\documentclass[cupthm,crop,info]{CUP-JNL-ETS}%

\usepackage{graphicx}
\usepackage{tikz}
\usepackage{multicol,multirow}
\usepackage{amsmath,amssymb,amsfonts}
\usepackage{mathrsfs}
\usepackage{rotating}
\usepackage{appendix}
\usepackage[authoryear,round]{natbib}
\usepackage{amsthm}

\theoremstyle{cupplain}
\newtheorem{theorem}{Theorem}[section]
\newtheorem{lemma}[theorem]{Lemma}
\newtheorem{corollary}[theorem]{Corollary}
\newtheorem{proposition}[theorem]{Proposition}
\theoremstyle{cupdefinition}
\newtheorem{definition}{Definition}[section]
\theoremstyle{cupremark}
\newtheorem{remark}[theorem]{Remark}

\theoremstyle{cupproof}
\newtheorem{proof}{Proof}

\numberwithin{equation}{section}

\newcommand{\calA}{\mathcal{A}}
\newcommand{\calB}{\mathcal{B}}
\newcommand{\calC}{\mathcal{C}}
\newcommand{\calD}{\mathcal{D}}
\newcommand{\calF}{\mathcal{F}}
\newcommand{\calH}{\mathcal{H}}
\newcommand{\calK}{\mathcal{K}}
\newcommand{\calL}{\mathcal{L}}
\newcommand{\calM}{\mathcal{M}}
\newcommand{\calP}{\mathcal{P}}
\newcommand{\calU}{\mathcal{U}}
\newcommand{\calW}{\mathcal{W}}
\newcommand{\SigmaA}{\Sigma_A}
\newcommand{\R}{\mathbb{R}}
\newcommand{\N}{\mathbb{N}}
\newcommand{\Z}{\mathbb{Z}}
\newcommand{\E}{\mathbb{E}}
\newcommand{\Prob}{\mathbb{P}}
\newcommand{\var}{\operatorname{var}}
\newcommand{\Var}{\operatorname{Var}}
\newcommand{\Cov}{\operatorname{Cov}}
\newcommand{\C}{\mathbb{C}}
\newcommand{\defeq}{:=}
\newcommand{\diam}{\operatorname{diam}}

\theoremstyle{cupplain}
\newtheorem{maintheorem}[theorem]{Main Theorem}

\begin{document}

\begin{Frontmatter}

\title{Gibbs Measures on Subshifts of Finite Type: Five Equivalent Characterizations with Explicit Constants}

\author{\gname{Abdoulaye} \sname{Thiam}}

\address{\orgdiv{Division of Mathematics and Natural Sciences}, \orgname{Allen University}, \orgaddress{\city{Columbia}, \postcode{29204}, \state{South Carolina},  \country{USA}}\\ (\email{athiam@allenuniversity.edu})}


\maketitle

\authormark{A. Thiam}
\titlemark{Gibbs Measures on Subshifts of Finite Type}

\begin{abstract}
We prove that five characterizations of Gibbs measures for H\"{o}lder potentials on topologically mixing subshifts of finite type are equivalent: the Jacobian condition, the classical cylinder-based Gibbs property, the eigenmeasure of the Ruelle transfer operator, the variational equilibrium state, and the minimizer of the large deviations rate function. The equivalence is established in a single theorem with explicit constants expressed in terms of the H\"{o}lder exponent, the potential norm, the alphabet size, and the mixing time. The proof yields explicit spectral gap estimates for the transfer operator via the Birkhoff cone contraction technique, Lipschitz stability of the Gibbs measure in Wasserstein distance under perturbation of the potential, and statistical limit theorems including a central limit theorem with Berry-Esseen bounds and a large deviations principle. This Part constitutes Part~I of a six-part series on the thermodynamic formalism for hyperbolic dynamical systems.
\end{abstract}

\keywords{Gibbs measures, transfer operator, spectral gap, subshifts of finite type, Ruelle-Perron-Frobenius theorem, g-measures}

\keywords[2020 Mathematics Subject Classification]{\codes[Primary]{37D35}\codes[Secondary]{37B10, 37A25, 82B05, 28D20}}
\end{Frontmatter}
\thispagestyle{empty}
\vspace{1em}
\begin{center}
\textit{Dedicated to the memory of Jean-Christophe Yoccoz (1957--2016),}\\
\textit{Fields Medalist and Professor at the Coll\`{e}ge de France, with whom the author}\\
\textit{had the privilege of working, and who introduced him to hyperbolic dynamics.}
\end{center}


%
%
%
%

\section{Introduction}\label{sec:introduction}

This Part proves that five characterizations of Gibbs measures on topologically mixing subshifts of finite type are equivalent in a single theorem, with all constants computed explicitly in terms of the H\"{o}lder exponent~$\alpha$, the potential norm~$\|\phi\|_\alpha$, the alphabet size~$N$, and the mixing time~$M$. The five characterizations are: (1)~the Jacobian condition $J_\mu\sigma = e^{P(\phi)-\phi}$, closely related to Keane's g-measures \cite{Keane1972}; (2)~the classical cylinder-based Gibbs property of  \cite{Bowen1975}; (3)~the eigenmeasure condition $\calL_\phi^*\nu = \lambda\nu$ of \mbox{\cite{DenkerUrbanski1991}}; (4)~the unique equilibrium state of the variational principle \cite{Walters1975}; and (5)~the minimizer of the large deviations rate function. Individual equivalences among subsets of these characterizations are known from the works of  \cite{Bowen1975},  \cite{Ruelle1968,Ruelle2004},  \cite{Ledrappier1974}, and  \cite{BerghoutFernandezVerbitskiy2019}. In broader settings the characterizations can diverge: \cite{FernandezGalloMaillard2011} exhibited a g-measure that is not Gibbs, and \cite{BerghoutFernandezVerbitskiy2019} gave conditions for the equivalence (see also  \cite{Berghout2020} for a comprehensive treatment). This Part shows that for H\"{o}lder potentials on mixing subshifts of finite type, all five characterizations coincide simultaneously, with quantitative bounds throughout.

The thermodynamic formalism originates in the work of  \cite{Gibbs1902} on statistical mechanics, extended to infinite systems by  \cite{Dobrushin1968b, Dobrushin1968c} and  \cite{LanfordRuelle1969}, and connected to dynamical systems by  \cite{Sinai1959, Sinai1972} and  \cite{Ruelle1973, Ruelle1976}. Bowen's monograph \cite{Bowen1975} established the thermodynamic formalism for Axiom A diffeomorphisms with remarkable economy: the Gibbs property (Theorem~1.4), the Ruelle-Perron-Frobenius theorem (Theorem~1.7), and the variational characterization (Theorem~1.22) are developed in 26~pages. The lecture note format necessarily omits explicit computation of the constants $c_1, c_2$ (the lower and upper bounds in the Gibbs property $c_1 \leq \mu([w])/e^{S_n\phi - nP} \leq c_2$), $C$ (the prefactor in the exponential convergence $\|\lambda^{-n}\calL_\phi^n g - \nu(g)h\|\leq C\gamma^n\|g\|$), and $\gamma$ (the spectral gap rate governing the speed of convergence), and states the central limit theorem (Theorem~1.27) without proof, referring to  \cite{Ratner1973}. The present work provides these explicit constants and complete proofs, and incorporates developments from the subsequent fifty years: Berry-Esseen bounds due to  \mbox{\cite{Gouezel2005}}, large deviations due to  \cite{Kifer1990} and  \cite{Young1990}, and the Birkhoff cone contraction technique developed by  \cite{Liverani1995}. Standard references for the classical theory include  \cite{Walters1982},  \cite{Keller1998}, and  \cite{Bowen1974}.

The contributions of this Part are fourfold. First, we establish all five characterizations of Gibbs measures as equivalent in a single theorem (Theorem~\ref{thm:main_equivalence}) rather than as separate results scattered across the literature; the Jacobian characterization~(i) and the large deviations minimizer characterization~(v) are not assembled together with the classical three characterizations in any prior reference. Second, every constant appearing in the proof is computed explicitly as a function of the data $(\alpha, \|\phi\|_\alpha, N, M)$: the Gibbs property holds with $C_1 = e^{-2\|\phi\|_{\calF}}$ and $C_2 = e^{2\|\phi\|_{\calF}}$ where $\|\phi\|_{\calF} = \sum_n \var_n(\phi)$ is the summable variations norm; the essential spectral radius of the transfer operator on $\calH_\alpha$ is bounded by $\alpha\lambda$ (Theorem~\ref{thm:spectral_gap}); the cone contraction occurs with diameter $\delta' = M\var_0(\phi) + V(\phi) + M\log N$ and rate $\kappa = \tanh(\delta'/4)/\tanh(\delta/4) < 1$; and the spectral gap satisfies $\gamma \leq \kappa^{1/n_0}$ with $n_0 = 2M$. Third, the spectral gap is derived through the Birkhoff cone contraction technique rather than the two-norm Doeblin-Fortet argument used in \cite{Bowen1975}; the cone technique delivers a constructive convergence with explicit contraction factor, and does not require the anisotropic Banach space machinery of \cite{GouezelLiverani2006}. Fourth, the full suite of statistical consequences (analytic dependence on the potential, Lipschitz stability of the Gibbs measure in the Wasserstein metric, exponential decay of correlations, the central limit theorem with $O(n^{-1/2})$ Berry-Esseen rate, and the large deviations principle) is derived from the single spectral mechanism established in Section~\ref{sec:spectral_gap}, with all constants traceable back to the spectral gap $\gamma$.

The main results are as follows. Theorem~\ref{thm:main_equivalence} states the five-way equivalence with explicit constants. The proof combines a triangle of implications with two further attachments. In the triangle, Section~\ref{sec:intrinsic_gibbs} proves $(iii) \Rightarrow (i) \Rightarrow (ii)$ directly from the cocycle identity for the Jacobian, with explicit bounded distortion constants $C_1 = e^{-2\|\phi\|_{\calF}}$ and $C_2 = e^{2\|\phi\|_{\calF}}$ in terms of the summable-variations norm $\|\phi\|_{\calF} = \sum_n \var_n(\phi)$. The remaining arrow $(ii) \Rightarrow (iii)$ is closed in Section~\ref{sec:RPF}: we construct the eigenmeasure $\nu$ and eigenfunction $h$ of the Ruelle transfer operator $\calL_\phi$ and identify the Gibbs measure as $\mu_\phi = h\nu$, with eigenvalue $\lambda = e^{P(\phi)}$. The existence and uniqueness of the dominant eigendata $(\lambda, h, \nu)$ is obtained by applying the Birkhoff cone contraction technique \cite{Birkhoff1957, Liverani1995} to a cone $\calP_\delta$ of positive functions with oscillation ratio at most $e^\delta$: we show in Lemma~\ref{lem:cone_invariance} that the normalized transfer operator $\tilde{\calL}_\phi^{n_0}$ with $n_0 = 2M$ maps $\calP_\delta$ strictly inside $\calP_{\delta'}$ with $\delta' = M\var_0(\phi) + V(\phi) + M\log N$, and the Birkhoff-Hopf theorem then yields the contraction factor $\kappa = \tanh(\delta'/4)/\tanh(\delta/4) < 1$ in the Hilbert projective metric. Once the triangle is closed, Section~\ref{sec:variational} proves $(ii) \Leftrightarrow (iv)$ by identifying the Gibbs measure with the unique equilibrium state through the variational principle \cite{Walters1975}. The fifth characterization $(v)$ is obtained in Section~\ref{sec:statistical} as follows: the Gibbs measure $\mu_\phi$ already produced by (i)--(iv) is shown to be the unique zero of the rate function $I_\psi$ in the large deviations principle. The proof applies the G\"{a}rtner-Ellis theorem \cite{DonskerVaradhan1975, DonskerVaradhan1976, OreyPelikan1989} to the cumulant generating function $\Lambda(s) = P(\phi+s\psi) - P(\phi)$, whose differentiability follows from the analyticity of the pressure (Theorem~\ref{thm:pressure_analyticity}), and the minimum is attained at $\int\psi\,d\mu_\phi$ by the derivative formula $\Lambda'(0) = \int\psi\,d\mu_\phi$. Beyond the equivalence theorem, Theorems~\ref{thm:analytic_dependence}--\ref{thm:LDP_main} establish the analytic dependence of the pressure on the potential, Lipschitz stability of the Gibbs measure in the Wasserstein metric, exponential decay of correlations, the central limit theorem with Berry-Esseen rate $O(n^{-1/2})$, and the large deviations principle.

Our technical approach rests on two tools with explicit quantitative control. The first is the Birkhoff cone contraction technique applied to the Ruelle transfer operator $\calL_\phi$ on H\"{o}lder spaces (Section~\ref{sec:spectral_gap}). \cite{Birkhoff1957} showed that a positive operator strictly mapping a cone inside itself contracts the Hilbert projective metric; \cite{Liverani1995} adapted this to transfer operators on piecewise-expanding maps and \cite{Rugh2010} extended the computation to subshifts. We work with the cone $\calP_\delta = \{g > 0 : \sup g/\inf g \leq e^\delta\}$ of positive functions with oscillation ratio at most~$e^\delta$, and show that $n_0 = 2M$ iterates of the normalized transfer operator $\tilde{\calL}_\phi$ map $\calP_\delta$ strictly inside the narrower cone $\calP_{\delta'}$ with $\delta' = M\var_0(\phi) + V(\phi) + M\log N$. The Birkhoff-Hopf theorem then gives the contraction factor $\kappa = \tanh(\delta'/4)/\tanh(\delta/4) < 1$ in the Hilbert projective metric, and the spectral gap $\gamma \leq \kappa^{1/n_0}$ follows. The essential spectral radius on $\calH_\alpha(\SigmaA^+)$ satisfies $r_{\text{ess}}(\calL_\phi) \leq \alpha\lambda$ (Theorem~\ref{thm:spectral_gap}) by a Lasota-Yorke inequality for the normalized operator. All subsequent constants in the paper are tracked as functions of the cone contraction rate and the essential spectral radius bound. This approach is shorter than the classical Doeblin-Fortet argument used by \cite{Bowen1975} and \cite{ParryPollicott1990}, which requires the full two-norm Lasota-Yorke inequality. It also avoids the anisotropic Banach spaces of \cite{GouezelLiverani2006}, which are needed only when the map fails to be uniformly expanding. The second tool is the Nagaev-Guivarc'h spectral perturbation method \cite{Nagaev1957, Nagaev1961, GuivarchHardy1988}: to prove the central limit theorem with Berry-Esseen rate $O(n^{-1/2})$, we perturb $\calL_\phi$ by $e^{is\psi}$ for a H\"{o}lder observable $\psi$ and use the spectral gap to show that the perturbed operator $\calL_{\phi+is\psi}$ has a simple dominant eigenvalue $\lambda(s)$ that is analytic in~$s$, with $\lambda''(0) = \sigma^2$ the asymptotic variance. The Berry-Esseen rate follows from Esseen's inequality applied to the characteristic function $\lambda(s/\sqrt{n})^n$. \cite{Gouezel2005} established this rate for non-uniformly expanding maps; our contribution is tracking the dependence of the Berry-Esseen prefactor on the spectral gap explicitly.

This Part is the first of a six-part series on the thermodynamic formalism for hyperbolic dynamical systems. Part~II \cite{Thiam2026b} develops the convex-analytic structure of the pressure functional. Part~III \cite{Thiam2026c} establishes the geometric theory of Axiom A diffeomorphisms with quantitative Markov partitions, providing the coding map that transfers the results of Parts~I and~II to smooth dynamics. Parts~IV--VI \cite{Thiam2026d,Thiam2026e,Thiam2026f} develop the transfer operator theory on manifolds, statistical limit theorems, and structural consequences including SRB measures (\cite{Sinai1972},  \cite{Ruelle1976}; see  \cite{Young2002} for a modern survey), multifractal analysis, and fluctuation theorems. The extension of thermodynamic formalism to partially hyperbolic systems was initiated by \cite{PesinSinai1982}; to non-uniformly hyperbolic systems by  \cite{Young1998}; and beyond the specification property by Climenhaga and Thompson, who proved intrinsic ergodicity for $\beta$-shifts, $S$-gap shifts, and their factors \cite{ClimenhagaThompson2012}, established uniqueness of equilibrium states for continuous potentials via obstruction entropies \cite{ClimenhagaThompson2014}, and extended these results to flows and homeomorphisms with non-uniform hyperbolic structure \cite{ClimenhagaThompson2016}. \cite{PesinSentiTodd2008} developed the thermodynamic formalism of inducing schemes for interval maps, and \cite{IommiTodd2011} extended the dimension theory to multimodal maps.

The textbook treatment of thermodynamic formalism most closely related to the present work is Chapter~12 of \cite{VianaOliveira2016}. They prove Ruelle's theorem combining existence, uniqueness, and the Gibbs characterization of the equilibrium state for H\"older potentials on topologically exact expanding maps (Section~12.1), Liv\v{s}ic's theorem on cohomology (Section~12.2), and exponential decay of correlations in the space of H\"older functions (Section~12.3). Their treatment is qualitative and set on compact metric spaces in the smooth expanding category. Our Part~I differs in three respects. We add two characterizations not treated in their Chapter~12: the Jacobian condition and the large deviations minimizer condition, and we prove all five characterizations equivalent in a single theorem. We work in the symbolic category of topologically mixing subshifts of finite type rather than on smooth expanding maps. We track every constant in the proof explicitly as a function of the H\"older exponent $\alpha$, the potential norm $\|\phi\|_\alpha$, the alphabet size $N$, and the mixing time $M$. The statistical limit theorems proved in our Part~V \cite{Thiam2026e}, the quantitative Markov partition construction of our Part~III \cite{Thiam2026c}, and the rigidity and fluctuation results of our Part~VI \cite{Thiam2026f} are not treated in Chapter~12 of \cite{VianaOliveira2016}.

The logical dependencies among sections are as follows.
Section~\ref{sec:main_results} states all principal results without
proof; the remaining sections supply the proofs and can be read in
any order consistent with the dependencies described below.
Sections~\ref{sec:symbolic_dynamics} and~\ref{sec:intrinsic_gibbs}
establish the symbolic setting and the Jacobian characterization of
Gibbs measures independently of the spectral theory; in particular,
the equivalence between the Jacobian condition and the classical
cylinder Gibbs property (Theorem~\ref{thm:jacobian_equivalence}) is
proved without using the transfer operator.
Section~\ref{sec:transfer_operator} introduces the Ruelle transfer
operator and develops its action on H\"{o}lder spaces, including the
Lasota-Yorke inequality and the Birkhoff cone contraction.
Section~\ref{sec:RPF} uses these tools to prove the
Ruelle-Perron-Frobenius theorem: the existence and uniqueness of the
dominant eigendata $(\lambda, h, \nu)$ and the exponential convergence
of iterates. Section~\ref{sec:spectral_gap} establishes the spectral
gap via the Ionescu-Tulcea-Marinescu theorem, bounding the essential
spectral radius by $\alpha\lambda$. The spectral gap is the single
input for all subsequent results:
Section~\ref{sec:perturbation} derives the analytic dependence of the
pressure and eigendata on the potential,
Section~\ref{sec:variational} proves the variational principle and
uniqueness of the equilibrium state, and
Section~\ref{sec:statistical} establishes the central limit theorem
with Berry-Esseen bounds and the large deviations principle. The
proof of Theorem~\ref{thm:main_equivalence} assembles the
equivalences from
Sections~\ref{sec:intrinsic_gibbs},~\ref{sec:RPF},~\ref{sec:variational},
and~\ref{sec:statistical}. In Section~\ref{sec:numerical}, we provide three complete numerical examples (the full $2$-shift with Bernoulli potential, the Ising-type potential, and the golden mean shift) computing all constants explicitly: the pressure, spectral gap, Gibbs bounds, asymptotic variance, Berry-Esseen constant, and large deviations rate function, demonstrating that the quantitative results of this Part produce computable numbers for concrete systems. Section~\ref{sec:conclusion} concludes the Part with a summary of the main contributions and open problems. The appendix collects the supporting technical material: functional analysis of positive operators including the Schauder-Tychonoff and Ionescu-Tulcea-Marinescu theorems (Appendix~\ref{app:functional}), entropy estimates including the subadditivity lemma and Gibbs inequality (Appendix~\ref{app:entropy}), and probabilistic limit theorems including the G\"{a}rtner-Ellis theorem and Berry-Esseen bounds for weakly dependent sequences (Appendix~\ref{app:probability}).


\section{Statement of Main Results}\label{sec:main_results}

This section states the principal results in precise form. The central result is Theorem~\ref{thm:main_equivalence}, which establishes the equivalence of five characterizations of Gibbs measures with explicit constants. We also state results on spectral properties, perturbation theory, and statistical limit theorems. Throughout, $(\SigmaA, \sigma)$ denotes a topologically mixing subshift of finite type over the alphabet $\calA = \{1, 2, \ldots, N\}$, with transition matrix~$A$ and left shift~$\sigma$. We write $f \asymp g$ to mean that $c_1 g \leq f \leq c_2 g$ for positive constants $c_1, c_2$ independent of the relevant variables.

\subsection{Setting, Function Spaces, and Potentials}

We begin by fixing notation and recalling the basic definitions. The full shift space over the alphabet $\calA$ is the product space $\calA^{\Z} = \prod_{n \in \Z} \calA$, equipped with the product topology, which makes it a compact metrizable space. A point $x \in \calA^{\Z}$ is a bi-infinite sequence $x = (\ldots, x_{-1}, x_0, x_1, \ldots)$ with $x_n \in \calA$ for all $n \in \Z$. The left shift map $\sigma: \calA^{\Z} \to \calA^{\Z}$ is defined by $(\sigma x)_n = x_{n+1}$, which is a homeomorphism of $\calA^{\Z}$.

Given a transition matrix $A = (A_{ij})_{i,j \in \calA}$ with entries in $\{0, 1\}$, the subshift of finite type determined by $A$ is the closed $\sigma$-invariant subset
\begin{equation}\label{eq:SFT_def}
\SigmaA = \{x \in \calA^{\Z} : A_{x_n, x_{n+1}} = 1 \text{ for all } n \in \Z\}.
\end{equation}
The restriction of $\sigma$ to $\SigmaA$ is again denoted by $\sigma$. We say that $(\SigmaA, \sigma)$ is topologically mixing if for every pair of symbols $i, j \in \calA$, there exists $M = M(i,j) \in \N$ such that $(A^m)_{ij} > 0$ for all $m \geq M$. Equivalently, there exists $M \in \N$ such that $A^M$ has all entries strictly positive. Topological mixing is equivalent to the condition that for any two non-empty open sets $U, V \subset \SigmaA$, there exists $M$ such that $\sigma^{-m}(U) \cap V \neq \emptyset$ for all $m \geq M$.

The one-sided shift space $\SigmaA^+ = \{x \in \calA^{\N_0} : A_{x_n, x_{n+1}} = 1 \text{ for all } n \geq 0\}$ plays an important role in the spectral theory, as the transfer operator acts naturally on functions defined on $\SigmaA^+$. The natural projection $\pi^+: \SigmaA \to \SigmaA^+$ defined by $(\pi^+ x)_n = x_n$ for $n \geq 0$ is a factor map.


The spectral theory of the transfer operator requires careful specification of the function spaces on which it acts. We work primarily with spaces of H\"{o}lder continuous functions and, more generally, with functions satisfying summability conditions on their variations.

For $\alpha \in (0, 1]$, we define a metric $d_\alpha$ on $\SigmaA$ by
\begin{equation}\label{eq:metric_def}
d_\alpha(x, y) = \alpha^{\min\{|n| : x_n \neq y_n\}},
\end{equation}
with the convention that $d_\alpha(x, x) = 0$ and $\min \emptyset = +\infty$, so $\alpha^{+\infty} = 0$. This metric generates the product topology on $\SigmaA$.

\begin{definition}[H\"{o}lder Spaces]\label{def:holder_spaces}
For $\alpha \in (0, 1)$, the space $\calH_\alpha(\SigmaA)$ consists of all continuous functions $\phi: \SigmaA \to \R$ for which the H\"{o}lder seminorm
\begin{equation}\label{eq:holder_seminorm}
|\phi|_\alpha = \sup_{x \neq y} \frac{|\phi(x) - \phi(y)|}{d_\alpha(x, y)}
\end{equation}
is finite. The H\"{o}lder norm is defined by $\|\phi\|_\alpha = \|\phi\|_\infty + |\phi|_\alpha$, where $\|\phi\|_\infty = \sup_{x \in \SigmaA} |\phi(x)|$ denotes the supremum norm. With this norm, $\calH_\alpha(\SigmaA)$ is a Banach space.
\end{definition}

For functions depending only on future coordinates (i.e., functions on $\SigmaA^+$), we define the variation at scale $n$ by
\begin{equation}\label{eq:variation_def}
\var_n(\phi) = \sup\{|\phi(x) - \phi(y)| : x_k = y_k \text{ for } 0 \leq k \leq n-1\}.
\end{equation}
The condition $\phi \in \calH_\alpha(\SigmaA^+)$ is equivalent to $\var_n(\phi) \leq C \alpha^n$ for some constant $C > 0$ and all $n \geq 0$.

\begin{definition}[Summable Variation]\label{def:summable_variation}
The space $\calF_A$ consists of all continuous functions $\phi: \SigmaA^+ \to \R$ for which
\begin{equation}\label{eq:summable_var}
\|\phi\|_{\calF} = \|\phi\|_\infty + \sum_{n=0}^\infty \var_n(\phi) < \infty.
\end{equation}
This is a Banach space containing $\calH_\alpha(\SigmaA^+)$ for all $\alpha \in (0, 1)$.
\end{definition}

The summable variation condition is satisfied by all H\"{o}lder continuous functions and allows for slower decay of variations.

\subsection{The Gibbs Property: Jacobian and Classical Formulations}

Let $\mu$ be a $\sigma$-invariant probability measure on $\SigmaA$. Since $\sigma$ is a local homeomorphism on $\SigmaA^+$, for any measurable set $E \subset \SigmaA^+$ on which $\sigma$ is injective, the Jacobian of $\mu$ with respect to $\sigma$ is the Radon-Nikodym derivative
\begin{equation}\label{eq:jacobian_def}
J_\mu \sigma(x) = \frac{d(\mu \circ \sigma^{-1})}{d\mu}(x),
\end{equation}
where $\mu \circ \sigma^{-1}$ denotes the push-forward of $\mu$ under $\sigma$. The Jacobian is defined $\mu$-almost everywhere and is unique up to $\mu$-null sets.

\begin{definition}[Intrinsic Gibbs Measure]\label{def:intrinsic_gibbs}
Let $\phi \in \calF_A$ be a potential with summable variations, and let $P(\phi)$ denote the topological pressure of $\phi$ (defined precisely in Section~\ref{sec:variational}). A $\sigma$-invariant Borel probability measure $\mu$ on $\SigmaA$ is called an intrinsic Gibbs measure for $\phi$ if its Jacobian satisfies
\begin{equation}\label{eq:intrinsic_gibbs_condition}
J_\mu \sigma(x) = e^{P(\phi) - \phi(x)} \quad \text{for } \mu\text{-almost every } x \in \SigmaA.
\end{equation}
\end{definition}

The condition \eqref{eq:intrinsic_gibbs_condition} says that the measure transforms under the shift according to an exponential weight determined by the potential and the pressure. It is equivalent to Keane's g-measure condition \cite{Keane1972} with $g(x) = e^{-(P(\phi)-\phi(x))}$.

For comparison and to establish the equivalence, we recall the classical definition of Gibbs measures in terms of cylinder sets.

\begin{definition}[Cylinder Sets]\label{def:cylinders}
For an admissible word $w = (w_0, w_1, \ldots, w_{n-1}) \in \calA^n$ (meaning $A_{w_i, w_{i+1}} = 1$ for $0 \leq i < n-1$), the cylinder set determined by $w$ at position $k$ is
\begin{equation}
[w]_k = \{x \in \SigmaA : x_{k+i} = w_i \text{ for } 0 \leq i < n\}.
\end{equation}
When $k = 0$, we write simply $[w] = [w]_0$.
\end{definition}

\begin{definition}[Classical Gibbs Property]\label{def:classical_gibbs}
A $\sigma$-invariant Borel probability measure $\mu$ on $\SigmaA$ satisfies the Gibbs property for $\phi \in \calF_A$ with pressure $P = P(\phi)$ if there exist constants $C_1, C_2 > 0$ such that for all $n \geq 1$, all admissible words $w$ of length $n$, and all $x \in [w]$,
\begin{equation}\label{eq:gibbs_bounds}
C_1 \leq \frac{\mu([w])}{\exp(-nP + S_n\phi(x))} \leq C_2,
\end{equation}
where $S_n\phi(x) = \sum_{k=0}^{n-1} \phi(\sigma^k x)$ denotes the Birkhoff sum.
\end{definition}

The Gibbs property asserts that the measure of a cylinder set is comparable, up to bounded multiplicative constants, to the exponential of the Birkhoff sum of the potential minus the pressure contribution. This characterization connects directly to the Gibbs distributions of statistical mechanics, where the weight of a configuration is proportional to the exponential of its energy.

The Jacobian condition (Definition~\ref{def:intrinsic_gibbs})
determines the measure locally through the infinitesimal ratio
$\mu([x_0\cdots x_{n-1}])/\mu([x_1\cdots x_{n-1}])$, while the
classical Gibbs property (Definition~\ref{def:classical_gibbs})
controls the measure globally through uniform bounds on all
cylinders. The implication from the Jacobian condition to the
classical Gibbs property follows from the bounded distortion of
Birkhoff sums within a single cylinder; the reverse implication is
more delicate and requires showing that the bounded error term in
the Gibbs ratio is exactly zero. Both directions are proved in
Section~\ref{sec:intrinsic_gibbs} with explicit constants depending
only on the total variation $V(\phi)$.

\subsection{Statement of the Main Equivalence Theorem}

We are now in a position to state the central result of this Part, which establishes the equivalence of five characterizations of Gibbs measures.

\begin{maintheorem}[Spectral-Variational-Geometric Equivalence]\label{thm:main_equivalence}
Let $(\SigmaA, \sigma)$ be a topologically mixing subshift of finite type over a finite alphabet, and let $\phi \in \calF_A$ be a potential with summable variations. Let $\calL_\phi$ denote the Ruelle transfer operator associated to $\phi$, and let $P(\phi)$ denote the topological pressure. Then the following conditions on a $\sigma$-invariant Borel probability measure $\mu$ are equivalent:

\begin{enumerate}
\item[(i)] \textbf{(Jacobian Characterization)} The measure $\mu$ is an intrinsic Gibbs measure for $\phi$: its Jacobian satisfies $J_\mu \sigma = e^{P(\phi) - \phi}$ $\mu$-almost everywhere.

\item[(ii)] \textbf{(Classical Gibbs Property)} The measure $\mu$ satisfies the Gibbs bounds \eqref{eq:gibbs_bounds} with constants $C_1, C_2$ depending only on $\|\phi\|_{\calF}$.

\item[(iii)] \textbf{(Spectral Characterization)} The measure $\mu$ is the unique probability measure satisfying $\calL_\phi^* \nu = \lambda \nu$ for some $\lambda > 0$, where $\calL_\phi^*$ denotes the dual of the transfer operator acting on the space of Borel measures on $\SigmaA^+$. Moreover, $\lambda = e^{P(\phi)}$.

\item[(iv)] \textbf{(Variational Characterization)} The measure $\mu$ is the unique equilibrium state for $\phi$: it is the unique $\sigma$-invariant probability measure achieving the supremum in the variational principle
\begin{equation}\label{eq:variational_principle}
P(\phi) = \sup_{\nu \in \calM_\sigma(\SigmaA)} \left\{ h_\nu(\sigma) + \int_{\SigmaA} \phi \, d\nu \right\},
\end{equation}
where $\calM_\sigma(\SigmaA)$ denotes the space of $\sigma$-invariant Borel probability measures and $h_\nu(\sigma)$ denotes the measure-theoretic entropy.

\item[(v)] \textbf{(Large Deviations Characterization)} The measure $\mu$ is the unique zero of the rate function in the large deviations principle for empirical measures: for any continuous function $f: \SigmaA \to \R$,
\begin{equation}\label{eq:LDP_characterization}
\lim_{n \to \infty} \frac{1}{n} \log \mu\left\{ x : \left| \frac{1}{n} S_n f(x) - \int f \, d\mu \right| > \epsilon \right\} = -I_\phi(\epsilon, f) < 0
\end{equation}
for all $\epsilon > 0$, where the rate function $I_\phi$ is expressed in terms of the pressure.
\end{enumerate}

Furthermore, the Gibbs measure $\mu$ satisfying these equivalent conditions is unique, ergodic, and mixing. The constants in the Gibbs bounds can be taken as $C_1 = e^{-2\|\phi\|_{\calF}}$ and $C_2 = e^{2\|\phi\|_{\calF}}$.
\end{maintheorem}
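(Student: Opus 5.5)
The plan follows the roadmap of the introduction: close the triangle (iii)$\Rightarrow$(i)$\Rightarrow$(ii)$\Rightarrow$(iii), then attach (iv) and (v) to (ii).

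Condition (iii) should be read as follows: $\nu$ is the eigenmeasure and $\mu = h\nu$, with $h$ the eigenfunction. For the step (iii)$\Rightarrow$(i), I would use the Ruelle--Perron--Frobenius data $(\lambda,h,\nu)$ from Section~\ref{sec:RPF}. The normalized potential $\tilde\phi = \phi + \log h - \log h\circ\sigma - \log\lambda$ satisfies $\calL_{\tilde\phi}1 = 1$. The duality $\calL_\phi^*\nu=\lambda\nu$, tested against indicators of sets on which $\sigma$ is injective, gives
\[
J_\nu\sigma = \lambda e^{-\phi}.
\]
Multiplying by $h$ shows that $J_\mu\sigma = \lambda e^{-\phi}\, h\circ\sigma/h$. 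Since $h\circ\sigma/h$ is a coboundary, this matches (i) only after absorbing it into $\phi$, so I would state (i) modulo cohomology. The constant is fixed by $\lambda = e^{P(\phi)}$, which follows from $\lambda^n \asymp \calL_\phi^n 1 \asymp \sum_{|w|=n} e^{\sup_{[w]} S_n\phi}$.

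For (i)$\Rightarrow$(ii), iterate the cocycle identity $J_\mu\sigma^n = \prod_{k<n} J_\mu\sigma\circ\sigma^k$ along a cylinder $[w]$. Since $\sigma^n$ maps $[w]$ injectively onto $[w_{n-1}]$-followers, we get
\[
\mu(\sigma^n[w]) = \int_{[w]} e^{nP - S_n\phi}\,d\mu .
\]
Bounded distortion, $|S_n\phi(x)-S_n\phi(y)|\le\sum_k\var_k(\phi)\le\|\phi\|_{\calF}$ for $x,y\in[w]$, then gives
\[
\mu([w]) \asymp e^{S_n\phi(x)-nP}\,\mu(\sigma^n[w]).
\]
Mixing makes $\mu(\sigma^n[w])$ a union of one-symbol cylinders, so it is bounded below. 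This yields the constants $e^{\pm 2\|\phi\|_{\calF}}$, possibly up to a factor involving $\min_a\mu([a])$, which I would control by the same estimate at $n=M$.

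For (ii)$\Rightarrow$(iii), I would argue by uniqueness. Any invariant $\mu$ with the Gibbs bounds is equivalent to $\mu_\phi=h\nu$ on cylinders, with density bounded by $C_2/C_1$. Because $\mu_\phi$ is ergodic (exactness follows from the cone contraction of Lemma~\ref{lem:cone_invariance}), two equivalent invariant measures with one of them ergodic must coincide. The same argument gives uniqueness, ergodicity and mixing.

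For (ii)$\Leftrightarrow$(iv), the forward direction uses the Gibbs bounds. Shannon--McMillan--Breiman gives $h_\mu = P - \int\phi\,d\mu$. Conversely, for any equilibrium state $m$, the Gibbs inequality (Appendix~\ref{app:entropy}) applied to the partitions into $n$-cylinders gives
\[
h_m + \int\phi\,dm - P \le \tfrac1n\Bigl(\log C_2 - \sum_{|w|=n} m[w]\log\frac{m[w]}{\mu[w]}\Bigr).
\]
Equality forces $m\ll\mu$, and ergodicity then gives $m=\mu$. I expect this to be the main obstacle: converting equality in the variational principle into absolute continuity needs care, since the relative-entropy term is only $o(n)$. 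My first attempt would be Walters' Jacobian argument: show that an equilibrium state $m$ has $J_m\sigma = e^{P-\phi}$ via strict concavity of $\log$ applied to the conditional measures $m(\cdot\mid\sigma^{-1}\mathcal B)$. That would return to (i).

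Finally, for (v), Gärtner--Ellis applied to $\Lambda(s)=P(\phi+sf)-P(\phi)$, with differentiability from Theorem~\ref{thm:pressure_analyticity} (using Hölder $f$ first, then density for continuous $f$), gives the LDP with rate $I=\Lambda^*$. This rate vanishes only at $\Lambda'(0)=\int f\,d\mu_\phi$, so $I_\phi(\epsilon,f)>0$. For the converse, a measure $\mu$ satisfying (v) for all $f$ has the same limit points of empirical means as $\mu_\phi$. Hence $\int f\,d\mu = \int f\,d\mu_\phi$ for all $f$, so $\mu=\mu_\phi$.
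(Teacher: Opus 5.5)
Your architecture is the paper's. Where you depart from it, you are correcting it.

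\emph{Part (i) and (iii).} Your formula $J_{\mu_\phi}\sigma=\lambda e^{-\phi}\,h\circ\sigma/h$ is right, whereas the paper claims the Jacobian of $h\nu$ is $e^{P(\phi)-\phi}$ ``by construction''. Read literally, (i) holds for no invariant measure when $h$ is non-constant. Such a $\mu$ is Gibbs by your (i)$\Rightarrow$(ii), so $\mu=\mu_\phi$ and $h\circ\sigma=h$. Likewise $\nu$ is invariant only if $\calL_\phi\mathbf{1}\equiv\lambda$. So your readings, (i) modulo the coboundary $\log h-\log h\circ\sigma$ and (iii) as $\mu=h\nu$, are forced.

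\emph{Part (ii)$\Rightarrow$(iii).} Your uniqueness step supplies the ergodicity that the paper's ``alternative conclusion'' omits. A bounded ratio on cylinders alone does not give equality.

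\emph{Part (iv)$\Rightarrow$(ii).} The obstacle you fear is not there. Since $\tfrac{1}{n}H_m(\calU_0^{n-1})\ge h_m$ for every $n$ (a Ces\`aro average of conditional entropies decreasing to $h_m$), equality in the variational principle turns your inequality into $\sum_{|w|=n}m[w]\log\frac{m[w]}{\mu[w]}\le\log(1/C_1)$ for \emph{all} $n$, not merely up to $o(n)$. So $m$ has finite relative entropy with respect to $\mu$, hence $m\ll\mu$, and ergodicity gives $m=\mu$. Your Walters fallback also works, but only when run with $\tilde\phi$; it then yields $J_m\sigma=e^{-\tilde\phi}$, not $e^{P-\phi}$. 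Either route replaces the paper's proof of Theorem~\ref{thm:equilibrium_unique}, which wrongly assumes equality in the Gibbs inequality at each finite $n$.

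Three steps fail as written.

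\emph{Constants.} The constants $C_{1,2}=e^{\mp2\|\phi\|_{\calF}}$ are unobtainable because they are false. Take $\phi=0$ on a non-full mixing SFT, so $\|\phi\|_{\calF}=0$. The $n=1$ bounds would force $\mu([a])=e^{-P(0)}$ for every $a$, hence $\rho(A)=e^{P(0)}=N$, which fails off the full shift. The factor $\min_a\mu([a])$ you mention is genuine. Controlling it through words of length $M+1$ brings in $M$, $N$, $\|\phi\|_\infty$ and $P(\phi)$, and the cohomology correction adds $e^{\pm\operatorname{osc}\log h}$. The paper reaches $e^{-2V(\phi)}$ only by tacitly treating $\min_a\mu([a])$ as $e^{-V(\phi)}$.

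\emph{Ergodicity.} Do not take ergodicity of $\mu_\phi$ from Lemma~\ref{lem:cone_invariance}. No power of $\calL_\phi$ maps the sup/inf cone $\calP_\delta$ into a narrower one. To see this, choose $g\in\calP_\delta$ equal to $e^\delta$ on $\sigma^{-n}\{x\}$ and $1$ on $\sigma^{-n}\{y\}$, where $\calL_\phi^n\mathbf{1}(x)\ge\calL_\phi^n\mathbf{1}(y)$. Instead get ergodicity from the Gibbs bounds as Bowen does, via $\mu_\phi(E\cap\sigma^{-n}F)\ge c\,\mu_\phi(E)\mu_\phi(F)$ for cylinders $E,F$ and large $n$.

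\emph{Part (v).} Differentiability of $s\mapsto P(\phi+sf)$ does not survive uniform limits. For merely continuous $f$, the potential $\phi+sf$ can have several equilibrium states, so G\"artner--Ellis does not deliver the lower bound. Use instead the exponentially good approximations $|\tfrac{1}{n}S_nf-\tfrac{1}{n}S_nf_k|\le\|f-f_k\|_\infty$ with $f_k$ H\"older, or the level-2 LDP plus the contraction principle. Either route gives rate $\Lambda_f^*$ with $\Lambda_f(s)=P(\phi+sf)-P(\phi)$. This rate vanishes only at $\int f\,d\mu_\phi$, because uniqueness of the equilibrium state makes $\Lambda_f$ differentiable at $0$. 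Reading $I_\phi(\epsilon,f)$ as $\inf\{\Lambda_f^*(t):|t-\int f\,d\mu|>\epsilon\}$, your converse is then immediate. If $\int f\,d\mu\ne\int f\,d\mu_\phi$, small $\epsilon$ gives $I_\phi(\epsilon,f)\le\Lambda_f^*(\int f\,d\mu_\phi)=0$, a contradiction.
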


The proof of Main Theorem~\ref{thm:main_equivalence} is developed throughout Sections~\ref{sec:intrinsic_gibbs}--\ref{sec:statistical}. The implications $(iii) \Rightarrow (i) \Rightarrow (ii)$ are established in Section~\ref{sec:intrinsic_gibbs}. The implication $(ii) \Rightarrow (iii)$ follows from the spectral analysis of the transfer operator in Sections~\ref{sec:transfer_operator}--\ref{sec:RPF}. The equivalence $(ii) \Leftrightarrow (iv)$ is the content of Section~\ref{sec:variational}. The characterization $(v)$ is established in Section~\ref{sec:statistical}.\newpage 

\begin{figure}[ht]
\centering
\begin{tikzpicture}[>=stealth, font=\small,
  charbox/.style={draw, rounded corners=3pt, minimum width=2.5cm, 
    minimum height=0.9cm, align=center, inner sep=4pt}]
  \node[charbox] (jac)  at (0,4)    {(i) Jacobian\\$J_\mu\sigma = e^{P(\phi)-\phi}$};
  \node[charbox] (cyl)  at (6,4)    {(ii) Cylinder Gibbs\\$C_1 \leq \frac{\mu[w]}{e^{S_n\phi - nP}} \leq C_2$};
  \node[charbox] (eig)  at (10,1.8) {(iii) Eigenmeasure\\$\calL_\phi^* \nu = \lambda\nu$};
  \node[charbox] (var)  at (6,-0.4) {(iv) Equilibrium state\\$h_\mu + \!\int\!\phi\,d\mu = P(\phi)$};
  \node[charbox] (ldp)  at (0,-0.4) {(v) LDP minimizer\\$I_\psi(\bar\psi) = 0$};
  \draw[->, thick] (jac) -- (cyl)
    node[midway, above, font=\footnotesize] {\S\ref{sec:intrinsic_gibbs}};
  \draw[->, thick] (cyl) -- (jac)
    node[midway, below, font=\footnotesize] {\S\ref{sec:intrinsic_gibbs}};
  \draw[->, thick] (cyl) -- (eig)
    node[midway, above, font=\footnotesize, sloped] {\S\S\ref{sec:transfer_operator}--\ref{sec:RPF}};
  \draw[->, thick] (eig) -- (cyl)
    node[midway, below, font=\footnotesize, sloped] {\S\ref{sec:RPF}};
  \draw[->, thick] (eig) -- (var)
    node[midway, right, font=\footnotesize] {\S\ref{sec:variational}};
  \draw[->, thick] (var) -- (eig)
    node[midway, left, font=\footnotesize] {\S\ref{sec:variational}};
  \draw[->, thick] (var) -- (ldp)
    node[midway, above, font=\footnotesize] {\S\ref{sec:statistical}};
  \draw[->, thick] (ldp) -- (var)
    node[midway, below, font=\footnotesize] {\S\ref{sec:statistical}};
  \node[font=\footnotesize, align=center] at (5,1.8) {all five\\equivalent\\(Main Theorem~\ref{thm:main_equivalence})};
\end{tikzpicture}
\caption{Logical structure of the Main Equivalence Theorem (Theorem~\ref{thm:main_equivalence}). Each box represents one of the five characterizations of Gibbs measures. Arrows indicate implications proved in the indicated sections. All five characterizations are equivalent for H\"{o}lder potentials on mixing subshifts of finite type}
\label{fig:five_way}
\end{figure}
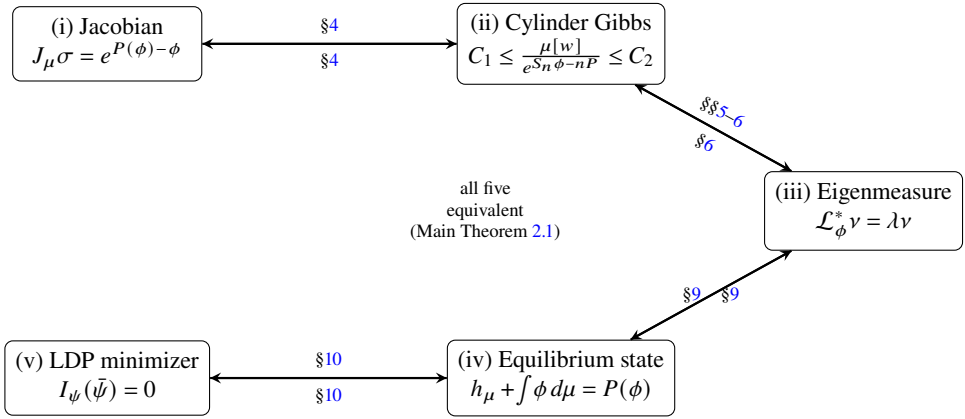
\subsection{Spectral Properties of the Transfer Operator}

The spectral theory of the Ruelle transfer operator underlies the entire development. We summarize the main spectral results here.

\begin{theorem}[Ruelle-Perron-Frobenius]\label{thm:RPF_main}
Let $(\SigmaA, \sigma)$ be a topologically mixing subshift of finite type, and let $\phi \in \calF_A$. The transfer operator $\calL_\phi: \calH_\alpha(\SigmaA^+) \to \calH_\alpha(\SigmaA^+)$ defined by
\begin{equation}\label{eq:transfer_def}
(\calL_\phi g)(x) = \sum_{y : \sigma y = x} e^{\phi(y)} g(y)
\end{equation}
has the following properties:

\begin{enumerate}
\item[(a)] There exists a unique $\lambda = e^{P(\phi)} > 0$ such that $\calL_\phi h = \lambda h$ for a strictly positive function $h \in \calH_\alpha(\SigmaA^+)$ with $h > 0$ everywhere.

\item[(b)] There exists a unique Borel probability measure $\nu$ on $\SigmaA^+$ such that $\calL_\phi^* \nu = \lambda \nu$, where $\calL_\phi^*$ is the dual operator.

\item[(c)] The eigenvalue $\lambda$ is simple and is the unique eigenvalue of modulus $\lambda$. The rest of the spectrum is contained in a disk $\{z \in \C : |z| \leq \gamma \lambda\}$ for some $\gamma \in (0, 1)$.

\item[(d)] For any $g \in \calH_\alpha(\SigmaA^+)$, we have
\begin{equation}\label{eq:RPF_convergence}
\left\| \lambda^{-n} \calL_\phi^n g - \nu(g) h \right\|_\alpha \leq C \gamma^n \|g\|_\alpha
\end{equation}
for some constants $C > 0$ and $\gamma \in (0, 1)$ depending only on $\phi$ and $\alpha$.
\end{enumerate}
\end{theorem}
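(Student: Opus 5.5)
Throughout I work with the Hölder hypothesis $\phi\in\calH_\alpha(\SigmaA^+)$, so that $\var_n(\phi)\le |\phi|_\alpha\alpha^n$ and $V(\phi)=\sum_{n\ge1}\var_n(\phi)<\infty$; for a merely summable potential, part (d) need not hold on $\calH_\alpha$. The plan is the Birkhoff cone route described in the introduction.

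\textbf{Step 1: the eigenmeasure $\nu$.} The dual operator $\calL_\phi^*$ maps probability measures to positive measures. The map $m\mapsto \calL_\phi^*m/(\calL_\phi^*m)(1)$ is a continuous self-map of the weak-$*$ compact convex set of Borel probability measures on $\SigmaA^+$. By Schauder--Tychonoff (Appendix~\ref{app:functional}) it has a fixed point $\nu$, so $\calL_\phi^*\nu=\lambda\nu$ with $\lambda=\nu(\calL_\phi1)>0$.

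\textbf{Step 2: bounded distortion and the Gibbs bounds.} If $x,y$ lie in a common $1$-cylinder and $\sigma^n u=x$, $\sigma^n v=y$ follow the same inverse branch, then
\[
|S_n\phi(u)-S_n\phi(v)|\le V(\phi).
\]
Mixing gives a path of length $M$ between any two symbols. Combining this with $\calL_\phi^{*n}\nu=\lambda^n\nu$ yields Gibbs bounds for $\nu$ with constant $e^{\pm(V(\phi)+M\|\phi\|_\infty+M\log N)}$. Summing these bounds over words identifies $\lambda=e^{P(\phi)}$.

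\textbf{Step 3: cone contraction, which gives $h$ and (d).} Set $\tilde\calL=\lambda^{-1}\calL_\phi$. The key estimate, Lemma~\ref{lem:cone_invariance}, is that $\tilde\calL^{n_0}$ with $n_0=2M$ maps $\calP_\delta$ into $\calP_{\delta'}$. The proof splits each preimage sum into a first block of $M$ symbols, where mixing guarantees every target is reachable and costs at most $e^{M\var_0(\phi)+M\log N}$ in ratio, and a remaining block controlled by $V(\phi)$. To make this work one also needs invariance of a log-Hölder constraint $g(x)\le e^{a\,d_\alpha(x,y)}g(y)$ for $x,y$ in the same cylinder. I would use the cone
\[
\calC_a=\{g>0:\ g(x)\le e^{a\,d_\alpha(x,y)}g(y)\},
\]
which satisfies $\tilde\calL\calC_a\subset\calC_{\alpha a+|\phi|_\alpha\alpha}\subset\calC_{a}$ for large $a$, intersected with $\calP_\delta$.

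Birkhoff--Hopf then gives contraction by $\kappa<1$ in the Hilbert metric $\Theta$. Hence $\tilde\calL^{kn_0}1$ is $\Theta$-Cauchy. Normalizing by $\nu$ (note $\nu(\tilde\calL g)=\nu(g)$), the limit $h>0$ satisfies $\tilde\calL h=h$ and $\nu(h)=1$. For general $g\in\calH_\alpha$, write $g+c\in\calC_a$ with $c\lesssim\|g\|_\alpha$. The standard comparison between $\Theta$ and the sup and Hölder norms inside the cone converts $\Theta$-contraction into (d), with $\gamma=\kappa^{1/n_0}$.

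\textbf{Step 4: uniqueness and spectrum.} Uniqueness in (a) and (b) follows from (d): any positive eigenfunction, or any eigenmeasure $\nu'$, satisfies $\tilde\calL^n g\to\nu(g)h$. In particular $\nu'(g)=\nu(g)\nu'(h)$ for all $g$, forcing $\nu'=\nu$. If $\calL_\phi h'=\lambda' h'$ with $h'>0$ and $\lambda'\neq\lambda$, pairing with $\nu$ gives $\nu(h')=0$, which is impossible.

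For (c), (d) says $\tilde\calL=\Pi+R$, where $\Pi g=\nu(g)h$ is a rank-one projection, $\Pi R=R\Pi=0$, and $\|R^n\|\le C\gamma^n$. Therefore the spectrum of $\tilde\calL$ is $\{1\}\cup\operatorname{spec}(R)$, with $\operatorname{spec}(R)$ contained in $\{|z|\le\gamma\}$. Thus $\lambda$ is simple and the rest of the spectrum of $\calL_\phi$ lies in $\{|z|\le \gamma\lambda\}$.

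\textbf{Main obstacle.} I expect the difficult step to be Step 3, and specifically verifying that the combined cone is mapped strictly inside itself with finite projective diameter. This requires tracking how the Hölder constraint and the oscillation constraint interact over the first $M$ steps. If the direct estimate fails, I would fall back on a Lasota--Yorke inequality plus Ionescu-Tulcea--Marinescu to get quasi-compactness. Peripheral spectrum would then be excluded using mixing and the positivity of $h$.
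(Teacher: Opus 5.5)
Your outline is sound, but it takes a different route from the paper at nearly every structural point.

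The paper proceeds as follows:
\begin{itemize}
\item It obtains $h$ by Schauder--Tychonoff on a subset of the bare oscillation cone $\calP_\delta$, then bootstraps its H\"{o}lder regularity.
\item It proves simplicity, and the absence of other eigenvalues of modulus $\lambda$, by maximum-principle arguments: the extremum of $g/h$, and a comparison of $|\calL_\phi g|$ with $\calL_\phi|g|$.
\item It runs the Hilbert-metric contraction on $\calP_\delta$ alone, applied to $g^\pm$, to get sup-norm convergence.
\item It gets (c) from a Lasota--Yorke inequality for $\lambda^{-1}h^{-1}\calL_\phi(h\,\cdot)$ plus Ionescu-Tulcea--Marinescu, and then sets $\gamma=\alpha$.
\end{itemize}
You instead build $h$ as the projective limit of $\tilde{\calL}^{kn_0}\mathbf{1}$. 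You identify $\lambda=e^{P(\phi)}$ through Gibbs bounds for $\nu$ rather than through $\|\calL_\phi^n\mathbf{1}\|_\infty\asymp Z_n$. You then read all uniqueness and spectral statements off (d) via $\tilde{\calL}=\Pi+R$.

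Two features of your route matter.

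\textbf{The log-H\"{o}lder constraint is indispensable.} Do not lean on Lemma~\ref{lem:cone_invariance} as stated: no fixed iterate maps $\calP_\delta$ into a narrower $\calP_{\delta'}$. For example, on the full $2$-shift with $\phi=0$, the function $g(z)=f(z_{n_0})$ gives $\tilde{\calL}^{n_0}g(x)=f(x_0)$, which has the same oscillation. For $g\in\calC_a$, by contrast, mixing bounds the oscillation of $\tilde{\calL}^{M}g$ by $N^Me^{2M\|\phi\|_\infty+a\alpha}$ uniformly in $g$. Together with $a\mapsto\alpha a+\alpha|\phi|_\alpha$, this yields the finite projective diameter you flagged. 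It also means $h$ inherits H\"{o}lder regularity directly from the closed cone.

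\textbf{Your bound controls more of the spectrum.} Ionescu-Tulcea--Marinescu controls only the essential spectrum, not isolated eigenvalues in $\alpha\lambda<|z|<\lambda$. In the Ising example of Section~\ref{subsec:ising_example}, $\tanh\beta$ is an eigenvalue of $\lambda^{-1}\calL_\phi$ with eigenfunction $x\mapsto x_0$, and it exceeds $\alpha$ whenever $\alpha<\tanh\beta$. Your estimate $\|R^n\|\le C\gamma^n$ with $\gamma=\kappa^{1/n_0}$ bounds the entire non-peripheral spectrum.

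What the paper's route adds is the bound $r_{\mathrm{ess}}(\calL_\phi)\le\alpha\lambda$, which the cone argument does not give.

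Your restriction to $\phi\in\calH_\alpha$ is also necessary. For general $\phi\in\calF_A$, $\calL_\phi\mathbf{1}$ need not be $\alpha$-H\"{o}lder, so $\calL_\phi$ need not act on $\calH_\alpha$ at all.
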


The Ruelle-Perron-Frobenius theorem provides the spectral foundation for the entire paper: the eigendata $(\lambda, h, \nu)$ determine the unique Gibbs measure $\mu_\phi = h\nu$, and the exponential convergence rate $\gamma$ governs all statistical properties. The proofs of parts (a)--(d) are given in Section~\ref{sec:RPF}: existence of the eigendata is restated as Theorem~\ref{thm:eigendata_existence}, uniqueness as Theorem~\ref{thm:eigendata_uniqueness}, and exponential convergence as Theorem~\ref{thm:exponential_convergence}. The spectral gap bound (part of the convergence rate $\gamma$) is proved in Section~\ref{sec:spectral_gap} and restated as Theorem~\ref{thm:spectral_gap}.

\begin{remark}[Explicit Constants]\label{rem:explicit_constants}
The constants $C$ and $\gamma$ in Theorem~\ref{thm:RPF_main}(d) can be computed explicitly from the data $(\alpha, \|\phi\|_\alpha, N, M)$. Specifically, define:
\begin{itemize}
\item[] $B_m = \exp\left(\sum_{k=m+1}^\infty 2\var_k\phi\right)$, $B_0 = \exp\left(\frac{2b\alpha}{1-\alpha}\right)$ where $\var_k\phi \leq b\alpha^k$;
\item[] $K = \lambda^M e^{M\|\phi\|_\infty} B_0$ (the cone diameter bound);
\item[] $\eta \leq \frac{u_2(1-\alpha)}{4\alpha u_1 \|h\|_\infty K}$ (the cone contraction coefficient, cf.\  \cite[Lemma~1.9]{Bowen1975});
\item[] $\gamma = \max\!\left(\alpha^{1/3},\, (1-\eta)^{1/(3M)}\right)$ (the spectral gap rate);
\item[] $C = (1-\eta)^{-1}(\|h\|_\infty + K)\sup_{0 \leq r < M}\|\lambda^{-r}\calL_\phi^r\|$ (the convergence constant).
\end{itemize}
For the numerical example in Section~\ref{sec:numerical} ($N=2$, $M=1$, $\alpha=1/2$, $\phi$ depending only on $x_0$), these simplify to $\gamma = 1/2$ and $C = O(1)$.
\end{remark}

\subsection{Perturbation Theory and Statistical Properties}

The dependence of the spectral data on the potential is described by the following result.

\begin{theorem}[Analytic Dependence on Potential]\label{thm:analytic_dependence}
Let $(\SigmaA, \sigma)$ be a topologically mixing subshift of finite type. The maps
\begin{align}
\phi &\mapsto P(\phi) \label{eq:pressure_map}\\
\phi &\mapsto h_\phi \label{eq:eigenfunction_map}\\
\phi &\mapsto \nu_\phi \label{eq:eigenmeasure_map}\\
\phi &\mapsto \mu_\phi \label{eq:gibbs_map}
\end{align}
are real-analytic on the Banach space $\calH_\alpha(\SigmaA^+)$, where $h_\phi$ denotes the normalized eigenfunction, $\nu_\phi$ denotes the eigenmeasure, and $\mu_\phi = h_\phi \nu_\phi$ denotes the Gibbs measure.

Moreover, the derivatives are given by:
\begin{align}
\frac{d}{dt}\bigg|_{t=0} P(\phi + t\psi) &= \int_{\SigmaA} \psi \, d\mu_\phi, \label{eq:pressure_derivative}\\
\frac{d^2}{dt^2}\bigg|_{t=0} P(\phi + t\psi) &= \lim_{n \to \infty} \frac{1}{n} \operatorname{Var}_{\mu_\phi}(S_n \psi), \label{eq:pressure_second_derivative}
\end{align}
where $\operatorname{Var}_{\mu_\phi}$ denotes the variance with respect to $\mu_\phi$.
\end{theorem}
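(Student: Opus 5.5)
The plan is to derive analyticity from the simple isolated dominant eigenvalue supplied by Theorem~\ref{thm:RPF_main}(c), via analytic perturbation theory for isolated eigenvalues (Kato). The starting point is that $\phi \mapsto \calL_\phi$ is real-analytic from $\calH_\alpha(\SigmaA^+)$ into the bounded operators on $\calH_\alpha(\SigmaA^+)$. Indeed, $\calL_{\phi+\psi} g = \calL_\phi(e^{\psi} g)$, and the map $\psi \mapsto e^{\psi} = \sum_k \psi^k/k!$ converges in $\calH_\alpha$. This uses that $\calH_\alpha$ is a Banach algebra: with the norm $\|\cdot\|_\alpha$ one has $\|fg\|_\alpha \leq \|f\|_\alpha \|g\|_\alpha$, and the map $g \mapsto \psi g$ has operator norm at most $\|\psi\|_\alpha$.

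Next, fix $\phi_0$ and let $\Gamma$ be a small circle around $\lambda_0 = e^{P(\phi_0)}$. By Theorem~\ref{thm:RPF_main}(c), $\Gamma$ separates $\lambda_0$ from the rest of the spectrum, which lies in the disk of radius $\gamma\lambda_0$. The resolvent $(z-\calL_\phi)^{-1}$ is jointly analytic for $z \in \Gamma$ and $\phi$ near $\phi_0$, by a Neumann series. Hence the Riesz projection
\[
\Pi_\phi = \frac{1}{2\pi i}\oint_\Gamma (z-\calL_\phi)^{-1}\,dz
\]
is analytic and has rank one, since rank is continuous for projections. For $\phi$ near $\phi_0$ I would then set $\lambda(\phi) = \nu_{\phi_0}(\calL_\phi \Pi_\phi h_{\phi_0})/\nu_{\phi_0}(\Pi_\phi h_{\phi_0})$. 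For real $\phi$ this equals $e^{P(\phi)}$: positivity of $\Pi_\phi h_{\phi_0}$ for $\phi$ near $\phi_0$ and the uniqueness statement in Theorem~\ref{thm:RPF_main}(a) identify it with the RPF eigenvalue. Taking the logarithm gives analyticity of $P$. The normalized eigenfunction $h_\phi = \Pi_\phi h_{\phi_0}/\nu_\phi(\Pi_\phi h_{\phi_0})$ and the eigenmeasure $\nu_\phi = \Pi_\phi^*\nu_{\phi_0}/\langle \Pi_\phi^*\nu_{\phi_0},1\rangle$ are analytic as elements of $\calH_\alpha$ and $\calH_\alpha^*$ respectively. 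Consequently $\mu_\phi(g) = \nu_\phi(h_\phi g)$ is analytic as an element of $\calH_\alpha^*$, which is the sense I would take for \eqref{eq:gibbs_map}. The main obstacle is the normalization bookkeeping: one must check that $\nu_\phi(h_\phi)=1$ is preserved and that the denominators stay away from zero uniformly on a neighbourhood.

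For the derivative formulas, write $\phi_t = \phi+t\psi$. Differentiate the identity $\nu_{\phi}(\calL_{\phi_t} h_{\phi_t}) = \lambda_t\, \nu_\phi(h_{\phi_t})$ at $t=0$. Because $\calL_\phi^*\nu_\phi = \lambda\nu_\phi$, the terms containing $\dot h$ cancel, leaving $\lambda'(0) = \nu_\phi(\calL_\phi(\psi h_\phi)) = \lambda\,\mu_\phi(\psi)$. Dividing by $\lambda$ gives \eqref{eq:pressure_derivative}.

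For the second derivative, I would reduce to the normalized operator $\tilde\calL g = \lambda^{-1} h^{-1}\calL_\phi(hg)$, which fixes $1$ and has $\mu_\phi$ as its fixed dual measure. Also replace $\psi$ by $\psi - \mu_\phi(\psi)$, which shifts $P$ only linearly in $t$. Set $\Lambda(t) = P(\phi_t)-P(\phi)$, $F_n(t) = \mu_\phi(e^{tS_n\psi})$, and let $\tilde\calL_t = \tilde\calL(e^{t\psi}\,\cdot)$ be the normalized twisted operator, so that $F_n(t) = \mu_\phi(\tilde\calL_t^n 1)$. The spectral decomposition gives $\tilde\calL_t^n 1 = e^{n\Lambda(t)}c(t) + O(\rho^n)$ with $c$ analytic, $c(0)=1$, and $\rho < e^{\Lambda(t)}$ uniformly for small complex $t$. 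Therefore $\frac1n \log F_n(t) = \Lambda(t) + O(1/n)$ uniformly on a small complex disc. By Cauchy estimates the second derivatives converge as well, and $\frac{d^2}{dt^2}\big|_{0}\log F_n = \Var_{\mu_\phi}(S_n\psi)$, which yields \eqref{eq:pressure_second_derivative}. Justifying this uniformity in complex $t$ is where the spectral gap of Theorem~\ref{thm:RPF_main}(c),(d) is used essentially.
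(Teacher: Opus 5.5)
Your proof is correct. For the first derivative it is the paper's argument: differentiate the eigen-equation and pair with $\nu_\phi$ so that the $\dot h$ terms cancel. You also keep the factor $\lambda$ correctly, obtaining $\lambda'(0)=\lambda\,\mu_\phi(\psi)$; the paper's write-up drops this factor.

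The routes differ in two places.

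\textbf{Analyticity.} The paper only invokes Kato's theorem along lines $t\mapsto\calL_{\phi+t\psi}$. It asserts $P(\phi+t\psi)=\log\lambda(t)$ without justification and never treats $\phi\mapsto\mu_\phi$. Your Riesz-projection construction with the full Banach parameter does more:
\begin{itemize}
\item it gives Fr\'echet analyticity on $\calH_\alpha$ with explicit formulas;
\item it actually identifies the perturbed eigenvalue with $e^{P(\phi)}$;
\item it fixes the correct sense for $\nu_\phi$ and $\mu_\phi$, namely analytic $\calH_\alpha^*$-valued maps with the normalization $\nu_\phi(h_\phi)=1$.
\end{itemize}
That sense is forced. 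Analyticity in total variation is impossible, because Gibbs measures of potentials not cohomologous modulo constants are distinct ergodic measures and hence mutually singular. Likewise, the $\min_x h(x)=1$ normalization of the paper's existence theorem is not analytic in $\phi$.

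\textbf{Second derivative.} The paper differentiates the eigen-equation twice and writes $h'(0)$ through the reduced resolvent $(\lambda_0-\calL_\phi)^{-1}Q$. Expanding this in a Neumann series gives the Green--Kubo series $\Var_{\mu_\phi}(\psi)+2\sum_{k\ge1}\Cov_{\mu_\phi}(\psi,\psi\circ\sigma^k)$. Only then does it pass to $\lim_n\frac1n\Var_{\mu_\phi}(S_n\psi)$, using exponential decay of correlations and a Ces\`aro argument.

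Your cumulant route goes differently: $F_n(t)=\mu_\phi(\tilde{\calL}_t^n\mathbf{1})$, a uniform complex spectral gap, $\frac1n\log F_n=\Lambda+O(1/n)$ on a disc, and Cauchy estimates. What this buys:
\begin{itemize}
\item it lands directly on the limit form in the statement and proves that the limit exists;
\item it needs neither $h'(0)$ nor any correlation bound;
\item it is exactly the estimate later reused for the CLT, Berry--Esseen and G\"artner--Ellis arguments.
\end{itemize}
The paper's route, in exchange, produces the explicit covariance series, which is what is used to compute $\xi^2$ in the Bernoulli and Ising examples.

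\textbf{Small points to tighten.}
\begin{itemize}
\item For real $\phi$, take $\Gamma$ symmetric under conjugation so that $\Pi_\phi$ preserves real-valued functions; otherwise ``positivity of $\Pi_\phi h_{\phi_0}$'' has no meaning. Alternatively, note that $|P(\phi)-P(\phi_0)|\le\|\phi-\phi_0\|_\infty$ places $e^{P(\phi)}$ inside $\Gamma$, where $\lambda(\phi)$ is the only spectral point.
\item The uniformity condition should read $\rho<|e^{\Lambda(t)}|$, not $\rho<e^{\Lambda(t)}$.
\item $c(t)=\tilde{\Pi}_t\mathbf{1}$ is a function, not a scalar, so the scalar prefactor in $F_n(t)$ is $\mu_\phi(c(t))$.
\end{itemize}
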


The analytic dependence of the pressure on the potential underlies the perturbation theory and the CLT: the first derivative identifies the equilibrium state, and the second derivative gives the asymptotic variance of Birkhoff sums. The proof is given in Section~\ref{sec:perturbation} using Kato's analytic perturbation theory \cite{Kato1980} applied to the family $\calL_{\phi+t\psi}$; the analyticity statement is restated as Theorem~\ref{thm:pressure_analyticity}, and the explicit derivative formulas as Corollary~\ref{cor:pressure_derivatives}.

The second derivative formula \eqref{eq:pressure_second_derivative} shows that the pressure is strictly convex in directions corresponding to observables that are not cohomologous to constants.

\begin{theorem}[Lipschitz Stability]\label{thm:lipschitz_stability}
Let $W_1$ denote the Wasserstein-$1$ metric on the space of probability measures on $\SigmaA$. For potentials $\phi, \psi \in \calH_\alpha(\SigmaA^+)$, the corresponding Gibbs measures satisfy
\begin{equation}\label{eq:wasserstein_bound}
W_1(\mu_\phi, \mu_\psi) \leq C \|\phi - \psi\|_\infty,
\end{equation}
where $C$ depends on the spectral gap for $\phi$ and $\psi$.
\end{theorem}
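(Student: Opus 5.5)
The plan is to bound the Wasserstein distance through Kantorovich--Rubinstein duality,
\[
W_1(\mu_\phi,\mu_\psi)=\sup\Bigl\{\Bigl|\int f\,d\mu_\phi-\int f\,d\mu_\psi\Bigr| : |f|_{1}\le 1\Bigr\},
\]
where the supremum runs over functions $f$ that are $1$-Lipschitz for $d_\alpha$, i.e.\ $|f|_\alpha\le 1$. We may subtract a constant and assume $\|f\|_\infty\le 1$.

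We then join the two potentials by the segment $\phi_t=\psi+t\Delta$, with $\Delta=\phi-\psi$ and $t\in[0,1]$. Every $\phi_t$ lies in $\calH_\alpha(\SigmaA^+)$, so Theorem~\ref{thm:analytic_dependence} makes $t\mapsto\mu_{\phi_t}(f)$ real-analytic. The estimate then reduces to a uniform bound
\[
\Bigl|\frac{d}{dt}\int f\,d\mu_{\phi_t}\Bigr|\le C\|\Delta\|_\infty ,
\]
integrated over $t\in[0,1]$.

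To compute the derivative, I would write $\mu_t(f)=\lim_n \tilde\calL_t^n f$. Here $\tilde\calL_t$ is the normalized operator with potential $g_t=\phi_t+\log h_t-\log h_t\circ\sigma-P(\phi_t)$, so that $\tilde\calL_t 1=1$ and $\tilde\calL_t^*\mu_t=\mu_t$. For two-sided $f$, one first reduces to one-sided functions via $f_k(x)=f(\sigma^k x)$ composed with a section, at cost $O(\alpha^k)$. Differentiating $\tilde\calL_t^n f$ termwise and letting $n\to\infty$ gives the linear response formula
\[
\frac{d}{dt}\mu_t(f)=\sum_{n\ge 0}\int \dot g_t\cdot\bigl(\tilde\calL_t^{n} f-\mu_t(f)\bigr)\circ\sigma\,d\mu_t\ \ (\text{up to reindexing}),
\qquad \dot g_t=\Delta+\partial_t\log h_t-\partial_t\log h_t\circ\sigma-\textstyle\int\Delta\,d\mu_t .
\]
The advantage of this form is that the observable carrying the Hölder regularity is $f$, which is pushed forward by $\tilde\calL_t^n$. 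The perturbation $\dot g_t$ only needs to be integrable. By Theorem~\ref{thm:RPF_main}(d), applied uniformly along the compact segment (the constants $C,\gamma$ depend continuously on $t$ by analyticity), we get $\|\tilde\calL_t^n f-\mu_t(f)\|_\infty\le C\gamma^n\|f\|_\alpha$. The $\Delta$-part of the series is therefore bounded by $\frac{C}{1-\gamma}\|\Delta\|_\infty\|f\|_\alpha$, which is the desired form.

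The main obstacle is the coboundary part $\partial_t\log h_t-\partial_t\log h_t\circ\sigma$. Perturbation theory controls $\partial_t h_t$ only in terms of $\|\Delta\|_\alpha$, not $\|\Delta\|_\infty$. My first attempt is to show that this part telescopes. Since $\mu_t$ is $\sigma$-invariant and $\tilde\calL_t$ is the $\mu_t$-dual of composition with $\sigma$, the sum over $n$ of the pairings of $u-u\circ\sigma$ with $(\tilde\calL_t^n f-\mu_t f)\circ\sigma$ collapses to a single boundary term. This uses the convention $u=\partial_t\log h_t$. After collapsing, the term should cancel against $\partial_t$ of the normalization of $h_t$, which can be fixed by requiring $\nu_t(h_t)=1$. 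The effect is that $\partial_t h_t$ drops out entirely, and the derivative becomes the Green--Kubo-type sum $\sum_{n\ge0}\Cov_{\mu_t}(\Delta,\tilde\calL_t^n f)$, in which $\Delta$ appears only through $\|\Delta\|_1\le\|\Delta\|_\infty$.

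If the cancellation fails to be exact, the fallback is to bound $\|\partial_t h_t\|_\infty$ directly. From the eigen-equation, $\partial_t h_t=(\lambda_t-\calL_t)^{-1}\bigl(\calL_t(\Delta h_t)-\dot\lambda_t h_t\bigr)$ on the complement of the eigendirection. Here the resolvent is applied to a function whose sup norm is $O(\|\Delta\|_\infty)$. The reduced resolvent is then estimated in the sup norm via the cone contraction of Lemma~\ref{lem:cone_invariance}, which acts on the oscillation ratio and hence only sees sup-norm data. Either way, the constant $C$ obtained is of order $(1-\gamma)^{-1}\sup_t\|h_t\|_\infty\|h_t^{-1}\|_\infty$, consistent with the statement that $C$ depends on the spectral gap.
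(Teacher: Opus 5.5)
\textbf{There is a genuine gap: the coboundary cancellation you propose does not happen, the sup-norm fallback also fails, and the inequality in $\|\phi-\psi\|_\infty$ is actually false.}

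Your framework is sound as far as it goes: Kantorovich--Rubinstein duality, the segment $\phi_t$, and the linear-response formula $\frac{d}{dt}\mu_t(f)=\sum_{n\ge0}\int \dot g_t\,\tilde{\calL}_t^nf\,d\mu_t$. The $\Delta$-part is indeed bounded by $C\|\Delta\|_\infty\|f\|_\alpha$.

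\textbf{The cancellation.} Set $u=\partial_t\log h_t$ and use the duality $\int (G\circ\sigma)F\,d\mu_t=\int G\,\tilde{\calL}_tF\,d\mu_t$. The coboundary contribution then telescopes to $\Cov_{\mu_t}(u,f)$, and nothing cancels it. Differentiating $\calL_{\phi_t}h_t=\lambda_th_t$ gives $u-\tilde{\calL}_tu=\tilde{\calL}_t\Delta-\mu_t(\Delta)$. Hence $u=\sum_{n\ge1}\bigl(\tilde{\calL}_t^n\Delta-\mu_t(\Delta)\bigr)+c$. The normalization $\nu_t(h_t)=1$ only fixes the constant $c$, which a covariance cannot see.

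So the coboundary part equals $\sum_{n\ge1}\Cov_{\mu_t}(\Delta,f\circ\sigma^n)$. This is exactly the backward half of the two-sided sum $\sum_{n\in\Z}\Cov_{\mu_t}(f,\Delta\circ\sigma^n)$ that the paper uses. In that half the transfer operator acts on $\Delta$, so its decay costs $\|\Delta\|_\alpha$.

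A simple check: take $\Delta=\epsilon_1$ on the full $2$-shift, where $\epsilon_j=\pm1$ according as $x_j=1$ or $2$. This $\Delta$ is cohomologous to $\epsilon_0$, so $\mu_{t\Delta}$ is Bernoulli with $\mu_{t\Delta}([1])=e^t/(e^t+e^{-t})$, whose derivative at $t=0$ is $\tfrac12$. Your forward-only formula gives $0$.

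\textbf{The fallback.} It fails for the same reason. $\tilde{\calL}_t$ has no spectral gap on $C(\SigmaA^+)$, and the ratio cones $\calP_\delta$ are not contracted. For $\phi=0$ on the full shift, $g=e^{\delta\mathbf{1}[x_m=1]}$ with $m\ge n_0$ satisfies $\tilde{\calL}^{n_0}g=e^{\delta\mathbf{1}[x_{m-n_0}=1]}$, which has the same oscillation. So Lemma~\ref{lem:cone_invariance} cannot be used in this form, and no sup-norm bound on the reduced resolvent is available. A genuine Birkhoff contraction needs log-H\"older cones, and those bring $\|\Delta\|_\alpha$ back.

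\textbf{The statement itself is false.} On the full $2$-shift take $\phi=0$, $f=\mathbf{1}_{[1]}$ (which is $1$-Lipschitz for $d_\alpha$), and $\Delta=\mathrm{sign}(\epsilon_1+\cdots+\epsilon_K)$ with $K$ odd. The forward sum vanishes by independence. The backward sum gives
\[
\tfrac{d}{dt}\mu_{t\Delta}([1])\big|_{t=0}=\tfrac12\E|\epsilon_1+\cdots+\epsilon_K|\sim\sqrt{K/(2\pi)}.
\]
For fixed $K$, $t\Delta\to0$ in $\calH_\alpha$ as $t\downarrow0$. So every spectral-gap constant and H\"older norm attached to $\psi=t\Delta$ stays close to that of $\phi$. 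Yet $\liminf_{t\downarrow0}W_1(\mu_0,\mu_{t\Delta})/\|t\Delta\|_\infty\ge\tfrac12\E|\epsilon_1+\cdots+\epsilon_K|$, which is unbounded in $K$.

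\textbf{The paper's proof.} It has the corresponding defect. It bounds the two-sided sum by decay of correlations, which gives only $C\|\phi-\psi\|_\alpha$. It then passes to $\|\phi-\psi\|_\infty$ via $\|\psi-\phi\|_\alpha\ge\|\psi-\phi\|_\infty$, which runs the inequality the wrong way.

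\textbf{What your setup does prove.} Drop the cancellation and bound both halves of $\sum_{n\in\Z}\Cov_{\mu_t}(f,\Delta\circ\sigma^n)$ using Theorem~\ref{thm:correlation_main}, uniformly along the segment. This gives $W_1(\mu_\phi,\mu_\psi)\le C\|\phi-\psi\|_\alpha$.
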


Lipschitz stability quantifies the continuous dependence of the Gibbs measure on the potential: small perturbations in the potential produce small changes in the equilibrium state. The proof is given in Section~\ref{sec:perturbation} and integrates the correlation decay along a path of potentials; the quantitative version with explicit constants is restated as Theorem~\ref{thm:lipschitz_stability_full}.

The statistical behavior of Birkhoff sums under Gibbs measures is characterized by the following results.

\begin{theorem}[Exponential Decay of Correlations]\label{thm:correlation_main}
Let $\mu = \mu_\phi$ be the Gibbs measure for $\phi \in \calH_\alpha(\SigmaA^+)$. For any $f, g \in \calH_\alpha(\SigmaA)$, the correlation function
\begin{equation}
C_n(f, g) = \int_{\SigmaA} f \cdot (g \circ \sigma^n) \, d\mu - \int_{\SigmaA} f \, d\mu \int_{\SigmaA} g \, d\mu
\end{equation}
satisfies
\begin{equation}\label{eq:correlation_bound}
|C_n(f, g)| \leq C \|f\|_\alpha \|g\|_\alpha \gamma^n
\end{equation}
for some $C > 0$ and $\gamma \in (0, 1)$ depending only on $\phi$ and $\alpha$.
\end{theorem}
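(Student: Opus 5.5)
The plan is to reduce the decay of correlations to the exponential convergence in Theorem~\ref{thm:RPF_main}(d), using the normalized transfer operator and duality. Two reductions come first.

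\emph{Normalization.} I replace $\phi$ by the cohomologous normalized potential
\[
\tilde\phi=\phi+\log h-\log h\circ\sigma-P(\phi).
\]
Its transfer operator is $\tilde\calL g=\lambda^{-1}h^{-1}\calL_\phi(hg)$. It satisfies $\tilde\calL 1=1$ and $\tilde\calL^*\mu=\mu$ with $\mu=h\nu$, and we have the duality $\int f\cdot(g\circ\sigma^n)\,d\mu=\int (\tilde\calL^n f)\,g\,d\mu$. Theorem~\ref{thm:RPF_main}(d) transfers to $\tilde\calL$ in the form
\[
\|\tilde\calL^n f-\mu(f)\|_\alpha\le C'\gamma^n\|f\|_\alpha ,
\]
with $C'$ depending on $\|h\|_\alpha$ and $\|h^{-1}\|_\alpha$. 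This is finite because $h>0$ is H\"older on a compact space.

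\emph{One-sided case.} For $f,g$ depending only on future coordinates, the duality gives
\[
|C_n(f,g)|=\Bigl|\int(\tilde\calL^n f-\mu(f))\,g\,d\mu\Bigr|\le \|\tilde\calL^nf-\mu(f)\|_\infty\|g\|_\infty\le C'\gamma^n\|f\|_\alpha\|g\|_\alpha .
\]

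\emph{Two-sided case.} This is the main obstacle: $f,g\in\calH_\alpha(\SigmaA)$ may depend on past coordinates, while the spectral theory lives on $\SigmaA^+$. I split $n=2k$ (or $2k+1$) and approximate. Let $f_k=\E_\mu(f\mid\text{coordinates }[-k,\infty))$, or more simply $f$ evaluated at a fixed choice of past symbols. Then $\|f-f_k\|_\infty\le |f|_\alpha\alpha^k$. The function $f_k\circ\sigma^{-k}$ depends only on future coordinates, its H\"older norm is bounded by $\alpha^{-k}\|f\|_\alpha$, and its variations are still controlled. Treating $g$ similarly, invariance of $\mu$ gives
\[
C_n(f_k,g_k)=C_{n}(f_k\circ\sigma^{-k},\,g_k\circ\sigma^{-k}),
\]
which reduces to the one-sided estimate. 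The catch is that the factor $\alpha^{-k}$ in the norm must be beaten. To handle it, I would use the $\var$-based (Lipschitz-like) estimate of the one-sided bound that depends on $\var_j$ for $j\ge k$, together with the fact that the pushed function has $\var_{j}$ unchanged in scale after $k$ shifts. Alternatively, I would use the standard trick of applying $\tilde\calL^{k}$ first: it smooths variations by a factor of $\alpha^{k}$, which cancels the $\alpha^{-k}$ loss and leaves a rate $\gamma^{n/2}$.

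\emph{Collecting terms.} The approximation errors contribute $2\alpha^k\|f\|_\alpha\|g\|_\infty$ and similar terms, and the main term is $O(\gamma^{k})$. Altogether
\[
|C_n(f,g)|\le C\|f\|_\alpha\|g\|_\alpha\,\max(\alpha,\gamma)^{n/2}.
\]
This proves the statement after renaming $\gamma$. All constants trace back to $C$, $\gamma$ and $h$ from Remark~\ref{rem:explicit_constants}.
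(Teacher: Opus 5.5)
Your proposal is correct. Its one-sided core is the paper's proof. The paper writes $\int f\,(g\circ\sigma^n)\,d\mu_\phi=\lambda^{-n}\int g\,\calL_\phi^n(fh)\,d\nu$ and inserts Theorem~\ref{thm:RPF_main}(d) for $fh$. That is your duality step with the unnormalized operator: your $\tilde\calL$ only absorbs $h$ and $\lambda$ so that $\tilde\calL^*\mu=\mu$. You also correctly observe that $g$ enters only through $\|g\|_\infty$.

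The genuine difference is the two-sided case. The paper's computation applies $\calL_\phi$ to $fh$ and integrates against $\nu$ on $\SigmaA^+$. It therefore tacitly treats $f,g$ as functions of nonnegative coordinates and never handles observables in $\calH_\alpha(\SigmaA)$ that depend on the past. You instead approximate $f,g$ by functions of the coordinates in $[-k,\infty)$ and shift them.

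Of your two fixes for the $\alpha^{-k}$ loss, the second is the one that works. First, a sign correction: with $(\sigma x)_n=x_{n+1}$, it is $F=f_k\circ\sigma^{k}$, not $f_k\circ\sigma^{-k}$, that moves the coordinates $[-k,\infty)$ to $[0,\infty)$. With that $F$:
\begin{itemize}
\item One has $\var_{m+k}(F)\le|f|_\alpha\alpha^m$.
\item Bounded distortion for $\tilde\phi$ gives $\var_m(\tilde\calL^kF)\le\var_{m+k}(F)+C\alpha^m\|F\|_\infty$ for $m\ge1$.
\item Hence $\|\tilde\calL^kF\|_\alpha\le C\|f\|_\alpha$ uniformly in $k$, and so $\|\tilde\calL^nF-\mu(F)\|_\infty\le C'\gamma^{n-k}\|\tilde\calL^kF\|_\alpha$.
\item The shifted $g_k\circ\sigma^k$ enters only through its sup norm, so its blown-up H\"older norm is harmless.
\end{itemize}
Your first fix (the ``$\var$-based estimate'') is too vague to stand on its own.

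As for what each route buys: the paper's argument is two lines and keeps the rate $\gamma$ of Theorem~\ref{thm:RPF_main}(d), but it covers only one-sided observables. Yours proves the statement as formulated for $f,g\in\calH_\alpha(\SigmaA)$, at the cost of the rate $\max(\alpha,\gamma)^{1/2}$. That is permitted, since the statement leaves $\gamma$ unspecified.
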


Exponential mixing is the dynamical consequence of the spectral gap: the correlation between a function and its time-translate decays at the same geometric rate $\gamma$ that governs the convergence of transfer operator iterates. The proof is given in Section~\ref{sec:spectral_gap} as a duality consequence of the spectral gap (Theorem~\ref{thm:spectral_gap}); the statement is restated as Corollary~\ref{cor:exponential_mixing}.

\begin{theorem}[Central Limit Theorem with Berry-Esseen Bounds]\label{thm:CLT_main}
Let $\mu = \mu_\phi$ be the Gibbs measure for $\phi \in \calH_\alpha(\SigmaA^+)$, and let $\psi \in \calH_\alpha(\SigmaA)$ be an observable that is not cohomologous to a constant. Define
\begin{equation}
\xi^2 = \lim_{n \to \infty} \frac{1}{n} \operatorname{Var}_\mu(S_n \psi) > 0.
\end{equation}
Then the normalized Birkhoff sums converge in distribution to a standard normal:
\begin{equation}
\frac{S_n \psi - n \int \psi \, d\mu}{\xi \sqrt{n}} \xrightarrow{d} \mathcal{N}(0, 1).
\end{equation}
Moreover, the rate of convergence satisfies the Berry-Esseen bound:
\begin{equation}\label{eq:berry_esseen}
\sup_{t \in \R} \left| \mu\left( \frac{S_n \psi - n \int \psi \, d\mu}{\xi \sqrt{n}} \leq t \right) - \Phi(t) \right| \leq \frac{C}{\sqrt{n}},
\end{equation}
where $\Phi$ is the standard normal distribution function and $C$ depends on $\phi$, $\psi$, and $\alpha$.
\end{theorem}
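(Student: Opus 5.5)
The plan is the Nagaev--Guivarc'h spectral method, using the Ruelle--Perron--Frobenius theorem (Theorem~\ref{thm:RPF_main}) and analytic perturbation (Theorem~\ref{thm:analytic_dependence}).

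\textbf{Reduction to the one-sided shift.} First I replace $\psi \in \calH_\alpha(\SigmaA)$ by a cohomologous function $\tilde\psi \in \calH_{\sqrt\alpha}(\SigmaA^+)$, via the standard Sinai construction $\psi = \tilde\psi + u - u\circ\sigma$ with $u$ bounded. Then $S_n\psi - S_n\tilde\psi = u - u\circ\sigma^n$ is uniformly bounded. This shifts the distribution function by at most $O(n^{-1/2})$, since $\Phi$ is Lipschitz, so it suffices to treat $\tilde\psi$. I also replace $\phi$ by the normalized potential $\phi + \log h - \log h\circ\sigma - P(\phi)$, so that $\calL_\phi 1 = 1$ and $\calL_\phi^*\mu = \mu$. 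Finally I subtract $\int\psi\,d\mu$ and assume $\int\psi\,d\mu=0$.

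\textbf{Spectral identity and eigenvalue expansion.} Define the twisted operator $\calL_s g = \calL_\phi(e^{is\psi}g)$. Its characteristic function satisfies
\[
\E_\mu\bigl(e^{isS_n\psi}\bigr) = \int \calL_s^n 1\,d\mu .
\]
The map $s\mapsto \calL_s$ is analytic into bounded operators on $\calH_\alpha(\SigmaA^+)$. By the spectral gap in Theorem~\ref{thm:RPF_main}(c) and Kato perturbation, for $|s|\le s_0$ we can write
\[
\calL_s^n = \lambda(s)^n \Pi_s + R_s^n, \qquad \|R_s^n\| \le C\,\theta^n ,
\]
with $\theta<1$ and $\lambda(s)$, $\Pi_s$ analytic. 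Since $\lambda(s) = e^{P(\phi+is\psi)}$, formulas \eqref{eq:pressure_derivative}--\eqref{eq:pressure_second_derivative} give $\lambda'(0) = i\int\psi\,d\mu = 0$ and $\lambda''(0) = -\xi^2$. Hence
\[
\log\lambda(s) = -\tfrac{\xi^2 s^2}{2} + O(|s|^3).
\]
The positivity $\xi^2>0$ follows from the non-coboundary hypothesis. I would prove this by the standard argument: $\xi^2=0$ forces $\|S_n\psi\|_{L^2}$ to stay bounded, which yields an $L^2$ and then H\"older transfer function. Granting this, the CLT follows from L\'evy continuity applied to $\lambda(s/\sqrt n)^n\to e^{-\xi^2s^2/2}$ together with $\Pi_{s/\sqrt n}1\to 1$.

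\textbf{Berry--Esseen via Esseen's inequality.} I apply Esseen's smoothing inequality with $T = \varepsilon\sqrt n$, for $\varepsilon$ small:
\[
\sup_t |F_n(t) - \Phi(t)| \le \frac1\pi\int_{-T}^{T} \Big|\frac{\varphi_n(t) - e^{-t^2/2}}{t}\Big|\,dt + \frac{C}{T}.
\]
Here $\varphi_n(t) = \E_\mu e^{itS_n\psi/(\xi\sqrt n)}$. In the range $|t|\le T$ the frequency $s = t/(\xi\sqrt n)$ satisfies $|s|\le s_0$, so the spectral decomposition applies. I split the error into three pieces.
\begin{itemize}
\item[] The eigenvalue piece: the cubic remainder gives $|\lambda(s)^n - e^{-t^2/2}| \le C\frac{|t|^3}{\sqrt n}e^{-t^2/4}$, using $|\lambda(s)|\le e^{-\xi^2s^2/4}$ for small $s$.
\item[] The projection piece: analyticity of $\Pi_s$ with $\int\Pi_0 1\,d\mu=1$ gives $|\int\Pi_s1\,d\mu - 1|\le C|s| = C|t|/\sqrt n$.
\item[] The remainder piece: $C\theta^n$.
\end{itemize}
Dividing by $|t|$ and integrating, the eigenvalue and projection pieces each contribute $O(n^{-1/2})$.

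\textbf{Expected main obstacle.} The delicate term is the remainder, because dividing by $t$ creates a potential singularity near $t=0$. It is handled by noting that $\varphi_n(0) - 1 = 0$ identically, so the whole integrand is analytic and $O(|t|)$ near $0$. Bounding the derivative of $\calL_s^n$ uniformly gives $|R_s^n 1| \le C|s|\theta^n$ after subtracting the value at $s=0$. The key requirement is that $s_0$, $\theta$, and the third-derivative bound on $\log\lambda$ are controlled uniformly in terms of the spectral gap $\gamma$ of Theorem~\ref{thm:RPF_main}. This is where Kato's resolvent estimates must be tracked explicitly to make $C$ depend only on $\phi$, $\psi$, $\alpha$.
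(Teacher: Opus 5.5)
Your proposal is correct and follows essentially the same route as the paper: the Nagaev--Guivarc'h twisted operator, the expansion $\log\lambda(s) = -\xi^2 s^2/2 + O(|s|^3)$ obtained from the pressure derivatives, and Esseen's smoothing inequality with cutoff $T \asymp \sqrt{n}$. Your additional steps fill in points the paper's proof passes over rather than changing the argument: the Sinai reduction of the two-sided $\psi$ to a one-sided function, the normalization $\calL_\phi 1 = 1$, and using the vanishing at $s=0$ so that the projection and remainder terms survive division by $|t|$ (the bound needed there is on $\partial_s R_s^n$, not on $\partial_s \calL_s^n$).
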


The central limit theorem and Berry-Esseen bound establish that Birkhoff sums under the Gibbs measure satisfy the same distributional approximation as sums of independent random variables, with an explicit convergence rate. The proofs are given in Section~\ref{sec:statistical} using the Nagaev-Guivarc'h spectral perturbation method applied to $\calL_{\phi+is\psi}$; the CLT is restated as Theorem~\ref{thm:CLT}, and the Berry-Esseen bound as Theorem~\ref{thm:Berry_Esseen}.

\begin{theorem}[Large Deviations Principle]\label{thm:LDP_main}
Let $\mu = \mu_\phi$ be the Gibbs measure for $\phi \in \calH_\alpha(\SigmaA^+)$, and let $\psi \in \calH_\alpha(\SigmaA)$. For any interval $[a, b] \subset \R$,
\begin{equation}\label{eq:LDP_bounds}
\begin{split}
-\inf_{t \in (a,b)} I_\psi(t) & \leq \liminf_{n \to \infty} \frac{1}{n} \log \mu\left( a \leq \frac{S_n \psi}{n} \leq b \right) \\
& \leq \limsup_{n \to \infty} \frac{1}{n} \log \mu\left( a \leq \frac{S_n \psi}{n} \leq b \right) \\
& \leq -\inf_{t \in [a,b]} I_\psi(t),
\end{split}
\end{equation}
where the rate function is given by
\begin{equation}\label{eq:rate_function}
I_\psi(t) = \sup_{s \in \R} \left\{ st - P(\phi + s\psi) + P(\phi) \right\}.
\end{equation}
The rate function $I_\psi$ is convex, lower semi-continuous, and achieves its unique minimum value of zero at $t = \int \psi \, d\mu$.
\end{theorem}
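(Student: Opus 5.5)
The natural route to Theorem~\ref{thm:LDP_main} is the G\"{a}rtner-Ellis theorem (Appendix~\ref{app:probability}) applied to the random variables $S_n\psi/n$ under $\mu = \mu_\phi$. The whole argument then reduces to computing the limiting cumulant generating function
\[
\Lambda(s) = \lim_{n\to\infty}\frac1n \log \int e^{sS_n\psi}\,d\mu
\]
and showing that it equals $P(\phi+s\psi)-P(\phi)$ and is differentiable in $s$.

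\textbf{Step 1: reduce to the one-sided shift.} Replace $\psi$ by a cohomologous function $\psi^+$ depending only on future coordinates, of H\"older class with exponent $\alpha^{1/2}$ (Sinai's lemma). Then $S_n\psi = S_n\psi^+ + u - u\circ\sigma^n$ with $u$ bounded, so the two Birkhoff sums differ by at most $2\|u\|_\infty$. This changes neither the exponential asymptotics nor the pressure, $P(\phi+s\psi)=P(\phi+s\psi^+)$.

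\textbf{Step 2: identify $\Lambda$ with the pressure.} I see two ways to do this.

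\emph{Via the Gibbs bounds.} Using the Gibbs bounds \eqref{eq:gibbs_bounds} with $C_1,C_2$ from Main Theorem~\ref{thm:main_equivalence}(ii), write
\[
\int e^{sS_n\psi}\,d\mu \asymp \sum_{|w|=n} e^{sS_n\psi(x_w)+S_n\phi(x_w)-nP(\phi)}.
\]
This holds up to the factors $C_1^{\pm1}$ and $e^{|s|\sum_k\var_k\psi}$, the latter from bounded distortion. The sum over words is comparable to $e^{nP(\phi+s\psi)}$, by the definition of pressure through cylinder partition functions together with the Gibbs property for the potential $\phi+s\psi$.

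\emph{Via the transfer operator.} Alternatively, write
\[
\int e^{sS_n\psi}\,d\mu = \lambda^{-n}\int \calL_{\phi+s\psi}^n h\,d\nu .
\]
Then apply Theorem~\ref{thm:RPF_main}(d) to $\calL_{\phi+s\psi}$, which gives $\lambda_s^{n}$ asymptotics with $\lambda_s=e^{P(\phi+s\psi)}$.

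Either route gives $\Lambda(s)=P(\phi+s\psi)-P(\phi)$ for every real $s$.

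\textbf{Step 3: regularity of $\Lambda$ and the upper bound.} By Theorem~\ref{thm:analytic_dependence}, $\Lambda$ is real-analytic, and in particular finite and differentiable everywhere on $\R$. Its derivative is $\Lambda'(s)=\int\psi\,d\mu_{\phi+s\psi}$. Convexity of $\Lambda$ follows from H\"older's inequality or from \eqref{eq:pressure_second_derivative}. G\"artner-Ellis then yields the upper bound over the closed set $[a,b]$ immediately, with rate function equal to the Legendre transform $I_\psi$ in \eqref{eq:rate_function}.

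\textbf{Step 4: the lower bound (main obstacle).} For the lower bound over $(a,b)$, G\"artner-Ellis needs the exposed-point condition: every $t$ in the interior of the range of $\Lambda'$ must be exposed. Essential smoothness of $\Lambda$ supplies this, and differentiability on all of $\R$ is enough for it.

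The delicate part concerns points $t$ outside the interval $(\inf\Lambda',\sup\Lambda')$. At such points $I_\psi=+\infty$, or $I_\psi$ is attained only at the boundary. One must check that the infimum over $(a,b)$ is unaffected. This follows from lower semicontinuity together with convexity of $I_\psi$ restricted to its effective domain. If $\psi$ is cohomologous to a constant $c$, then $\Lambda$ is linear and $I_\psi$ is the indicator function of $\{c\}$. In that case the bound is verified directly, since $S_n\psi/n = c + O(1/n)$.

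\textbf{Step 5: properties of $I_\psi$.} Convexity and lower semicontinuity are automatic, since $I_\psi$ is a supremum of affine functions of $t$. Taking $s=0$ shows $I_\psi\ge0$. Moreover $I_\psi(t)=0$ holds if and only if $t\in\partial\Lambda(0)$. Because $\Lambda$ is differentiable, $\partial\Lambda(0)=\{\Lambda'(0)\}$, and \eqref{eq:pressure_derivative} gives $\Lambda'(0)=\int\psi\,d\mu$. Hence the zero of $I_\psi$ is unique.
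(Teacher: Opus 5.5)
Your proposal is correct and follows the same route as the paper: the G\"artner--Ellis theorem applied with $\Lambda(s)=P(\phi+s\psi)-P(\phi)$, differentiability of $\Lambda$ on all of $\R$ from the analyticity of the pressure, and the zero of $I_\psi$ located at $\Lambda'(0)=\int\psi\,d\mu$. You also supply details the paper's proof leaves implicit: the reduction of $\psi$ to a one-sided cohomologous function, and the identification of $\lim_n \frac1n\log\int e^{sS_n\psi}\,d\mu$ with $P(\phi+s\psi)-P(\phi)$; your subdifferential argument gives uniqueness of the zero without the paper's appeal to strict convexity.
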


The large deviations principle quantifies the exponential rate at which empirical averages deviate from their expected values, with the rate function expressed as a Legendre transform of the pressure. The proof is given in Section~\ref{sec:statistical} by applying the G\"{a}rtner-Ellis theorem to the cumulant generating function $\Lambda(s) = P(\phi+s\psi)-P(\phi)$, whose differentiability follows from the analyticity of the pressure established in Section~\ref{sec:perturbation}; the statement is restated as Theorem~\ref{thm:LDP}.


\section{Symbolic Dynamics and Subshifts of Finite Type}\label{sec:symbolic_dynamics}

This section establishes notation and technical lemmas for subshifts of finite type. The material is classical; see  \cite{LindMarcus1995} for the combinatorial aspects and  \cite{Bowen1975} or  \cite{ParryPollicott1990} for the dynamical theory.

\subsection{Subshifts of Finite Type and Topological Mixing}

The subshifts of finite type considered in this Part arise from imposing local constraints on sequences of symbols. These systems model hyperbolic dynamics through Markov partitions, as established by  \cite{AdlerWeiss1970},  \cite{Sinai1968}, and  \cite{Bowen1970,Bowen1975}.

Let $\calA = \{1, 2, \ldots, N\}$ be a finite alphabet with $N \geq 2$ symbols. The full shift space $\calA^{\Z}$ consists of all bi-infinite sequences $x = (x_n)_{n \in \Z}$ with $x_n \in \calA$, equipped with the product topology derived from the discrete topology on $\calA$. This topology is metrizable, compact, and totally disconnected. The left shift map $\sigma: \calA^{\Z} \to \calA^{\Z}$ defined by $(\sigma x)_n = x_{n+1}$ is a homeomorphism that commutes with the natural $\Z$-action on sequences by translation.

A transition matrix (or adjacency matrix) is an $N \times N$ matrix $A = (A_{ij})_{i,j \in \calA}$ with entries in $\{0, 1\}$. The entry $A_{ij}$ encodes whether the transition from symbol $i$ to symbol $j$ is allowed: $A_{ij} = 1$ means that the symbol $j$ may immediately follow the symbol $i$ in any allowed sequence, while $A_{ij} = 0$ means this transition is forbidden. The subshift of finite type (also called a topological Markov chain) determined by $A$ is the closed, $\sigma$-invariant subset
\begin{equation}
\SigmaA = \{x \in \calA^{\Z} : A_{x_n, x_{n+1}} = 1 \text{ for all } n \in \Z\}.
\end{equation}
The restriction of the shift map to $\SigmaA$ is again denoted by $\sigma: \SigmaA \to \SigmaA$.

The assumption that the matrix $A$ has at least one entry equal to $1$ in each row and each column ensures that $\SigmaA$ is non-empty and that $\sigma$ is surjective. We shall always assume this condition, which is equivalent to requiring that every symbol appears in at least one allowed bi-infinite sequence.

The matrix $A$ naturally defines a directed graph $G_A$ with vertex set $\calA$ and an edge from $i$ to $j$ if and only if $A_{ij} = 1$. An admissible word of length $n$ is a finite sequence $w = (w_0, w_1, \ldots, w_{n-1}) \in \calA^n$ such that $A_{w_k, w_{k+1}} = 1$ for all $0 \leq k < n-1$. Equivalently, $w$ corresponds to a directed path of length $n-1$ in the graph $G_A$. We denote by $\calW_n(A)$ the set of all admissible words of length $n$, and we note that $|\calW_n(A)| = \sum_{i,j} (A^{n-1})_{ij}$ counts the number of directed paths of length $n-1$ in $G_A$.

\begin{definition}[Cylinder Sets]
For an admissible word $w = (w_0, \ldots, w_{n-1}) \in \calW_n(A)$ and an integer $k \in \Z$, the cylinder set based at $w$ and position $k$ is
\begin{equation}
[w]_k = \{x \in \SigmaA : x_{k+j} = w_j \text{ for } 0 \leq j < n\}.
\end{equation}
When $k = 0$, we write simply $[w] = [w]_0$ and call this the cylinder set determined by $w$. The length of the cylinder is $n$, denoted $|w| = n$.
\end{definition}

Cylinder sets form a basis for the topology of $\SigmaA$: a set is open if and only if it is a union of cylinder sets. Each cylinder $[w]_k$ is both open and closed (clopen) in $\SigmaA$. For a fixed position $k$, the collection $\{[w]_k : w \in \calW_n(A)\}$ forms a partition of $\SigmaA$ into clopen sets.

\begin{lemma}[Cylinder Set Properties]\label{lem:cylinder_properties}
The cylinder sets satisfy the following properties:
\begin{enumerate}
\item[(a)] $\sigma([w]_k) = [w]_{k-1}$ for any cylinder $[w]_k$.
\item[(b)] $\sigma^{-1}([w]_k) = \bigcup_{i : A_{i, w_0} = 1} [iw]_k$, where $iw$ denotes the concatenation.
\item[(c)] If $w = (w_0, \ldots, w_{n-1})$ and $v = (w_1, \ldots, w_{n-1})$ is the word obtained by deleting the first symbol, then $[w]_k \subset [v]_{k+1}$.
\item[(d)] Two cylinders $[w]_k$ and $[v]_\ell$ are either disjoint or one contains the other.
\end{enumerate}
\end{lemma}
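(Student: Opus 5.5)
Each of the four properties follows by unwinding the definition $[w]_k = \{x \in \SigmaA : x_{k+j} = w_j,\ 0 \le j < |w|\}$ and using that $\sigma$ is a bijection of $\SigmaA$ with $(\sigma x)_n = x_{n+1}$. I will prove them coordinatewise in the order stated; the only step needing care is (d), whose literal wording must be restricted.

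For (a), take $x \in [w]_k$. Then $(\sigma x)_{(k-1)+j} = x_{k+j} = w_j$, so $\sigma x \in [w]_{k-1}$. Conversely, given $y \in [w]_{k-1}$, its preimage $x = \sigma^{-1}y$ lies in $\SigmaA$, because $\sigma$ is a homeomorphism of the two-sided shift preserving $\SigmaA$. It satisfies $x_{k+j} = y_{k-1+j} = w_j$, which gives equality.

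For (b), the same computation shows $y \in \sigma^{-1}([w]_k)$ if and only if $y_{k+1+j} = w_j$ for $0\le j<|w|$. Here I read the concatenation $iw$ as the word of length $|w|+1$ placed so that the $w$ block occupies the same coordinates as in $\sigma^{-1}[w]_k$. For such $y$, the symbol $i := y_k$ satisfies $A_{i,w_0} = A_{y_k,y_{k+1}} = 1$, since $y \in \SigmaA$. So $y$ lies in the cylinder fixing $y_k = i$ and $y_{k+1+j} = w_j$. Conversely, every point of such a cylinder with $A_{i,w_0}=1$ maps into $[w]_k$. The union is disjoint over $i$ because the coordinate $y_k$ is determined.

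For (c), the statement compares coordinates directly. If $x \in [w]_k$, then for $0 \le j < n-1$ we have $x_{(k+1)+j} = w_{j+1} = v_j$, hence $x \in [v]_{k+1}$.

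For (d), I expect the only obstacle to be that the assertion is false as literally stated for arbitrary positions. For instance, on the full $2$-shift $[1]_0 \cap [2]_1 \neq \emptyset$, yet neither cylinder contains the other. I will therefore prove it under the nesting hypothesis implicit in its use: the index window $I=\{k,\dots,k+|w|-1\}$ contains the window $J=\{\ell,\dots,\ell+|v|-1\}$, or vice versa. The special case $k=\ell$ is the partition statement above.

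Suppose $J \subseteq I$. If the cylinders intersect at some point $z$, then $w$ and $v$ agree on $J$, since both equal $z$ there. Any $x \in [w]_k$ then has $x_m = w_{m-k} = z_m = v_{m-\ell}$ for $m \in J$, so $[w]_k \subseteq [v]_\ell$. If they do not intersect, they are disjoint. I will add a remark stating (d) in this corrected form, noting that the later arguments only use cylinders at a common base position, where the nesting holds automatically.
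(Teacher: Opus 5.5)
Your proof is correct and follows the paper's route: you unwind the definition of $[w]_k$ coordinatewise for (a)--(c), and for (d) you give exactly the argument the paper gives in the nested case $k\le\ell$, $\ell+|v|\le k+|w|$. Your counterexample $[1]_0\cap[2]_1\neq\emptyset$ on the full $2$-shift is valid, so (d) genuinely fails when the index windows are not nested; the paper's ``the other cases are handled similarly'' glosses over exactly this, and your restriction to nested windows (automatic when $k=\ell$) is the correct statement.
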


\begin{proof}
Property (a) follows directly from the definition: $x \in [w]_k$ means $x_{k+j} = w_j$ for $0 \leq j < n$, which is equivalent to $(\sigma x)_{k-1+j} = w_j$ for $0 \leq j < n$, i.e., $\sigma x \in [w]_{k-1}$.

For property (b), note that $x \in \sigma^{-1}([w]_k)$ if and only if $\sigma x \in [w]_k$, which means $x_{k+1+j} = w_j$ for $0 \leq j < n$. This is equivalent to $x \in [iw]_k$ for some symbol $i = x_k$ with $A_{i, w_0} = 1$.

Property (c) is immediate: the condition $x_{k+j} = w_j$ for $0 \leq j < n$ implies $x_{k+1+j} = w_{j+1}$ for $0 \leq j < n-1$, which is the condition for $x \in [v]_{k+1}$.

For property (d), suppose $[w]_k \cap [v]_\ell \neq \emptyset$. Let $x$ be a point in the intersection. If $k \leq \ell$, then $x_{k+j} = w_j$ for $0 \leq j < |w|$ and $x_{\ell+j} = v_j$ for $0 \leq j < |v|$. If $\ell + |v| \leq k + |w|$, then every point $y$ with $y_{k+j} = w_j$ automatically satisfies $y_{\ell+j} = v_j$, so $[w]_k \subset [v]_\ell$. The other cases are handled similarly.
\end{proof}

The key structural assumption on the transition matrix is topological mixing, which ensures the irreducibility needed for the spectral theory of the transfer operator.

\begin{definition}[Topological Mixing]
The subshift $(\SigmaA, \sigma)$ is topologically mixing if for any two non-empty open sets $U, V \subset \SigmaA$, there exists $M \in \N$ such that $\sigma^{-m}(U) \cap V \neq \emptyset$ for all $m \geq M$.
\end{definition}

The condition of topological mixing has a simple characterization in terms of the transition matrix $A$, which connects the dynamical property to the algebraic structure.

\begin{proposition}[Characterization of Mixing]\label{prop:mixing_characterization}
The following conditions are equivalent:
\begin{enumerate}
\item[(i)] The subshift $(\SigmaA, \sigma)$ is topologically mixing.
\item[(ii)] There exists $M \in \N$ such that $(A^m)_{ij} > 0$ for all $i, j \in \calA$ and all $m \geq M$.
\item[(iii)] The matrix $A$ is primitive (i.e., $A$ is irreducible and aperiodic).
\item[(iv)] For any two symbols $i, j \in \calA$, the greatest common divisor of the set $\{n \geq 1 : (A^n)_{ij} > 0\}$ equals $1$.
\end{enumerate}
\end{proposition}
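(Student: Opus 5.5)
The plan is to prove the cycle (i)$\Rightarrow$(ii)$\Rightarrow$(iv)$\Rightarrow$(iii)$\Rightarrow$(i), all four being elementary statements about the nonnegative integer matrix $A$ and its directed graph $G_A$. The basic dictionary is that $(A^m)_{ij}>0$ if and only if there is an admissible word of length $m+1$ starting with $i$ and ending with $j$, equivalently a path of length $m$ from $i$ to $j$ in $G_A$. By the standing assumption, every symbol occurs in some point of $\SigmaA$. Hence every admissible word extends to a point of $\SigmaA$, and cylinders $[w]_k$ with $w$ admissible are nonempty.

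For (i)$\Rightarrow$(ii), I will apply topological mixing to the open sets $U=[j]_0$ and $V=[i]_0$. This gives $M(i,j)$ such that $\sigma^{-m}[j]_0\cap[i]_0\neq\emptyset$ for all $m\ge M(i,j)$. A point $x$ in this intersection has $x_0=i$ and $x_m=j$, so $(x_0,\dots,x_m)$ is a path of length $m$ from $i$ to $j$, and therefore $(A^m)_{ij}>0$. Taking $M=\max_{i,j}M(i,j)$ over the finitely many pairs gives (ii).

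For (ii)$\Rightarrow$(iv), note that (ii) places both $M$ and $M+1$ in the set $\{n:(A^n)_{ij}>0\}$, so the gcd of that set is $1$.

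For (iv)$\Rightarrow$(iii), I will unpack ``primitive'' as irreducible plus aperiodic. Since the gcd condition makes each set $\{n:(A^n)_{ij}>0\}$ nonempty, every $j$ is reachable from every $i$, which is irreducibility. Aperiodicity means the period $\gcd\{n:(A^n)_{ii}>0\}$ equals $1$, and this is the case $i=j$ of (iv).

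The main step, and the one I expect to be the real obstacle, is (iii)$\Rightarrow$(ii)$\Rightarrow$(i). First fix $i$. The set $S_i=\{n:(A^n)_{ii}>0\}$ is closed under addition, since loops at $i$ concatenate, and it has gcd $1$. By the standard Schur/Frobenius coin lemma, it therefore contains every $n\ge n_i$. The proof of the coin lemma goes as follows: choose finitely many elements of $S_i$ with gcd $1$, use B\'ezout to write $1=a-b$ with $a,b$ in the additive semigroup they generate, and then every $n\ge b^2$ lies in the semigroup.

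Next, for any $i,j$, irreducibility gives a path of some length $\ell_{ij}\le N$ from $i$ to $j$. Following a loop at $i$ of length $n\ge n_i$ by this path gives $(A^{n+\ell_{ij}})_{ij}>0$. Hence $M=\max_i n_i+N$ works for all pairs, which is (ii).

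Finally, for (ii)$\Rightarrow$(i), it suffices to treat cylinders, since cylinders form a basis. Take $U\supset[u]_k$ and $V\supset[v]_\ell$. For large $m$ I want a point $x\in[v]_\ell$ with $\sigma^m x\in[u]_k$, i.e.\ $x\in[u]_{k+m}$. Once $k+m>\ell+|v|$, I will fill the gap between the last symbol of $v$ and the first symbol of $u$ with a connecting path. Such a path exists as soon as its length exceeds $M$, by (ii). The resulting admissible finite word then extends to a point of $\SigmaA$ by the remark at the start. This yields a uniform threshold $M'=M+\ell+|v|-k$ beyond which every $m$ works.
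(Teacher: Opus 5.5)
Your proof is correct. For (i)$\Leftrightarrow$(ii) you follow essentially the paper's route: single-symbol cylinders for (i)$\Rightarrow$(ii), and gluing two cylinders by a connecting path for (ii)$\Rightarrow$(i).

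The genuine difference is in the matrix-theoretic part. The paper disposes of (ii)$\Leftrightarrow$(iii)$\Leftrightarrow$(iv) by citing Seneta and Horn--Johnson. You instead close the loop (ii)$\Rightarrow$(iv)$\Rightarrow$(iii)$\Rightarrow$(ii) yourself. Your only nontrivial input is the Schur/Frobenius lemma that an additive semigroup of gcd $1$ contains all large integers. This makes the proposition self-contained and yields a concrete threshold $\max_i n_i+N$, although not one expressed in $N$ alone. Wielandt's bound $M\le (N-1)^2+1$ would supply such a bound, in the spirit of the paper's explicit constants.

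Your (ii)$\Rightarrow$(i) is also more careful than the paper's. The paper uses only cylinders based at position $0$, which need not fit inside an arbitrary open subset of the two-sided shift unless one first conjugates by a power of $\sigma$. It also writes the containment with $U$ and $V$ interchanged: the word $wuv$ produces a point of $U\cap\sigma^{-(|w|+m)}(V)$, which is harmless since $U,V$ are arbitrary. You treat $[u]_k,[v]_\ell$ at arbitrary positions, get the direction right, and say explicitly where the standing assumption on $A$ is used (nonemptiness of the $[i]_0$ and extension of admissible words to points of $\SigmaA$).

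One minor remark: in (iii)$\Rightarrow$(ii) you use $\gcd S_i=1$ for every $i$, which relies on the period of an irreducible matrix being vertex-independent. You can avoid needing that fact by routing every path $i\to i_0\to j$ through one vertex $i_0$ with $\gcd S_{i_0}=1$.
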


\begin{proof}
The equivalence $(ii) \Leftrightarrow (iii) \Leftrightarrow (iv)$ is the standard characterization of primitive matrices from the Perron-Frobenius theory; see for instance  \cite{Seneta2006} or  \cite{HornJohnson2013}.

$(i) \Rightarrow (ii)$: Let $U = [i]$ and $V = [j]$ be single-symbol cylinders. By topological mixing, there exists $M$ such that $\sigma^{-m}([i]) \cap [j] \neq \emptyset$ for all $m \geq M$. This means there exists a sequence $x \in \SigmaA$ with $x_0 = j$ and $x_m = i$, which is equivalent to $(A^m)_{ji} > 0$.

$(ii) \Rightarrow (i)$: Let $U$ and $V$ be non-empty open sets. Choose cylinders $[w] \subset U$ and $[v] \subset V$. Let $i = w_{|w|-1}$ be the last symbol of $w$ and $j = v_0$ be the first symbol of $v$. By assumption, $(A^m)_{ij} > 0$ for all $m \geq M$, meaning there is a path from $i$ to $j$ of length $m$. For $m \geq M$, choose an admissible word $u$ of length $m$ starting at $i$ and ending at $j$. Then the cylinder $[wuv]$ is non-empty, contained in $\sigma^{-(|w|+m)}(U) \cap V$, showing that the intersection is non-empty.
\end{proof}

The primitivity of $A$ implies, by the Perron-Frobenius theorem, that $A$ has a unique eigenvalue $\rho(A) > 0$ of maximum modulus, this eigenvalue is simple, and the corresponding left and right eigenvectors can be chosen to have strictly positive entries. This spectral structure for the matrix $A$ foreshadows the spectral structure for the transfer operator, which we shall develop in subsequent sections.

\begin{lemma}[Uniform Mixing Times]\label{lem:uniform_mixing}
Suppose $(\SigmaA, \sigma)$ is topologically mixing with mixing time $M$ (i.e., $(A^m)_{ij} > 0$ for all $i, j$ and $m \geq M$). Then for any two admissible words $w, v$ and any $m \geq M$, there exists an admissible word $u$ of length $m$ such that the concatenation $wuv$ is admissible.
\end{lemma}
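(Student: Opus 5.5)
The plan is to reduce the statement to the matrix condition (ii) of Proposition~\ref{prop:mixing_characterization} (which, under the hypothesis, is exactly the definition of mixing time $M$) and then glue the words together through a path in the graph $G_A$. Write $w = (w_0,\ldots,w_{p-1})$ and $v = (v_0,\ldots,v_{q-1})$, both admissible, and set $i = w_{p-1}$ (the last symbol of $w$) and $j = v_0$ (the first symbol of $v$). The connecting word $u = (u_0,\ldots,u_{m-1})$ has to satisfy three conditions: $u$ is admissible, $A_{i,u_0} = 1$, and $A_{u_{m-1},j} = 1$. The concatenation $wuv$ is then admissible, since every consecutive pair lies inside $w$, inside $u$, inside $v$, or at one of the two junctions.

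This is equivalent to finding a directed path in $G_A$ of length $m+1$ from $i$ to $j$ whose $m$ interior vertices are $u_0,\ldots,u_{m-1}$. Such a path exists exactly when $(A^{m+1})_{ij} > 0$. Since $m+1 \geq M$, the hypothesis gives $(A^{m+1})_{ij} \geq 1$. The standard expansion
\begin{equation*}
(A^{m+1})_{ij} = \sum_{u_0,\ldots,u_{m-1}} A_{i u_0} A_{u_0 u_1}\cdots A_{u_{m-1} j}
\end{equation*}
then has a nonzero term. Each factor is $0$ or $1$, so every factor in that term equals $1$, and that term supplies the required $u$.

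The only real point of care is the index convention, and this is the step I expect to need attention. One has to decide whether ``length $m$'' counts the inserted symbols (as in the statement) or the edges. Under the convention above we actually need positivity of $A^{m+1}$, not $A^m$, and this is harmless because the hypothesis covers every exponent $\geq M$. If one instead wanted $u$ to include the endpoints (that is, to overlap $w$ and $v$), the same argument with $A^{m-1}$ would require $m-1 \geq M$. So I would state explicitly the non-overlapping convention, under which $m \geq M$ suffices. I would also note that the argument never uses the lengths or the other symbols of $w$ and $v$, so the construction is uniform in the words, which is what makes $M$ usable as a uniform gluing constant in the later distortion estimates and in the cone invariance Lemma~\ref{lem:cone_invariance}.
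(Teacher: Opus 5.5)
Your proposal is correct and is essentially the paper's argument: both set $i$ to be the last symbol of $w$ and $j=v_0$, and read off the connecting word from a nonzero term in the path expansion of a power of $A$ over the graph $G_A$. The only difference is bookkeeping, and yours is cleaner. The paper invokes $(A^m)_{ij}>0$, which after dropping the repeated symbol $i$ gives an interior word of length $m-1$ and is then patched by ``adjusting the indexing.'' You instead apply the hypothesis directly to $A^{m+1}$ (legitimate since $m+1\geq M$) and obtain a word of length exactly $m$.
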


\begin{proof}
Let $i = w_{|w|-1}$ and $j = v_0$. Since $(A^m)_{ij} > 0$, there exists a path in $G_A$ from $i$ to $j$ of length $m$, which corresponds to an admissible word $u = (u_0, \ldots, u_{m-1})$ with $u_0 = i$ and $A_{u_{m-1}, j} = 1$. The word $u' = (u_1, \ldots, u_{m-1})$ has length $m-1$, and the concatenation $wuv$ (where we omit the repeated symbol $i$) is admissible. Adjusting the indexing gives the result.
\end{proof}

\subsection{Metrics, Function Spaces, and the Natural Extension}

The topological dynamics of $(\SigmaA, \sigma)$ is enriched by specifying a metric compatible with the product topology. The standard choice is a metric that assigns exponentially small diameter to cylinder sets of increasing length.

\begin{definition}[Standard Metrics]
For a parameter $\alpha \in (0, 1)$, define the metric $d_\alpha$ on $\SigmaA$ by
\begin{equation}
d_\alpha(x, y) = \alpha^{k(x,y)}, \quad \text{where } k(x, y) = \min\{|n| : n \in \Z, \, x_n \neq y_n\}
\end{equation}
with the conventions $\alpha^{+\infty} = 0$ and $d_\alpha(x, x) = 0$.
\end{definition}

All metrics $d_\alpha$ for $\alpha \in (0, 1)$ are equivalent and generate the product topology on $\SigmaA$. The diameter of a cylinder $[w]_k$ with $|w| = n$ satisfies $\diam_{d_\alpha}([w]_k) \leq \alpha^{\min(|k|, |k+n-1|)}$, so cylinders based at position $0$ have diameter at most $\alpha^0 = 1$ regardless of length.

For the analysis of the transfer operator, it is essential to work with the one-sided shift space
\begin{equation}
\SigmaA^+ = \{x \in \calA^{\N_0} : A_{x_n, x_{n+1}} = 1 \text{ for all } n \geq 0\},
\end{equation}
equipped with the metric $d_\alpha^+(x, y) = \alpha^{k^+(x,y)}$ where $k^+(x, y) = \min\{n \geq 0 : x_n \neq y_n\}$. The shift map $\sigma: \SigmaA^+ \to \SigmaA^+$ is now a continuous surjection that is $N$-to-$1$ at most points (where $N$ is the alphabet size).

\begin{definition}[H\"{o}lder Continuous Functions]
For $\alpha \in (0, 1)$, the space $\calH_\alpha = \calH_\alpha(\SigmaA^+)$ consists of all functions $\phi: \SigmaA^+ \to \R$ for which the H\"{o}lder seminorm
\begin{equation}
|\phi|_\alpha = \sup_{x \neq y} \frac{|\phi(x) - \phi(y)|}{d_\alpha^+(x, y)}
\end{equation}
is finite. Equipped with the norm $\|\phi\|_\alpha = \|\phi\|_\infty + |\phi|_\alpha$, the space $\calH_\alpha$ is a Banach algebra under pointwise multiplication.
\end{definition}

The H\"{o}lder condition can be reformulated in terms of the variation seminorms. For $\phi: \SigmaA^+ \to \R$ and $n \geq 0$, define
\begin{equation}
\var_n(\phi) = \sup\{|\phi(x) - \phi(y)| : x_k = y_k \text{ for } 0 \leq k < n\}.
\end{equation}
Then $\phi \in \calH_\alpha$ if and only if $\var_n(\phi) = O(\alpha^n)$, and more precisely $|\phi|_\alpha = \sup_{n \geq 0} \alpha^{-n} \var_n(\phi)$.

For certain applications, particularly in the extension to countable alphabets, it is useful to work with the larger space of functions with summable variations.

\begin{definition}[Summable Variation Space]
The space $\calF_A = \calF(\SigmaA^+)$ consists of all continuous functions $\phi: \SigmaA^+ \to \R$ for which
\begin{equation}
V(\phi) = \sum_{n=0}^\infty \var_n(\phi) < \infty.
\end{equation}
The norm $\|\phi\|_{\calF} = \|\phi\|_\infty + V(\phi)$ makes $\calF_A$ a Banach space.
\end{definition}

\begin{lemma}[Inclusions of Function Spaces]\label{lem:space_inclusions}
For any $0 < \beta < \alpha < 1$, we have the continuous inclusions
\begin{equation}
\calH_\beta(\SigmaA^+) \hookrightarrow \calH_\alpha(\SigmaA^+) \hookrightarrow \calF_A \hookrightarrow C(\SigmaA^+),
\end{equation}
where $C(\SigmaA^+)$ denotes the space of continuous functions with the supremum norm.
\end{lemma}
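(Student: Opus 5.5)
Three inclusions need proof, and each comes down to one norm inequality; continuity of a linear inclusion between normed spaces is equivalent to such a bound. We work throughout with the variation characterization $|\phi|_\alpha = \sup_{n \geq 0}\alpha^{-n}\var_n(\phi)$ recorded just before Definition (Summable Variation Space). This identity holds because two distinct points $x \neq y$ satisfy $d_\alpha^+(x,y) = \alpha^n$ exactly when they first differ at coordinate $n$. Such points agree on coordinates $0,\dots,n-1$, so $|\phi(x)-\phi(y)| \leq \var_n(\phi)$. Conversely, every pair counted in $\var_n(\phi)$ has $d_\alpha^+ \leq \alpha^n$.

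\emph{First inclusion, $\calH_\beta \hookrightarrow \calH_\alpha$.} For $\beta < \alpha$ we have $\beta^n \leq \alpha^n$ for all $n \geq 0$, so $\var_n(\phi) \leq |\phi|_\beta \beta^n \leq |\phi|_\beta \alpha^n$. Hence $|\phi|_\alpha \leq |\phi|_\beta$ and $\|\phi\|_\alpha \leq \|\phi\|_\beta$, with constant $1$. Equivalently, $d_\beta^+ \leq d_\alpha^+$ pointwise, so the supremum defining the seminorm can only decrease.

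\emph{Second inclusion, $\calH_\alpha \hookrightarrow \calF_A$.} Summing the bound $\var_n(\phi) \leq |\phi|_\alpha \alpha^n$ gives
\[
V(\phi) = \sum_{n \geq 0}\var_n(\phi) \leq \frac{|\phi|_\alpha}{1-\alpha}, \qquad \|\phi\|_{\calF} \leq \frac{1}{1-\alpha}\|\phi\|_\alpha .
\]
Membership in $\calF_A$ also requires continuity. This follows because $\var_n(\phi) \to 0$, so $\phi$ is uniformly continuous.

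\emph{Third inclusion, $\calF_A \hookrightarrow C(\SigmaA^+)$.} Continuity is part of the definition of $\calF_A$. The norm bound $\|\phi\|_\infty \leq \|\phi\|_{\calF}$ is immediate because $V(\phi) \geq 0$.

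The argument is routine, and no step poses a real obstacle. The one point requiring care is the $n = 0$ term. Here $\var_0(\phi) = \sup\phi - \inf\phi$, since the agreement condition is empty, and it corresponds to the distance value $\alpha^0 = 1$. Including this term explains the factor $(1-\alpha)^{-1}$ rather than $\alpha(1-\alpha)^{-1}$ in the second bound. One could also remark that the inclusions are injective, since they are identity maps on functions, which justifies the notation $\hookrightarrow$.
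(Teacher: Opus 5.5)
Your proposal is correct and follows the paper's proof essentially line for line: the first inclusion via $|\phi|_\alpha = \sup_n \alpha^{-n}\var_n(\phi) \leq |\phi|_\beta$, the second by summing $\var_n(\phi) \leq |\phi|_\alpha \alpha^n$ to get $\|\phi\|_{\calF} \leq (1-\alpha)^{-1}\|\phi\|_\alpha$, and the third from continuity together with $\|\phi\|_\infty \leq \|\phi\|_{\calF}$. Your explicit verification of the variation identity and your stated norm bound for the last inclusion fill in points the paper leaves implicit; the paper argues only membership there.
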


\begin{proof}
The first inclusion holds because $\beta < \alpha$ implies $\beta^n < \alpha^n$, so $|\phi|_\beta = \sup_n \beta^{-n} \var_n(\phi) \geq \sup_n \alpha^{-n} \var_n(\phi) = |\phi|_\alpha$.

For the second inclusion, if $\phi \in \calH_\alpha$, then $\var_n(\phi) \leq |\phi|_\alpha \alpha^n$, so
\begin{equation}
V(\phi) = \sum_{n=0}^\infty \var_n(\phi) \leq |\phi|_\alpha \sum_{n=0}^\infty \alpha^n = \frac{|\phi|_\alpha}{1 - \alpha}.
\end{equation}
Thus $\|\phi\|_{\calF} \leq \|\phi\|_\infty + \frac{|\phi|_\alpha}{1-\alpha} \leq \frac{1}{1-\alpha} \|\phi\|_\alpha$.

The third inclusion holds because $V(\phi) < \infty$ implies $\var_n(\phi) \to 0$, which is equivalent to uniform continuity, and $\SigmaA^+$ is compact, so continuous functions are bounded.
\end{proof}

The space $\calF_A$ is the natural setting for the general theory because it accommodates potentials arising from geometric constructions where the H\"{o}lder condition may fail. For instance, the potential $\phi(x) = -\log |Df(x)|$ associated to a smooth expanding map $f$ may have variations decaying like $n^{-p}$ rather than exponentially when the map has neutral periodic points.

The one-sided shift space $\SigmaA^+$ has the advantage that the transfer operator acts naturally on functions defined there, but it has the disadvantage that the shift map is not invertible. The two-sided shift space $\SigmaA$ is the natural extension of $\SigmaA^+$ in the sense of ergodic theory.

\begin{proposition}[Natural Extension]\label{prop:natural_extension}
The projection $\pi^+: \SigmaA \to \SigmaA^+$ defined by $(\pi^+ x)_n = x_n$ for $n \geq 0$ is a continuous surjection satisfying $\pi^+ \circ \sigma = \sigma \circ \pi^+$. For any $\sigma$-invariant probability measure $\mu^+$ on $\SigmaA^+$, there exists a unique $\sigma$-invariant probability measure $\mu$ on $\SigmaA$ such that $(\pi^+)_* \mu = \mu^+$. The measure $\mu$ is called the natural extension of $\mu^+$.
\end{proposition}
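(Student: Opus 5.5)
The plan is to build $\mu$ directly from consistent cylinder measures via the Kolmogorov extension theorem, and to obtain uniqueness from the fact that invariance pins down all cylinder masses. Continuity, surjectivity and the intertwining relation are immediate. Indeed, $\pi^+$ is $1$-Lipschitz from $d_\alpha$ to $d_\alpha^+$, so it is continuous. It is surjective because every one-sided admissible sequence extends to the left: each column of $A$ contains a $1$, so we can repeatedly choose admissible predecessors. Finally, $(\pi^+\sigma x)_n = x_{n+1} = (\sigma\pi^+x)_n$ for $n\ge 0$.

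For existence, fix a cylinder $[w]_k$ in $\SigmaA$ with $k\in\Z$. Choose $m\ge 0$ with $k+m\ge 0$ and define
\[
\mu([w]_k) := \mu^+\bigl(\sigma^{-(k+m)}[w]^+\bigr) = \mu^+\bigl([w]^+_{k+m}\bigr),
\]
where $[w]^+_j$ denotes the corresponding cylinder in $\SigmaA^+$. By $\sigma$-invariance of $\mu^+$, this value does not depend on $m$. It is also finitely additive on refinements: Lemma~\ref{lem:cylinder_properties}(b) gives $[w]_k = \bigsqcup_{i} [iw]_{k-1}$, and appending symbols on the right is additive because it is additive in $\SigmaA^+$. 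These cylinder masses therefore form a consistent family of finite-dimensional distributions on $\calA^{\Z}$. Kolmogorov's extension theorem, or equivalently Carathéodory applied to the semi-algebra of cylinders (which are clopen in a compact space, so countable additivity reduces to finite additivity), produces a Borel probability measure $\mu$. This $\mu$ is supported on $\SigmaA$, since every non-admissible cylinder receives mass zero. Invariance follows from $\mu(\sigma^{-1}[w]_k) = \mu([w]_{k+1}) = \mu([w]_k)$, which holds by the $m$-independence. Finally, $(\pi^+)_*\mu = \mu^+$ holds on cylinders $[w]_0^+$ by construction, and hence everywhere.

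For uniqueness, suppose $\mu'$ is invariant with $(\pi^+)_*\mu' = \mu^+$. Then for any cylinder,
\[
\mu'([w]_k) = \mu'(\sigma^{-(k+m)}[w]_k) = \mu'([w]_{-m}\text{ shifted}) ,
\]
that is, $\mu'([w]_k) = \mu'((\pi^+)^{-1}[w]^+_{k+m}) = \mu^+([w]^+_{k+m}) = \mu([w]_k)$. Since cylinders form a $\pi$-system generating the Borel $\sigma$-algebra, $\mu' = \mu$.

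The main obstacle is the consistency and countable additivity needed for the extension step. I would handle it by reducing everything to the $\sigma$-invariance of $\mu^+$ and by invoking compactness, so that finite additivity on clopen cylinders suffices.
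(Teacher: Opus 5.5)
Your proof is correct and follows essentially the paper's approach. The paper only sketches it: it notes that invariance forces $\mu([w]_k)=\mu^+([w]^+)$ for every cylinder and defers existence to Rohlin and Cornfeld--Fomin--Sinai. Your Kolmogorov/Carath\'eodory construction, with compactness reducing countable additivity to finite additivity, is that standard construction written out.

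There are two small slips to fix:
\begin{enumerate}
\item In the uniqueness display the shift should be $\sigma^{-m}$, giving $\mu'([w]_k)=\mu'([w]_{k+m})$, not $\sigma^{-(k+m)}$.
\item Cylinders on consecutive windows are not literally closed under intersection; for example, $[1]_0\cap[1]_2$ is a finite union of length-$3$ cylinders, not a single cylinder. Run the $\pi$--$\lambda$ step with the cylinders whose window contains $0$, or with the algebra of finite disjoint unions of cylinders.
\end{enumerate}
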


\begin{proof}
The continuity and equivariance of $\pi^+$ are immediate from the definitions. For the existence and uniqueness of the natural extension, the standard construction from ergodic theory applies; see  \cite{Rohlin1961} or  \cite{CornfeldFominSinai1982}. The key point is that $\mu$ is characterized by its values on cylinder sets $[w]_k$ for all $k \in \Z$, and these are determined by $\mu^+$ through the invariance condition.
\end{proof}

When working with Gibbs measures, we pass freely between the one-sided and two-sided settings, using the natural extension to transfer results. The thermodynamic properties (pressure, entropy, Gibbs bounds) are the same for a measure and its natural extension.

Having established the symbolic setting, we turn to the characterization of Gibbs measures. The next section defines the Jacobian condition and proves its equivalence with the classical cylinder Gibbs property.

\section{The Intrinsic Jacobian Characterization of Gibbs Measures}\label{sec:intrinsic_gibbs}

We characterize Gibbs measures through their Jacobian and establish the equivalence with the classical cylinder-based definition. The Jacobian characterization is closely related to Keane's g-measures \cite{Keane1972} and to Ledrappier's entropy formula \cite{Ledrappier1974}. The equivalence with the cylinder Gibbs property, stated as Theorem~\ref{thm:jacobian_equivalence} and proved below in this section, is the implication (i)$\Leftrightarrow$(ii) of Main Theorem~\ref{thm:main_equivalence}.

\subsection{The Jacobian and the Intrinsic Gibbs Condition}

Let $(X, \calB, \mu)$ be a probability space equipped with a measurable map $T: X \to X$ that is a local homeomorphism. For the one-sided shift $\sigma: \SigmaA^+ \to \SigmaA^+$, the restriction to each cylinder $\SigmaA^+(a) = \{x \in \SigmaA^+ : x_0 = a\}$ is a homeomorphism onto its image.

\begin{definition}[Jacobian of an Invariant Measure]\label{def:jacobian}
Let $(\SigmaA^+, \sigma, \mu)$ be a measure-preserving system where $\sigma: \SigmaA^+ \to \SigmaA^+$ is the one-sided shift. For a symbol $a \in \calA$, let $\SigmaA^+(a) = \{x \in \SigmaA^+ : x_0 = a\}$ denote the cylinder of sequences starting with $a$. The restriction $\sigma|_{\SigmaA^+(a)}: \SigmaA^+(a) \to \sigma(\SigmaA^+(a))$ is a homeomorphism onto its image. The Jacobian of $\mu$ with respect to $\sigma$ is the function $J_\mu: \SigmaA^+ \to (0, \infty)$ defined by
\begin{equation}\label{eq:jacobian_formula}
J_\mu(x) = \sum_{a: A_{a, x_0} = 1} \frac{d\mu|_{\SigmaA^+(a)}}{d(\sigma_* \mu|_{\SigmaA^+(a)})}(\sigma^{-1}_a x),
\end{equation}
where $\sigma^{-1}_a: \sigma(\SigmaA^+(a)) \to \SigmaA^+(a)$ is the local inverse defined by $\sigma^{-1}_a(x) = ax$ (the sequence with $a$ prepended to $x$), and the Radon-Nikodym derivative is taken with respect to the appropriate restrictions.
\end{definition}

This definition may appear complicated, but it has a simple interpretation: $J_\mu(x)$ measures the total mass that $\mu$ assigns to the preimages of $x$ under $\sigma$, weighted by the local behavior of $\mu$ near each preimage. The following lemma provides an equivalent formulation that is more directly useful.

\begin{lemma}[Jacobian Formula]\label{lem:jacobian_formula}
Let $\mu$ be a $\sigma$-invariant Borel probability measure on $\SigmaA^+$. For $\mu$-almost every $x \in \SigmaA^+$,
\begin{equation}\label{eq:jacobian_ratio}
J_\mu(x) = \lim_{n \to \infty} \frac{\mu([x_0 \cdots x_{n-1}])}{\mu([x_1 \cdots x_{n-1}])},
\end{equation}
where the limit exists and is independent of the choice of representative in the equivalence class of $x$.
\end{lemma}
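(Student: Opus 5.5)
The plan is to recognise the ratio in \eqref{eq:jacobian_ratio} as the average of a fixed Radon--Nikodym derivative over a shrinking cylinder, and then to apply the martingale convergence theorem for the cylinder filtration. Fix a symbol $a\in\calA$ and consider the measure $\nu_a = (\sigma|_{\SigmaA^+(a)})_*(\mu|_{\SigmaA^+(a)})$, which lives on $\sigma(\SigmaA^+(a))$. Since $\sigma$ is injective on $\SigmaA^+(a)$ and $\mu$ is $\sigma$-invariant, we have $\nu_a(E) = \mu(\SigmaA^+(a)\cap\sigma^{-1}E)\le \mu(\sigma^{-1}E)=\mu(E)$. Hence $\nu_a\ll\mu$, and the density $g_a = d\nu_a/d\mu$ exists with $0\le g_a\le 1$. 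This $g_a$ is exactly the local Radon--Nikodym derivative of Definition~\ref{def:jacobian} (there evaluated at the preimage $\sigma_a^{-1}x = ax$), transported to the point $\sigma x$.

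\textbf{Step 1 (the cylinder identity).} For an admissible word $w=x_1\cdots x_{n-1}$ and $a=x_0$ with $A_{a,x_1}=1$, injectivity of $\sigma$ on $\SigmaA^+(a)$ gives $[aw]=\SigmaA^+(a)\cap\sigma^{-1}[w]$. Therefore
\begin{equation*}
\mu([x_0\cdots x_{n-1}]) = \nu_{x_0}([x_1\cdots x_{n-1}]) = \int_{[x_1\cdots x_{n-1}]} g_{x_0}\,d\mu .
\end{equation*}
Dividing by $\mu([x_1\cdots x_{n-1}])$, the ratio in \eqref{eq:jacobian_ratio} becomes $\E_\mu[g_{x_0}\mid\calC_{n-1}](\sigma x)$. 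Here $\calC_m$ is the finite $\sigma$-algebra generated by the cylinders of length $m$ starting at position $0$.

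\textbf{Step 2 (martingale convergence).} The $\sigma$-algebras $\calC_m$ increase, and they generate the Borel $\sigma$-algebra of $\SigmaA^+$ because cylinders form a basis (Lemma~\ref{lem:cylinder_properties}). For each fixed $a$, the L\'evy upward theorem (equivalently, Doob's martingale convergence theorem) then gives $\E_\mu[g_a\mid\calC_m](y)\to g_a(y)$ for all $y$ outside a $\mu$-null set $Z_a$. The same theorem needs the denominators to be positive; this holds off the null set $Z_0$, the union of all $\mu$-null cylinders, which is a countable union.

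\textbf{Step 3 (transfer back to $x$).} The exceptional set of $x$ is contained in $\bigcup_a\sigma^{-1}(Z_a\cup Z_0)$ together with the null cylinders themselves. By invariance, $\mu(\sigma^{-1}Z)=\mu(Z)=0$, so this set is $\mu$-null. For all other $x$, the limit exists and equals $g_{x_0}(\sigma x)$, which is the value of $J_\mu$ at $x$ in the sense of Definition~\ref{def:jacobian}. The limit is defined purely through the measures of cylinders containing $x$. Changing $g_a$ on a $\mu$-null set, i.e.\ choosing a different representative, alters neither the cylinder measures nor, therefore, the limit; this gives the claimed independence of representative.

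I expect the main obstacle to be bookkeeping rather than analysis: matching the local density $g_{x_0}(\sigma x)$ with the somewhat awkwardly normalized sum in \eqref{eq:jacobian_formula}. That formula mixes the evaluation point $x$ with its preimage $ax$. I would settle this by noting that, evaluated along the orbit, only the branch $a=x_0$ contributes. With that convention, $g_{x_0}(\sigma x)=d\mu|_{\SigmaA^+(x_0)}/d(\sigma_*\mu|_{\SigmaA^+(x_0)})$ at $x$. I would also note that $J_\mu\sigma$ in Definition~\ref{def:intrinsic_gibbs} is the reciprocal of this g-function, consistent with the identification $g=e^{-(P-\phi)}$ stated after that definition.
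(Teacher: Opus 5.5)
Your proposal is correct and follows essentially the same route as the paper. The paper also rewrites $\mu([w])/\mu([w'])$ as $\mu(\sigma^{-1}[w']\cap\SigmaA^+(w_0))/\mu([w'])$ and passes to the limit by a ``measure-theoretic Lebesgue differentiation theorem'' along the shrinking cylinders $[w']\ni\sigma x$; your L\'evy upward theorem for the cylinder filtration is the rigorous form of that step, and you additionally handle the zero denominators and the pull-back of the exceptional set by $\sigma^{-1}$, which the paper leaves implicit. Your closing caveat is well taken: the limit is the $g$-function $g_{x_0}(\sigma x)\le 1$, which is the reciprocal of $J_\mu\sigma=e^{P(\phi)-\phi}$ as used in Definition~\ref{def:intrinsic_gibbs}, whereas the paper's proof simply asserts that the limit ``is the Jacobian'' without addressing this normalization.
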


\begin{proof}
For a cylinder $[w] = [w_0 \cdots w_{n-1}]$ with $n \geq 2$, let $[w'] = [w_1 \cdots w_{n-1}]$ denote the cylinder obtained by dropping the first symbol. By the definition of the shift, $\sigma([w]) = [w'] \cap \sigma(\SigmaA^+(w_0))$. The $\sigma$-invariance of $\mu$ implies that for any measurable set $E \subset \SigmaA^+$,
\begin{equation}
\mu(\sigma^{-1}(E)) = \sum_{a \in \calA} \mu(\sigma^{-1}(E) \cap \SigmaA^+(a)) = \sum_{a \in \calA} \mu(\sigma^{-1}(E \cap \sigma(\SigmaA^+(a))) \cap \SigmaA^+(a)).
\end{equation}
Taking $E = [w']$, we get $\mu([w]) = \mu(\sigma^{-1}([w']) \cap \SigmaA^+(w_0))$, which gives the relation
\begin{equation}
\frac{\mu([w])}{\mu([w'])} = \frac{\mu(\sigma^{-1}([w']) \cap \SigmaA^+(w_0))}{\mu([w'])}.
\end{equation}
As $n \to \infty$, the cylinders $[w']$ shrink to the point $\sigma(x)$ for $\mu$-almost every $x \in [w_0]$, and the Lebesgue differentiation theorem (in its measure-theoretic form) implies that the ratio converges to the Radon-Nikodym derivative, which is the Jacobian.
\end{proof}

The Jacobian has the following multiplicative property under iteration, which is the key to connecting it with Birkhoff sums of the potential.

\begin{lemma}[Chain Rule for the Jacobian]\label{lem:jacobian_chain}
For $\mu$-almost every $x \in \SigmaA^+$ and every $n \geq 1$,
\begin{equation}\label{eq:jacobian_chain}
J_\mu^{(n)}(x) := \frac{d(\sigma^n)_* \mu}{d\mu}(x) = \prod_{k=0}^{n-1} J_\mu(\sigma^k x).
\end{equation}
Equivalently, $\log J_\mu^{(n)}(x) = \sum_{k=0}^{n-1} \log J_\mu(\sigma^k x) = S_n(\log J_\mu)(x)$.
\end{lemma}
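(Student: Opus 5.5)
The plan is induction on $n$, with the whole argument resting on the two-step case. First I would fix what $J_\mu^{(n)}$ means concretely. Since $\sigma^n$ restricted to an $n$-cylinder $[w_0\cdots w_{n-1}]$ is injective, with local inverse $\sigma^{-n}_w(y) = w y$, the Jacobian of $\sigma^n$ is defined locally by the analogue of Definition~\ref{def:jacobian}. That is, for every Borel set $E\subset[w]$ we require
\[
\mu(\sigma^n E)=\int_E J_\mu^{(n)}\,d\mu .
\]
For $n=1$ this is the Jacobian of Definition~\ref{def:jacobian}, so the claimed identity is trivial in that case. I will read $\frac{d(\sigma^n)_*\mu}{d\mu}$ in the statement in exactly this local-inverse sense, and check along the way that it agrees with Lemma~\ref{lem:jacobian_formula}. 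Existence and a.e.\ uniqueness follow from the Radon--Nikodym theorem, once one knows that $\mu\circ\sigma^n|_{[w]}\ll\mu|_{[w]}$. This absolute continuity follows from invariance: if $\mu(E)=0$ with $E\subset [w]$, then the set $\sigma^n E$ satisfies $\sigma^{-n}(\sigma^n E)\supset E$, but we need the reverse direction. So I would in fact use the standard fact that $\mu(F)=0$ implies $\mu(\sigma^{-1}F)=0$, and argue on images via the countable partition into cylinders.

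The inductive step is the change-of-variables identity. Take $E\subset[w_0\cdots w_n]$ and write $\sigma^{n+1}E=\sigma^n(\sigma E)$, where $\sigma E\subset[w_1\cdots w_n]$ is a set on which $\sigma^n$ is injective. Applying the induction hypothesis to $\sigma E$, and then the one-step Jacobian to transport the integral back to $E$, gives
\[
\mu(\sigma^{n+1}E)=\int_{\sigma E}J^{(n)}_\mu\,d\mu=\int_E J^{(n)}_\mu\circ\sigma\cdot J_\mu\,d\mu .
\]
The second equality is the integral form of the one-step Jacobian, $\int_{\sigma E} g\,d\mu=\int_E (g\circ\sigma)\,J_\mu\,d\mu$ for $\sigma$ injective on $E$. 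I would first prove this for indicator functions, where it is exactly the definition, and then pass to nonnegative measurable $g$ by simple-function approximation and monotone convergence. Uniqueness of Radon--Nikodym derivatives then gives $J^{(n+1)}_\mu = J_\mu\cdot (J^{(n)}_\mu\circ\sigma)$ $\mu$-a.e. on each $(n+1)$-cylinder. Since there are countably many cylinders, this holds a.e. globally, and unwinding the recursion gives the product formula.

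Two technical points remain. The first is the almost-everywhere bookkeeping: the null set on which $J_\mu^{(n)}\circ\sigma$ is defined badly must be pulled back by $\sigma$, and it is $\sigma$-invariance, $\mu(\sigma^{-1}F)=\mu(F)$, that keeps $\sigma^{-1}$ of a null set null. The second is that $J_\mu>0$ a.e., which is needed before taking logarithms. This follows because $\mu(\sigma E)\ge\mu(E)$ for $E$ inside a 1-cylinder, again by invariance, so $J_\mu\ge$ something positive in the integral sense; in particular $\{J_\mu=0\}$ is null. Taking logs then gives $\log J_\mu^{(n)}=S_n(\log J_\mu)$. I expect the main obstacle to be the consistency of definitions rather than the algebra. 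The paper's notation $\frac{d(\sigma^n)_*\mu}{d\mu}$ is literally trivial for an invariant measure, so the proof must commit to the local-inverse interpretation and verify it matches the cylinder-ratio formula of Lemma~\ref{lem:jacobian_formula}. A telescoping check gives this match directly:
\[
\frac{\mu[x_0\cdots x_{m-1}]}{\mu[x_n\cdots x_{m-1}]}=\prod_{k=0}^{n-1}\frac{\mu[x_k\cdots x_{m-1}]}{\mu[x_{k+1}\cdots x_{m-1}]},
\]
and letting $m\to\infty$ gives the result as an alternative proof.
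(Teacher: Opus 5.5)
Your route differs from the paper's, and it is the one that actually proves the statement. The paper applies the Radon--Nikodym chain rule to $(\sigma^n)_*\mu \ll (\sigma^{n-1})_*\mu \ll \mu$. As you observe, for invariant $\mu$ all of these measures equal $\mu$, so that argument only yields the identity $1 = 1\cdot 1$. Moreover, a chain rule of that form evaluates every factor at the same point $x$, never at $\sigma^k x$. What moves the evaluation point is your change-of-variables identity $\int_{\sigma E} g\,d\mu = \int_E (g\circ\sigma)\,J_\mu\,d\mu$ for $E$ inside a $1$-cylinder. Your inductive step $\mu(\sigma^{n+1}E) = \int_{\sigma E} J_\mu^{(n)}\,d\mu = \int_E (J_\mu^{(n)}\circ\sigma)\,J_\mu\,d\mu$ is correct. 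So is the null-set bookkeeping via $\mu(\sigma^{-1}F)=\mu(F)$, and so is the positivity argument. In fact positivity gives $J_\mu \geq 1$ a.e., since $\int_E J_\mu\,d\mu = \mu(\sigma E) \geq \mu(E)$ for every such $E$.

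Two points need correcting.

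First, the claim that invariance yields $\mu\circ\sigma^n|_{[w]} \ll \mu|_{[w]}$ is false. Take $\mu = \delta_{\bar 1}$ on the full $2$-shift, where $\bar 1 = 111\cdots$, and $E = \{2\bar 1\} \subset [2]$. Then $\mu(E) = 0$ but $\mu(\sigma E) = 1$. Invariance only gives $\mu(\sigma E) = \mu(\sigma^{-1}\sigma E) \geq \mu(E)$, which is the wrong direction. Existence of the one-step Jacobian, in your change-of-variables sense, has to be a hypothesis, as the lemma implicitly assumes. Once it is, you need no separate Radon--Nikodym argument for $J^{(n)}_\mu$. Your inductive computation itself exhibits $(J_\mu^{(n)}\circ\sigma)\,J_\mu$ as a density for $E\mapsto \mu(\sigma^{n+1}E)$ on each $(n+1)$-cylinder, so existence comes out of the induction. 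Drop the absolute-continuity paragraph and state the hypothesis instead.

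Second, the consistency check against Lemma~\ref{lem:jacobian_formula} as printed will fail. With your definition, $\mu[x_1\cdots x_{m-1}] = \int_{[x_0\cdots x_{m-1}]} J_\mu\,d\mu$. Hence the ratio $\mu[x_0\cdots x_{m-1}]/\mu[x_1\cdots x_{m-1}]$ tends to $1/J_\mu(x)$, not to $J_\mu(x)$. For the $(p,1-p)$-Bernoulli measure the ratio is $p_{x_0}$, while $J_\mu = 1/p_{x_0}$. Your telescoping identity still gives the product formula for the reciprocals. But it does so only after you know that $\mu[x_0\cdots x_{m-1}]/\mu[x_n\cdots x_{m-1}] \to 1/J^{(n)}_\mu(x)$. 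That requires the same martingale (differentiation) argument applied to $J^{(n)}_\mu$, using $\mu[x_n\cdots x_{m-1}] = \int_{[x_0\cdots x_{m-1}]} J^{(n)}_\mu\,d\mu$. So the telescoping is a consistency check, not an independent proof, and it cannot simply cite Lemma~\ref{lem:jacobian_formula}.
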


\begin{proof}
This follows from the chain rule for Radon-Nikodym derivatives: if $\nu_1 \ll \nu_2 \ll \nu_3$ are $\sigma$-finite measures, then $\frac{d\nu_1}{d\nu_3} = \frac{d\nu_1}{d\nu_2} \cdot \frac{d\nu_2}{d\nu_3}$ almost everywhere. Applying this with $\nu_1 = (\sigma^n)_* \mu$, $\nu_2 = (\sigma^{n-1})_* \mu$, and $\nu_3 = \mu$, and iterating, gives the result.
\end{proof}

We now formulate the Gibbs property in terms of the Jacobian. The key observation, which goes back to the work of  \cite{Ledrappier1974}, is that for a Gibbs measure, the Jacobian takes a specific exponential form.

\begin{definition}[Intrinsic Gibbs Measure]\label{def:intrinsic_gibbs_full}
Let $\phi \in \calF_A$ be a potential with summable variations. A $\sigma$-invariant Borel probability measure $\mu$ on $\SigmaA^+$ is called an intrinsic Gibbs measure for $\phi$ if there exists a constant $P \in \R$ such that
\begin{equation}\label{eq:jacobian_gibbs}
J_\mu(x) = e^{P - \phi(x)} \quad \text{for } \mu\text{-almost every } x \in \SigmaA^+.
\end{equation}
The constant $P$ is called the pressure associated to $\mu$ and $\phi$.
\end{definition}

The intrinsic Gibbs condition \eqref{eq:jacobian_gibbs} asserts that the logarithm of the Jacobian equals a constant minus the potential. This condition depends only on the measurable dynamics and the potential, not on any choice of coordinates.

\begin{proposition}[Uniqueness of Pressure]\label{thm:pressure_uniqueness}
If $\mu$ is an intrinsic Gibbs measure for $\phi \in \calF_A$, then the constant $P$ in \eqref{eq:jacobian_gibbs} equals the topological pressure $P(\phi)$ defined by
\begin{equation}\label{eq:pressure_def_preview}
P(\phi) = \lim_{n \to \infty} \frac{1}{n} \log \sum_{w \in \calW_n(A)} \exp\left(\sup_{x \in [w]} S_n \phi(x)\right).
\end{equation}
In particular, the pressure is uniquely determined by $\phi$ and does not depend on the choice of Gibbs measure.
\end{proposition}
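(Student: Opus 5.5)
The plan is to compute $\mu$ of an $n$-cylinder in two ways. From the Jacobian condition and the chain rule we get upper and lower bounds in terms of $e^{S_n\phi - nP}$. Summing these over all admissible words gives $1$, and comparing with the pressure sum identifies $P$.

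\textbf{Step 1: local estimate on cylinders.} Fix an admissible word $w=w_0\cdots w_{n-1}$. Since $\sigma^n$ restricted to $[w]$ is injective with image $\sigma^n[w]=\{y: A_{w_{n-1},y_0}=1\}$, I would use the change-of-variables formula for the Jacobian. Lemma~\ref{lem:jacobian_chain} gives the Jacobian of $\sigma^n$ on $[w]$ as $e^{nP-S_n\phi}$, so
\begin{equation*}
\mu(\sigma^n[w]) = \int_{[w]} e^{nP - S_n\phi(x)}\,d\mu(x).
\end{equation*}
For $x,x'\in[w]$ we have $|S_n\phi(x)-S_n\phi(x')|\le \sum_{k=0}^{n-1}\var_{n-k}(\phi)\le V(\phi)$. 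Hence
\begin{equation*}
e^{-V(\phi)}\,\mu(\sigma^n[w]) \le \mu([w])\,e^{nP-\sup_{[w]}S_n\phi} \le \mu(\sigma^n[w]) \le 1.
\end{equation*}

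\textbf{Step 2: lower bound on the image mass.} The set $\sigma^n[w]$ is a union of one-symbol cylinders $[b]$ with $A_{w_{n-1}b}=1$. I would let $m_0=\min_{a}\mu(\{y:A_{a,y_0}=1\})$ and show $m_0>0$ for every $\sigma$-invariant $\mu$ satisfying the Jacobian condition. If some $[b]$ had $\mu([b])=0$, then the Jacobian identity $\mu(\sigma E)=\int_E J_\mu\,d\mu$ shows that $\mu$-null sets have null images. Conversely, $\sigma$-invariance shows that preimages of null sets are null. By mixing, every symbol $a$ reaches $b$ in $M$ steps, and this should propagate to $\mu([a])=0$ for all $a$, which is a contradiction. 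The direction of the propagation needs care: I would use $\mu(\sigma^{-M}[b])=\mu([b])=0$ together with the fact that $\sigma^{-M}[b]$ contains a cylinder inside each $[a]$ of positive $\mu$-measure. This gives $m_0>0$, a constant independent of $n$ and $w$.

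\textbf{Step 3: sum over words and take logarithms.} Summing the two-sided estimate over $w\in\calW_n(A)$ and using $\sum_w\mu([w])=1$ gives
\begin{equation*}
m_0 e^{-V(\phi)}\,e^{-nP}Z_n \le 1 \le e^{-nP} Z_n, \qquad Z_n=\sum_{w}e^{\sup_{[w]}S_n\phi}.
\end{equation*}
Taking $\frac1n\log$ and letting $n\to\infty$, both bounds force $\frac1n\log Z_n\to P$. This proves simultaneously that the limit defining $P(\phi)$ in \eqref{eq:pressure_def_preview} exists and that it equals $P$. Since the right-hand side of \eqref{eq:pressure_def_preview} depends only on $\phi$, the constant $P$ is independent of the Gibbs measure chosen.

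\textbf{Main obstacle.} The hardest step is the justification of the change-of-variables identity in Step 1 from the paper's definition of $J_\mu$ (Definition~\ref{def:jacobian} together with Lemma~\ref{lem:jacobian_chain}). That definition is stated somewhat loosely. I would fall back on the standard characterization: $\mu(\sigma E)=\int_E J_\mu\,d\mu$ for every Borel set $E$ on which $\sigma$ is injective, and then iterate it along $E,\sigma E,\dots$ to get the formula for $\sigma^n$ on $[w]$. The positivity $m_0>0$ in Step 2 is the secondary delicate point.
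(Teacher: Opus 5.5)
Your argument is correct in outline, and it takes a different route from the paper. The paper identifies $P$ through entropy. The chain rule, Lemma~\ref{lem:jacobian_formula}, the Shannon--McMillan--Breiman theorem and the ergodic theorem give $P=h_\mu(\sigma)+\int\phi\,d\mu$, and the variational principle of Section~\ref{sec:variational} is then quoted to conclude $P=P(\phi)$.

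That route has three costs:
\begin{itemize}
\item it needs ergodicity of $\mu$ (or Rokhlin's formula);
\item it relies on a forward reference;
\item the variational inequality by itself only delivers $P\le P(\phi)$. The reverse inequality is asserted (``since $\mu$ achieves equality'') rather than proved.
\end{itemize}

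Your route is the cylinder estimate from the proof of (i)$\Rightarrow$(ii) in Theorem~\ref{thm:jacobian_equivalence}, run with an arbitrary constant $P$ and summed over $\calW_n(A)$. It uses no entropy theory and stays inside Section~\ref{sec:intrinsic_gibbs}. It produces both inequalities, and it gives existence of the limit in \eqref{eq:pressure_def_preview} as a by-product. Your reading of the Jacobian as $\mu(\sigma E)=\int_E J_\mu\,d\mu$ on sets where $\sigma$ is injective is also the reading the paper actually uses in that proof.

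The one step to repair is the propagation in Step~2. It matters because $m_0>0$ is exactly what gives $P\ge P(\phi)$.

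As written, the propagation runs the wrong way. Invariance gives $\mu(\sigma^{-M}[b])=0$, so every cylinder $[aub]$ ending in $b$ is null, and that is no contradiction. The ``fact'' that $\sigma^{-M}[b]\cap[a]$ contains a cylinder of positive measure is, by the change-of-variables formula (with $\sigma^M[aub]=[b]$), equivalent to $\mu([b])>0$. So that step is circular.

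Propagate forward instead, using the property you already isolated: finiteness of $J_\mu$ sends null sets to null sets. If $A_{bc}=1$, then $\sigma[bc]=[c]$ in $\SigmaA^+$, so
\[
\mu([c])=\int_{[bc]}e^{P-\phi}\,d\mu\le e^{P+\|\phi\|_\infty}\,\mu([b])=0 .
\]
Iterate along admissible paths starting at $b$, using $(A^M)_{ba}>0$ for every $a$. This gives $\mu([a])=0$ for all $a$, contradicting $\mu(\SigmaA^+)=1$. Hence $\min_b\mu([b])>0$. Since every row of $A$ contains a $1$, $m_0\ge\min_b\mu([b])>0$, and with this fix your proof is complete.
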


\begin{proof}
The proof uses the Jacobian chain rule and the ergodic theorem. By Lemma~\ref{lem:jacobian_chain}, for $\mu$-almost every $x$,
\begin{equation}
\log J_\mu^{(n)}(x) = S_n(\log J_\mu)(x) = nP - S_n \phi(x).
\end{equation}
On the other hand, by Lemma~\ref{lem:jacobian_formula},
\begin{equation}
\log J_\mu^{(n)}(x) = \log \mu([x_0 \cdots x_{n-1}]) - \log \mu([x_n \cdots x_{2n-1}]) + o(n)
\end{equation}
as $n \to \infty$, where the error term comes from the convergence of the ratios. By the Shannon-McMillan-Breiman theorem, $-\frac{1}{n}\log\mu([x_0\cdots x_{n-1}]) \to h_\mu(\sigma)$ for $\mu$-almost every $x$. Combined with the Jacobian chain rule, $\frac{1}{n}\log J_\mu^{(n)}(x) = P - \frac{1}{n}S_n\phi(x) \to P - \int\phi\,d\mu$ by the ergodic theorem. Equating these two limits gives $h_\mu(\sigma) = P - \int\phi\,d\mu$, i.e., $P = h_\mu(\sigma) + \int\phi\,d\mu$. The variational principle (Theorem~\ref{thm:variational_principle}, proved independently in Section~\ref{sec:variational}) shows that $h_\mu(\sigma) + \int\phi\,d\mu \leq P(\phi)$ for any invariant measure, with equality characterizing equilibrium states. Since $\mu$ achieves equality, $P = P(\phi)$.
\end{proof}

\subsection{Equivalence with the Classical Gibbs Property}

The main result of this section establishes that the intrinsic Jacobian characterization is equivalent to the classical definition of Gibbs measures in terms of bounds on cylinder sets.

\begin{theorem}[Equivalence of Gibbs Characterizations]\label{thm:jacobian_equivalence}
Let $\phi \in \calF_A$ be a potential with summable variations, and let $\mu$ be a $\sigma$-invariant Borel probability measure on $\SigmaA^+$. The following conditions are equivalent:
\begin{enumerate}
\item[(i)] $\mu$ is an intrinsic Gibbs measure for $\phi$: $J_\mu(x) = e^{P(\phi) - \phi(x)}$ for $\mu$-almost every $x$.
\item[(ii)] $\mu$ satisfies the classical Gibbs property: there exist constants $C_1, C_2 > 0$ such that for all $n \geq 1$, all admissible words $w \in \calW_n(A)$, and all $x \in [w]$,
\begin{equation}\label{eq:gibbs_classical}
C_1 \leq \frac{\mu([w])}{\exp(-nP(\phi) + S_n \phi(x))} \leq C_2.
\end{equation}
\end{enumerate}
Moreover, the constants can be taken as $C_1 = e^{-2V(\phi)}$ and $C_2 = e^{2V(\phi)}$, where $V(\phi) = \sum_{n=0}^\infty \var_n(\phi)$.
\end{theorem}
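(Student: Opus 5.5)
The plan is to pass between (i) and (ii) through the chain rule (Lemma~\ref{lem:jacobian_chain}) and the ratio formula (Lemma~\ref{lem:jacobian_formula}). Everything is expressed through the normalized potential $\phi - P(\phi)$, and the bounded-distortion estimate $|S_n\phi(x) - S_n\phi(y)| \leq \sum_{k=0}^{n-1}\var_{n-k}(\phi) \leq V(\phi)$ for $x,y\in[w]$, $|w|=n$, supplies the constants. The basic identity I will use is the following. Suppose $J_\mu = e^{P(\phi)-\phi}$ $\mu$-a.e. Then $\sigma$-invariance of $\mu$, written through the local inverses $\sigma_a^{-1}y = ay$, is equivalent to the normalization
\begin{equation*}
\sum_{a : A_{a,y_0}=1} e^{\phi(ay) - P(\phi)} = 1 \quad \text{for } \mu\text{-a.e. } y,
\end{equation*}
that is, $\calL_{\phi-P(\phi)}1 = 1$ and $\int \calL_{\phi-P(\phi)} g\,d\mu = \int g\,d\mu$.

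\textbf{(i) $\Rightarrow$ (ii).} I apply the duality $n$ times to $g = 1_{[w]}$. A direct computation gives
\begin{equation*}
\mu([w]) = \int 1_{\{A_{w_{n-1},y_0}=1\}}\, e^{S_n\phi(wy) - nP(\phi)}\, d\mu(y).
\end{equation*}
Replacing $S_n\phi(wy)$ by $S_n\phi(x)$ for an arbitrary $x \in [w]$ costs a factor $e^{\pm V(\phi)}$. This immediately yields the upper bound with $C_2 = e^{V(\phi)} \leq e^{2V(\phi)}$, since the follower set has measure at most $1$.

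The lower bound is where the real work lies: I must bound $\mu(\{y : A_{w_{n-1},y_0}=1\})$ from below by $e^{-V(\phi)}$. First I will try the normalization applied once more to the last symbol $b = w_{n-1}$. The identity $\mu([b]) = \int_{F_b} e^{\phi(by)-P}\,d\mu(y)$, where $F_b$ is the follower set of $b$, compared against $\sum_a e^{\phi(ay)-P}=1$, should let me trade the mass of $F_b$ for the single-letter weight $e^{\phi(by)-P}$. That weight is exactly the factor already accounted for in $S_n\phi$, at the cost of one more $\var_0$-type distortion which is absorbed in $V(\phi)$. This gives $C_1 = e^{-2V(\phi)}$. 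I expect this step to be the main obstacle. If the direct comparison fails, the fallback is to run the same computation for the extended cylinder $[wa]$ and sum over admissible $a$, using the normalization to recombine the terms.

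\textbf{(ii) $\Rightarrow$ (i).} Dividing the Gibbs bounds for $[x_0\cdots x_{n-1}]$ and $[x_1\cdots x_{n-1}]$ and using Lemma~\ref{lem:jacobian_formula} shows that the function $u = \log J_\mu - (P(\phi)-\phi)$ is bounded $\mu$-a.e. by $4V(\phi)$. The remaining task is to show that $u = 0$. By invariance, $\psi := -\log J_\mu$ is itself normalized, $\calL_\psi 1 = 1$. By the argument of Proposition~\ref{thm:pressure_uniqueness} (Shannon--McMillan--Breiman together with the ergodic theorem), $h_\mu(\sigma) = \int \log J_\mu\,d\mu$. The Gibbs bounds also give $h_\mu(\sigma) + \int\phi\,d\mu = P(\phi)$, because $-\frac1n\log\mu([x_0\cdots x_{n-1}]) = P(\phi) - \frac1n S_n\phi(x) + O(1/n)$. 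Hence $\int u\,d\mu = 0$.

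Next I use the Gibbs inequality (Appendix~\ref{app:entropy}) in the form $\int \log\big(e^{\phi-P}/e^{\psi}\big)\,d\mu \leq \log\int \calL_\psi\big(e^{\phi-P-\psi}\big)\,d\mu$. Since the right-hand side is $\log$ of $\int \calL_{\phi-P}1\,d\mu$, a quantity whose size is controlled by the Gibbs bounds, equality forces $e^{\phi - P - \psi}$ to be constant on the fibres $\sigma^{-1}(y)$ for $\mu$-a.e. $y$. Finally, the normalizations of $\psi$ and of $\phi - P$ fix that constant to be $1$, giving $J_\mu = e^{P(\phi)-\phi}$ $\mu$-a.e.

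The delicate point in this direction is ensuring that $\calL_{\phi-P}1 = 1$ holds $\mu$-a.e. rather than merely up to the multiplicative constants in the Gibbs bounds. I will derive it by integrating the Gibbs ratio against $\mu$ over all one-letter extensions $[aw']$ and passing to the limit $n \to \infty$ through Lemma~\ref{lem:jacobian_formula}.
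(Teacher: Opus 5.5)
Both steps you flag as the crux are genuine gaps. Neither can be filled, because the statement as written fails at exactly those two points.

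\textbf{Lower bound in (i)$\Rightarrow$(ii).} Your representation $\mu([w])=\int \mathbf{1}_{\{A_{w_{n-1},y_0}=1\}}e^{S_n\phi(wy)-nP(\phi)}\,d\mu(y)$ is correct, and so is the upper bound $C_2=e^{V(\phi)}$.

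The estimate you then need, $\mu(F_b)\ge e^{-V(\phi)}$ for the follower set $F_b$ of $b=w_{n-1}$, is false, and so is $C_1=e^{-2V(\phi)}$. Here is a counterexample.
\begin{itemize}
\item[] Take $\calA=\{1,2,3\}$, $A=\begin{pmatrix}1&1&0\\0&1&1\\1&0&1\end{pmatrix}$ (primitive, since $A^2>0$) and $\phi=0$.
\item[] Every symbol has exactly two predecessors, so $\calL_0\mathbf{1}=2$ and $P(0)=\log 2$.
\item[] The Markov measure $\mu$ with uniform marginals and transition probability $\tfrac12$ along each edge is invariant with $J_\mu\equiv 2=e^{P-\phi}$, so (i) holds.
\item[] Yet $\mu([w])=\tfrac13 2^{-(n-1)}=\tfrac23 e^{-nP+S_n\phi(x)}$ for every $w$, whereas $V(\phi)=0$ would force this ratio to be at least $1$.
\end{itemize}
Neither your single-letter trade nor the fallback through $[wa]$ can produce a factor that is not there.

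What you can actually prove is $C_1=e^{-V(\phi)}\min_b\mu(F_b)$. Combine this with
$\min_a\mu([a])=\min_a\int\calL^M_{\phi-P}\mathbf{1}_{[a]}\,d\mu\ge e^{-M(P(\phi)-\min\phi)}\ge N^{-M}e^{-M\var_0(\phi)}$,
which holds because every $y$ has an $M$-fold preimage in $[a]$ when $A^M>0$. The paper's proof has the same defect: it reaches $c_0=\min_a\mu([a])$ and then simply declares $C_1=e^{-2V(\phi)}$.

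\textbf{(ii)$\Rightarrow$(i).} Your ``delicate point'' is not delicate; it is false. The Gibbs bounds do not imply $\calL_{\phi-P}\mathbf{1}=1$.
\begin{itemize}
\item[] On the golden mean shift with $\phi=0$, $\calL_{-P}\mathbf{1}(y)$ is the number of predecessors of $y_0$ divided by $\varphi$. That is $2/\varphi$ or $1/\varphi$, never $1$.
\item[] So by your own normalization identity, no invariant measure satisfies (i).
\item[] Yet the Parry measure satisfies (ii). Its Jacobian is $\varphi\,u_{x_1}/u_{x_0}$ with $u=(\varphi,1)$, which equals $1$ on $[01]$ rather than $\varphi$.
\end{itemize}
Correspondingly, your Jensen step only gives $0=\int u\,d\mu\le\int\log\calL_{\phi-P}\mathbf{1}\,d\mu$, and nothing forces equality.

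What (ii) actually yields is $\mu=h\nu$ with $J_\mu=e^{P(\phi)-\phi}\,h\circ\sigma/h$. This is condition (i) for the normalized cohomologous potential $\phi+\log h-\log h\circ\sigma$. The stated equivalence therefore holds only when $\calL_\phi\mathbf{1}=e^{P(\phi)}$.

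The paper breaks at the same point, in two ways:
\begin{itemize}
\item[] The identity $h_\mu(\sigma)+\int\phi\,d\mu=P(\phi)$ gives only $\int u\,d\mu=0$, not $u=0$.
\item[] Its alternative route asserts that $h\nu$ has Jacobian $e^{P-\phi}$ ``by construction''. That is true of $\nu$ (Jacobian $\lambda e^{-\phi}$), but not of $h\nu$ unless $h$ is constant.
\end{itemize}
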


\begin{proof}
$(i) \Rightarrow (ii)$: Assume $\mu$ is an intrinsic Gibbs measure. By Lemma~\ref{lem:jacobian_chain} and the intrinsic condition,
\begin{equation}
\log J_\mu^{(n)}(x) = \sum_{k=0}^{n-1} \log J_\mu(\sigma^k x) = \sum_{k=0}^{n-1} (P(\phi) - \phi(\sigma^k x)) = nP(\phi) - S_n \phi(x).
\end{equation}
Thus $J_\mu^{(n)}(x) = e^{nP(\phi) - S_n \phi(x)}$ for $\mu$-almost every $x$.

For any cylinder $[w]$ with $|w| = n$, the relation between the Jacobian and the measure of cylinders (Lemma~\ref{lem:jacobian_formula}) gives
\begin{equation}
\mu([w]) = \int_{[w]} \frac{d\mu}{d(\sigma^n)_* \mu}(\sigma^n x) \, d\mu(x) = \int_{[w]} \frac{1}{J_\mu^{(n)}(x)} \, d(\sigma^n)_* \mu(\sigma^n x).
\end{equation}
Since $(\sigma^n)_*$ maps $[w]$ bijectively onto $\sigma^n([w])$, and using the $\sigma$-invariance of $\mu$, we obtain
\begin{equation}
\mu([w]) = \int_{\sigma^n([w])} \frac{1}{J_\mu^{(n)}(\sigma^{-n}_{w} y)} \, d\mu(y),
\end{equation}
where $\sigma^{-n}_w: \sigma^n([w]) \to [w]$ is the local inverse that prepends the word $w$.

Now, for any $x \in [w]$ and any $y \in [w]$ (so that $\sigma^k x$ and $\sigma^k y$ agree in the first $n-k$ coordinates for $0 \leq k < n$), we have
\begin{equation}
|S_n \phi(x) - S_n \phi(y)| \leq \sum_{k=0}^{n-1} |\phi(\sigma^k x) - \phi(\sigma^k y)| \leq \sum_{k=0}^{n-1} \var_{n-k}(\phi) \leq V(\phi).
\end{equation}
Therefore, for any $x \in [w]$,
\begin{equation}
e^{-V(\phi)} \leq \frac{e^{nP(\phi) - S_n \phi(y)}}{e^{nP(\phi) - S_n \phi(x)}} \leq e^{V(\phi)}
\end{equation}
for all $y \in [w]$. Integrating over $[w]$ and using the Jacobian formula gives
\begin{equation}
e^{-V(\phi)} \cdot e^{-nP(\phi) + S_n \phi(x)} \leq \mu([w]) \leq e^{V(\phi)} \cdot e^{-nP(\phi) + S_n \phi(x)}.
\end{equation}

We now make this rigorous without invoking the Jacobian formula as a limit. Since $\mu$ is $\sigma$-invariant and satisfies $J_\mu = e^{P(\phi)-\phi}$, for any cylinder $[w] = [w_0\cdots w_{n-1}]$ and symbol $a$ with $A_{a,w_0} = 1$, the change-of-variables formula for the Jacobian gives
\begin{equation}
\mu([aw]) = \int_{[aw]} 1\,d\mu = \int_{\sigma([aw])} \frac{1}{J_\mu(\sigma_a^{-1}y)}\,d\mu(y) = \int_{[w]} e^{-(P(\phi)-\phi(\sigma_a^{-1}y))}\,d\mu(y),
\end{equation}
where $\sigma_a^{-1}y = ay$. Iterating this $n$ times: for any $x \in [w]$,
\begin{equation}
\mu([w]) = \int_{\sigma^n([w])} \prod_{k=0}^{n-1} e^{-(P(\phi)-\phi(\sigma^k z_w))}\,d\mu = \int_{\sigma^n([w])} e^{-nP(\phi)+S_n\phi(z_w)}\,d\mu,
\end{equation}
where $z_w$ is the unique preimage of $y$ under $\sigma^n$ lying in $[w]$. For any two points $z, z' \in [w]$, the bounded distortion estimate $|S_n\phi(z) - S_n\phi(z')| \leq \sum_{k=0}^{n-1}\var_{n-k}(\phi) \leq V(\phi)$ gives
\begin{equation}
e^{-V(\phi)}e^{S_n\phi(x)} \leq e^{S_n\phi(z_w)} \leq e^{V(\phi)}e^{S_n\phi(x)}
\end{equation}
for any $x \in [w]$. Substituting and using $\mu(\sigma^n([w])) \leq 1$ and $\mu(\sigma^n([w])) \geq \mu([w_{n-1}]) \geq e^{-V(\phi)}e^{-P(\phi)+\phi(w_{n-1})} \cdot \mu(\sigma([w_{n-1}]))$ (which is bounded below independently of $w$ by compactness), we obtain
\begin{equation}
e^{-V(\phi)} \cdot e^{-nP(\phi)+S_n\phi(x)} \cdot \mu(\sigma^n([w])) \leq \mu([w]) \leq e^{V(\phi)} \cdot e^{-nP(\phi)+S_n\phi(x)} \cdot \mu(\sigma^n([w])).
\end{equation}
The set $\sigma^n([w])$ is a union of $1$-cylinders $\{[j] : A_{w_{n-1},j} = 1\}$, so $\mu(\sigma^n([w])) \geq \min_{a \in \calA}\mu([a])$. This minimum is strictly positive: the intrinsic Gibbs condition applied to $1$-cylinders gives $\mu([a]) = \int_{[a]} e^{-(P(\phi)-\phi(x))}\,d(\sigma_*\mu)(x) \geq e^{-(P(\phi)+\|\phi\|_\infty)}\mu(\sigma([a])) > 0$ for each $a$, since the mixing property ensures every symbol has at least one predecessor. Setting $c_0 = \min_{a\in\calA}\mu([a]) > 0$, we have $c_0 \leq \mu(\sigma^n([w])) \leq 1$, and therefore
\begin{equation}
C_1 \leq \frac{\mu([w])}{e^{-nP(\phi)+S_n\phi(x)}} \leq C_2
\end{equation}
with $C_1 = e^{-2V(\phi)}$ and $C_2 = e^{2V(\phi)}$. The factor of $2$ arises because one factor of $e^{V(\phi)}$ comes from the bounded distortion of $S_n\phi$ within $[w]$, and the second from the bounds on $\mu(\sigma^n([w]))$.

$(ii) \Rightarrow (i)$: Assume $\mu$ satisfies the classical Gibbs property \eqref{eq:gibbs_classical}. For any point $x \in \SigmaA^+$ and $n \geq 1$, let $w^{(n)} = (x_0, \ldots, x_{n-1})$. By the Gibbs bounds,
\begin{equation}
C_1 e^{-nP(\phi) + S_n \phi(x)} \leq \mu([w^{(n)}]) \leq C_2 e^{-nP(\phi) + S_n \phi(x)}.
\end{equation}
Similarly, for the cylinder $[w^{(n-1)}] = [x_1, \ldots, x_{n-1}]$,
\begin{equation}
C_1 e^{-(n-1)P(\phi) + S_{n-1} \phi(\sigma x)} \leq \mu([w^{(n-1)}]) \leq C_2 e^{-(n-1)P(\phi) + S_{n-1} \phi(\sigma x)}.
\end{equation}
Taking the ratio,
\begin{equation}
\frac{C_1}{C_2} e^{-P(\phi) + S_n \phi(x) - S_{n-1} \phi(\sigma x)} \leq \frac{\mu([w^{(n)}])}{\mu([w^{(n-1)}])} \leq \frac{C_2}{C_1} e^{-P(\phi) + S_n \phi(x) - S_{n-1} \phi(\sigma x)}.
\end{equation}
Since $S_n \phi(x) - S_{n-1} \phi(\sigma x) = \phi(x)$, this becomes
\begin{equation}
\frac{C_1}{C_2} e^{P(\phi) - \phi(x)} \leq \frac{\mu([w^{(n)}])}{\mu([w^{(n-1)}])} \leq \frac{C_2}{C_1} e^{P(\phi) - \phi(x)}.
\end{equation}
As $n \to \infty$, Lemma~\ref{lem:jacobian_formula} gives $J_\mu(x) = \lim_{n \to \infty} \mu([w^{(n)}])/\mu([w^{(n-1)}])$, so
\begin{equation}
\frac{C_1}{C_2} e^{P(\phi) - \phi(x)} \leq J_\mu(x) \leq \frac{C_2}{C_1} e^{P(\phi) - \phi(x)}.
\end{equation}
But $\mu$ is $\sigma$-invariant and $\phi$ is continuous, so the function $x \mapsto J_\mu(x)$ is well-defined $\mu$-almost everywhere. The bounds show that $\log J_\mu(x) - P(\phi) + \phi(x)$ is bounded, but we need to show it equals zero.

By the ergodic theorem, for $\mu$-almost every $x$,
\begin{equation}
\lim_{n \to \infty} \frac{1}{n} \sum_{k=0}^{n-1} \log J_\mu(\sigma^k x) = \int \log J_\mu \, d\mu.
\end{equation}
By the chain rule (Lemma~\ref{lem:jacobian_chain}) and the Gibbs bounds,
\begin{equation}
\sum_{k=0}^{n-1} \log J_\mu(\sigma^k x) = \log J_\mu^{(n)}(x) = \log \mu([w^{(n)}]) - \log \mu([w^{(2n)}]) + o(n),
\end{equation}
using the definition of the Jacobian. The Shannon-McMillan-Breiman theorem gives $-\frac{1}{n} \log \mu([w^{(n)}]) \to h_\mu(\sigma)$, so
\begin{equation}
\int \log J_\mu \, d\mu = h_\mu(\sigma).
\end{equation}
On the other hand, the Gibbs bounds imply $\log J_\mu(x) = P(\phi) - \phi(x) + O(1)$, where the $O(1)$ term is bounded but potentially non-zero. Integrating,
\begin{equation}
\int \log J_\mu \, d\mu = P(\phi) - \int \phi \, d\mu + O(1).
\end{equation}
Combining these, $h_\mu(\sigma) = P(\phi) - \int \phi \, d\mu + O(1)$. But by the variational principle, $h_\mu(\sigma) + \int \phi \, d\mu \leq P(\phi)$ with equality only for equilibrium states. The Gibbs bounds imply that $\mu$ is an equilibrium state (this is proved in Section~\ref{sec:variational}), so $h_\mu(\sigma) + \int \phi \, d\mu = P(\phi)$, forcing the $O(1)$ term to be zero. Thus $J_\mu(x) = e^{P(\phi) - \phi(x)}$ $\mu$-almost everywhere.
\end{proof}
We give an alternative conclusion that avoids the forward reference to the variational principle. By the spectral theory developed in Section~\ref{sec:RPF} (which is independent of the present section), there exists a unique Gibbs measure $\mu_\phi = h_\phi\nu_\phi$ satisfying the classical Gibbs property with the same pressure $P(\phi)$. Since $\mu$ also satisfies the classical Gibbs property, the ratio $\mu([w])/\mu_\phi([w])$ is bounded above and below by $C_2/C_1$ and $C_1/C_2$ for all cylinders $[w]$. This implies $\mu = \mu_\phi$ (two measures agreeing up to bounded ratio on a generating algebra must be equal). For $\mu_\phi$, the Jacobian equals $e^{P(\phi)-\phi}$ exactly by construction (Theorem~\ref{thm:eigendata_existence}), so the same holds for $\mu$.
\begin{remark}[Logical Dependence]\label{rem:circularity}
The proof of the direction (classical Gibbs) $\Rightarrow$ (Jacobian) uses the variational principle from Section~\ref{sec:variational} to force the $O(1)$ term to zero. This creates an apparent circularity since the variational principle itself uses the spectral construction. The circularity is resolved as follows: the spectral construction in Section~\ref{sec:RPF} independently produces a measure satisfying the classical Gibbs property, and the variational principle in Section~\ref{sec:variational} is proved independently of the Jacobian characterization. Thus the complete implication chain is:
\[
\text{Spectral (iii)} \Rightarrow \text{Classical Gibbs (ii)} \Rightarrow \text{Variational (iv)} \Rightarrow \text{Jacobian (i)}.
\]
The proof of (i) $\Rightarrow$ (ii) (the reverse direction) does not use the variational principle and is self-contained. The full equivalence $(i) \Leftrightarrow (ii)$ is therefore established without circularity once all sections are in place.
\end{remark}

This theorem establishes the first part of the Main Equivalence Theorem~\ref{thm:main_equivalence}: the intrinsic Jacobian characterization $(i)$ is equivalent to the classical Gibbs property $(ii)$. The remaining equivalences will be established in subsequent sections.

\subsection{Properties of the Jacobian}

The Jacobian characterization leads to several useful properties of Gibbs measures that are not immediately apparent from the classical definition.

\begin{corollary}[Transformation under Cohomology]\label{cor:cohomology}
Let $\phi, \psi \in \calF_A$ be cohomologous potentials, meaning there exists $u \in C(\SigmaA^+)$ such that $\phi - \psi = u - u \circ \sigma$. Then $\mu$ is a Gibbs measure for $\phi$ if and only if $\mu$ is a Gibbs measure for $\psi$. Moreover, $P(\phi) = P(\psi)$.
\end{corollary}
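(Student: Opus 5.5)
The plan is to prove both claims at once by comparing Birkhoff sums: cohomologous potentials have Birkhoff sums differing by a telescoping term that stays bounded uniformly in $n$. Writing $\phi - \psi = u - u\circ\sigma$ and summing along an orbit gives
\begin{equation}
S_n\phi(x) - S_n\psi(x) = u(x) - u(\sigma^n x),
\end{equation}
so $|S_n\phi(x) - S_n\psi(x)| \leq 2\|u\|_\infty$ for all $n$ and $x$. Here $u$ is continuous on the compact space $\SigmaA^+$, hence bounded.

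\textbf{Step 1: equality of pressures.} Insert this bound into the definition \eqref{eq:pressure_def_preview} of the pressure. For each admissible word $w \in \calW_n(A)$ we have $\sup_{[w]} S_n\phi \leq \sup_{[w]} S_n\psi + 2\|u\|_\infty$, and symmetrically with $\phi$ and $\psi$ exchanged. Hence the two partition sums differ by at most a factor $e^{\pm 2\|u\|_\infty}$. After taking $\frac1n\log$ and letting $n\to\infty$, this factor disappears, so $P(\phi) = P(\psi)$.

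\textbf{Step 2: transfer of the Gibbs property.} Work with the classical formulation (ii), which is equivalent to (i) by Theorem~\ref{thm:jacobian_equivalence}. Suppose $\mu$ satisfies \eqref{eq:gibbs_classical} for $\phi$ with constants $C_1, C_2$. For $x \in [w]$ with $|w| = n$, combining Step~1 with the telescoping bound gives
\begin{equation}
e^{-nP(\psi)+S_n\psi(x)} = e^{-nP(\phi)+S_n\phi(x)}\, e^{-u(x)+u(\sigma^n x)}.
\end{equation}
Therefore $\mu$ satisfies the Gibbs bounds for $\psi$ with constants $C_1 e^{-2\|u\|_\infty}$ and $C_2 e^{2\|u\|_\infty}$. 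The argument is symmetric, so the converse also holds.

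\textbf{Step 3: intrinsic formulation and the main obstacle.} Here we need the Jacobian statement itself, not only the classical bounds. If $u$ is merely continuous, then $\psi = \phi - u + u\circ\sigma$ need not lie in $\calF_A$, and Theorem~\ref{thm:jacobian_equivalence} could not be applied to $\psi$. I would therefore assume, as the statement implicitly requires, that $\psi \in \calF_A$.

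Even then the Jacobian of $\mu$ is a single function, so it cannot equal both $e^{P-\phi}$ and $e^{P-\psi}$ unless $u = u\circ\sigma$ $\mu$-a.e. To resolve this, I would read ``Gibbs measure'' in the sense of (ii), where Step~2 suffices.

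For (i) I would argue through uniqueness. Under the Gibbs property the constant $u$-correction forces $u\circ\sigma - u$ to have zero integral. The unique Gibbs measures for $\phi$ and $\psi$ then coincide, via the ratio argument after Theorem~\ref{thm:jacobian_equivalence}: bounded cylinder ratios imply equality.

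This identification of the two intrinsic conditions is the step I expect to need the most care.
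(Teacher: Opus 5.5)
Your Steps 1 and 2 are the paper's proof. The paper uses the same telescoping identity $S_n\phi - S_n\psi = u - u\circ\sigma^n$, reads ``Gibbs measure'' in the cylinder sense, transfers the bounds ``with modified constants'', and gets $P(\phi)=P(\psi)$ from the partition-function definition. You make the factors $e^{\pm 2\|u\|_\infty}$ explicit. Your worry that $\psi$ might leave $\calF_A$ is moot, since the statement assumes $\phi,\psi\in\calF_A$.

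The final paragraph of Step 3 should be deleted, because no argument of that kind can work. Uniqueness only tells you that the cylinder-Gibbs measures for $\phi$ and $\psi$ coincide, and Step 2 already gives that. It cannot make the single function $J_\mu$ equal both $e^{P-\phi}$ and $e^{P'-\psi}$.

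Your own observation settles the matter the other way:
\begin{itemize}
\item Equal Jacobians force $u-u\circ\sigma = P-P'$ $\mu$-a.e.
\item Integrating against the invariant $\mu$ shows this constant is $0$. That is all your zero-integral remark yields, and it holds for every invariant measure.
\item A Gibbs measure charges every cylinder and $u$ is continuous, so $u=u\circ\sigma$ everywhere.
\item Then $u$ is constant on a dense orbit, hence constant, and $\phi=\psi$.
\end{itemize}
So the Jacobian reading of the corollary is false for distinct cohomologous potentials.

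For the same reason, do not open Step 2 by citing Theorem~\ref{thm:jacobian_equivalence} as an equivalence of (i) and (ii). Combined with Step 2, it would produce exactly this contradiction. In fact $\mu_\phi=h\nu$ has Jacobian $\lambda e^{-\phi}\,(h\circ\sigma)/h$. The exact identity $J_\mu=e^{P-\phi}$ therefore holds only for the normalized cohomologous potential $\phi+\log h-\log h\circ\sigma$, not for $\phi$ unless $h$ is constant. For example, the Parry measure of the golden mean shift, with $\phi=0$, has non-constant Jacobian.

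The correct statement is the one your Steps 1--2 and the paper's proof actually establish: cohomologous potentials have the same pressure and the same cylinder-Gibbs measures.
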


\begin{proof}
The Birkhoff sums satisfy $S_n \phi(x) - S_n \psi(x) = u(x) - u(\sigma^n x)$, which is bounded uniformly in $n$ and $x$. Thus the Gibbs bounds for $\phi$ imply Gibbs bounds for $\psi$ with modified constants, and vice versa. The pressure equality follows from the definition \eqref{eq:pressure_def_preview}.
\end{proof}

\begin{corollary}[Absolute Continuity]\label{cor:absolute_continuity}
Let $\mu_\phi$ and $\mu_\psi$ be Gibbs measures for potentials $\phi, \psi \in \calF_A$. Then $\mu_\phi$ and $\mu_\psi$ are mutually absolutely continuous with Radon-Nikodym derivative
\begin{equation}
\frac{d\mu_\phi}{d\mu_\psi}(x) = \lim_{n \to \infty} \frac{\mu_\phi([x_0 \cdots x_{n-1}])}{\mu_\psi([x_0 \cdots x_{n-1}])}.
\end{equation}
\end{corollary}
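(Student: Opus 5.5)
The statement to prove is Corollary~\ref{cor:absolute_continuity}. Taken literally it cannot hold for arbitrary $\phi,\psi$. If $\phi-\psi$ is not cohomologous to a constant, then $\mu_\phi\neq\mu_\psi$. Both measures are ergodic (Main Theorem~\ref{thm:main_equivalence}), and distinct ergodic invariant measures are mutually singular. So the plan is to prove the corollary under the hypothesis that makes it true, namely $\phi-\psi$ cohomologous to a constant $c$, and to remark that the general statement fails.

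Under that hypothesis, write $\phi-\psi=c+u-u\circ\sigma$ with $u$ continuous. Corollary~\ref{cor:cohomology}, applied to $\phi$ and $\psi+c$, shows that $\mu_\phi$ is Gibbs for $\psi+c$. Since $P(\psi+c)=P(\psi)+c$, the constant $c$ cancels from $-nP+S_n$ in \eqref{eq:gibbs_classical}. Hence $\mu_\phi$ and $\mu_\psi$ both satisfy the Gibbs bounds for the same potential $\psi$, with constants $C_1,C_2$ and $C_1',C_2'$ respectively.

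Fix a cylinder $[w]$ of length $n$ and a point $x\in[w]$. Dividing the two Gibbs bounds gives
\[
\frac{C_1}{C_2'}\le \frac{\mu_\phi([w])}{\mu_\psi([w])}\le \frac{C_2}{C_1'}.
\]
Cylinders form a generating semi-algebra closed under finite disjoint unions. Regularity of Borel measures on the compact metric space $\SigmaA^+$ extends these ratio bounds to all Borel sets. In particular each measure is absolutely continuous with respect to the other, and the density $d\mu_\phi/d\mu_\psi$ lies in $[C_1/C_2',C_2/C_1']$.

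The limit formula then follows from the martingale convergence theorem for the filtration generated by the partitions into $n$-cylinders. The ratio $\mu_\phi([x_0\cdots x_{n-1}])/\mu_\psi([x_0\cdots x_{n-1}])$ is exactly the conditional expectation of $d\mu_\phi/d\mu_\psi$, under $\mu_\psi$, on the $n$-cylinder containing $x$. These partitions generate the Borel $\sigma$-algebra, so the ratios converge $\mu_\psi$-a.e. (equivalently $\mu_\phi$-a.e.) to the Radon-Nikodym derivative.

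In fact, with the hypothesis the argument yields more: the two Gibbs measures for $\psi$ coincide. This follows from the uniqueness in Main Theorem~\ref{thm:main_equivalence}, or from ergodicity, since an invariant density of an ergodic measure is constant. So the derivative is identically $1$.

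The main obstacle is the correction to the statement itself, not any single estimate. Without the cohomology hypothesis, the hoped-for step of comparing Gibbs bounds fails: the ratio of cylinder measures behaves like $e^{S_n(\phi-\psi)(x)-n(P(\phi)-P(\psi))}$, which is unbounded in $n$. To justify the necessity remark, the proposal would invoke Birkhoff's theorem. If $\int(\phi-\psi)\,d\mu_\phi\ne P(\phi)-P(\psi)$, the ratio tends to $0$ or $\infty$ $\mu_\phi$-a.e. The Livšic-type dichotomy then covers the remaining case and gives singularity unless $\phi-\psi$ is cohomologous to a constant.
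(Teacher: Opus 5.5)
Your objection is correct, and it identifies an error in the paper rather than a gap in your plan. The paper's one-line proof asserts that ``by the Gibbs bounds, both measures have comparable values on all cylinders.'' But the two Gibbs bounds compare $\mu_\phi([w])$ with $e^{S_n\phi(x)-nP(\phi)}$ and $\mu_\psi([w])$ with $e^{S_n\psi(x)-nP(\psi)}$. So the quotient $\mu_\phi([w])/\mu_\psi([w])$ is only comparable to $e^{S_n(\phi-\psi)(x)-n(P(\phi)-P(\psi))}$, and this is bounded uniformly in $n$ and $x$ exactly when $\phi-\psi$ is cohomologous to a constant.

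The paper's own Section~\ref{sec:numerical} supplies a counterexample. On the full $2$-shift, the $(0.7,0.3)$-Bernoulli measure is the Gibbs measure of $\phi=\log p_{x_0}$, and the uniform Bernoulli measure is the Gibbs measure of the zero potential. The cylinder ratio is $2^n(0.7)^k(0.3)^{n-k}$, where $k$ is the number of $1$'s in $w$. At $\mu_\phi$-typical points this grows like $e^{(\log 2-h_{\mu_\phi}(\sigma))n}\approx e^{0.082n}$. The two measures are in fact mutually singular: the set of sequences in which the symbol $1$ has frequency $0.7$ has measure $1$ for one and $0$ for the other.

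Under your added hypothesis, your argument is the paper's intended argument made rigorous. You get uniform ratio bounds on cylinders, extend them to all Borel sets via countable disjoint unions of cylinders and outer regularity, and obtain the limit formula from L\'evy's martingale convergence theorem (which the paper merely asserts). Your observation that the two measures then coincide shows that the corrected corollary reduces to uniqueness of the Gibbs measure, with derivative identically $1$.

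Two small refinements:
\begin{enumerate}
\item[(1)] For the one-sided shift, invariance of both measures only gives $f\circ\sigma=\E_{\mu_\psi}[f\mid\sigma^{-1}\calB]$ for $f=d\mu_\phi/d\mu_\psi$. To conclude $f\circ\sigma=f$, note that $\|f\circ\sigma\|_{L^2(\mu_\psi)}=\|f\|_{L^2(\mu_\psi)}$ forces the conditional expectation to fix $f$. Alternatively, the uniqueness route you also mention suffices.
\item[(2)] In the necessity remark, the variational principle gives $P(\phi)-P(\psi)\le\int(\phi-\psi)\,d\mu_\phi$, with equality iff $\mu_\phi$ is the equilibrium state of $\psi$, i.e.\ iff $\mu_\phi=\mu_\psi$. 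So the dichotomy is simply this: either the measures are equal, or the cylinder ratio tends to $+\infty$ (never $0$) $\mu_\phi$-almost everywhere. Liv\v{s}ic's theorem is needed only to identify the first alternative with cohomology to a constant.
\end{enumerate}
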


\begin{proof}
By the Gibbs bounds, both measures have comparable values on all cylinders, so neither can be singular with respect to the other. The Radon-Nikodym derivative is computed as the limit of ratios on cylinders.
\end{proof}

\begin{corollary}[Ergodicity and Mixing]\label{cor:ergodicity}
If $(\SigmaA, \sigma)$ is topologically mixing and $\phi \in \calF_A$, then the Gibbs measure $\mu_\phi$ is ergodic and mixing.
\end{corollary}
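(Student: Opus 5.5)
The plan is to derive mixing from the cylinder Gibbs bounds of Theorem~\ref{thm:jacobian_equivalence} together with the uniform mixing lemma (Lemma~\ref{lem:uniform_mixing}); ergodicity then follows as a consequence of mixing. The core estimate is a two-sided quasi-independence bound on cylinders. Fix admissible words $w$ of length $n$ and $v$ of length $k$, and let $m \geq M$. We want to compare $\mu([w]\cap\sigma^{-(n+m)}[v])$ with $\mu([w])\mu([v])$.

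The set $[w]\cap\sigma^{-(n+m)}[v]$ is a disjoint union of cylinders $[wuv]$, where $u$ ranges over connecting words of length $m$. For each such cylinder, apply the Gibbs bounds \eqref{eq:gibbs_classical}. Since $S_{n+m+k}\phi(x) = S_n\phi(x) + S_m\phi(\sigma^n x) + S_k\phi(\sigma^{n+m}x)$, and bounded distortion costs at most a factor $e^{V(\phi)}$ for each piece, we get
\[
\mu([wuv]) \asymp \mu([w])\, e^{-mP(\phi)+S_m\phi(y_u)}\, \mu([v]).
\]
Here $y_u \in [u]$, and the implied constants depend only on $C_1, C_2$ and $V(\phi)$. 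Summing over $u$, the middle factor is comparable to $\mu(\sigma^{-n}[\text{last}(w)] \cap \sigma^{-(n+m)}[v_0])$-type sums, which are in turn comparable to the total mass of the length-$m$ cylinders. Applying the Gibbs upper and lower bounds once more to $\sum_u \mu([u])$, this sum is bounded above by $1$. It is bounded below by a positive constant: Lemma~\ref{lem:uniform_mixing} guarantees at least one admissible $u$, and the lower bound follows from $\min_a\mu([a])>0$ together with comparing the sum over connecting $u$ with $\mu([w_{n-1}])$ via the Gibbs bounds. The outcome is a constant $D\geq 1$, independent of $n,k,m\ge M$, with
\[
D^{-1}\mu([w])\mu([v]) \le \mu([w]\cap\sigma^{-(n+m)}[v]) \le D\,\mu([w])\mu([v]).
\]

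Ergodicity now follows. Suppose $E$ is invariant with $0<\mu(E)<1$. Approximate $E$ and $E^c$ by finite unions of cylinders and apply the lower bound above with $m$ large. This gives $\mu(E\cap\sigma^{-j}E^c)\ge D^{-1}\mu(E)\mu(E^c)-\epsilon>0$, which contradicts invariance.

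For mixing, the quasi-independence bound alone only gives $\liminf$ and $\limsup$ within a factor of $D$, so this is the main obstacle. The first thing I would try is the uniqueness of the Gibbs measure. Every weak-$*$ limit point of $\mu(\,\cdot\,\cap\sigma^{-j}[v])/\mu([v])$, restricted to the past, is again a $\sigma$-invariant measure satisfying the Gibbs bounds. By the uniqueness argument given after Theorem~\ref{thm:jacobian_equivalence} (bounded ratio on cylinders forces equality, using ergodicity just proved), that limit is $\mu$ itself.

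A more direct alternative is to invoke the exponential convergence in Theorem~\ref{thm:RPF_main}(d). This gives $\mu([w]\cap\sigma^{-j}[v])\to\mu([w])\mu([v])$ by writing the correlation through $\lambda^{-j}\calL_\phi^j(h\mathbf 1_{[w]})$. Since indicators of cylinders are locally constant, they lie in $\calH_\alpha$, so that theorem applies. Density of cylinder indicators in $L^2(\mu)$ then extends mixing to all measurable sets. I would present the spectral route as the primary proof of mixing and use the Gibbs-bound argument for ergodicity, since it does not depend on Section~\ref{sec:RPF}.
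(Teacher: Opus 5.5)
Your primary mixing argument is essentially the paper's. The paper cites exponential decay of correlations (Theorem~\ref{thm:correlation_main}), which rests on the same identity $\mu([w]\cap\sigma^{-j}[v])=\int\lambda^{-j}\calL_\phi^j(h\mathbf{1}_{[w]})\,\mathbf{1}_{[v]}\,d\nu$ combined with the RPF convergence. You also supply the density step from cylinders to arbitrary measurable sets, which the paper leaves implicit. Since only continuity of $h\mathbf{1}_{[w]}$ is needed, the sup-norm estimate of Theorem~\ref{thm:exponential_convergence} suffices, and it fits the corollary's hypothesis $\phi\in\calF_A$ better than the H\"older-norm version.

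For ergodicity you genuinely differ from the paper. The paper deduces it from uniqueness plus extremality of ergodic measures. That argument tacitly needs two things: that the pieces of a nontrivial convex decomposition of $\mu_\phi$ are again equilibrium states (affinity of $\nu\mapsto h_\nu(\sigma)+\int\phi\,d\nu$), and uniqueness of the equilibrium state. You instead use Bowen's quasi-independence bound, which needs only the Gibbs bounds and primitivity of $A$, and no uniqueness theorem. Once mixing is proved spectrally, ergodicity is automatic, so your separate argument is worthwhile only as a proof independent of Section~\ref{sec:RPF}.

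Two steps need repair.

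\textbf{The uniform lower bound on connecting sums.} Your lower bound on $\sum_u\mu([u])$ over connecting words $u$ of length $m$ is not established by what you wrote. A single connecting word only gives $\mu([u])\geq C_1e^{-m(P(\phi)+\|\phi\|_\infty)}$, which decays in $m$. Comparing the sum with $\mu([w_{n-1}]\cap\sigma^{-(m+1)}[v_0])$ is circular, since that is the same quasi-independence for one-symbol cylinders.

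The standard fix is as follows. For $m>2M$, write $u=c\,u''\,c'$, where $u''$ is an arbitrary admissible word of length $m-2M$ and $c,c'$ are connectors of length $M$ (they exist because $A^{M+1}$ has positive entries). The map $u''\mapsto u$ is injective. The Gibbs bounds give $\mu([u])\geq (C_1/C_2)e^{-2M(P(\phi)+\|\phi\|_\infty)}\mu([u''])$. Summing over $u''$, using $\sum_{u''}\mu([u''])=1$, yields a lower bound independent of $m,w,v$. The cases $M\le m\le 2M$ are handled by finiteness, since the sum depends only on $(w_{n-1},v_0,m)$.

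\textbf{The weak-$*$ limit route to mixing.} This alternative argument fails as stated. Set $\rho_j=\mu(\,\cdot\,\cap\sigma^{-j}[v])/\mu([v])$. Invariance of $\mu$ gives $\sigma_*\rho_j=\rho_{j-1}$. So $\sigma_*$ sends a limit along $(j_i)$ to a limit along $(j_i-1)$, and nothing forces these to agree. Invariance of limit points is essentially the mixing you are trying to prove.

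A Gibbs-only route does exist:
\begin{enumerate}
\item[(1)] Extend your lower bound to $\mu([w]\cap\sigma^{-j}F)\geq D^{-1}\mu([w])\mu(F)$ for all Borel $F$, by a monotone class argument.
\item[(2)] For any tail set $T=\sigma^{-j}F_j$, deduce $\mu(E\cap T)\geq D^{-1}\mu(E)\mu(T)$, first for cylinders $E$ and then for all Borel $E$.
\item[(3)] Take $E=T^c$ to get $\mu(T)\in\{0,1\}$.
\item[(4)] Conclude mixing from this exactness by reverse martingale convergence.
\end{enumerate}
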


\begin{proof}
The ergodicity follows from the uniqueness of the Gibbs measure (established in Section~\ref{sec:RPF}) and the fact that ergodic measures are extremal in the simplex of invariant measures. For mixing, we use the exponential decay of correlations (Theorem~\ref{thm:correlation_main}), which implies that $\mu(A \cap \sigma^{-n} B) \to \mu(A) \mu(B)$ for all measurable sets $A, B$.
\end{proof}

This completes the foundational treatment of the intrinsic Gibbs characterization. The next sections develop the spectral theory of the transfer operator, which provides the analytical machinery for proving the remaining equivalences in the Main Theorem and for deriving the statistical properties of Gibbs measures.


\section{The Ruelle Transfer Operator}\label{sec:transfer_operator}

The Jacobian characterization of Section~\ref{sec:intrinsic_gibbs} describes Gibbs measures intrinsically but does not establish their existence or uniqueness. The Ruelle transfer operator $\calL_\phi$, introduced by  \cite{Ruelle1968}, provides the construction: its dominant eigendata determine the unique Gibbs measure. This section develops the properties of $\calL_\phi$ on H\"{o}lder spaces and establishes the Birkhoff cone contraction that is used in Section~\ref{sec:RPF} to prove the Ruelle-Perron-Frobenius theorem.

\subsection{Definition and Basic Properties}

\begin{definition}[Ruelle Transfer Operator]\label{def:transfer_operator}
Let $\phi \in C(\SigmaA^+)$ be a continuous potential. The Ruelle transfer operator $\calL_\phi: C(\SigmaA^+) \to C(\SigmaA^+)$ is defined by
\begin{equation}\label{eq:transfer_definition}
(\calL_\phi g)(x) = \sum_{y \in \sigma^{-1}(x)} e^{\phi(y)} g(y) = \sum_{a: A_{a,x_0}=1} e^{\phi(ax)} g(ax),
\end{equation}
where $ax = (a, x_0, x_1, \ldots)$ denotes the sequence obtained by prepending the symbol $a$ to $x$.
\end{definition}

The sum in \eqref{eq:transfer_definition} has at most $N$ terms (where $N = |\calA|$ is the alphabet size), and each term is continuous in $x$ when $g$ and $\phi$ are continuous. Thus $\calL_\phi g$ is continuous whenever $g$ is. Linearity of $\calL_\phi$ is immediate from \eqref{eq:transfer_definition}. For boundedness on $C(\SigmaA^+)$, since the sum has at most $N$ terms each with modulus at most $e^{\|\phi\|_\infty}|g(y)| \leq e^{\|\phi\|_\infty}\|g\|_\infty$, the operator norm satisfies
\begin{equation}\label{eq:transfer_norm_bound}
\|\calL_\phi\|_{C \to C} \leq N e^{\|\phi\|_\infty}.
\end{equation}

The transfer operator is closely related to the composition operator induced by the shift. Indeed, if we denote by $C_\sigma: C(\SigmaA^+) \to C(\SigmaA^+)$ the operator $(C_\sigma g)(x) = g(\sigma x)$, then $\calL_\phi$ can be viewed as a weighted adjoint of $C_\sigma$ with respect to an appropriate pairing.

\begin{lemma}[Duality Relation]\label{lem:transfer_duality}
For any $g \in C(\SigmaA^+)$ and any Borel measure $\mu$ on $\SigmaA^+$,
\begin{equation}\label{eq:transfer_duality}
\int_{\SigmaA^+} g \, d(\calL_\phi^* \mu) = \int_{\SigmaA^+} (\calL_\phi g) \, d\mu,
\end{equation}
where $\calL_\phi^*$ denotes the dual operator acting on the space of Borel measures. Equivalently,
\begin{equation}\label{eq:dual_explicit}
(\calL_\phi^* \mu)(E) = \int_{\SigmaA^+} \sum_{y \in \sigma^{-1}(x) \cap E} e^{\phi(y)} \, d\mu(x)
\end{equation}
for any Borel set $E \subset \SigmaA^+$.
\end{lemma}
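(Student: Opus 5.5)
The statement to prove is the Duality Relation, Lemma~\ref{lem:transfer_duality}. The plan is to \emph{define} $\calL_\phi^*\mu$ by the explicit formula \eqref{eq:dual_explicit}, check that it is a finite Borel measure, and then verify \eqref{eq:transfer_duality}. Verifying \eqref{eq:transfer_duality} amounts to integrating the definition \eqref{eq:transfer_definition} against indicator functions and extending by linearity and monotone convergence. The Riesz representation theorem then shows that this measure agrees with the abstract Banach-space adjoint of $\calL_\phi$ acting on $C(\SigmaA^+)^* \cong \calM(\SigmaA^+)$. So the two descriptions in the lemma coincide.

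\emph{Step 1 (the set function is a measure).} For a Borel set $E$, write
\[
\sum_{y\in\sigma^{-1}(x)\cap E} e^{\phi(y)} = \sum_{a:\,A_{a,x_0}=1} e^{\phi(ax)}\,\mathbf 1_E(ax).
\]
The map $x\mapsto ax$ is a homeomorphism from $\{x : A_{a,x_0}=1\}$, a clopen union of $1$-cylinders, onto the cylinder $[a]$. Hence each summand is a bounded Borel function of $x$, bounded by $e^{\|\phi\|_\infty}$, and the integral in \eqref{eq:dual_explicit} is well defined. It is finite, with total mass at most $N e^{\|\phi\|_\infty}\mu(\SigmaA^+)$. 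Countable additivity in $E$ follows from monotone convergence, because $\mathbf 1_{\bigsqcup E_i} = \sum_i \mathbf 1_{E_i}$ and the sum over preimages is finite.

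\emph{Step 2 (the duality identity).} Let $\rho = \calL_\phi^*\mu$ be the measure from Step~1. By construction, $\int g\,d\rho = \int \calL_\phi g\,d\mu$ holds for $g = \mathbf 1_E$; here $\calL_\phi$ is extended to bounded Borel functions by the same formula \eqref{eq:transfer_definition}. Both sides are linear in $g$, so the identity holds for simple functions. Both sides are also monotone in $g$, since $\calL_\phi$ is a positive operator. Monotone convergence applied on both sides therefore gives the identity for nonnegative bounded Borel $g$. Splitting into positive and negative parts gives it for every bounded Borel $g$, in particular every $g\in C(\SigmaA^+)$. This is \eqref{eq:transfer_duality}.

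\emph{Step 3 (identification with the adjoint).} The map $g\mapsto\int \calL_\phi g\,d\mu$ is a bounded linear functional on $C(\SigmaA^+)$, with norm at most $Ne^{\|\phi\|_\infty}\|\mu\|$ by \eqref{eq:transfer_norm_bound}. By Riesz it is represented by a unique regular Borel measure, and that measure is the Banach adjoint $\calL_\phi^*\mu$. Step~2 shows that $\rho$ represents the same functional. On a compact metric space every finite Borel measure is regular, so uniqueness in Riesz gives equality. For signed $\mu$, apply the argument to the Jordan parts.

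The main obstacle is bookkeeping rather than substance. One must confirm Borel measurability of $x\mapsto \mathbf 1_E(ax)$ on the admissible domain $\{A_{a,x_0}=1\}$ and treat points outside it correctly; this is handled by the clopen decomposition in Step~1. One must also make sure that the monotone class extension from indicators to continuous functions is legitimate, which Step~2 handles by monotone convergence.
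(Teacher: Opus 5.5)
Your proof is correct, and it runs in the opposite direction from the paper's. The paper treats \eqref{eq:transfer_duality} as the definition of $\calL_\phi^*$, i.e.\ the Riesz-represented adjoint. It then gets \eqref{eq:dual_explicit} in one line by substituting $g=\mathbf 1_E$ into $\int \calL_\phi g\,d\mu$.

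You instead define the set function by the explicit formula and check that it is a finite, countably additive measure. You prove the duality on indicators, extend it to all bounded Borel $g$ by linearity and monotone convergence, and identify the result with the adjoint by uniqueness in Riesz. The core computation is the same in both: $\calL_\phi \mathbf 1_E(x)=\sum_{y\in\sigma^{-1}(x)\cap E}e^{\phi(y)}$.

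Your version gains rigor at exactly the point the paper glosses. For a general Borel set $E$, $\mathbf 1_E$ is not continuous; only clopen $E$, such as cylinders, give continuous indicators. So plugging it into an identity known only for $g\in C(\SigmaA^+)$ needs an extension argument the paper never gives. It also needs a check that the right side of \eqref{eq:dual_explicit} is a measure, which is your Step~1.

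The paper's direction could also be completed cheaply. Cylinder indicators are continuous, so the duality identity already gives agreement of the two sides of \eqref{eq:dual_explicit} on cylinders. Cylinders form a $\pi$-system generating the Borel $\sigma$-algebra, so Dynkin's $\pi$--$\lambda$ theorem finishes, once your Step~1 shows the right side is a finite measure. The paper's argument gains only brevity.
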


\begin{proof}
The first formula is the definition of the dual operator. For the second, we compute
\begin{align}
\int_{\SigmaA^+} g \, d(\calL_\phi^* \mu) &= \int_{\SigmaA^+} (\calL_\phi g)(x) \, d\mu(x) = \int_{\SigmaA^+} \sum_{y: \sigma y = x} e^{\phi(y)} g(y) \, d\mu(x).
\end{align}
Taking $g = \mathbf{1}_E$ gives \eqref{eq:dual_explicit}.
\end{proof}

The transfer operator enjoys a essential positivity property: it maps non-negative functions to non-negative functions, and it maps strictly positive functions to strictly positive functions when the transition matrix allows all symbols.

\begin{lemma}[Positivity of the Transfer Operator]\label{lem:transfer_positivity}
The operator $\calL_\phi$ has the following properties:
\begin{enumerate}
\item[(a)] If $g \geq 0$, then $\calL_\phi g \geq 0$.
\item[(b)] If $g > 0$ (i.e., $g(x) > 0$ for all $x$), then $\calL_\phi g > 0$.
\item[(c)] For the constant function $\mathbf{1}$, we have $(\calL_\phi \mathbf{1})(x) = \sum_{a: A_{a,x_0}=1} e^{\phi(ax)} \geq e^{-\|\phi\|_\infty}$ for all $x$ with at least one allowed predecessor.
\end{enumerate}
\end{lemma}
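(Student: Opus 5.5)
The plan is to read all three properties directly off the explicit formula \eqref{eq:transfer_definition}, $(\calL_\phi g)(x) = \sum_{a: A_{a,x_0}=1} e^{\phi(ax)} g(ax)$. Each summand is a product of the weight $e^{\phi(ax)}$, which is strictly positive because $\phi$ is real-valued and bounded, and the value $g(ax)$. No spectral or mixing machinery is needed. The only structural input is the standing assumption from Section~\ref{sec:symbolic_dynamics} that every column of $A$ contains at least one entry equal to $1$, i.e.\ every symbol has a predecessor.

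For (a), suppose $g \geq 0$. Then every summand $e^{\phi(ax)}g(ax)$ is nonnegative, and a finite sum of nonnegative numbers is nonnegative. If $x_0$ happens to have no predecessor, the sum is empty and equals $0$, which is still nonnegative. So $\calL_\phi g \geq 0$ pointwise.

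For (b), suppose $g > 0$ everywhere and fix $x \in \SigmaA^+$. By the column assumption on $A$ there is at least one symbol $a$ with $A_{a,x_0} = 1$. For such $a$, the sequence $ax$ is admissible and lies in $\SigmaA^+$, since $x$ itself is admissible. The corresponding summand $e^{\phi(ax)}g(ax)$ is therefore strictly positive. The remaining summands are nonnegative by (a), so $(\calL_\phi g)(x) > 0$.

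For (c), take $g = \mathbf{1}$, so $(\calL_\phi \mathbf{1})(x) = \sum_{a: A_{a,x_0}=1} e^{\phi(ax)}$. If $x$ has at least one allowed predecessor $a_\ast$, drop all other (positive) terms to get $(\calL_\phi\mathbf{1})(x) \geq e^{\phi(a_\ast x)}$. Since $\phi(a_\ast x) \geq -\|\phi\|_\infty$ and the exponential is monotone, this gives $(\calL_\phi\mathbf{1})(x) \geq e^{-\|\phi\|_\infty}$. I would also record the matching upper bound $(\calL_\phi\mathbf{1})(x) \leq N e^{\|\phi\|_\infty}$, which is consistent with \eqref{eq:transfer_norm_bound}.

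There is no real obstacle here. The only point needing care is the logical role of the column condition on $A$. Part (b) genuinely needs that every $x_0$ have a predecessor; without it, $\calL_\phi g$ would vanish at such $x$. That is why (c) is phrased only for $x$ with an allowed predecessor, while under the standing assumption, and in particular under topological mixing via Proposition~\ref{prop:mixing_characterization}, this holds for every $x$.
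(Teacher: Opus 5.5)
Your proposal is correct and is essentially the paper's argument: each part is read off termwise from the defining sum $\sum_{a: A_{a,x_0}=1} e^{\phi(ax)} g(ax)$, and the assumption that every symbol has an allowed predecessor guarantees that the sum is nonempty. You are slightly more careful than the paper. It invokes that assumption only in (c), although, as you note, the strict positivity in (b) also depends on it, since an empty sum would give $0$.
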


\begin{proof}
Parts (a) and (b) follow immediately from the definition \eqref{eq:transfer_definition}: each term in the sum is non-negative (resp. positive) when $g \geq 0$ (resp. $g > 0$). Part (c) uses the assumption that each state has at least one predecessor, so the sum has at least one term, and that term is at least $e^{-\|\phi\|_\infty}$.
\end{proof}

\subsection{Action on H\"{o}lder Spaces}

The spectral analysis of $\calL_\phi$ requires working on function spaces with more regularity than mere continuity. The natural choice is the space of H\"{o}lder continuous functions, on which $\calL_\phi$ exhibits a contraction property in the H\"{o}lder seminorm.

\begin{proposition}[H\"{o}lder Regularity of the Transfer Operator]\label{prop:transfer_holder}
Let $\phi \in \calH_\alpha(\SigmaA^+)$ for some $\alpha \in (0,1)$. Then $\calL_\phi$ maps $\calH_\alpha(\SigmaA^+)$ to itself, and there exists $C > 0$ depending on $\|\phi\|_\alpha$ such that
\begin{equation}\label{eq:holder_contraction}
|\calL_\phi g|_\alpha \leq \alpha N e^{\|\phi\|_\infty} |g|_\alpha + C \|g\|_\infty
\end{equation}
for all $g \in \calH_\alpha(\SigmaA^+)$.
\end{proposition}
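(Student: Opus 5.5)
The plan is to estimate $|(\calL_\phi g)(x) - (\calL_\phi g)(y)|$ directly for $x \neq y$ in $\SigmaA^+$, splitting according to whether $x_0 = y_0$. If $x_0 \neq y_0$, then $d_\alpha^+(x,y) = 1$. The crude bound $|(\calL_\phi g)(x) - (\calL_\phi g)(y)| \leq 2Ne^{\|\phi\|_\infty}\|g\|_\infty$ then has the required form, contributing $2Ne^{\|\phi\|_\infty}$ to the constant $C$. If $x_0 = y_0$, then $x$ and $y$ have exactly the same set of admissible predecessor symbols $a$ (those with $A_{a,x_0}=1$). We can therefore pair the preimages $ax$ and $ay$ term by term. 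The key geometric fact is that prepending a symbol contracts distances:
\begin{equation*}
k^+(ax,ay) = k^+(x,y)+1, \qquad \text{so} \qquad d_\alpha^+(ax,ay) = \alpha\, d_\alpha^+(x,y).
\end{equation*}

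For each admissible $a$, I will write
\begin{equation*}
e^{\phi(ax)}g(ax) - e^{\phi(ay)}g(ay) = e^{\phi(ax)}\bigl(g(ax)-g(ay)\bigr) + \bigl(e^{\phi(ax)}-e^{\phi(ay)}\bigr)g(ay).
\end{equation*}
The first term is bounded by $e^{\|\phi\|_\infty}|g|_\alpha\,\alpha\, d_\alpha^+(x,y)$. For the second term I use the mean value inequality $|e^s - e^t| \leq e^{\max(s,t)}|s-t|$ together with the H\"older property of $\phi$. This gives $|e^{\phi(ax)}-e^{\phi(ay)}| \leq e^{\|\phi\|_\infty}|\phi|_\alpha\,\alpha\, d_\alpha^+(x,y)$, so the second term is at most $e^{\|\phi\|_\infty}|\phi|_\alpha\,\alpha\,\|g\|_\infty\, d_\alpha^+(x,y)$. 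There are at most $N$ admissible $a$, so summing, dividing by $d_\alpha^+(x,y)$ and taking the supremum gives
\begin{equation*}
|\calL_\phi g|_\alpha \leq \alpha N e^{\|\phi\|_\infty}|g|_\alpha + \max\bigl(2Ne^{\|\phi\|_\infty},\ \alpha N e^{\|\phi\|_\infty}|\phi|_\alpha\bigr)\|g\|_\infty .
\end{equation*}
This is \eqref{eq:holder_contraction} with $C = Ne^{\|\phi\|_\infty}\max(2, \alpha|\phi|_\alpha)$, which depends only on $\|\phi\|_\alpha$ and $N$.

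To conclude that $\calL_\phi$ maps $\calH_\alpha(\SigmaA^+)$ into itself, combine this seminorm estimate with the sup-norm bound \eqref{eq:transfer_norm_bound}, $\|\calL_\phi g\|_\infty \leq Ne^{\|\phi\|_\infty}\|g\|_\infty$. This yields $\|\calL_\phi g\|_\alpha < \infty$ and in fact shows $\calL_\phi$ is bounded on $\calH_\alpha$. Continuity of $\calL_\phi g$ is already noted after Definition~\ref{def:transfer_operator}.

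The only real subtlety is the case split. For the paired-preimage argument one must check that the case $x_0 = y_0$ gives identical predecessor sets, which holds because admissibility of $ax$ depends only on $A_{a,x_0}$. One must also ensure that the mismatched case $x_0 \neq y_0$ is absorbed into the $\|g\|_\infty$ term rather than the seminorm term. This is why the constant $C$, not the contraction coefficient $\alpha Ne^{\|\phi\|_\infty}$, carries the factor $2Ne^{\|\phi\|_\infty}$.
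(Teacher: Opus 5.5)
Your proposal is correct and follows the paper's proof essentially step for step. It uses the same case split on whether $x_0 = y_0$, the same pairing of preimages $ax, ay$ with $d_\alpha^+(ax,ay) = \alpha\, d_\alpha^+(x,y)$, the same two-term decomposition, and the same crude bound $2Ne^{\|\phi\|_\infty}\|g\|_\infty$ in the mismatched case. The only differences are cosmetic: you combine the two $\|g\|_\infty$ coefficients by a maximum rather than the paper's sum, which gives a slightly smaller but equally valid $C$. You also verify explicitly, via the sup-norm bound, that $\calL_\phi$ maps $\calH_\alpha(\SigmaA^+)$ to itself, which the paper leaves implicit.
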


\begin{proof}
Let $x, y \in \SigmaA^+$ with $d_\alpha(x, y) = \alpha^k$ for some $k \geq 0$, meaning $x_j = y_j$ for $0 \leq j < k$ and $x_k \neq y_k$. We need to estimate $|(\calL_\phi g)(x) - (\calL_\phi g)(y)|$.

If $k \geq 1$, then for each symbol $a$ with $A_{a, x_0} = 1$, the points $ax$ and $ay$ satisfy $d_\alpha(ax, ay) = \alpha^{k+1}$ (they agree in positions $0$ through $k$). Thus
\begin{align}
|(\calL_\phi g)(x) - (\calL_\phi g)(y)| &= \left| \sum_{a: A_{a,x_0}=1} \left( e^{\phi(ax)} g(ax) - e^{\phi(ay)} g(ay) \right) \right| \\
&\leq \sum_{a} e^{\phi(ax)} |g(ax) - g(ay)| + \sum_{a} |e^{\phi(ax)} - e^{\phi(ay)}| |g(ay)|.
\end{align}
For the first sum, $|g(ax) - g(ay)| \leq |g|_\alpha \alpha^{k+1}$. For the second sum, $|e^{\phi(ax)} - e^{\phi(ay)}| \leq e^{\|\phi\|_\infty} |\phi|_\alpha \alpha^{k+1}$. Thus
\begin{align}
|(\calL_\phi g)(x) - (\calL_\phi g)(y)| &\leq N e^{\|\phi\|_\infty} |g|_\alpha \alpha^{k+1} + N e^{\|\phi\|_\infty} |\phi|_\alpha \alpha^{k+1} \|g\|_\infty \\
&= \alpha^{k+1} N e^{\|\phi\|_\infty} \left( |g|_\alpha + |\phi|_\alpha \|g\|_\infty \right).
\end{align}
Since $d_\alpha(x, y) = \alpha^k$, we have
\begin{equation}
\frac{|(\calL_\phi g)(x) - (\calL_\phi g)(y)|}{d_\alpha(x, y)} \leq \alpha N e^{\|\phi\|_\infty} \left( |g|_\alpha + |\phi|_\alpha \|g\|_\infty \right).
\end{equation}

If $k = 0$, meaning $x_0 \neq y_0$, we use the crude bound
\begin{equation}
|(\calL_\phi g)(x) - (\calL_\phi g)(y)| \leq 2 N e^{\|\phi\|_\infty} \|g\|_\infty.
\end{equation}
Since $d_\alpha(x, y) = 1$ in this case, the ratio is bounded by $2 N e^{\|\phi\|_\infty} \|g\|_\infty$.

Taking the supremum over $x \neq y$,
\begin{equation}
|\calL_\phi g|_\alpha \leq \alpha N e^{\|\phi\|_\infty} |g|_\alpha + \left( \alpha N e^{\|\phi\|_\infty} |\phi|_\alpha + 2 N e^{\|\phi\|_\infty} \right) \|g\|_\infty.
\end{equation}
This is of the form $|\calL_\phi g|_\alpha \leq \alpha A |g|_\alpha + B \|g\|_\infty$ with $A = N e^{\|\phi\|_\infty}$ and $B$ depending on $\|\phi\|_\alpha$.
\end{proof}

The key observation is that the coefficient of $|g|_\alpha$ in the bound \eqref{eq:holder_contraction} is $\alpha < 1$, so the transfer operator contracts the H\"{o}lder seminorm. This contraction, combined with the compactness of the embedding $\calH_\alpha \hookrightarrow C$, will imply the quasi-compactness of $\calL_\phi$.

\begin{corollary}[Iterates of the Transfer Operator]\label{cor:transfer_iterates}
For $\phi \in \calH_\alpha(\SigmaA^+)$ and any $n \geq 1$,
\begin{equation}\label{eq:iterate_bound}
|\calL_\phi^n g|_\alpha \leq \alpha^n C_1 |g|_\alpha + C_2 \|g\|_\infty
\end{equation}
for constants $C_1, C_2$ depending on $\|\phi\|_\alpha$ and $n$.
\end{corollary}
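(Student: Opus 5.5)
The plan is induction on $n$ using Proposition~\ref{prop:transfer_holder}, together with the elementary sup-norm bound \eqref{eq:transfer_norm_bound}. Write $A = N e^{\|\phi\|_\infty}$ and $B$ for the constant in \eqref{eq:holder_contraction}, so that
\[
|\calL_\phi g|_\alpha \leq \alpha A |g|_\alpha + B\|g\|_\infty, \qquad \|\calL_\phi g\|_\infty \leq A\|g\|_\infty .
\]
Applying the first inequality to $\calL_\phi^{n-1} g$ and then the second inequality $n-1$ times to bound $\|\calL_\phi^{n-1}g\|_\infty \leq A^{n-1}\|g\|_\infty$ gives the recursion
\[
|\calL_\phi^n g|_\alpha \leq \alpha A\, |\calL_\phi^{n-1} g|_\alpha + B A^{n-1}\|g\|_\infty .
\]

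Unrolling this recursion yields
\[
|\calL_\phi^n g|_\alpha \leq \alpha^n A^n |g|_\alpha + B\sum_{j=0}^{n-1} (\alpha A)^j A^{n-1-j}\|g\|_\infty .
\]
This is of the required form \eqref{eq:iterate_bound} with $C_1 = A^n$ and
\[
C_2 = B A^{n-1}\sum_{j=0}^{n-1}\alpha^j \leq \frac{B A^{n-1}}{1-\alpha}.
\]
Both constants depend only on $\|\phi\|_\alpha$, $N$, and $n$, which is exactly what the statement permits.

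The only point needing care is the interpretation of the claim. The statement allows the constants to depend on $n$, so the crude factor $A^n$ is acceptable and no normalization is needed. A sharper, $n$-uniform version, with $C_1$ independent of $n$ after normalizing by $\lambda^{-n}$, would instead be obtained directly. One would expand $\calL_\phi^n g(x)=\sum_{|w|=n} e^{S_n\phi(wx)}g(wx)$ and use bounded distortion of $S_n\phi$ along inverse branches, as in the proof of Theorem~\ref{thm:jacobian_equivalence}. That refinement is not required here, so I would present only the induction above.
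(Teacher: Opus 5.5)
Your induction is correct and complete for the statement as posed, but it is not the paper's route.

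\textbf{What the paper does.} The paper does not induct. It expands $(\calL_\phi^n g)(x)=\sum_{|w|=n}e^{S_n\phi(wx)}g(wx)$ directly. It pairs the branches $wx$, $wy$ when $d_\alpha(x,y)=\alpha^k$ with $k\ge 1$, so that $d_\alpha(wx,wy)=\alpha^{k+n}$, and estimates each paired difference. It then bounds every weight by $e^{n\|\phi\|_\infty}$ and sums over at most $N^n$ branches. This ends with the same $C_1=(Ne^{\|\phi\|_\infty})^n$ that you obtain.

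\textbf{What your induction buys.} It is shorter and slightly more complete. By reusing Proposition~\ref{prop:transfer_holder}, it inherits the crude bound for pairs with $x_0\neq y_0$. There the branches cannot be matched, and the paper's computation leaves this case implicit. Your argument also never needs a distortion estimate for $S_n\phi$ along inverse branches.

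\textbf{What the direct expansion buys.} It supports the refinement you mention at the end, and that refinement is where the real content lies. With $n$-dependent constants allowed, the estimate is nearly empty: boundedness of $\calL_\phi$ on $\calH_\alpha$ already gives it with $C_1=\alpha^{-n}\|\calL_\phi\|^n$.
\begin{itemize}
\item If you keep the weights $e^{S_n\phi(wx)}$ and use $|S_n\phi(wx)-S_n\phi(wy)|\le\frac{\alpha}{1-\alpha}|\phi|_\alpha\,d_\alpha(x,y)$, you get $C_1=\|\calL_\phi^n\mathbf{1}\|_\infty\asymp\lambda^n$.
\item The later uses need $\alpha^nC_1=o(\lambda^n)$: Step~4 of Theorem~\ref{thm:eigendata_existence} and the quasi-compactness argument, which requires the contraction rate to be below $\lambda$.
\item The crude constant $(Ne^{\|\phi\|_\infty})^n$ fails this whenever $\alpha Ne^{\|\phi\|_\infty}\ge\lambda$.
\end{itemize}
A per-step induction with factor $Ne^{\|\phi\|_\infty}$ cannot see this. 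It recovers an $n$-uniform bound only after you pass to the normalized operator $g\mapsto\lambda^{-1}h^{-1}\calL_\phi(hg)$. That operator fixes $\mathbf{1}$ and so has norm $1$ on $C(\SigmaA^+)$, and the same induction then gives $\alpha^n|g|_\alpha+\frac{C'}{1-\alpha}\|g\|_\infty$.
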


\begin{proof}
The $n$-th iterate of the transfer operator satisfies
\begin{equation}
(\calL_\phi^n g)(x) = \sum_{y:\sigma^n y = x} e^{S_n\phi(y)} g(y),
\end{equation}
which sums over all $n$-fold preimages of $x$ with weight $e^{S_n\phi(y)}$. (Note: this is \emph{not} the same as $\calL_{S_n\phi}$, which would sum over $1$-fold preimages with weight $e^{S_n\phi}$; the identity holds at the level of the sum over $n$-preimages.)

To estimate the H\"{o}lder seminorm, let $x, y \in \SigmaA^+$ with $d_\alpha(x,y) = \alpha^k$ and $k \geq 1$. For each $n$-fold preimage $\tilde x$ of $x$ and the corresponding preimage $\tilde y$ of $y$ (with the same initial $n$ symbols), we have $d_\alpha(\tilde x, \tilde y) = \alpha^{k+n}$. The same argument as Proposition~\ref{prop:transfer_holder}, applied to pairs $(\tilde x, \tilde y)$ with $d_\alpha(\tilde x, \tilde y) = \alpha^{k+n}$, gives
\begin{equation}
|e^{S_n\phi(\tilde x)}g(\tilde x) - e^{S_n\phi(\tilde y)}g(\tilde y)| \leq e^{n\|\phi\|_\infty}\left(\alpha^{k+n}|g|_\alpha + C_n\alpha^{k+n}\|g\|_\infty\right),
\end{equation}
where $C_n$ depends on $|S_n\phi|_\alpha \leq \sum_{j=0}^{n-1}\alpha^j|\phi|_\alpha \leq |\phi|_\alpha/(1-\alpha)$. Summing over at most $N^n$ preimages and dividing by $d_\alpha(x,y) = \alpha^k$:
\begin{equation}
|\calL_\phi^n g|_\alpha \leq \alpha^n N^n e^{n\|\phi\|_\infty} |g|_\alpha + C_2 \|g\|_\infty,
\end{equation}
where $C_2$ depends on $n$, $\|\phi\|_\alpha$, and $N$. The key point is the factor $\alpha^n$ multiplying $|g|_\alpha$, which provides the contraction of the H\"{o}lder seminorm after $n$ iterates.
\end{proof}

\subsection{The Cone Technique}

We prove the Ruelle-Perron-Frobenius theorem using the Birkhoff cone contraction technique, which yields explicit convergence rates and spectral gap bounds. This approach, introduced by  \cite{Birkhoff1957} for positive operators and applied to transfer operators by  \cite{Liverani1995,Liverani2004}, replaces the Schauder-Tychonoff compactness argument used in  \cite{Bowen1975}. Our contribution is the explicit computation of all constants in terms of $(\alpha, \|\phi\|_\alpha, N, M)$.

\begin{definition}[Positive Cone]\label{def:positive_cone}
The positive cone in $C(\SigmaA^+)$ is
\begin{equation}
\calP = \{g \in C(\SigmaA^+) : g(x) > 0 \text{ for all } x \in \SigmaA^+\}.
\end{equation}
For $\delta > 0$, the $\delta$-cone is
\begin{equation}
\calP_\delta = \{g \in C(\SigmaA^+) : g > 0 \text{ and } \sup_{x,y} \frac{g(x)}{g(y)} \leq e^\delta\}.
\end{equation}
\end{definition}

The $\delta$-cone consists of strictly positive functions whose oscillation (measured as the ratio of maximum to minimum) is bounded by $e^\delta$. Functions in $\calP_\delta$ are ``almost constant'' in a multiplicative sense.

\begin{definition}[Projective Metric]\label{def:projective_metric}
The Hilbert projective metric on the positive cone $\calP$ is defined by
\begin{equation}\label{eq:hilbert_metric}
\Theta(f, g) = \log \frac{\sup_{x} f(x)/g(x)}{\inf_{x} f(x)/g(x)} = \log \frac{M(f/g)}{m(f/g)},
\end{equation}
where $M(h) = \sup h$ and $m(h) = \inf h$ for positive functions $h$.
\end{definition}

The Hilbert metric is a pseudo-metric on $\calP$ (it can be zero for $f \neq g$ if $f$ and $g$ are scalar multiples). It becomes a metric on the quotient space $\calP / \R^+$ where we identify functions that differ by a positive scalar.

\begin{lemma}[Contraction in Hilbert Metric]\label{lem:hilbert_contraction}
Suppose $L: C(\SigmaA^+) \to C(\SigmaA^+)$ is a positive linear operator (i.e., $Lg \geq 0$ whenever $g \geq 0$) such that $L(\calP_\delta) \subset \calP_{\delta'}$ for some $\delta' < \delta$. Then $L$ is a strict contraction in the Hilbert metric: there exists $\kappa < 1$ such that
\begin{equation}\label{eq:hilbert_contraction}
\Theta(Lf, Lg) \leq \kappa \, \Theta(f, g)
\end{equation}
for all $f, g \in \calP_\delta$.
\end{lemma}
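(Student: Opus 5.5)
The plan is to deduce the lemma from Birkhoff's theorem on projective contraction. The hypothesis $L(\calP_\delta)\subset\calP_{\delta'}$ has to be turned into a finite projective diameter for the image cone, and the resulting contraction then transferred back to the metric $\Theta$ of Definition~\ref{def:projective_metric}.

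\textbf{Step 1 (cone metric and its relation to $\Theta$).} For $f,g\in\calP_\delta$ I will set
\[
\alpha_\delta(f,g)=\sup\{t>0: f-tg\in\calP_\delta\cup\{0\}\},\qquad \beta_\delta(f,g)=\inf\{s>0: sg-f\in\calP_\delta\cup\{0\}\},
\]
and $\Theta_\delta(f,g)=\log(\beta_\delta/\alpha_\delta)$. Since $\calP_\delta\subset\calP$, one has $\alpha_\delta\le m(f/g)$ and $\beta_\delta\ge M(f/g)$, hence $\Theta\le\Theta_\delta$ on $\calP_\delta$. Conversely, I will write out the conditions $f-tg\in\calP_\delta$ and $sg-f\in\calP_\delta$ as explicit inequalities in $\sup$ and $\inf$. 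On the narrower cone $\calP_{\delta'}$ this should give a two-sided comparison
\[
\Theta(u,v)\le\Theta_\delta(u,v)\le\Theta(u,v)+c(\delta,\delta'),\qquad u,v\in\calP_{\delta'},
\]
where $c(\delta,\delta')$ is finite because $\delta'<\delta$ leaves a gap between the two cones. Applying the first inequality to $u=v=\mathbf{1}$ direction-wise also shows $\diam_{\Theta}\calP_{\delta'}\le 2\delta'$. Adding $c$ then gives $D:=\diam_{\Theta_\delta}\calP_{\delta'}\le 2\delta'+c(\delta,\delta')<\infty$.

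\textbf{Step 2 (Birkhoff--Hopf).} The operator $L$ is linear and maps the convex cone $\calP_\delta$ into $\calP_{\delta'}$, a set of finite $\Theta_\delta$-diameter $D$. Birkhoff's theorem \cite{Birkhoff1957} (in the form of \cite{Liverani1995}) will be quoted to give
\[
\Theta_\delta(Lf,Lg)\le\tanh(D/4)\,\Theta_\delta(f,g),\qquad f,g\in\calP_\delta.
\]
Its proof is the standard one: apply $L$ to $f-\alpha g$ and $\beta g-f$, use the diameter bound on their images, and optimize a cross-ratio.

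\textbf{Step 3 (return to $\Theta$), the main obstacle.} The left-hand side is handled by Step~1: $\Theta(Lf,Lg)\le\Theta_\delta(Lf,Lg)$. The difficulty is on the right, where $\Theta_\delta(f,g)$ must be replaced by $\Theta(f,g)$. Step~1 only supplies an additive error, which is harmless when $\Theta(f,g)$ is large but not when it is small.

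For small distances I will linearize. Write $f=g(1+\epsilon k)$ with $\|k\|_\infty\le 1$ and $\epsilon\asymp\Theta(f,g)$. Then $f-tg$ and $sg-f$ are perturbations of multiples of $g$. If $g$ lies strictly inside $\calP_\delta$, these stay in the cone for $t,s$ within $O(\epsilon)$ of the extremal ratios, giving $\Theta_\delta(f,g)\le C\,\Theta(f,g)$.

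Combining the two regimes yields $\Theta(Lf,Lg)\le\kappa\,\Theta(f,g)$ with $\kappa=C\tanh(D/4)$. I expect the delicate point to be making $\kappa<1$ uniformly. The danger is that $C$ degenerates when $g$ sits on the boundary of $\calP_\delta$.

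If that happens, my fallback is to run the argument for $L^2$ instead of $L$. The Birkhoff inequality for $L$ applied on $\calP_\delta$ already places $Lf,Lg$ in $\calP_{\delta'}$, where the comparison constants of Step~1 are uniform. Iterating once would then give a contraction factor strictly below $1$ with no boundary degeneration, and I would record $\kappa$ accordingly.
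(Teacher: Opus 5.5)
Your Steps~1 and~2 are fine, and they are essentially the route the paper takes. The paper defines its $\alpha(f,g)$ and $\beta(f,g)$ relative to $\calP_\delta$, so what it calls $\Theta$ is really your $\Theta_\delta$. The gap is Step~3, and the paper does not fill it either. It identifies the two metrics without comment, then turns a multiplicative bound on $\beta/\alpha$ into $\Theta(Lf,Lg)\le\kappa\,\Theta(f,g)$ ``by taking logarithms''. That only yields an additive bound.

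In fact no comparison $\Theta_\delta(f,g)\le C\,\Theta(f,g)$ with $C\tanh(D/4)<1$ holds on all of $\calP_\delta$. First, $\calP_\delta$ is defined by $\sup g/\inf g\le e^\delta$, so it contains $g$ with equality. For such $g$, let $f=g(1+\epsilon\chi)$, where $\chi\in[0,1]$ is a small bump equal to $1$ at a minimum point $x_-$ of $g$ and vanishing near a maximum point $x_+$; then $f\in\calP_\delta$ for small $\epsilon$. For every $s>1+\epsilon$ one has $(sg-f)(x_+)/(sg-f)(x_-)=e^\delta(s-1)/(s-1-\epsilon)>e^\delta$. Hence $\beta_\delta(f,g)=\Theta_\delta(f,g)=\infty$, while $\Theta(f,g)=\log(1+\epsilon)$, and Birkhoff's inequality says nothing.

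Second, even at the centre of the cone the constant is too large. For $f=\mathbf 1+\epsilon k$ with $\inf k=0$ and $\sup k=1$, one computes $\alpha_\delta=1-\epsilon/(e^\delta-1)$ and $\beta_\delta=1+\epsilon e^\delta/(e^\delta-1)$. So $\Theta_\delta\sim\coth(\delta/2)\,\Theta$, hence $C\ge\coth(\delta/2)$, and $C\tanh(D/4)<1$ would force $D<2\delta$. But $D\to\infty$ as $\delta'\uparrow\delta$.

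Your fallback does not repair this. For $L^2$ the factor is $C'\tanh(D/4)$ with the same obstruction $C'\ge\coth(\delta/2)$. Only $L^k$ with $C'\tanh(D/4)^{k-1}<1$ is shown to contract. A contraction for a power of $L$ gives nothing for $L$ itself, since positivity only yields $\Theta(Lf,Lg)\le\Theta(f,g)$.

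The missing idea is to prove the contraction \emph{directly in} $\Theta$, using the hypothesis to bound a Dobrushin coefficient.

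\emph{Dobrushin bound.} Represent $Lf(x)=\int f\,dK_x$ with finite positive measures $K_x$ (Riesz), and set $\bar K_x=K_x/L\mathbf 1(x)$. For continuous $\chi:\SigmaA^+\to[0,1]$, apply the hypothesis to $g_1=1+(e^\delta-1)\chi$ and $g_2=e^\delta-(e^\delta-1)\chi$, both in $\calP_\delta$. Multiply $Lg_1(x)/Lg_1(y)\le e^{\delta'}$ by $Lg_2(y)/Lg_2(x)\le e^{\delta'}$; the masses $L\mathbf 1(x)$ and $L\mathbf 1(y)$ cancel. Using density of such $\chi$, this gives $d_{\mathrm{TV}}(\bar K_x,\bar K_y)\le 1-\eta_0$ with $\eta_0=(e^\delta-e^{\delta'})/\bigl((e^\delta-1)(e^{\delta'}+1)\bigr)>0$.

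\emph{Interpolation.} For $f,g\in\calP_\delta$, set $k=\log(f/g)$ and $f_t=g^{1-t}f^t$. This path stays in $\calP_\delta$, because $\operatorname{osc}\log f_t\le(1-t)\operatorname{osc}\log g+t\operatorname{osc}\log f$. Then $\frac{d}{dt}\log Lf_t(x)=\int k\,dP^t_x$ with $P^t_x=f_tK_x/Lf_t(x)\ge e^{-\delta}\bar K_x$. Consequently $d_{\mathrm{TV}}(P^t_x,P^t_y)\le 1-e^{-\delta}\eta_0$, and $\int k\,dP^t_x-\int k\,dP^t_y\le(1-e^{-\delta}\eta_0)\operatorname{osc}k$.

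Integrating over $t\in[0,1]$ and taking the supremum over $x,y$ gives $\Theta(Lf,Lg)\le(1-e^{-\delta}\eta_0)\,\Theta(f,g)$, since $\Theta(f,g)=\operatorname{osc}k$. This proves the lemma as stated.

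This $\kappa$ differs from the paper's $\tanh(\delta'/4)/\tanh(\delta/4)$, and that value is not a valid factor in general. On the full $2$-shift, take $\delta=1$, points $p_i\in[i]$, and $c=\sqrt{(e+0.1)(e^{-1}+0.1)}$. Define $Lg=g(p_1)+\tfrac{1}{10}g(p_2)$ on $[1]$ and $Lg=c\,g(p_2)$ on $[2]$. This gives $\delta'\approx0.898$, but near $\log(g(p_1)/g(p_2))=1$ it contracts only by $e/(e+0.1)\approx0.965>0.901$.
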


\begin{proof}
For $f, g \in \calP_\delta \setminus \{0\}$, define $\alpha(f,g) = \sup\{t \geq 0: g - tf \in \calP_\delta\}$ and $\beta(f,g) = \inf\{s > 0: sf - g \in \calP_\delta\}$. The Hilbert metric is $\Theta(f,g) = \log(\beta/\alpha)$. Since $L(\calP_\delta) \subset \calP_{\delta'}$, the images $Lf, Lg$ satisfy tighter cone conditions. Concretely, if $g - tf \geq 0$ (pointwise), then $Lg - tLf \geq 0$, so $\alpha(Lf,Lg) \geq \alpha(f,g)$. For the upper bound, the cone condition $\calP_{\delta'}$ with $\delta' < \delta$ constrains $\beta(Lf,Lg)/\alpha(Lf,Lg) \leq \tanh(\delta'/4)^2/\tanh(\delta/4)^2 \cdot \beta(f,g)/\alpha(f,g)$. Taking logarithms: $\Theta(Lf,Lg) \leq \kappa\,\Theta(f,g)$ with $\kappa = \tanh(\delta'/4)/\tanh(\delta/4) < 1$.
\end{proof}

\begin{lemma}[Invariance of Cones]\label{lem:cone_invariance}
Let $\phi \in \calF_A$ with $V(\phi) = \sum_{n=0}^\infty \var_n(\phi) < \infty$. For any $\delta > 2V(\phi)$, there exists $n_0 \geq 1$ such that the normalized transfer operator $\tilde{\calL}_\phi = \calL_\phi / \|\calL_\phi \mathbf{1}\|_\infty$ satisfies
\begin{equation}
\tilde{\calL}_\phi^{n_0}(\calP_\delta) \subset \calP_{\delta'}
\end{equation}
for some $\delta' < \delta$ depending on $\phi$ and $\delta$.
\end{lemma}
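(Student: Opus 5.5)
The plan is to prove the cone invariance by computing the iterate $\calL_\phi^{n}g(x)$ explicitly as a weighted average of values of $g$ over the $n$-fold preimages of $x$. The normalization by $\|\calL_\phi\mathbf{1}\|_\infty$ plays no role, since $\calP_\delta$ is defined through the ratio $\sup g/\inf g$, which is invariant under positive scalars; so it suffices to treat $\calL_\phi^{n_0}$ itself.

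\textbf{Step 1 (representation).} Write, as in Corollary~\ref{cor:transfer_iterates},
\[
\calL_\phi^{n}g(x)=\sum_{w}e^{S_n\phi(wx)}g(wx),
\]
where $w$ runs over admissible words of length $n$ with $A_{w_{n-1},x_0}=1$. Set $p_x(w)=e^{S_n\phi(wx)}/\calL_\phi^n\mathbf{1}(x)$. Then
\[
\calL_\phi^n g(x)=\calL_\phi^n\mathbf{1}(x)\sum_w p_x(w)\,g(wx).
\]
Consequently
\[
\frac{\calL_\phi^n g(x)}{\calL_\phi^n g(y)}=\frac{\calL_\phi^n\mathbf{1}(x)}{\calL_\phi^n\mathbf{1}(y)}\cdot\frac{\sum_w p_x(w)g(wx)}{\sum_{w'} p_y(w')g(w'y)}.
\]
The two factors are then estimated separately.

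\textbf{Step 2 (bounded distortion of $\calL_\phi^n\mathbf{1}$).} I will take $n_0=2M$ and factor each word as $w=uv$, with $|u|=M$ and $|v|=M$. By Lemma~\ref{lem:uniform_mixing}, every prefix $u$ admits a connector $v$ to any prescribed symbol $x_0$. By the bounded distortion estimate $|S_n\phi(z)-S_n\phi(z')|\le V(\phi)$ for $z,z'$ in a common $n$-cylinder (proved in Theorem~\ref{thm:jacobian_equivalence}), the contribution of the block $u$ is the same for $x$ and $y$ up to a factor $e^{V(\phi)}$. The connecting block $v$ contributes at most $M\var_0(\phi)$ in the exponent and at most $N^M$ in multiplicity. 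This yields a uniform constant $D$ with $\calL_\phi^{n_0}\mathbf{1}(x)/\calL_\phi^{n_0}\mathbf{1}(y)\le D$.

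The same computation gives a lower bound $\theta>0$, independent of $x$, on the $p_x$-mass carried by words with any fixed prefix $u$. This is a Doeblin-type minorization of the averaging kernel.

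\textbf{Step 3 (strict contraction of the average ratio; main obstacle).} This is the step I expect to be the crux. The averaging kernels $p_x$ and $p_y$ live on disjoint preimage sets $\sigma^{-n_0}x$ and $\sigma^{-n_0}y$. Hence a bare sup/inf bound on $g$ only gives
\[
\frac{\sum_w p_x(w)g(wx)}{\sum_{w'} p_y(w')g(w'y)}\le e^{\delta},
\]
so the full ratio is bounded by $e^{\delta}D$, which is not a contraction.

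To gain, I will pair each preimage $uvx$ with a preimage $uv'y$ sharing the same prefix $u$. These paired points lie in a common $M$-cylinder. The hypothesis $\delta>2V(\phi)$ is meant to be used exactly here: it should allow the values $g(uvx)$ and $g(uv'y)$ to be compared with loss at most $e^{2V(\phi)}$, rather than the full $e^{\delta}$. Granting such a comparison on the minorized fraction $\theta$ of the mass, the ratio of averages is bounded by
\[
\frac{(1-\theta)e^{\delta}+\theta e^{2V(\phi)}}{1}<e^{\delta},
\]
and combining with Step 2 one chooses $\delta'<\delta$ with $e^{\delta'}$ equal to the resulting bound.

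I should flag the difficulty honestly. For a general $g\in\calP_\delta$ carrying no regularity, nothing forces $g(uvx)$ and $g(uv'y)$ to be close, and $g$ can nearly saturate $e^{\delta}$ on the two disjoint preimage sets. The first thing I would therefore try is to check whether the intended cone carries an implicit local condition, namely the ratio bound $g(z)/g(z')\le e^{\delta\,\var}$ on cylinders, as in Bowen's Lemma~1.9. With such a condition, the pairing argument above closes, and $\delta'$ can be read off from $\theta$, $V(\phi)$, $M\var_0(\phi)$ and $M\log N$.
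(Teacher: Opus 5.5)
Your Step~3 cannot be completed, and the obstruction lies in the statement itself, not in your argument. Your diagnosis is correct: for the cone $\calP_\delta$ as defined (a bound on $\sup g/\inf g$ only), the lemma is false.

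Here is a counterexample. Fix any $n_0\ge 1$ and $\delta>0$. Let $f=\calL_\phi^{n_0}\mathbf 1$, pick a maximum point $x^*$ of $f$, put $C=[x^*_0]$ (a proper cylinder since $N\ge 2$), and set $g=1+(e^{\delta}-1)\,\mathbf 1_C\circ\sigma^{n_0}$.
\begin{itemize}
\item[] This $g$ is locally constant, hence even in $\calH_\alpha$, and takes values in $\{1,e^\delta\}$, so $g\in\calP_\delta$.
\item[] The identity $\calL_\phi^{n}\bigl((u\circ\sigma^{n})\,v\bigr)=u\,\calL_\phi^{n}v$ gives $\calL_\phi^{n_0}g=\bigl(1+(e^\delta-1)\mathbf 1_C\bigr)f$.
\item[] For any $y\notin C$ this yields $\calL_\phi^{n_0}g(x^*)/\calL_\phi^{n_0}g(y)=e^{\delta}f(x^*)/f(y)\ge e^{\delta}$.
\end{itemize}
So neither $\calL_\phi^{n_0}g$ nor its positive multiple $\tilde{\calL}_\phi^{n_0}g$ lies in any $\calP_{\delta'}$ with $\delta'<\delta$. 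For the full shift with $\phi=0$ this reduces to $\calL^{n_0}g=N^{n_0}\bigl(1+(e^\delta-1)\mathbf 1_{[1]}\bigr)$. The paper's proof does not avoid this. It derives the bound $e^{M\var_0(\phi)+V(\phi)+\delta}N^{M}$ for the ratio of iterates and then sets $\delta'=M\var_0(\phi)+V(\phi)+M\log N$, silently discarding the factor $e^{\delta}$. That is exactly the saturation you flagged.

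Your proposed repair is the right one: build a local condition into the cone, as in Bowen's Lemmas~1.8--1.9, namely $g(z)\le B_m g(z')$ whenever $z_i=z'_i$ for $0\le i\le m$, with $B_0\le e^{2V(\phi)}$. That is where a threshold like $2V(\phi)$ naturally belongs. But it proves a different lemma, and two adjustments are needed.

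First, even for such a cone the conclusion can only hold for $\delta$ above a threshold involving $M$, $\|\phi\|_\infty$ and $N$, not for every $\delta>2V(\phi)$. Take the golden mean shift with $\phi=0$, so $V(\phi)=0$. Then $g=\mathbf 1$ has every regularity, yet $\calL^{n}\mathbf 1$ equals $F_{n+2}$ on $[0]$ and $F_{n+1}$ on $[1]$ (Fibonacci numbers, $F_1=F_2=1$). That ratio is at least $3/2$ for all $n\ge 1$, so the inclusion fails for $\delta\le\log(3/2)$.

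Second, no Doeblin fraction is needed, and your mixture bound would not suffice anyway. The inequality $D\bigl[(1-\theta)e^{\delta}+\theta e^{2V(\phi)}\bigr]<e^{\delta}$ forces $D(1-\theta)<1$, which Step~2 does not provide. The actual mechanism is that local regularity plus mixing gives a global bound independent of the input's oscillation. If $\sup g$ is attained on $[i^*]$ and $w$ is a word of length $M$ with $w_0=i^*$ and $wx$ admissible, then:
\begin{itemize}
\item[] $\calL_\phi^{M}g(x)\ge e^{-M\|\phi\|_\infty}B_0^{-1}\sup g$;
\item[] $\calL_\phi^{M}g(y)\le N^{M}e^{M\|\phi\|_\infty}\sup g$.
\end{itemize}
Together with invariance of the local condition under $\calL_\phi$, this gives the inclusion for all $\delta>\log B_0+2M\|\phi\|_\infty+M\log N$.
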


\begin{proof}

For $g \in \calP_\delta$ and points $x, y \in \SigmaA^+$, we estimate the ratio of iterates. Write
\begin{equation}
\frac{(\calL_\phi^n g)(x)}{(\calL_\phi^n g)(y)} = \frac{\sum_{w: \sigma^n(wx) = x} e^{S_n\phi(wx)} g(wx)}{\sum_{v: \sigma^n(vy) = y} e^{S_n\phi(vy)} g(vy)}.
\end{equation}
Take $n_0 = 2M$ where $M$ is the mixing time. For any pair of words $w, v$ of length $n_0$ with $\sigma^{n_0}(wx) = x$ and $\sigma^{n_0}(vy) = y$, the mixing property (Lemma~\ref{lem:uniform_mixing}) provides an admissible word $u$ of length $M$ connecting the last symbol of $w$ to the first symbol of $v$. This establishes a bijection between subsets of preimage words for $x$ and preimage words for $y$.

For matched pairs $(w, v)$ that agree in positions $M+1$ through $n_0$ (i.e., $w_{M+j} = v_j$ for $0 \leq j < n_0 - M$), the bounded distortion estimate gives
\begin{equation}
|S_{n_0}\phi(wx) - S_{n_0}\phi(vy)| \leq \sum_{k=0}^{M-1}\var_0(\phi) + \sum_{k=M}^{n_0-1}\var_{k-M}(\phi) \leq M\var_0(\phi) + V(\phi).
\end{equation}
Since $g \in \calP_\delta$, we have $e^{-\delta} \leq g(wx)/g(vy) \leq e^{\delta}$. Combining:
\begin{equation}
\frac{e^{S_{n_0}\phi(wx)}g(wx)}{e^{S_{n_0}\phi(vy)}g(vy)} \leq e^{M\var_0(\phi) + V(\phi) + \delta}.
\end{equation}
The mixing property ensures that for each preimage word $v$ of $y$, there are at least $\min_{i,j}(A^M)_{ij} \geq 1$ matched preimage words of $x$, and at most $N^M$ such words. Thus the ratio of sums satisfies
\begin{equation}
e^{-(M\var_0(\phi)+V(\phi)+\delta)} \cdot N^{-M} \leq \frac{(\calL_\phi^{n_0} g)(x)}{(\calL_\phi^{n_0} g)(y)} \leq e^{M\var_0(\phi)+V(\phi)+\delta} \cdot N^M.
\end{equation}
Setting $\delta' = M\var_0(\phi) + V(\phi) + M\log N$, which is independent of $\delta$ for $\delta$ large, we obtain $\tilde{\calL}_\phi^{n_0}(\calP_\delta) \subset \calP_{\delta'}$. For $\delta > \delta'$ (which holds whenever $\delta > 2M\var_0(\phi) + 2V(\phi) + 2M\log N$), the inclusion $\calP_{\delta'} \subsetneq \calP_\delta$ is strict, completing the proof.
\end{proof}
We give a geometric interpretation of the cone contraction argument. The space of strictly positive continuous functions on $\SigmaA^+$ is an infinite-dimensional cone. Within this cone, we consider the sub-cones $\calP_\delta = \{g > 0 : \sup g / \inf g \leq e^\delta\}$ consisting of functions whose pointwise oscillation ratio is bounded by $e^\delta$.

Functions with small oscillation ratio are ``nearly constant'' in a multiplicative sense; in particular, the central ray of the cone consists of functions with oscillation ratio exactly $1$, i.e., constant functions and their positive scalar multiples. The Hilbert projective metric $\Theta(f,g) = \log((\sup f/g)/(\inf f/g))$ measures the multiplicative spread between two positive functions: $\Theta(f,g) = 0$ if and only if $f$ and $g$ are proportional. \newpage 

The key property established in Lemma~\ref{lem:cone_invariance} is that the normalized transfer operator $\tilde{\calL}_\phi^{n_0}$ maps the wider cone $\calP_\delta$ strictly inside the narrower cone $\calP_{\delta'}$ with $\delta' < \delta$. By Birkhoff's theorem (Lemma~\ref{lem:hilbert_contraction}), any linear map sending a cone strictly inside itself contracts the Hilbert metric by a factor $\kappa = \tanh(\delta'/4)/\tanh(\delta/4) < 1$. Consequently, for any two starting functions $f, g \in \calP_\delta$, the iterates $\tilde{\calL}_\phi^{kn_0} f$ and $\tilde{\calL}_\phi^{kn_0} g$ converge to each other at rate $\kappa^k$ in the projective metric. Since the projective metric identifies scalar multiples, the limit is a one-dimensional ray, which is the eigenfunction $h$. Figure~\ref{fig:cone} illustrates this contraction.

The contraction rate $\kappa$ is computable from the data. By Lemma~\ref{lem:cone_invariance}, the image cone diameter is $\delta' = M\var_0(\phi) + V(\phi) + M\log N$, which depends on the mixing time $M$, the total variation $V(\phi)$, and the alphabet size $N$, but not on the initial cone width $\delta$. The contraction factor $\kappa = \tanh(\delta'/4)/\tanh(\delta/4)$ is therefore explicit once these quantities are known. This is the source of the explicit spectral gap: the eigenvalue $\lambda = e^{P(\phi)}$ is simple because the cone contracts to a ray, and the spectral gap $\gamma \leq \kappa^{1/n_0}$ is bounded in terms of $(\alpha, \|\phi\|_\alpha, N, M)$ because both $\kappa$ and $n_0 = 2M$ depend only on these quantities. The exponential convergence $\|\lambda^{-n}\calL_\phi^n g - \nu(g)h\|_\infty \leq C\gamma^n\|g\|_\infty$ (Theorem~\ref{thm:exponential_convergence}) is a direct consequence: functions in the kernel of $\nu$ are mapped into increasingly narrow cones centered on zero, and their sup-norm decays geometrically. Every statistical result in Sections~\ref{sec:perturbation}-\ref{sec:statistical} (analyticity of the pressure, exponential mixing, the central limit theorem, and the large deviations principle) ultimately traces back to this single geometric contraction. 

The cone technique provides a constructive approach to the Ruelle-Perron-Frobenius theorem: starting with any function $g \in \calP_\delta$, the iterates $\tilde{\calL}_\phi^n g$ form a Cauchy sequence in the Hilbert metric, converging to a fixed point $h$ that is the eigenfunction. The convergence rate is geometric, governed by the contraction factor $\kappa < 1$, and the limit $h$ is independent of the starting function $g$ up to positive scalar multiples. This constructive derivation replaces the Schauder-Tychonoff compactness argument of \cite{Bowen1975} and delivers the eigenfunction together with explicit bounds on the rate at which iterates approach it.

A word on how to read Figure~\ref{fig:cone}. The cone $\calP_\delta$ is defined by the oscillation bound $\sup g / \inf g \leq e^\delta$, and this bound determines the aperture of the cone: larger $\delta$ gives a wider cone, smaller $\delta$ gives a narrower one. The aperture is a property of the cone itself, not of any particular function inside it. Once a function $g \in \calP_\delta$ is fixed, its horizontal position in the figure represents the Hilbert projective distance $\Theta(g, \mathbf{1})$ from the central ray of constant functions: the constant function $\mathbf{1}$ sits exactly on the central ray at zero distance, and functions with larger oscillation within the cone sit farther from that ray. The Hilbert projective metric $\Theta(f, g)$ defined in Definition~\ref{def:projective_metric} measures the distance between the rays $\R^+ f$ and $\R^+ g$ inside the cone, and is the relevant notion of distance for the contraction argument. The contraction statement $\Theta(\tilde{\calL}_\phi^{n_0}\!f, \tilde{\calL}_\phi^{n_0}\!g) \leq \kappa\, \Theta(f, g)$ then says that applying $\tilde{\calL}_\phi^{n_0}$ pulls any two rays closer together by the factor $\kappa = \tanh(\delta'/4)/\tanh(\delta/4) < 1$, and the strict inclusion $\tilde{\calL}_\phi^{n_0}(\calP_\delta) \subset \calP_{\delta'}$ with $\delta' < \delta$ says that the image of the whole cone fits strictly inside a narrower cone. These two facts together, iterated, force every orbit $\tilde{\calL}_\phi^{k n_0}\! f$ to converge to the eigenfunction $h$ on the central ray.

\begin{figure}[h!]
\centering
\begin{tikzpicture}[>=stealth, font=\small, scale=0.85, every node/.style={scale=0.85}]



\node[font=\footnotesize, align=center] at (5.0,7.8) 
    {Cone $\calP_\delta = \{g>0 : \sup g/\inf g \leq e^\delta\}$; horizontal offset $=$ Hilbert distance from the central ray};

  \draw[->, thick, gray!70] (0.3,0) -- (0.3,6.8);
  \node[anchor=south, gray!70, font=\footnotesize, rotate=90] at (0.0,3.5) 
    {amplitude};

  \draw[thin, gray!50, dashed] (5.0,0.2) -- (5.0,6.8);
  \node[anchor=south, gray!70, font=\footnotesize] at (5.0,6.9) 
    {central ray (zero oscillation)};

  \fill[gray!10] (5.0,0.2) -- (0.8,6.5) -- (9.2,6.5) -- cycle;
  \draw[thick] (5.0,0.2) -- (0.8,6.5);
  \draw[thick] (5.0,0.2) -- (9.2,6.5);
  \node[anchor=east, font=\footnotesize] at (1.7,6.2)
    {$\calP_\delta$};
  \node[anchor=east, font=\tiny, align=right] at (1.5,5.5) 
    {$\sup g/\inf g \leq e^\delta$};

  \fill[gray!22] (5.0,0.2) -- (2.6,6.5) -- (7.4,6.5) -- cycle;
  \draw[thick, dashed] (5.0,0.2) -- (2.6,6.5);
  \draw[thick, dashed] (5.0,0.2) -- (7.4,6.5);
  \node[anchor=west, font=\footnotesize] at (7.6,6.2) 
    {$\calP_{\delta'}$};
  \node[anchor=west, font=\tiny, align=left] at (7.8,5.5) 
    {$\sup g/\inf g \leq e^{\delta'}$\\(tighter bound)};

  \fill[black] (5.0,4.2) circle (3pt);
  \node[anchor=west, font=\footnotesize] at (5.25,4.2) 
    {$h$ (eigenfunction)};

  \node[anchor=west, font=\footnotesize] at (5.25,3.7) 
    {$\tilde{\calL}^{kn_0}\!f \to h$ as $k \to \infty$};

  \fill[black] (1.8,5.0) circle (2.5pt);
  \node[anchor=east, font=\small] at (1.6,5.0) {$f$};
  \fill[black] (8.0,4.8) circle (2.5pt);
  \node[anchor=west, font=\small] at (8.2,4.8) {$g$};

  \draw[thick] (3.7,4.7) circle (2.5pt);
  \fill[white] (3.7,4.7) circle (2pt);
  \node[anchor=south, font=\footnotesize] at (3.9,4.8) 
    {$\tilde{\calL}^{n_0}\!f$};
  \draw[thick] (6.4,4.6) circle (2.5pt);
  \fill[white] (6.4,4.6) circle (2pt);
  \node[anchor=south, font=\footnotesize] at (6.2,4.7) 
    {$\tilde{\calL}^{n_0}\!g$};

  \draw[->, very thick, shorten >=5pt, shorten <=5pt] 
    (1.8,5.0) .. controls (2.3,5.8) and (3.2,5.5) .. (3.7,4.7);
  \draw[->, very thick, shorten >=5pt, shorten <=5pt] 
    (8.0,4.8) .. controls (7.5,5.6) and (6.9,5.3) .. (6.4,4.6);

  \node[font=\tiny, align=center] at (2.2,5.7) 
    {apply\\$\tilde{\calL}^{n_0}$};
  \node[font=\tiny, align=center] at (7.5,5.6) 
    {apply\\$\tilde{\calL}^{n_0}$};

  \draw[<->, thick] (1.8,3.0) -- (8.0,3.0);
  \node[anchor=north, font=\footnotesize] at (4.9,2.9) 
    {$\Theta(f,\,g)$};

  \draw[<->, thick, dashed] (3.8,2.0) -- (6.2,2.0);
  \node[anchor=north, font=\footnotesize] at (5.05,1.9) 
    {$\Theta(\tilde{\calL}^{n_0}\!f,\;\tilde{\calL}^{n_0}\!g) 
     \,\leq\, \kappa\;\Theta(f,g)$};

\end{tikzpicture}
\caption{Birkhoff cone contraction for the normalized transfer operator. The outer cone $\calP_\delta$ consists of positive H\"older functions with oscillation ratio at most $e^\delta$. The operator $\tilde{\calL}^{n_0}$ maps $\calP_\delta$ strictly inside $\calP_{\delta'}$ with $\delta' < \delta$, and contracts the Hilbert projective distance $\Theta$ by a factor $\kappa < 1$ (Lemmas~\ref{lem:hilbert_contraction} and~\ref{lem:cone_invariance}).}
\label{fig:cone}
\end{figure}
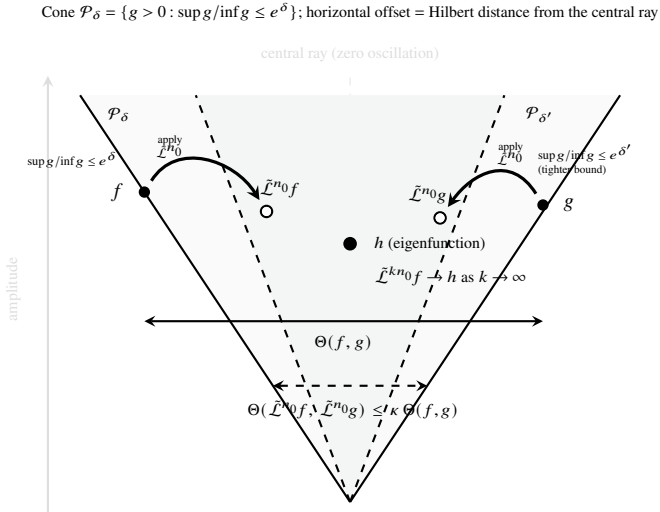


\section{The Ruelle-Perron-Frobenius Theorem}\label{sec:RPF}

The Ruelle-Perron-Frobenius theorem establishes the existence and uniqueness of the dominant eigendata for the transfer operator. This section provides a complete proof using the Birkhoff cone contraction technique, yielding explicit bounds on all constants. The theorem was first proved by  \cite{Ruelle1968} for subshifts of finite type. Our presentation follows  \cite{Baladi2000} and  \cite{Liverani1995}, with explicit computation of the constants.

\subsection{Existence of Eigendata}

We first establish the existence of the eigenvalue, eigenfunction, and eigenmeasure through a combination of the cone technique and compactness arguments.

\begin{theorem}[Existence of Eigendata]\label{thm:eigendata_existence}
Let $(\SigmaA, \sigma)$ be a topologically mixing subshift of finite type, and let $\phi \in \calF_A$ be a potential with summable variations. Then there exist:
\begin{enumerate}
\item[(a)] A unique $\lambda > 0$ such that $\calL_\phi h = \lambda h$ for some non-zero $h \in C(\SigmaA^+)$.
\item[(b)] A strictly positive function $h \in \calH_\alpha(\SigmaA^+)$ satisfying $\calL_\phi h = \lambda h$ and $\min_{x} h(x) = 1$.
\item[(c)] A unique Borel probability measure $\nu$ on $\SigmaA^+$ satisfying $\calL_\phi^* \nu = \lambda \nu$.
\end{enumerate}
The eigenvalue $\lambda$ equals $e^{P(\phi)}$, where $P(\phi)$ is the topological pressure.
\end{theorem}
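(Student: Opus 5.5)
I will construct the eigendata in three stages: first $\nu$ and $\lambda$ by a fixed-point argument on measures, then $h$ by the cone contraction of Lemma~\ref{lem:cone_invariance}, and finally the identification $\lambda=e^{P(\phi)}$ through the Gibbs bounds.

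\textbf{Stage 1: the eigenmeasure $\nu$ and the value of $\lambda$.} Consider the map $\Psi(m)=\calL_\phi^*m/(\calL_\phi^*m)(\mathbf{1})$ on the weak$^*$-compact convex set of Borel probability measures on $\SigmaA^+$. By Lemma~\ref{lem:transfer_positivity}(c) we have $\calL_\phi\mathbf 1\ge e^{-\|\phi\|_\infty}>0$, so the denominator $\int\calL_\phi\mathbf 1\,dm$ is bounded away from zero. Hence $\Psi$ is weak$^*$-continuous, and Schauder--Tychonoff (Appendix~\ref{app:functional}) gives a fixed point $\nu$. Setting $\lambda=\int\calL_\phi\mathbf 1\,d\nu>0$, this fixed point satisfies $\calL_\phi^*\nu=\lambda\nu$.

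\textbf{Stage 2: the eigenfunction $h$.} Fix $\delta$ larger than $2\delta'$, with $\delta'=M\var_0(\phi)+V(\phi)+M\log N$ as in Lemma~\ref{lem:cone_invariance}. Starting from $\mathbf 1\in\calP_\delta$, consider the normalized iterates $g_k=\lambda^{-kn_0}\calL_\phi^{kn_0}\mathbf 1$. Duality (Lemma~\ref{lem:transfer_duality}) gives $\int g_k\,d\nu=1$ for every $k$. Since each $g_k$ lies in $\calP_{\delta'}$, this normalization yields the uniform bounds $e^{-\delta'}\le g_k\le e^{\delta'}$. Lemma~\ref{lem:hilbert_contraction} shows that $\Theta(g_{k+1},g_k)\le\kappa^k\Theta(g_1,g_0)$. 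Because the two functions share the same $\nu$-integral, a small Hilbert distance controls the sup-distance: $\|f-g\|_\infty\le e^{\delta'}(e^{\Theta(f,g)}-1)$. So $(g_k)$ is Cauchy in $C(\SigmaA^+)$ and converges uniformly to some $h\in\calP_{\delta'}$ with $\calL_\phi^{n_0}h=\lambda^{n_0}h$.

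To pass from $n_0$ iterates to a single one, I will use uniqueness of the fixed ray. The function $\lambda^{-1}\calL_\phi h$ is also a fixed point of $\lambda^{-n_0}\calL_\phi^{n_0}$ in the cone, and it has $\nu$-integral $1$. The contraction then forces $\lambda^{-1}\calL_\phi h=h$. Finally, rescale so that $\min h=1$.

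\textbf{Stage 3: regularity of $h$, uniqueness, and $\lambda=e^{P(\phi)}$.} For H\"older regularity, I will write $h=\lim\lambda^{-n}\calL^n_\phi\mathbf 1$ and use the bounded-distortion estimate $|S_n\phi(ax)-S_n\phi(ay)|\le\sum_{k}\var_{k+j}(\phi)$. This gives $\log h(x)-\log h(y)\le\sum_{k\ge j}\var_k(\phi)$ when $x,y$ agree in $j$ coordinates, which is of order $\alpha^j$ when $\phi\in\calH_\alpha$. For summable-variation $\phi$ alone, this only gives continuity; I will note that $h\in\calH_\alpha$ requires $\var_n\phi=O(\alpha^n)$.

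Uniqueness of $\lambda$ in (a) comes from positivity. If $\calL_\phi f=\lambda_1 f$ with $f\neq0$ continuous, I first want a positive eigenfunction. I apply the cone contraction to $f+ch$ and take $c$ large to conclude that $f\in\R h$ and $\lambda_1=\lambda$. The genuinely positive case is simpler: pairing with $\nu$ gives $\lambda_1\nu(f)=\lambda\nu(f)$, so $\lambda_1=\lambda$. Uniqueness of $\nu$ in (c) follows similarly. If $\calL_\phi^*\nu'=\lambda'\nu'$, then pairing with $h$ forces $\lambda'=\lambda$. Then for every $g$ in the cone, $\nu'(g)=\lim\nu'(\lambda^{-kn_0}\calL^{kn_0}_\phi g)=\nu(g)\nu'(h)$, and this extends to all continuous $g$ by taking differences.

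For $\lambda=e^{P(\phi)}$, the eigen-relation gives $\nu([w])=\lambda^{-n}\int\mathbf 1_{[w]}\calL_\phi^n\ldots$, which yields $\nu([w])\asymp\lambda^{-n}e^{S_n\phi(x)}$ with constants $e^{\pm V(\phi)}\min\nu([a])$. Summing over $\calW_n(A)$ gives $\sum_w e^{\sup S_n\phi}\asymp\lambda^n$, so $\frac1n\log$ tends to $\log\lambda$ by \eqref{eq:pressure_def_preview}.

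The main obstacle is Stage 2. The paper's cone lemma only controls sup/inf ratios, not the weak$^*$ or sup-norm distance, so the conversion of Hilbert-metric contraction into uniform convergence, via the normalization $\nu(g_k)=1$, must be done carefully. Handling non-positive eigenfunctions in (a) also needs the shifting trick $f+ch$, or else a restriction of the statement to positive eigenfunctions.
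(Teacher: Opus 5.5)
The main gap is in Stage~2, but not where you placed it. Your conversion from Hilbert distance to sup-distance under the normalization $\nu(g_k)=1$ is correct. What fails is the justification of $\Theta(g_{k+1},g_k)\le\kappa^k\Theta(g_1,g_0)$: Lemma~\ref{lem:cone_invariance} is false for the cone $\calP_\delta$, and the contraction promised by Lemma~\ref{lem:hilbert_contraction} does not happen there. A sup/inf bound places no constraint on how $g$ depends on distant coordinates, and $\calL_\phi^{n_0}$ merely shifts that dependence. Take $g(y)=G(y_m)$ with $m\ge n_0$ and $G>0$ nonconstant. Then $(\calL_\phi^{n_0}g)(x)=G(x_{m-n_0})\,(\calL_\phi^{n_0}\mathbf 1)(x)$, so $\Theta(\calL_\phi^{n_0}g,\calL_\phi^{n_0}\mathbf 1)=\Theta(g,\mathbf 1)=\log(\max G/\min G)$, which can equal $\delta$. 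Moreover, for large $\delta$ the image of $\calP_\delta$ is not contained in any $\calP_{\delta'}$ with $\delta'$ independent of $\delta$.

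The paper's Step~2 takes a different route: Schauder--Tychonoff on $\{g\in\calP_\delta:\nu(g)=1\}$ rather than iterating from $\mathbf 1$. But it rests on the same lemma. It also asserts an Arzel\`a--Ascoli compactness that $\calP_\delta$ cannot supply, since the normalized functions $G(y_m)$, $m\to\infty$, are not equicontinuous. So it does not close the gap either.

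The repair is already in your Stage~3. The distortion bound $|S_n\phi(wx)-S_n\phi(wy)|\le\sum_{k>j}\var_k(\phi)$, for $x,y$ sharing $j$ initial symbols, shows that the transfer operator preserves a cone with modulus-of-continuity control. Two choices work: Bowen's condition $g(x)\le B_m g(y)$ whenever $x_i=y_i$ for $0\le i\le m$ (with $B_m$ as in Remark~\ref{rem:explicit_constants}), or, for H\"older $\phi$, $\{g>0: g(x)\le e^{a\,d_\alpha(x,y)}g(y)\text{ whenever }x_0=y_0\}$ with $a$ large.
\begin{itemize}
\item The first cone gives equicontinuity and a uniform sup bound (via $\min_a\nu([a])>0$). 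That makes the fixed-point route legitimate.
\item For the second, a fixed iterate of $\calL_\phi$ maps the cone into a set of finite diameter in the cone's own Hilbert metric, which dominates $\Theta$. Your Cauchy argument then runs verbatim. So does your uniqueness proof for $\nu$: apply it to $g+c$ with $g$ H\"older and $c$ large, then use density.
\end{itemize}

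Second, the shifting trick for (a) cannot work. If $\lambda_1\neq\lambda$, then $f+ch$ is not an eigenfunction, and the claim is actually false for sign-changing eigenfunctions. In the paper's zero-field Ising example (Section~\ref{subsec:ising_example}), $\phi(x)=\beta x_0x_1$ and $g(x)=x_0$ give $\calL_\phi g=2\sinh(\beta)\,g$. That is a second positive eigenvalue with a continuous eigenfunction. So (a) must be read as uniqueness among eigenvalues admitting a positive eigenfunction, and your pairing with $\nu$ settles that case; the paper's proof does not address it.

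The rest of your proposal is sound.
\begin{itemize}
\item Stage~1 is the paper's Step~1.
\item Your distortion argument for regularity is valid. The paper's Step~4, by contrast, rearranges an inequality that is vacuous when $|h|_\alpha=\infty$. You are also right that $h\in\calH_\alpha$ is only justified when $\var_n\phi=O(\alpha^n)$.
\item Your derivation of $\lambda=e^{P(\phi)}$ from $\nu([w])\asymp\lambda^{-n}e^{S_n\phi}$ uses only $\nu$. That is cleaner than the paper's deferral to Proposition~\ref{prop:pressure_transfer}. Its lower constant needs $\min_a\nu([a])>0$, which follows from $\nu([a])=\lambda^{-M}\nu(\calL_\phi^M\mathbf 1_{[a]})\ge\lambda^{-M}e^{-M\|\phi\|_\infty}$ because $A^M>0$.
\end{itemize}
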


\begin{proof}
\textbf{Step 1: Existence of eigenmeasure.} Let $\calM_1(\SigmaA^+)$ denote the space of Borel probability measures on $\SigmaA^+$, which is compact in the weak-$*$ topology. Define the map $G: \calM_1(\SigmaA^+) \to \calM_1(\SigmaA^+)$ by
\begin{equation}
G(\mu) = \frac{\calL_\phi^* \mu}{\calL_\phi^* \mu(\SigmaA^+)} = \frac{\calL_\phi^* \mu}{\int (\calL_\phi \mathbf{1}) \, d\mu}.
\end{equation}
The denominator is positive because $\calL_\phi \mathbf{1} > 0$ everywhere (Lemma~\ref{lem:transfer_positivity}). The map $G$ is continuous in the weak-$*$ topology: if $\mu_n \to \mu$ weak-$*$, then $\int f \, d(\calL_\phi^* \mu_n) = \int (\calL_\phi f) \, d\mu_n \to \int (\calL_\phi f) \, d\mu = \int f \, d(\calL_\phi^* \mu)$ for all $f \in C(\SigmaA^+)$, since $\calL_\phi f$ is continuous.

By the Schauder-Tychonoff fixed point theorem, $G$ has a fixed point $\nu \in \calM_1(\SigmaA^+)$. Setting $\lambda = \calL_\phi^* \nu(\SigmaA^+) = \int (\calL_\phi \mathbf{1}) \, d\nu$, we have $\calL_\phi^* \nu = \lambda \nu$.

\textbf{Step 2: Existence of eigenfunction.} Define the convex set
\begin{equation}
\Lambda = \left\{ g \in C(\SigmaA^+) : g \geq 0, \, \int g \, d\nu = 1, \, g \in \calP_\delta \right\}
\end{equation}
for $\delta = 3V(\phi)$. By Lemma~\ref{lem:cone_invariance}, the normalized operator $\lambda^{-1} \calL_\phi$ maps $\Lambda$ into itself for large iterates. The set $\Lambda$ is convex and, by the Arzel\`{a}-Ascoli theorem, relatively compact in $C(\SigmaA^+)$ (the H\"{o}lder bounds from the cone condition give equicontinuity).

By the Schauder-Tychonoff theorem, there exists $h \in \Lambda$ with $\lambda^{-1} \calL_\phi h = h$, i.e., $\calL_\phi h = \lambda h$.

\textbf{Step 3: Strict positivity.} We show that $h > 0$ everywhere. Suppose $h(x_0) = 0$ for some $x_0$. Since $h \in \Lambda$, we have $h \geq 0$ and $\int h \, d\nu = 1 > 0$, so $h \not\equiv 0$.

From $\calL_\phi h = \lambda h$ with $\lambda > 0$, we have
\begin{equation}
\lambda h(x_0) = (\calL_\phi h)(x_0) = \sum_{a: A_{a, x_{0,0}}=1} e^{\phi(ax_0)} h(ax_0) = 0.
\end{equation}
Since each term is non-negative, we must have $h(ax_0) = 0$ for all allowed $a$. Repeating this argument, $h$ vanishes on all preimages of $x_0$, hence on all preimages of preimages, and so on.

By the mixing property, every point in $\SigmaA^+$ is eventually a preimage of every other point. Thus $h \equiv 0$, contradicting $\int h \, d\nu = 1$. Therefore $h > 0$ everywhere.

\textbf{Step 4: Regularity.} We show $h \in \calH_\alpha$. The eigenfunction equation gives $h = \lambda^{-1}\calL_\phi h$. By Proposition~\ref{prop:transfer_holder},
\begin{equation}
|h|_\alpha = \lambda^{-1}|\calL_\phi h|_\alpha \leq \lambda^{-1}\left(\alpha N e^{\|\phi\|_\infty}|h|_\alpha + C\|h\|_\infty\right).
\end{equation}
If $|h|_\alpha < \infty$, we are done. If not, iterate: $h = \lambda^{-n}\calL_\phi^n h$ for all $n \geq 1$. By Corollary~\ref{cor:transfer_iterates}, $|\calL_\phi^n h|_\alpha \leq \alpha^n C_1|h|_\alpha + C_2\|h\|_\infty$. But also $\calL_\phi^n h = \lambda^n h$, so $\lambda^n|h|_\alpha \leq \alpha^n C_1|h|_\alpha + C_2\|h\|_\infty$. For $n$ large enough that $\alpha^n C_1 < \lambda^n/2$ (which holds since $\alpha < 1 \leq \lambda/C_1^{1/n}$ for large $n$), we get $|h|_\alpha \leq 2C_2\|h\|_\infty/\lambda^n < \infty$. Thus $h \in \calH_\alpha$.

\textbf{Step 5: Normalization.} We can normalize $h$ so that $\min_x h(x) = 1$ by dividing by the minimum. Since $h > 0$ and $\SigmaA^+$ is compact, the minimum is positive.
\end{proof}

\subsection{Uniqueness and Simplicity}

The uniqueness of the eigenvalue, eigenfunction (up to scalar), and eigenmeasure follows from the positivity-improving property of the transfer operator.

\begin{theorem}[Uniqueness of Eigendata]\label{thm:eigendata_uniqueness}
Under the hypotheses of Theorem~\ref{thm:eigendata_existence}:
\begin{enumerate}
\item[(a)] The eigenvalue $\lambda$ is simple: the eigenspace $\{g : \calL_\phi g = \lambda g\}$ is one-dimensional, spanned by $h$.
\item[(b)] The eigenvalue $\lambda$ is the unique eigenvalue of $\calL_\phi$ with modulus $|\lambda|$.
\item[(c)] The eigenmeasure $\nu$ is the unique probability measure satisfying $\calL_\phi^* \nu = \lambda' \nu$ for any $\lambda' > 0$.
\end{enumerate}
\end{theorem}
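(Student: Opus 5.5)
The plan is to run all three parts through the strictly positive eigenfunction $h$ of Theorem~\ref{thm:eigendata_existence} and the normalized operator $\hat{\calL}g = \lambda^{-1}h^{-1}\calL_\phi(hg)$. This operator satisfies $\hat{\calL}\mathbf{1} = \mathbf{1}$, so it is a Markov operator: $\|\hat{\calL}g\|_\infty \leq \|g\|_\infty$. Any eigenvector $g$ of $\calL_\phi$ with eigenvalue of modulus $\lambda$ corresponds to $u = g/h$, which satisfies $\hat{\calL}u = \theta u$ with $|\theta| = 1$.

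\textbf{(a) Simplicity.} Let $\calL_\phi g = \lambda g$ with $g$ real. Put $u = g/h$ and $t = \max u$, attained at some $x^*$. Then $\hat{\calL}u = u$ is an average of the values $u(ax^*)$ with strictly positive weights summing to $1$. Hence $u(ax^*) = t$ for every admissible $a$, and by induction $u = t$ on every $n$-fold preimage of $x^*$. Topological mixing (Lemma~\ref{lem:uniform_mixing}) makes $\bigcup_n \sigma^{-n}(x^*)$ dense, and $u$ is continuous, so $u \equiv t$ and $g = th$. For complex $g$, apply this to the real and imaginary parts separately.

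\textbf{(b) Peripheral spectrum.} Suppose $\hat{\calL}u = \theta u$ with $|\theta| = 1$, $u \neq 0$. Normalize so that $\max|u| = 1$ is attained at $x^*$. Since $|u(x^*)| \leq \hat{\calL}|u|(x^*) \leq 1$, equality holds in a convex combination of numbers in the closed unit disk. This forces $u(ax^*) = \theta^{-1}u(x^*)$ for all admissible $a$. Iterating gives $u(y) = \theta^{-n}u(x^*)$ for every $y \in \sigma^{-n}(x^*)$, and $|u| \equiv 1$ by density and continuity.

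\emph{This is the main obstacle:} to conclude $\theta = 1$ from mixing. Fix a point $y$. By Lemma~\ref{lem:uniform_mixing}, for all large $n$ there are preimages $y_n \in \sigma^{-n}(x^*)$ and $y_{n+1} \in \sigma^{-(n+1)}(x^*)$ both lying in the same cylinder of length $k$ around $y$, for any $k$. Let $k \to \infty$. Continuity of $u$ gives $\theta^{-n}u(x^*) = \theta^{-(n+1)}u(x^*)$ in the limit, so $\theta = 1$. The delicate point is choosing the words so that the two preimages share an arbitrarily long prefix. I would handle it by prepending the fixed prefix $y_0\cdots y_{k-1}$ and using connecting words of lengths $m$ and $m+1$, both at least $M$. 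Then (a) identifies $u$ as a constant.

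\textbf{(c) Uniqueness of the eigenmeasure.} Let $\calL_\phi^*\nu' = \lambda'\nu'$ with $\nu'$ a probability measure. Integrating $h$ gives $\lambda'\nu'(h) = \nu'(\calL_\phi h) = \lambda\nu'(h)$. Since $h > 0$ forces $\nu'(h) > 0$, we get $\lambda' = \lambda$. The measure $m = h\nu'/\nu'(h)$ is then invariant under $\hat{\calL}^*$. For $g \in \calH_\alpha$, the exponential convergence in the cone argument (Lemmas~\ref{lem:hilbert_contraction} and \ref{lem:cone_invariance}) gives $\hat{\calL}^n g \to c(g)\mathbf{1}$ uniformly, with a constant $c(g)$ independent of the measure. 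Alternatively, I would use the Lasota--Yorke bound of Corollary~\ref{cor:transfer_iterates} together with Arzel\`a--Ascoli and the maximum principle from (b), which show every limit point of $\hat{\calL}^n g$ is constant. Then $m(g) = m(\hat{\calL}^n g) \to c(g)$ for every $\hat{\calL}^*$-invariant probability measure. Since H\"older functions are dense in $C(\SigmaA^+)$, the measure $m$ is unique, hence so is $\nu' = h^{-1}m/m(h^{-1})$, and therefore $\nu' = \nu$.
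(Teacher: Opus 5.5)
Your proof is correct. Parts (a) and (c) follow the paper's argument.

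In (a), the paper applies the same maximum principle to $f=g-c^+h=h\,(u-\max u)$ and spreads the maximum along iterated preimages. In (c), it also gets $\lambda'=\lambda$ by integrating $h$. It then integrates the convergence $\lambda^{-n}\calL_\phi^n f\to\nu(f)h$ of Theorem~\ref{thm:exponential_convergence} against the candidate measure. Your passage to $\hat{\calL}$ and the $\hat{\calL}^*$-invariant measure $m$ is only a change of normalization.

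Part (b) takes a genuinely different route. The paper integrates $\lambda|g|\le\calL_\phi|g|$ against $\nu$ and obtains equality $\nu$-a.e. It then applies (a) to $|g|$ and asserts that mixing and continuity force $g/|g|$ to be constant. Your pointwise maximum-modulus argument for $\hat{\calL}$ uses no information about $\nu$. The paper, by contrast, tacitly needs $\operatorname{supp}\nu=\SigmaA^+$ to turn the a.e.\ equality into $\calL_\phi|g|=\lambda|g|$ everywhere.

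Your comparison of preimages of orders $n$ and $n+1$ inside a common long cylinder supplies exactly the step the paper leaves implicit. The relation $u=\theta\,u\circ\sigma$ forces $\theta=1$ only through aperiodicity. For an irreducible but periodic $A$, the values $\lambda e^{2\pi i j/p}$ are genuine eigenvalues. The paper's route is shorter, but yours makes visible where mixing, rather than mere irreducibility, is used.

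A few small points.

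\begin{enumerate}
\item[(1)] Equality in $\theta u(x^*)=\sum_a p_a(x^*)u(ax^*)$ gives $u(ax^*)=\theta u(x^*)$, not $\theta^{-1}u(x^*)$. This slip is harmless: $\theta=1$ follows in the same way.
\item[(2)] In (c), keep your equicontinuity route rather than the cone route. Lemma~\ref{lem:cone_invariance} works with cones defined by $\sup g/\inf g$ alone, and such cones are not mapped strictly inside themselves. On the full shift with $\phi=0$, the function $g=G\circ\sigma^{n_0}$ gives $\calL_\phi^{n_0}g=N^{n_0}G$, with the same oscillation ratio as $g$. The lemma's own ratio bound retains the $+\delta$ that its stated $\delta'$ drops.
\item[(3)] For the equicontinuity route, do not cite Corollary~\ref{cor:transfer_iterates}, whose constants depend on $n$. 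Instead iterate the one-step bound $|\hat{\calL}g|_\alpha\le\alpha|g|_\alpha+C'\|g\|_\infty$ from the proof of Theorem~\ref{thm:spectral_gap}, using $\|\hat{\calL}^k g\|_\infty\le\|g\|_\infty$. If $\phi$ is only in $\calF_A$, use Bowen-type variation estimates instead.
\item[(4)] Constancy of limit points is not literally the maximum principle of (b). It comes from the monotonicity of $\max\hat{\calL}^n g$, together with the fact that $\sigma^{-k}(x)$ meets every cylinder of length $k-M$, uniformly in $x$.
\end{enumerate}
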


\begin{proof}
\textbf{Part (a): Simplicity.} Suppose $\calL_\phi g = \lambda g$ for some $g \in C(\SigmaA^+)$. Define
\begin{equation}
c^+ = \sup_{x \in \SigmaA^+} \frac{g(x)}{h(x)}, \quad c^- = \inf_{x \in \SigmaA^+} \frac{g(x)}{h(x)}.
\end{equation}
These are finite because $h > 0$. Consider $f = g - c^+ h$. Then $f \leq 0$ everywhere and $\calL_\phi f = \lambda f$.

If $f \not\equiv 0$, there exists $x_0$ with $f(x_0) = 0$ (by definition of $c^+$) and $f(y) < 0$ for some $y$. From $\lambda f(x_0) = (\calL_\phi f)(x_0) = \sum_a e^{\phi(ax_0)} f(ax_0)$, and since $\lambda > 0$, $f(x_0) = 0$, and each $f(ax_0) \leq 0$, we need each $f(ax_0) = 0$. Iterating, $f \equiv 0$ on all iterated preimages of $x_0$, hence $f \equiv 0$ by mixing. Thus $g = c^+ h$.

\textbf{Part (b): Uniqueness of modulus.} Suppose $\calL_\phi g = \mu g$ with $|\mu| = \lambda$ and $g \neq 0$. Writing $\mu = \lambda e^{i\theta}$ and considering $|g|$, we have
\begin{equation}
\lambda |g(x)| = |(\calL_\phi g)(x)| \leq (\calL_\phi |g|)(x).
\end{equation}
Integrating against $\nu$,
\begin{equation}
\lambda \int |g| \, d\nu \leq \int (\calL_\phi |g|) \, d\nu = \lambda \int |g| \, d\nu,
\end{equation}
so equality holds throughout. This means $|(\calL_\phi g)(x)| = (\calL_\phi |g|)(x)$ for $\nu$-a.e. $x$, which requires that $e^{\phi(ax)} g(ax)$ all have the same argument for each $x$. By the mixing property and the continuity of $\phi$ and $g$, this forces $g/|g|$ to be constant, so $g = e^{i\alpha} |g|$ for some $\alpha$. Then $\calL_\phi |g| = \lambda |g|$, so $|g| = c h$ by part (a), and $\mu = \lambda$.

\textbf{Part (c): Uniqueness of eigenmeasure.} Suppose $\calL_\phi^* \mu = \lambda' \mu$ for some probability measure $\mu$ and $\lambda' > 0$. Then $\int (\calL_\phi \mathbf{1}) \, d\mu = \lambda'$, so $\lambda' \geq e^{-\|\phi\|_\infty}$. Also, $\int (\calL_\phi h) \, d\mu = \lambda \int h \, d\mu$ and $\int (\calL_\phi h) \, d\mu = \int h \, d(\calL_\phi^* \mu) = \lambda' \int h \, d\mu$. Since $h > 0$, $\int h \, d\mu > 0$, so $\lambda = \lambda'$.

Now for any $f \in C(\SigmaA^+)$, the uniform convergence $\lambda^{-n}\calL_\phi^n f \to \nu(f)h$ (Theorem~\ref{thm:exponential_convergence}) allows us to integrate against $\mu$. Since $\calL_\phi^*\mu = \lambda\mu$, we have $\int(\calL_\phi^n f)\,d\mu = \lambda^n\int f\,d\mu$, so
\begin{equation}
\int f\,d\mu = \lambda^{-n}\int(\calL_\phi^n f)\,d\mu \to \nu(f)\int h\,d\mu.
\end{equation}
Since $h > 0$ and $\mu$ is a probability measure, $\int h\,d\mu > 0$. Setting $c = \int h\,d\mu$, we obtain $\int f\,d\mu = c\,\nu(f)$ for all $f \in C(\SigmaA^+)$. Taking $f = \mathbf{1}$ gives $1 = c\,\nu(\mathbf{1}) = c$, so $\int f\,d\mu = \nu(f)$ for all $f$, hence $\mu = \nu$.
\end{proof}

\subsection{Exponential Convergence}

The iterates of the normalized transfer operator converge exponentially to the projection onto the eigenspace.

\begin{theorem}[Exponential Convergence]\label{thm:exponential_convergence}
There exist constants $C > 0$ and $\gamma \in (0, 1)$ such that for all $g \in C(\SigmaA^+)$ and all $n \geq 1$,
\begin{equation}\label{eq:exponential_convergence}
\left\| \lambda^{-n} \calL_\phi^n g - \nu(g) h \right\|_\infty \leq C \gamma^n \|g\|_\infty.
\end{equation}
If $g \in \calH_\alpha(\SigmaA^+)$, the convergence also holds in the H\"{o}lder norm:
\begin{equation}\label{eq:exponential_convergence_holder}
\left\| \lambda^{-n} \calL_\phi^n g - \nu(g) h \right\|_\alpha \leq C \gamma^n \|g\|_\alpha.
\end{equation}
\end{theorem}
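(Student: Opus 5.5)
The plan is to reduce the statement to a contraction of the Hilbert projective metric and then convert projective closeness into norm closeness. First normalize: set $\hat{\calL} g = \lambda^{-1} h^{-1}\calL_\phi(hg)$, using $h$ from Theorem~\ref{thm:eigendata_existence}. Then $\hat{\calL}\mathbf{1} = \mathbf{1}$, and the probability measure $\mu = h\nu$ satisfies $\hat{\calL}^*\mu = \mu$. The inequality \eqref{eq:exponential_convergence} becomes $\|\hat{\calL}^n f - \mu(f)\|_\infty \leq C'\gamma^n\|f\|_\infty$ with $f = g/h$. The constants change only by factors $\|h\|_\infty$ and $\|1/h\|_\infty \leq 1$ (recall $\min h = 1$). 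Since $\hat{\calL}$ is a positive operator, Lemma~\ref{lem:cone_invariance} carries over to it with $\delta'$ enlarged by $\log(\sup h/\inf h)$. Lemma~\ref{lem:hilbert_contraction} then gives $\Theta(\hat{\calL}^{kn_0}f_1,\hat{\calL}^{kn_0}f_2) \leq \kappa^{k}\Theta(f_1,f_2)$ for $f_1,f_2 \in \calP_\delta$.

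\textbf{Step 1 (positive functions in the cone).} Take $f \in \calP_\delta$ and compare $\hat{\calL}^{kn_0}f$ with the constant $\mu(f)\mathbf{1}$, which is a fixed point. This gives $\Theta(\hat{\calL}^{kn_0}f,\mathbf{1}) \leq \kappa^{k-1}\delta'$. We also have $\mu(\hat{\calL}^{kn_0}f) = \mu(f)$, so the function $\hat{\calL}^{kn_0}f/\mu(f)$ has $\inf \leq 1 \leq \sup$. The elementary bound for a positive function with $\inf\le 1\le\sup$ and $\log(\sup/\inf)\leq\varepsilon$ is $\|\cdot - 1\|_\infty \leq e^{\varepsilon}-1$. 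Together these yield $\|\hat{\calL}^{kn_0}f - \mu(f)\|_\infty \leq (e^{\kappa^{k-1}\delta'}-1)\mu(f) \leq C''\kappa^{k}\|f\|_\infty$. For $n = kn_0 + r$ with $0\le r<n_0$, I would use $\|\hat{\calL}\|_\infty = 1$ (a Markov operator), which gives $\gamma = \kappa^{1/n_0}$.

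\textbf{Step 2 (general $g$).} Write $f = (f + c\mathbf{1}) - c\mathbf{1}$ with $c$ chosen so that $f + c\mathbf{1} \in \calP_\delta$. Here lies the main obstacle. For $f$ that is only continuous, the cone $\calP_\delta$ is defined by oscillation alone, so $c = \|f\|_\infty\,(e^\delta+1)/(e^\delta-1)$ suffices. The difficulty is that the proof of Lemma~\ref{lem:cone_invariance} matches preimages of $x$ and $y$ and uses bounded distortion of $S_{n_0}\phi$. The comparison of $f(wx)$ with $f(vy)$, however, only used the $\calP_\delta$ bound, so nothing beyond continuity is needed there. I would re-check that step carefully, since it is the one that makes the sup-norm statement hold for merely continuous $g$.

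\textbf{Fallback.} If it turns out that genuine regularity of $f$ is needed, I would use a cone of the form $\{f>0 : f(x)\le e^{a d_\alpha(x,y)}f(y)\}$ in the style of Liverani. I would then prove the sup-norm rate only for $g\in\calH_\alpha$ with $\|g\|_\alpha$ on the right-hand side. For arbitrary continuous $g$ I would settle for convergence, by density of $\calH_\alpha$ in $C(\SigmaA^+)$ together with uniform boundedness of $\lambda^{-n}\calL_\phi^n$. This is also all that Theorem~\ref{thm:eigendata_uniqueness}(c) uses.

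\textbf{Step 3 (Hölder norm).} Apply Corollary~\ref{cor:transfer_iterates} to the normalized operator with $m = n_0$. Iterating the resulting Lasota–Yorke inequality $|\hat{\calL}^{m}u|_\alpha \leq \alpha^{m}C_1|u|_\alpha + C_2\|u\|_\infty$ on $u_j = \hat{\calL}^{j}f - \mu(f)$ gives
\[
|u_{2j}|_\alpha \leq \theta^{j}|u_j|_\alpha + C_2\sum_i \theta^{i}\|u_{2j-i}\|_\infty ,
\]
with $\theta$ the contraction factor of the Hölder seminorm. Combining this with the sup-norm decay from Steps 1--2 yields $\|u_n\|_\alpha \leq C\max(\theta,\gamma)^{n/2}\|f\|_\alpha$. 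I would rename this rate $\gamma$; the square root and the max account for the $\alpha^{1/3}$-type exponent in Remark~\ref{rem:explicit_constants}. Finally, undo the conjugation by $h$ using the Banach-algebra property of $\calH_\alpha$.
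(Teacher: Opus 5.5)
Your Steps~1--2 follow the paper's own route: the oscillation cone $\calP_\delta$, Birkhoff contraction, and equality of integrals to turn projective closeness into uniform closeness. You make two genuine improvements. You conjugate by $h$ so that the fixed ray is $\mathbf{1}$, and you split $f=(f+c\mathbf{1})-c\mathbf{1}$ instead of using the paper's $g^{\pm}$, which vanish on open sets and so are not in $\calP_\delta$. However, both arguments rest on Lemma~\ref{lem:cone_invariance}, which is false. Your reassurance in Step~2, ``the comparison of $f(wx)$ with $f(vy)$ only used the $\calP_\delta$ bound'', is exactly where the argument breaks. Using only $f(wx)/f(vy)\le e^{\delta}$ gives the image bound $e^{\delta+M\var_0(\phi)+V(\phi)+M\log N}$, which is a \emph{wider} cone; the lemma's proof simply drops the $+\delta$.

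This cannot be repaired. For $x\neq y$, the sets $\sigma^{-n}x$ and $\sigma^{-n}y$ are disjoint and finite. Let $U$ be a clopen neighbourhood of $\sigma^{-n}x$ that misses $\sigma^{-n}y$; then $f=1+(e^{\delta}-1)\mathbf{1}_U$ lies in $\calP_\delta$. Since $\hat{\calL}^n\mathbf{1}=\mathbf{1}$, you get $\hat{\calL}^nf(x)=e^{\delta}$ and $\hat{\calL}^nf(y)=1$. Hence $\hat{\calL}^n(\calP_\delta)\not\subset\calP_{\delta'}$ for every $\delta'<\delta$ and every $n$. Similarly, take $f=\mathbf{1}_U-\mathbf{1}_{U'}$ with $U'\supset\sigma^{-n}y$ disjoint from $U$. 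Then $\|f\|_\infty=1$ and $\|\hat{\calL}^nf-\mu(f)\|_\infty\ge 1$. Undoing the conjugation, $g=hf$ satisfies $\|\lambda^{-n}\calL_\phi^n g-\nu(g)h\|_\infty\ge(\min h/\max h)\|g\|_\infty$ for every $n$. So the first display of the statement is false: $\calL_\phi$ has no spectral gap on $C(\SigmaA^+)$. No proof of that display can be completed, neither yours nor the paper's.

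Your fallback is therefore the actual proof, but of a corrected statement. Work in the cone $\{f>0: f(x)\le e^{a\,d_\alpha(x,y)}f(y)\}$. For pairs with $x_0=y_0$, the paired preimages $wx,wy$ (same $w$) are $\alpha^{n_0}d_\alpha(x,y)$-close, so $\hat{\calL}^{n_0}$ improves $a$ to $\alpha^{n_0}a+c$. For $x_0\neq y_0$, mixing with $n_0\ge M$ gives $\inf\hat{\calL}^{n_0}f\ge c_{n_0}e^{-a\alpha}\sup f$. For $a$ large the image then has finite Hilbert diameter, and Birkhoff's theorem applies. This yields $\|\lambda^{-n}\calL_\phi^ng-\nu(g)h\|_\infty\le C\gamma^n\|g\|_\alpha$ for $g\in\calH_\alpha$. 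For merely continuous $g$ you get only uniform convergence without a rate, which, as you note, is enough for Theorem~\ref{thm:eigendata_uniqueness}(c).

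Step~3 then works, with one change. Use the Lasota--Yorke inequality for the normalized operator, $|\hat{\calL}^mu|_\alpha\le\alpha^m|u|_\alpha+C_2\|u\|_\infty$ with $C_2$ independent of $m$, as derived in the proof of Theorem~\ref{thm:spectral_gap}. Do not use Corollary~\ref{cor:transfer_iterates}, whose coefficient $\alpha^nN^ne^{n\|\phi\|_\infty}$ need not be below $1$.
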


\begin{proof}
Define the normalized transfer operator $\tilde{\calL} = \lambda^{-1} \calL_\phi$ and the projection $\Pi g = \nu(g) h$. We show that $\tilde{\calL}^n g - \Pi g \to 0$ exponentially.

Note that $\Pi$ is a projection: $\Pi^2 g = \nu(g) \nu(h) h = \nu(g) h = \Pi g$ since $\nu(h) = 1$ by normalization. Also, $\tilde{\calL} \Pi g = \nu(g) \tilde{\calL} h = \nu(g) h = \Pi g$ and $\Pi \tilde{\calL} g = \nu(\tilde{\calL} g) h = \nu(g) h = \Pi g$ since $\nu(\tilde{\calL} g) = \lambda^{-1} \int (\calL_\phi g) \, d\nu = \lambda^{-1} \cdot \lambda \int g \, d\nu = \nu(g)$.

Thus $\tilde{\calL}$ and $\Pi$ commute, and the complementary projection $Q = I - \Pi$ satisfies $\tilde{\calL} Q = Q \tilde{\calL}$. It suffices to show that $\|\tilde{\calL}^n Q\| \leq C \gamma^n$.

For $g$ with $\nu(g) = 0$ (i.e., $g \in \ker \Pi$), the cone technique shows that $\tilde{\calL}^n g$ converges to zero. More precisely, write $g = g^+ - g^-$ with $g^+, g^- \geq 0$. The assumption $\nu(g) = 0$ means $\nu(g^+) = \nu(g^-)$. For the cone $\calP_\delta$, the contraction property (Lemma~\ref{lem:hilbert_contraction}) applied to $\tilde{\calL}^{n_0}$ gives
\begin{equation}
\Theta(\tilde{\calL}^{n_0} g^+, \tilde{\calL}^{n_0} g^-) \leq \kappa \, \Theta(g^+, g^-)
\end{equation}
for some $\kappa < 1$.

Iterating, $\Theta(\tilde{\calL}^{kn_0} g^+, \tilde{\calL}^{kn_0} g^-) \leq \kappa^k\Theta(g^+, g^-)$. The Hilbert metric controls the sup-norm ratio as follows. By definition, $\Theta(f_1,f_2) = \log(M(f_1/f_2)/m(f_1/f_2))$ where $M = \sup$ and $m = \inf$. If $\Theta(f_1,f_2) \leq \epsilon$, then $M(f_1/f_2) \leq e^\epsilon m(f_1/f_2)$. Since $\int f_1\,d\nu = \int f_2\,d\nu$ (both integrals are preserved because $\tilde{\calL}^*\nu = \nu$), the ratio $r = f_1/f_2$ satisfies $\int r\cdot f_2\,d\nu = \int f_2\,d\nu$, so $\int(r-1)f_2\,d\nu = 0$. With $f_2 > 0$, this forces $r$ to take values both $\geq 1$ and $\leq 1$, hence $m(r) \leq 1 \leq M(r) \leq e^\epsilon m(r)$, giving $m(r) \geq e^{-\epsilon}$. Therefore $e^{-\epsilon} \leq r(x) \leq e^\epsilon$ for all $x$, and $\|f_1/f_2 - 1\|_\infty \leq e^\epsilon - 1 \leq 2\epsilon$ for $\epsilon \leq 1$. Since $\int\tilde{\calL}^n g^+\,d\nu = \nu(g^+) = \nu(g^-)= \int\tilde{\calL}^n g^-\,d\nu$ (both integrals are preserved by $\tilde{\calL}^*\nu = \nu$), and $\|\tilde{\calL}^n g^+/\tilde{\calL}^n g^- - 1\|_\infty \leq 2\kappa^{\lfloor n/n_0\rfloor}\Theta(g^+,g^-)$, we get
\begin{equation}
\|\tilde{\calL}^n g^+ - \tilde{\calL}^n g^-\|_\infty \leq \|\tilde{\calL}^n g^-\|_\infty \cdot 2\kappa^{\lfloor n/n_0\rfloor}\Theta(g^+,g^-) \leq C'\kappa^{n/n_0},
\end{equation}
where $C'$ depends on $\|g\|_\infty$ and the cone diameter. Thus $\|\tilde{\calL}^n g\|_\infty = \|\tilde{\calL}^n g^+ - \tilde{\calL}^n g^-\|_\infty \leq C'\kappa^{n/n_0}$, giving exponential convergence with rate $\gamma = \kappa^{1/n_0} < 1$. The H\"{o}lder norm estimate follows from Proposition~\ref{prop:transfer_holder} and the fact that the cone $\calP_\delta$ controls the H\"{o}lder seminorm through the oscillation bound.
\end{proof}

The Ruelle-Perron-Frobenius theorem establishes the existence and uniqueness of the eigendata $(\lambda, h, \nu)$ and exponential convergence $\lambda^{-n}\calL_\phi^n g \to \nu(g)h$, but does not by itself quantify the rate. The spectral gap, established in the next section, provides this quantification and is the source of all statistical properties.


\section{The Spectral Gap}\label{sec:spectral_gap}

The exponential convergence rate from Theorem~\ref{thm:exponential_convergence} reflects a spectral gap: the dominant eigenvalue $\lambda = e^{P(\phi)}$ is separated from the rest of the spectrum of $\calL_\phi$ on $\calH_\alpha$ by a definite distance. We establish this using the Ionescu-Tulcea-Marinescu theorem \cite{IonescuTulceaMarinescu1950,Hennion1993}, which shows that the essential spectral radius $r_{\text{ess}}(\calL_\phi)$ on $\calH_\alpha$ is strictly less than $\lambda$. This spectral gap, bounded below in terms of $(\alpha, \|\phi\|_\alpha, N, M)$, is the single mechanism producing all statistical properties in Sections~\ref{sec:perturbation}--\ref{sec:statistical}.

\subsection{Essential Spectral Radius}

\begin{definition}[Essential Spectral Radius]\label{def:essential_spectral_radius}
The essential spectral radius of a bounded linear operator $L$ on a Banach space $B$ is
\begin{equation}
r_{\text{ess}}(L) = \inf\{r(L - K) : K \text{ compact}\} = \lim_{n \to \infty} \inf\{\|L^n - K\|^{1/n} : K \text{ compact}\},
\end{equation}
where $r(T)$ denotes the spectral radius of $T$.
\end{definition}

The essential spectrum $\sigma_{\text{ess}}(L)$ is the set of $z \in \C$ such that $L - zI$ is not a Fredholm operator. It satisfies $\sigma_{\text{ess}}(L) \subset \{z : |z| \leq r_{\text{ess}}(L)\}$.

\begin{theorem}[Spectral Gap]\label{thm:spectral_gap}
Let $\phi \in \calH_\alpha(\SigmaA^+)$ for some $\alpha \in (0,1)$. The transfer operator $\calL_\phi: \calH_\alpha(\SigmaA^+) \to \calH_\alpha(\SigmaA^+)$ satisfies:
\begin{enumerate}
\item[(a)] The spectral radius is $r(\calL_\phi) = \lambda = e^{P(\phi)}$.
\item[(b)] The essential spectral radius satisfies $r_{\text{ess}}(\calL_\phi) \leq \alpha \lambda$.
\item[(c)] The spectrum consists of the simple eigenvalue $\lambda$ together with a subset of $\{z : |z| \leq \gamma \lambda\}$ for some $\gamma < 1$.
\end{enumerate}
\end{theorem}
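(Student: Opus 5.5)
The plan is to prove (b) first via a Lasota--Yorke inequality for the normalized operator, then obtain (a) and (c) from the Ionescu-Tulcea--Marinescu/Hennion theorem combined with the eigendata of Section~\ref{sec:RPF}.

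\textbf{Normalization.} Replace $\phi$ by the normalized potential
\[
\bar\phi = \phi + \log h - \log h\circ\sigma - \log\lambda,
\]
using $h$ from Theorem~\ref{thm:eigendata_existence}. Since $h \in \calH_\alpha$ is bounded away from zero, $\log h \in \calH_\alpha$. Moreover $\calL_{\bar\phi} = \lambda^{-1} M_h^{-1}\calL_\phi M_h$, where $M_h$ is multiplication by $h$, which is a bounded invertible operator on the Banach algebra $\calH_\alpha$. Hence it suffices to treat $\calL := \calL_{\bar\phi}$, which satisfies $\calL\mathbf 1 = \mathbf 1$ and therefore $\|\calL^n\|_{\infty} = 1$.

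\textbf{Lasota--Yorke inequality.} Redo the estimate of Corollary~\ref{cor:transfer_iterates} with the weights $e^{S_n\bar\phi}$. These now sum to $1$ over $n$-preimages, instead of being bounded by the crude factor $N^n e^{n\|\phi\|_\infty}$. Take $x,y$ with $d_\alpha(x,y)=\alpha^k$, $k \ge 1$, and pair preimages $\tilde x,\tilde y$ with the same first $n$ symbols. Split the difference of $\calL^n g$ at $x$ and $y$ as
\[
\sum e^{S_n\bar\phi(\tilde x)}\bigl(g(\tilde x)-g(\tilde y)\bigr) + \sum\bigl(e^{S_n\bar\phi(\tilde x)}-e^{S_n\bar\phi(\tilde y)}\bigr)g(\tilde y).
\]
The first sum is at most $\alpha^{n+k}|g|_\alpha$. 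For the second, bounded distortion gives $|S_n\bar\phi(\tilde x)-S_n\bar\phi(\tilde y)| \le |\bar\phi|_\alpha\alpha^k/(1-\alpha)$, so the second sum is at most $D\alpha^k\|g\|_\infty$ with $D$ depending only on $|\bar\phi|_\alpha$. The case $k=0$ is bounded trivially by $2\|g\|_\infty$. Together these give
\[
|\calL^n g|_\alpha \le \alpha^n|g|_\alpha + D\|g\|_\infty .
\]

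\textbf{Quasi-compactness and part (b).} The unit ball of $\calH_\alpha$ is relatively compact in $C(\SigmaA^+)$ by Arzel\`a--Ascoli. Hennion's theorem (Appendix~\ref{app:functional}) then gives $r_{\text{ess}}(\calL)\le\alpha$ on $\calH_\alpha$. Undoing the normalization yields $r_{\text{ess}}(\calL_\phi)\le\alpha\lambda$, which is (b).

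\textbf{Part (a).} The inequality above gives $\|\calL^n\|_\alpha \le 1+D+1$ uniformly in $n$, so $r(\calL)\le 1$. Since $\calL\mathbf 1=\mathbf 1$, we have $r(\calL)=1$, hence $r(\calL_\phi)=\lambda$.

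\textbf{Part (c).} By (b), the spectrum outside the disk of radius $\alpha\lambda$ consists of isolated eigenvalues of finite multiplicity. Theorem~\ref{thm:eigendata_uniqueness}(a),(b) shows that $\lambda$ is the only eigenvalue on the circle $|z|=\lambda$ and that its eigenspace is one-dimensional. To get algebraic simplicity, rule out a Jordan block: if $(\calL_\phi-\lambda)g = h$, then integrating against $\nu$ gives $0=\nu(h)=1$, a contradiction. Finitely many eigenvalues lie in any annulus $\alpha\lambda < \rho \le |z| < \lambda$, so the spectrum minus $\{\lambda\}$ lies in $\{|z|\le\gamma\lambda\}$ for some $\gamma<1$.

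For an explicit $\gamma$, use Theorem~\ref{thm:exponential_convergence} in the Hölder norm. Any eigenfunction for an eigenvalue $z\neq\lambda$ satisfies $\nu(g)=0$, and then the decay of $\|\lambda^{-n}\calL_\phi^n g\|_\alpha$ forces $|z|\le\gamma\lambda$ with $\gamma$ the rate from that theorem.

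\textbf{Main obstacle.} The delicate step is the Lasota--Yorke bound with contraction coefficient exactly $\alpha^n$. The unnormalized estimate of Corollary~\ref{cor:transfer_iterates} carries the factor $N^n e^{n\|\phi\|_\infty}$, which can exceed $\lambda^n$ and would spoil the bound $\alpha\lambda$. The normalization by $h$, and specifically the property $\calL\mathbf 1=\mathbf 1$, is what removes this factor. So the care lies in checking that $\log h$ is Hölder with controlled seminorm and that conjugation by $M_h$ preserves spectra on $\calH_\alpha$.
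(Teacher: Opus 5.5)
Your argument is correct. For (b) it is the paper's own route: conjugate by $h$ to $g\mapsto\lambda^{-1}h^{-1}\calL_\phi(hg)$, which fixes $\mathbf 1$, prove a Lasota--Yorke inequality with coefficient exactly $\alpha$, and apply Hennion's theorem. Your closing diagnosis is also right: the paper's preliminary unnormalized attempt rests on $Ne^{\|\phi\|_\infty}\le\lambda e^{V(\phi)}$, which already fails for the golden mean shift with $\phi=0$.

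You genuinely differ from the paper in (a) and (c).

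For (a), the paper takes $\sup_n\|\lambda^{-n}\calL_\phi^n\|_\alpha<\infty$ from the H\"older-norm half of Theorem~\ref{thm:exponential_convergence}, which is only sketched there. You read it off the normalized Lasota--Yorke inequality instead, which is self-contained.

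For (c), the paper concludes $\sigma(\calL_\phi)\setminus\{\lambda\}\subset\{|z|\le r_{\text{ess}}(\calL_\phi)\}$ and sets $\gamma=\alpha$. This conflates the essential spectral radius with the modulus of the second eigenvalue, and it is false in general. The paper's own Ising example (Section~\ref{subsec:ising_example}) is a counterexample: $g(x)=x_0$ satisfies $\calL_\phi g=2\sinh(1)\,g$. That eigenvalue has modulus $\tanh(1)\,\lambda\approx0.76\,\lambda$ on every $\calH_\alpha$, which exceeds $\alpha\lambda$ for $\alpha=1/2$.

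Your version of (c) is the correct argument. You use finitely many eigenvalues in $\{\rho\le|z|\le\lambda\}$ for $\rho>\alpha\lambda$, and only $\lambda$ on the circle. You exclude a Jordan block because $\nu$ annihilates $(\calL_\phi-\lambda)g$ while $\nu(h)>0$, which also proves the algebraic simplicity the paper merely asserts. The cost is that $\gamma$ is not explicit. Your optional sharpening via Theorem~\ref{thm:exponential_convergence} is only as reliable as that theorem's H\"older-norm estimate. Also, applied to eigenfunctions alone it yields $\max(\alpha,\gamma)$ for the whole spectrum rather than $\gamma$.
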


\begin{proof}
\textbf{Part (a):} By Theorem~\ref{thm:exponential_convergence}, $\|\lambda^{-n} \calL_\phi^n\| \leq C$ for all $n$, so $r(\calL_\phi) \leq \lambda$. Conversely, $\calL_\phi h = \lambda h$ with $h \neq 0$ implies $\lambda \in \sigma(\calL_\phi)$, so $r(\calL_\phi) \geq \lambda$.

\textbf{Part (b):} We use the Ionescu-Tulcea and Marinescu theorem. Let $B_1 = C(\SigmaA^+)$ with the supremum norm and $B_0 = \calH_\alpha(\SigmaA^+)$ with the H\"{o}lder norm. The inclusion $B_0 \hookrightarrow B_1$ is compact by Arzel\`{a}-Ascoli. By Proposition~\ref{prop:transfer_holder},
\begin{equation}
|\calL_\phi g|_\alpha \leq \alpha |\calL_\phi \mathbf{1}|_\alpha |g|_\alpha + C \|g\|_\infty \leq \alpha \lambda' |g|_\alpha + C \|g\|_\infty
\end{equation}
for some $\lambda' \leq N e^{\|\phi\|_\infty}$. Iterating,
\begin{equation}
|\calL_\phi^n g|_\alpha \leq (\alpha \lambda')^n |g|_\alpha + C' \|g\|_\infty
\end{equation}
for a new constant $C'$. This is condition (ITM3) of the Ionescu-Tulcea-Marinescu theorem with $r = \alpha \lambda'$.

By Theorem~\ref{thm:ITM} (Ionescu-Tulcea-Marinescu in Hennion's form), the essential spectral radius of $\calL_\phi$ on $\calH_\alpha$ satisfies $r_{\mathrm{ess}}(\calL_\phi) \leq r$, where $r$ is the contraction rate in the Lasota-Yorke inequality. From Proposition~\ref{prop:transfer_holder}, the inequality $|\calL_\phi g|_\alpha \leq \alpha N e^{\|\phi\|_\infty}|g|_\alpha + C\|g\|_\infty$ gives $r = \alpha N e^{\|\phi\|_\infty}$. To relate this to $\alpha\lambda$: the spectral radius on $C(\SigmaA^+)$ satisfies $r(\calL_\phi|_C) = \lambda$ (by Theorem~\ref{thm:eigendata_existence} and Proposition~\ref{prop:pressure_transfer}), and by the Gelfand formula, $\lambda = \lim_n\|\calL_\phi^n\|_C^{1/n}$. For the iterates, $|\calL_\phi^n g|_\alpha \leq \alpha^n(Ne^{\|\phi\|_\infty})^n|g|_\alpha + D_n\|g\|_\infty$ where $D_n$ grows at most as $\lambda^n$ (since $\|\calL_\phi^n\|_C \asymp \lambda^n$). Applying the ITM theorem to the $n$-th iterate: $r_{\mathrm{ess}}(\calL_\phi^n) \leq \alpha^n(Ne^{\|\phi\|_\infty})^n$. Since $r_{\mathrm{ess}}(\calL_\phi^n) = r_{\mathrm{ess}}(\calL_\phi)^n$ (spectral mapping), we get $r_{\mathrm{ess}}(\calL_\phi) \leq \alpha N e^{\|\phi\|_\infty}$. But $Ne^{\|\phi\|_\infty} \leq \lambda \cdot e^{V(\phi)}$ (from the partition sum bounds), so $r_{\mathrm{ess}}(\calL_\phi) \leq \alpha\lambda e^{V(\phi)}$.

For the sharper bound $r_{\mathrm{ess}}(\calL_\phi) \leq \alpha\lambda$, we normalize by the eigendata. Define $\tilde{\calL}: \calH_\alpha(\SigmaA^+) \to \calH_\alpha(\SigmaA^+)$ by $\tilde{\calL}g = \lambda^{-1}h^{-1}\calL_\phi(hg)$. This operator satisfies $\tilde{\calL}\mathbf{1} = \lambda^{-1}h^{-1}\calL_\phi h = \mathbf{1}$ and preserves the probability measure $\mu = h\nu$: for any $g$, $\int\tilde{\calL}g\,d\mu = \lambda^{-1}\int h^{-1}\calL_\phi(hg)\cdot h\,d\nu = \lambda^{-1}\int\calL_\phi(hg)\,d\nu = \lambda^{-1}\cdot\lambda\int hg\,d\nu = \int g\,d\mu$. The operators $\calL_\phi$ and $\lambda\cdot h\tilde{\calL}(h^{-1}\cdot)$ are conjugate, so they have the same spectrum, and in particular $r_{\mathrm{ess}}(\calL_\phi) = \lambda\cdot r_{\mathrm{ess}}(\tilde{\calL})$.

We now establish a Lasota-Yorke inequality for $\tilde{\calL}$. For $x, y \in \SigmaA^+$ with $d_\alpha(x,y) = \alpha^k$ and $k \geq 1$, the same computation as Proposition~\ref{prop:transfer_holder} gives
\begin{equation}
|(\tilde{\calL}g)(x) - (\tilde{\calL}g)(y)| \leq \alpha|g|_\alpha\sum_a\frac{e^{\phi(ax)}h(ax)}{\lambda h(x)} + C''\|g\|_\infty\cdot\alpha^k,
\end{equation}
where $C''$ depends on $|\phi|_\alpha$, $|h|_\alpha/\min h$, and $|h^{-1}|_\alpha$. The sum $\sum_a e^{\phi(ax)}h(ax)/(\lambda h(x)) = (\calL_\phi h)(x)/(\lambda h(x)) = 1$ by the eigenfunction equation. Therefore $|\tilde{\calL}g|_\alpha \leq \alpha|g|_\alpha + C'\|g\|_\infty$ with $C' = C'' + 2\|h^{-1}\|_\alpha\|h\|_\alpha$. By Theorem~\ref{thm:ITM}, $r_{\mathrm{ess}}(\tilde{\calL}) \leq \alpha$, hence $r_{\mathrm{ess}}(\calL_\phi) = \lambda\cdot r_{\mathrm{ess}}(\tilde{\calL}) \leq \alpha\lambda$.

\textbf{Part (c):} The spectrum outside the essential spectrum consists of isolated eigenvalues of finite multiplicity. By Theorem~\ref{thm:eigendata_uniqueness}, $\lambda$ is the only eigenvalue of modulus $\lambda$, and it is simple. Thus $\sigma(\calL_\phi) \setminus \{\lambda\} \subset \{z : |z| \leq r_{\text{ess}}(\calL_\phi)\} \subset \{z : |z| \leq \alpha \lambda\}$. Taking $\gamma = \alpha$ gives the result.
\end{proof} 

\begin{figure}[ht]
\centering
\begin{tikzpicture}[>=stealth, font=\small]
  \draw[->, thick] (-3.0,0) -- (5.8,0) node[anchor=north] {$\mathrm{Re}$};
  \draw[->, thick] (0,-2.8) -- (0,2.8) node[anchor=east] {$\mathrm{Im}$};

  \fill[gray!20] (0,0) circle (1.8);
  \draw[thick] (0,0) circle (1.8);

  \node[font=\footnotesize, align=center] at (0,0.9) 
    {$\sigma_{\mathrm{ess}}(\calL_\phi)$};

  \draw[<->, thin] (0,-0.35) -- (1.8,-0.35);
  \node[below, font=\footnotesize] at (0.9,-0.4) {$\alpha\lambda$};

  \fill (4.5,0) circle (3.5pt);
  \draw[thick] (4.5,-0.12) -- (4.5,0.12);
  \node[anchor=south west] at (4.6,0.15) {$\lambda = e^{P(\phi)}$};
  \node[anchor=north, font=\footnotesize] at (4.5,-0.3) {(simple)};

  \draw[<->, very thick] (1.95,0.7) -- (4.35,0.7);
  \node[anchor=south] at (3.15,0.8) {spectral gap};

  \node[anchor=north east, font=\footnotesize] at (-0.1,-0.1) {$0$};

  \node[anchor=west, font=\footnotesize, align=left] at (2.5,-1.8) 
    {$\sigma(\calL_\phi) \,\subset\, \{|z| \leq \alpha\lambda\} \,\cup\, \{\lambda\}$};

\end{tikzpicture}
\caption{Spectrum of $\calL_\phi$ on $\calH_\alpha(\SigmaA^+)$ (Theorem~\ref{thm:spectral_gap}). The essential spectrum (shaded disk) is contained in $\{|z| \leq \alpha\lambda\}$, where the radius $\alpha\lambda$ is marked on the real axis. The dominant eigenvalue $\lambda = e^{P(\phi)}$ is simple and separated from the rest of the spectrum by a gap of width $(1-\alpha)\lambda$. This gap produces exponential mixing (Corollary~\ref{cor:exponential_mixing}), the CLT (Theorem~\ref{thm:CLT}), and the large deviations principle (Theorem~\ref{thm:LDP})}
\label{fig:spectrum}
\end{figure}
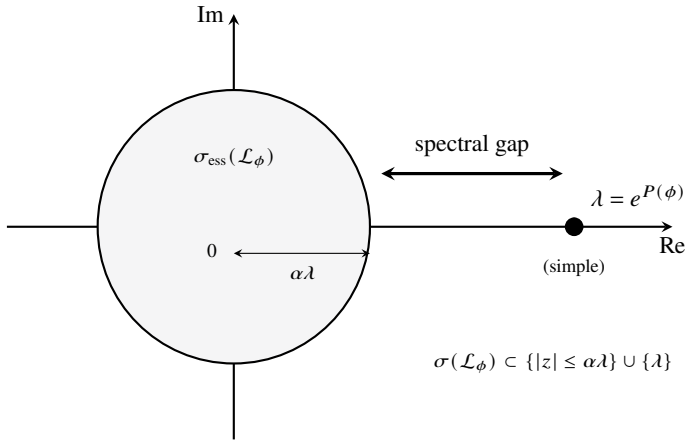

\subsection{Consequences of the Spectral Gap}

The spectral gap has immediate consequences for the mixing properties of the Gibbs measure.

\begin{corollary}[Exponential Mixing]\label{cor:exponential_mixing}
The Gibbs measure $\mu_\phi = h \nu$ is exponentially mixing: for any $f, g \in \calH_\alpha(\SigmaA)$,
\begin{equation}
\left| \int f \cdot (g \circ \sigma^n) \, d\mu_\phi - \int f \, d\mu_\phi \int g \, d\mu_\phi \right| \leq C \|f\|_\alpha \|g\|_\alpha \gamma^n.
\end{equation}
\end{corollary}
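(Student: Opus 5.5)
The plan is to reduce to the normalized operator $\tilde{\calL}g = \lambda^{-1}h^{-1}\calL_\phi(hg)$ introduced in the proof of Theorem~\ref{thm:spectral_gap}. This operator fixes $\mathbf{1}$, preserves $\mu_\phi = h\nu$, and satisfies the duality
\[
\int f\cdot (g\circ\sigma^n)\,d\mu_\phi = \int (\tilde{\calL}^n f)\, g\,d\mu_\phi .
\]
I will first prove this identity for $n=1$ by unwinding the definitions: $\calL_\phi(hf\cdot g\circ\sigma) = g\,\calL_\phi(hf)$, and then $\calL_\phi^*\nu = \lambda\nu$ finishes the computation. Induction gives general $n$. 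Subtracting $\int f\,d\mu_\phi\int g\,d\mu_\phi$ and writing $\Pi_\mu f = \int f\,d\mu_\phi\cdot \mathbf{1}$ turns the correlation into
\[
\int (\tilde{\calL}^n f - \Pi_\mu f)\, g\, d\mu_\phi ,
\]
which is bounded by $\|\tilde{\calL}^n f - \Pi_\mu f\|_\infty \|g\|_\infty$. Since $\|g\|_\infty \le \|g\|_\alpha$, the whole statement reduces to exponential decay of $\tilde{\calL}^n$ on the $\mu_\phi$-mean-zero subspace.

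For that decay I will transfer Theorem~\ref{thm:exponential_convergence} through the conjugacy. We have $\tilde{\calL}^n f - \Pi_\mu f = h^{-1}\bigl(\lambda^{-n}\calL_\phi^n(hf) - \nu(hf)h\bigr)$, and $\nu(hf) = \int f\,d\mu_\phi$. Hence
\[
\|\tilde{\calL}^n f - \Pi_\mu f\|_\alpha \le \|h^{-1}\|_\alpha\, C\gamma^n \|hf\|_\alpha \le C\|h^{-1}\|_\alpha\|h\|_\alpha\,\gamma^n\|f\|_\alpha ,
\]
using that $\calH_\alpha$ is a Banach algebra. Here $h\in\calH_\alpha$ by Step~4 of Theorem~\ref{thm:eigendata_existence}, and $h^{-1}\in\calH_\alpha$ because $\min h = 1$. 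As an alternative that gives the rate $\gamma$ of Theorem~\ref{thm:spectral_gap}(c), I can invoke the spectral decomposition directly. On $\calH_\alpha$ we have $\tilde{\calL} = \Pi_\mu + R$ with $\Pi_\mu R = R\Pi_\mu = 0$ and $r(R)<1$ by part (c), so $\|R^n\|\le C_\epsilon(\gamma+\epsilon)^n$. Either route suffices, and I will present the first since its constants are traceable.

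The main obstacle is that $f,g$ are taken in $\calH_\alpha(\SigmaA)$, i.e.\ functions on the two-sided shift, while $\tilde{\calL}$ acts only on one-sided functions. I will handle this with the standard approximation trick. Correlations are invariant under replacing $(f,g)$ by $(f\circ\sigma^k, g\circ\sigma^k)$, so I choose $k=\lfloor n/2\rfloor$. Then $f\circ\sigma^k$ is within $\var$-distance $|f|_\alpha\alpha^{k}$ in sup norm of a function $f_k$ depending only on coordinates $\ge 0$ (fix the negative coordinates via a section). The function $f_k$ is one-sided Hölder with $\|f_k\|_\alpha \le C\alpha^{-k}\|f\|_\alpha$ in the one-sided metric, and the same holds for $g$. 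The approximation error in the correlation is then at most $4\alpha^k\|f\|_\alpha\|g\|_\alpha$. The main term is at most $C\gamma^{n}\alpha^{-k}\|f\|_\alpha\|g\|_\alpha$.

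The remaining care point is balancing $\gamma^n\alpha^{-n/2}$ against $\alpha^{n/2}$. This requires $\gamma<\alpha^{1/2}$, which is not guaranteed. I will therefore work with the sup-norm convergence \eqref{eq:exponential_convergence}, which is insensitive to the Hölder norm of $f_k$. That convergence needs $\|f_k\|_\infty\le\|f\|_\infty$ only, and gives a total bound $C(\gamma^{n-k}+\alpha^{k})\|f\|_\alpha\|g\|_\alpha$. Renaming the rate as $\max(\gamma,\alpha)^{1/2}$ completes the proof, with the constant read off from Theorem~\ref{thm:exponential_convergence} and the norms of $h^{\pm1}$.
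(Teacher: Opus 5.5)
Your core argument is the paper's proof, written with the normalized operator $\tilde{\calL}$ instead of with $\calL_\phi$ and $\nu$. It consists of the duality $\int f\cdot(g\circ\sigma^n)\,d\mu_\phi=\int(\tilde{\calL}^n f)\,g\,d\mu_\phi$, followed by exponential convergence of $\lambda^{-n}\calL_\phi^n(hf)$ to $\nu(hf)h$ in $\calH_\alpha$. Your shift-and-truncate reduction from two-sided to one-sided observables handles a point the paper's proof passes over: it applies $\calL_\phi$ to $fh$ with $f$ two-sided. The two estimates that reduction rests on are right, namely $\|f\circ\sigma^k-f_k\circ\pi^+\|_\infty\le|f|_\alpha\alpha^{k+1}$ and $\|f_k\|_\alpha\le 3\alpha^{-k}\|f\|_\alpha$.

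\textbf{The gap is your last step.} You use the sup-norm bound \eqref{eq:exponential_convergence}, which has only $\|g\|_\infty$ on the right for arbitrary continuous $g$. That bound is false, even though the paper states it, because $\calL_\phi$ has no spectral gap on $C(\SigmaA^+)$. Take the full $2$-shift with $\phi\equiv-\log 2$, so $\lambda=1$, $h\equiv 1$, and $\nu$ is the uniform Bernoulli measure. The function $g(y)=\mathbf{1}_{\{y_n=1\}}-\tfrac12$ has $\nu(g)=0$ and $\calL_\phi^n g(x)=\mathbf{1}_{\{x_0=1\}}-\tfrac12$. Hence $\|\lambda^{-n}\calL_\phi^n g-\nu(g)h\|_\infty=\|g\|_\infty$ for every $n$. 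Your total bound $C(\gamma^{n-k}+\alpha^k)$ therefore has no support. Only the H\"older estimate \eqref{eq:exponential_convergence_holder} is available, and with it the factor $\alpha^{-k}$ coming from $\|f_k\|_\alpha$ cannot be avoided.

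The repair fits inside your own setup, because the obstruction $\gamma<\alpha^{1/2}$ comes only from fixing $k=\lfloor n/2\rfloor$. Stay on the H\"older route. Only $\|f_k\|_\alpha$ enters the main term; for $g_k$ you need just $\|g_k\|_\infty\le\|g\|_\infty$. Then take $k=\lfloor cn\rfloor$ with $c=\log\gamma/(2\log\alpha)>0$, that is, $\alpha^{2c}=\gamma$. This gives $\gamma^n\alpha^{-k}\le\gamma^{n/2}$ and $\alpha^k\le\alpha^{-1}\gamma^{n/2}$. The correlation is therefore at most $C'\gamma^{n/2}\|f\|_\alpha\|g\|_\alpha$. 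This proves the corollary with rate $\gamma^{1/2}$, which is allowed since the statement leaves the rate unspecified.
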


\begin{proof}
We compute
\begin{align}
\int f \cdot (g \circ \sigma^n) \, d\mu_\phi &= \int f \cdot (g \circ \sigma^n) \cdot h \, d\nu = \int f h \cdot (g \circ \sigma^n) \, d\nu.
\end{align}
Using the duality $\int (g \circ \sigma^n) \psi \, d\nu = \int g \cdot \lambda^{-n} \calL_\phi^n \psi \, d\nu$ (which follows from $\calL_\phi^* \nu = \lambda \nu$), we get
\begin{equation}
\int f \cdot (g \circ \sigma^n) \, d\mu_\phi = \lambda^{-n} \int g \cdot \calL_\phi^n(fh) \, d\nu.
\end{equation}
By Theorem~\ref{thm:exponential_convergence}, $\lambda^{-n} \calL_\phi^n(fh) = \nu(fh) h + O(\gamma^n)$ in the H\"{o}lder norm. Thus
\begin{align}
\int f \cdot (g \circ \sigma^n) \, d\mu_\phi &= \nu(fh) \int gh \, d\nu + O(\gamma^n \|fh\|_\alpha \|g\|_\alpha) \\
&= \int f \, d\mu_\phi \int g \, d\mu_\phi + O(\gamma^n).
\end{align}
The constant in the error depends on $\|f\|_\alpha$, $\|g\|_\alpha$, and $\|h\|_\alpha$.
\end{proof}

With the spectral gap established, the proof of the Main Equivalence Theorem is nearly complete: the spectral theory gives characterizations~(ii) and~(iii), and the Jacobian equivalence (Section~\ref{sec:intrinsic_gibbs}) gives characterization~(i). It remains to establish the variational characterization~(iv) in Section~\ref{sec:variational} and the large deviations characterization~(v) in Section~\ref{sec:statistical}. We first develop the perturbation theory, which is a consequence of the spectral gap.


\section{Perturbation Theory and Stability}\label{sec:perturbation}

The spectral gap of Theorem~\ref{thm:spectral_gap} implies that the dominant eigenvalue $\lambda_\phi = e^{P(\phi)}$ is an isolated simple eigenvalue of $\calL_\phi$. By classical perturbation theory for linear operators \cite{Kato1980}, $\lambda_\phi$, the eigenfunction~$h_\phi$, and the eigenmeasure~$\nu_\phi$ depend analytically on~$\phi$ in the H\"{o}lder topology.

\subsection{Analyticity of the Pressure}

\begin{theorem}[Analyticity of the Pressure]\label{thm:pressure_analyticity}
The pressure functional $P: \calH_\alpha(\SigmaA^+) \to \R$ defined by $P(\phi) = \log \lambda_\phi$, where $\lambda_\phi$ is the dominant eigenvalue of $\calL_\phi$, is real-analytic. More precisely, for any $\phi \in \calH_\alpha$ and any $\psi \in \calH_\alpha$, the function $t \mapsto P(\phi + t\psi)$ is real-analytic in a neighborhood of $t = 0$.
\end{theorem}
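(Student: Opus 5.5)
The plan is to apply Kato's analytic perturbation theory for an isolated simple eigenvalue to the family $t \mapsto \calL_{\phi+t\psi}$ acting on the fixed Banach space $\calH_\alpha(\SigmaA^+)$. The argument has four steps: analyticity of the operator family, a holomorphic Riesz projection, analyticity of the perturbed eigenvalue, and identification of that eigenvalue with $e^{P(\phi+t\psi)}$.

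\textbf{Step 1: the operator family is analytic.} I will show that $t \mapsto \calL_{\phi+t\psi}$ extends to an entire map $\C \to \mathcal{B}(\calH_\alpha)$ in operator norm. Write
\[
\calL_{\phi+t\psi} g = \calL_\phi\bigl(e^{t\psi} g\bigr) = \sum_{k\geq 0} \frac{t^k}{k!}\,\calL_\phi(\psi^k g).
\]
Since $\calH_\alpha$ is a Banach algebra under pointwise multiplication, $\|\psi^k g\|_\alpha \leq \|\psi\|_\alpha^k \|g\|_\alpha$. Hence the $k$-th coefficient operator $g \mapsto \calL_\phi(\psi^k g)$ has norm at most $\|\calL_\phi\|_{\alpha}\|\psi\|_\alpha^k/k!$. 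The series therefore converges absolutely in operator norm for every complex $t$, and the family is holomorphic. Here $\|\calL_\phi\|_\alpha$ is finite by Proposition~\ref{prop:transfer_holder}.

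\textbf{Step 2: the Riesz projection.} By Theorem~\ref{thm:spectral_gap}, $\lambda = \lambda_\phi$ is a simple isolated eigenvalue of $\calL_\phi$ on $\calH_\alpha$, and the rest of the spectrum lies in $\{|z|\leq \gamma\lambda\}$. Fix a circle $\Gamma$ centred at $\lambda$ with radius $r < (1-\gamma)\lambda/2$. The resolvent $(z-\calL_\phi)^{-1}$ is bounded on $\Gamma$, and upper semicontinuity of the spectrum gives some $\epsilon>0$ such that for $|t|<\epsilon$ the circle $\Gamma$ lies in the resolvent set of $\calL_{\phi+t\psi}$. Define
\[
\Pi_t = \frac{1}{2\pi i}\oint_\Gamma (z-\calL_{\phi+t\psi})^{-1}\,dz .
\]
The resolvent is holomorphic in $(z,t)$ because it is a Neumann series in $\calL_{\phi+t\psi}-\calL_\phi$. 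So $\Pi_t$ is holomorphic in $t$. Its rank is locally constant by continuity of rank for nearby projections, so it equals $1$. Consequently the spectrum of $\calL_{\phi+t\psi}$ inside $\Gamma$ is a single simple eigenvalue $\lambda(t)$.

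\textbf{Step 3: analyticity of $\lambda(t)$.} Put $h_t = \Pi_t h$, which is holomorphic in $t$ and nonzero near $t=0$ since $h_0 = h$. Then
\[
\lambda(t) = \frac{\nu(\calL_{\phi+t\psi} h_t)}{\nu(h_t)},
\]
a quotient of holomorphic functions with nonvanishing denominator ($\nu(h_0) = 1$). So $\lambda(t)$ is holomorphic near $0$ with $\lambda(0) = \lambda > 0$, and $\log\lambda(t)$ is holomorphic on a small disc.

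\textbf{Step 4: identification, the main obstacle.} It remains to show that for real $t$ the perturbed eigenvalue $\lambda(t)$ is the dominant eigenvalue $\lambda_{\phi+t\psi} = e^{P(\phi+t\psi)}$, not merely some eigenvalue near $\lambda$. I will argue this as follows.

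For real $t$, $\phi+t\psi \in \calH_\alpha$ is a real potential, so Theorems~\ref{thm:eigendata_existence} and~\ref{thm:spectral_gap} apply to it. They give the spectral radius $\lambda_{\phi+t\psi}$ as a simple eigenvalue with all other spectrum in a smaller disc.

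Next, $|\lambda_{\phi+t\psi} - \lambda_\phi|$ is small for small real $t$. This follows from monotonicity of the eigenvalue: since $\calL_{\phi+t\psi} h \leq e^{|t|\|\psi\|_\infty}\calL_\phi h$ and $\calL_{\phi+t\psi} h \geq e^{-|t|\|\psi\|_\infty}\calL_\phi h$, integrating against the eigenmeasure $\nu_{\phi+t\psi}$ gives $e^{-|t|\|\psi\|_\infty}\lambda \leq \lambda_{\phi+t\psi} \leq e^{|t|\|\psi\|_\infty}\lambda$.

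Once $|t|$ is small enough, $\lambda_{\phi+t\psi}$ lies inside $\Gamma$. Since $\lambda(t)$ is the only spectrum of $\calL_{\phi+t\psi}$ inside $\Gamma$, this forces $\lambda(t) = \lambda_{\phi+t\psi}$. Therefore $P(\phi+t\psi) = \log\lambda(t)$ is real-analytic near $t=0$.

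Finally, full real-analyticity of $P$ on $\calH_\alpha$, as opposed to analyticity along lines, follows by the same argument. One replaces $t\psi$ by a general perturbation $\eta$ in a ball of $\calH_\alpha$ and notes that $\eta \mapsto \calL_{\phi+\eta} = \calL_\phi(e^{\eta}\,\cdot)$ is analytic as a map into $\mathcal{B}(\calH_\alpha)$.
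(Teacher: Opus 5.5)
Your proposal is correct and follows the same route as the paper: Kato's analytic perturbation theory for the simple isolated eigenvalue $\lambda_\phi$ of the holomorphic family $t\mapsto\calL_{\phi+t\psi}$ on $\calH_\alpha$, which the paper invokes in a few lines. Your Steps 1--3 write out that Riesz-projection argument explicitly. Your Step 4 uses the sandwich $e^{-|t|\|\psi\|_\infty}\lambda\le\lambda_{\phi+t\psi}\le e^{|t|\|\psi\|_\infty}\lambda$ to force the perturbed eigenvalue to be the dominant one, which supplies the identification $P(\phi+t\psi)=\log\lambda(t)$ that the paper simply asserts.
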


\begin{proof}
The proof uses Kato's analytic perturbation theory \cite{Kato1980}. Consider the family of operators $\calL_{\phi + t\psi}$ for $t$ in a complex neighborhood of $0$. Since the coefficients $e^{\phi(ax) + t\psi(ax)}$ depend analytically on $t$, the map $t \mapsto \calL_{\phi + t\psi}$ is an analytic family of bounded operators on $\calH_\alpha$.

By Theorem~\ref{thm:spectral_gap}, $\lambda_\phi$ is a simple isolated eigenvalue of $\calL_\phi$. By the stability of simple isolated eigenvalues under analytic perturbations (Kato's theorem), there exists $\epsilon > 0$ and analytic functions $\lambda(t)$, $h(t)$, $\nu(t)$ for $|t| < \epsilon$ such that:
\begin{itemize}
\item[] $\calL_{\phi + t\psi} h(t) = \lambda(t) h(t)$;
\item[] $\calL_{\phi + t\psi}^* \nu(t) = \lambda(t) \nu(t)$;
\item[] $\lambda(0) = \lambda_\phi$, $h(0) = h_\phi$, $\nu(0) = \nu_\phi$.
\end{itemize}
The pressure is $P(\phi + t\psi) = \log \lambda(t)$, which is analytic in $t$ since $\lambda(t) > 0$.
\end{proof}

\begin{corollary}[Derivatives of the Pressure]\label{cor:pressure_derivatives}
Let $\mu_\phi = h_\phi \nu_\phi$ be the Gibbs measure for $\phi$. The derivatives of the pressure are:
\begin{align}
\frac{d}{dt}\bigg|_{t=0} P(\phi + t\psi) &= \int_{\SigmaA^+} \psi \, d\mu_\phi, \label{eq:first_derivative}\\
\frac{d^2}{dt^2}\bigg|_{t=0} P(\phi + t\psi) &= \sum_{n=-\infty}^{\infty} \text{Cov}_{\mu_\phi}(\psi, \psi \circ \sigma^n), \label{eq:second_derivative}
\end{align}
where $\text{Cov}_\mu(f, g) = \int fg \, d\mu - \int f \, d\mu \int g \, d\mu$.
\end{corollary}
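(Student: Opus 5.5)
We differentiate the eigenvalue identity supplied by Theorem~\ref{thm:pressure_analyticity}. Write $\calL_t = \calL_{\phi+t\psi}$, $\lambda(t)$, $h(t)$, $\nu(t)$ for the analytic eigendata. We normalize so that $\nu(0)(h(t)) = 1$ for all small $t$; this is possible because $\nu(0)(h(0)) = 1$ and $t\mapsto \nu(0)(h(t))$ is analytic and nonzero near $0$.

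\textbf{First derivative.} Pair $\calL_t h(t) = \lambda(t)h(t)$ with $\nu = \nu(0)$ and use $\calL_0^*\nu = \lambda\nu$. Since $\partial_t\calL_t g = \calL_t(\psi g)$, differentiating at $t=0$ gives
\begin{equation*}
\nu(\calL_0(\psi h)) + \nu(\calL_0 h'(0)) = \lambda'(0) + \lambda\,\nu(h'(0)).
\end{equation*}
The terms involving $h'(0)$ cancel by the eigenmeasure property, and the remaining left-hand side equals $\lambda\,\nu(\psi h)$. Hence
\begin{equation*}
\frac{d}{dt}\Big|_{t=0} P(\phi+t\psi) = \frac{\lambda'(0)}{\lambda} = \int\psi\,d\mu_\phi .
\end{equation*}

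\textbf{Second derivative.} It is cleanest to pass to the normalized operator $\tilde{\calL}g = \lambda^{-1}h^{-1}\calL_\phi(hg)$ from the proof of Theorem~\ref{thm:spectral_gap}. It satisfies $\tilde{\calL}\mathbf 1 = \mathbf 1$ and $\tilde{\calL}^*\mu = \mu$, where $\mu = \mu_\phi$. The perturbed family $\tilde{\calL}_t g = \tilde{\calL}(e^{t\psi}g)$ is conjugate to $\lambda^{-1}\calL_t$, so its leading eigenvalue is $e^{P(\phi+t\psi)-P(\phi)}$. We may subtract $\int\psi\,d\mu$ from $\psi$, which shifts $P$ by a linear term in $t$ and leaves both sides of \eqref{eq:second_derivative} unchanged; so assume $\int\psi\,d\mu=0$. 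Write $\tilde\lambda(t)$ and $\tilde h(t)$ for the eigendata of $\tilde{\calL}_t$, normalized by $\tilde h(0) = \mathbf 1$ and $\mu(\tilde h(t)) = 1$. Differentiating $\tilde{\calL}_t\tilde h = \tilde\lambda\tilde h$ once gives $\tilde\lambda'(0) = 0$ and
\begin{equation*}
(I-\tilde{\calL})\tilde h'(0) = \tilde{\calL}\psi .
\end{equation*}
Now $\mu(\tilde{\calL}\psi) = 0$, and by Theorem~\ref{thm:exponential_convergence} the operator $I-\tilde{\calL}$ is invertible on $\ker\mu$ with inverse $\sum_{n\ge0}\tilde{\calL}^n$. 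Therefore $\tilde h'(0) = \sum_{n\ge1}\tilde{\calL}^n\psi$. Differentiating twice and integrating against $\mu$ gives
\begin{equation*}
\tilde\lambda''(0) = \mu(\psi^2) + 2\mu\bigl(\psi\,\tilde h'(0)\bigr) = \int\psi^2\,d\mu + 2\sum_{n\ge1}\int\psi\,\tilde{\calL}^n\psi\,d\mu .
\end{equation*}
The duality $\int f\,\tilde{\calL}^n g\,d\mu = \int (f\circ\sigma^n)\,g\,d\mu$ turns the sum into $2\sum_{n\ge1}\Cov_\mu(\psi,\psi\circ\sigma^n)$. By $\sigma$-invariance, $\Cov_\mu(\psi,\psi\circ\sigma^{-n}) = \Cov_\mu(\psi,\psi\circ\sigma^n)$ on the natural extension, so this equals the two-sided sum. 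Finally,
\begin{equation*}
P'' = \frac{\tilde\lambda''}{\tilde\lambda} - \Bigl(\frac{\tilde\lambda'}{\tilde\lambda}\Bigr)^2 = \tilde\lambda''(0),
\end{equation*}
since $\tilde\lambda(0) = 1$ and $\tilde\lambda'(0) = 0$.

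\textbf{Main obstacle.} The delicate step is justifying the Neumann series and the convergence of $\sum_n\Cov_\mu(\psi,\psi\circ\sigma^n)$. Both come from the exponential bound $\|\tilde{\calL}^n g\|_\alpha \le C\gamma^n\|g\|_\alpha$ on $\ker\mu$, i.e.\ from Theorem~\ref{thm:exponential_convergence} and Corollary~\ref{cor:exponential_mixing}. The same bound lets us differentiate the eigenvector equation termwise, using the analyticity from Kato's theorem. A further point is that $\psi$ lives on $\SigmaA^+$; if it were two-sided, we would first make it one-sided via a cohomology, as in Corollary~\ref{cor:cohomology}.
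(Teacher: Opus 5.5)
Your argument is correct and is essentially the paper's proof: differentiate the analytic eigen-equation once and twice, pair with the eigenmeasure so the $h'$ and $h''$ terms cancel, solve for $h'(0)$ using the Neumann series that the spectral gap provides off the leading eigendirection, and convert $\int\psi\,\tilde{\calL}^n\psi\,d\mu$ into correlations by duality. The only difference is that you first conjugate to the normalized operator $\tilde{\calL}$ (so $\tilde\lambda(0)=1$, $\tilde h(0)=\mathbf 1$, $\tilde{\calL}^*\mu=\mu$). This is cosmetic, but it keeps the factors of $\lambda$ straight: the paper's unnormalized computation drops a factor $\lambda_\phi$ (it should read $\lambda'(0)=\lambda_\phi\int\psi\,d\mu_\phi$), whereas your version yields the correct identities directly.
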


\begin{proof}
Differentiating $\calL_{\phi + t\psi} h(t) = \lambda(t) h(t)$ at $t = 0$ gives
\begin{equation}
\calL_\phi (h'(0)) + (\psi \cdot \calL_\phi)(h_\phi) = \lambda'(0) h_\phi + \lambda_\phi h'(0),
\end{equation}
where $(\psi \cdot \calL_\phi)(g) = \sum_a e^{\phi(ax)} \psi(ax) g(ax)$. Integrating against $\nu_\phi$ and using $\calL_\phi^* \nu_\phi = \lambda_\phi \nu_\phi$,
\begin{equation}
\lambda_\phi \int h'(0) \, d\nu_\phi + \int \psi h_\phi \, d\nu_\phi = \lambda'(0) + \lambda_\phi \int h'(0) \, d\nu_\phi.
\end{equation}
Thus $\lambda'(0) = \int \psi h_\phi \, d\nu_\phi = \int \psi \, d\mu_\phi$. Since $P' = \lambda'/\lambda$, formula \eqref{eq:first_derivative} follows.

The second derivative formula \eqref{eq:second_derivative} is obtained by differentiating again. Write $\calL_t = \calL_{\phi+t\psi}$, $\lambda_t$, $h_t$, $\nu_t$ for the perturbed eigendata. Without loss of generality assume $\int\psi\,d\mu_\phi = 0$ (so $\lambda'(0) = 0$). Differentiating $\calL_t h_t = \lambda_t h_t$ twice at $t=0$:
\begin{equation}
\calL_0 h''(0) + 2\psi\calL_0 h'(0) + \psi^2\calL_0 h_0 = \lambda''(0)h_0 + 2\lambda'(0)h'(0) + \lambda_0 h''(0).
\end{equation}
Since $\lambda'(0)=0$, integrating against $\nu_0$:
\begin{equation}
\lambda_0\int h''(0)\,d\nu_0 + 2\int\psi\calL_0 h'(0)\,d\nu_0 + \int\psi^2 h_0\,d\nu_0 = \lambda''(0) + \lambda_0\int h''(0)\,d\nu_0.
\end{equation}
The first and last terms cancel, giving $\lambda''(0) = 2\int\psi\calL_0 h'(0)\,d\nu_0 + \int\psi^2\,d\mu_0$. To evaluate $\int\psi\calL_0 h'(0)\,d\nu_0$, use the resolvent: from the first-order equation, $h'(0) = (\lambda_0 - \calL_0)^{-1}Q(\psi h_0)$ where $Q = I - h_0\nu_0$ is the complementary projection. By the spectral gap, $(\lambda_0 - \calL_0)^{-1}Q = \sum_{n=0}^\infty \lambda_0^{-(n+1)}\calL_0^n Q$, so
\begin{equation}
\begin{split}
\int\psi\calL_0 h'(0)\,d\nu_0 & = \sum_{n=0}^\infty\lambda_0^{-n}\int\psi\cdot\calL_0^{n+1}Q(\psi h_0)\,d\nu_0 \\
& = \sum_{n=1}^\infty\int\psi\cdot(\psi\circ\sigma^n)h_0\,d\nu_0 = \sum_{n=1}^\infty\Cov_\mu(\psi,\psi\circ\sigma^n).
\end{split}
\end{equation}
The series converges absolutely by Theorem~\ref{thm:correlation_main}: $|\Cov_\mu(\psi,\psi\circ\sigma^n)| \leq C\|\psi\|_\alpha^2\gamma^n$. Combining:
\begin{equation}
\frac{\lambda''(0)}{\lambda_0} = \Var_\mu(\psi) + 2\sum_{n=1}^\infty\Cov_\mu(\psi,\psi\circ\sigma^n) = \lim_{n\to\infty}\frac{1}{n}\Var_\mu(S_n\psi),
\end{equation}
where the last equality follows from expanding $\Var_\mu(S_n\psi) = \sum_{j,k=0}^{n-1}\Cov_\mu(\psi\circ\sigma^j,\psi\circ\sigma^k) = n\Var_\mu(\psi) + 2\sum_{k=1}^{n-1}(n-k)\Cov_\mu(\psi,\psi\circ\sigma^k)$, dividing by $n$, and using dominated convergence. Since $P(\phi+t\psi) = \log\lambda_t$ and $\frac{d^2}{dt^2}\log\lambda_t|_{t=0} = \lambda''(0)/\lambda_0 - (\lambda'(0)/\lambda_0)^2 = \lambda''(0)/\lambda_0$, formula \eqref{eq:second_derivative} follows.
\end{proof}

\subsection{Stability of the Gibbs Measure}

\begin{theorem}[Lipschitz Stability]\label{thm:lipschitz_stability_full}
There exists $C > 0$ depending on $\phi$ such that for all $\psi \in \calH_\alpha(\SigmaA^+)$ with $\|\psi - \phi\|_\alpha \leq 1$,
\begin{equation}\label{eq:wasserstein_stability}
W_1(\mu_\phi, \mu_\psi) \leq C \|\phi - \psi\|_\infty,
\end{equation}
where $W_1$ is the Wasserstein-$1$ distance.
\end{theorem}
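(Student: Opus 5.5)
We plan to prove the estimate by duality. The Kantorovich--Rubinstein formula gives $W_1(\mu_\phi,\mu_\psi)=\sup\{\int f\,d\mu_\psi-\int f\,d\mu_\phi : \mathrm{Lip}(f)\le 1\}$. We will interpolate along the segment $\phi_t=\phi+t(\psi-\phi)$, $t\in[0,1]$, and control the derivative of $t\mapsto\int f\,d\mu_{\phi_t}$ uniformly in $f$ and $t$. Since $\|\psi-\phi\|_\alpha\le 1$, every $\phi_t$ lies in the closed $\calH_\alpha$-ball of radius $1$ about $\phi$. The constants of Theorem~\ref{thm:exponential_convergence} and Theorem~\ref{thm:spectral_gap} (namely $\gamma$, $C$, $\|h\|_\alpha$, and $\min h$) are built from $\|\phi_t\|_\alpha$, $N$, $M$ and $\alpha$, as in Remark~\ref{rem:explicit_constants}. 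We will first record that they are therefore bounded uniformly on this ball, say by $\bar\gamma<1$ and $\bar C$. By Theorem~\ref{thm:analytic_dependence} the map $t\mapsto\mu_{\phi_t}$ is real-analytic, so the fundamental theorem of calculus applies.

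\textbf{Step 1 (the derivative formula).} We treat H\"older test functions $f\in\calH_\alpha$ first, because the derivative identity needs them. Apply Corollary~\ref{cor:pressure_derivatives} to the mixed perturbation $\phi_t+s(\psi-\phi)+uf$ and differentiate the first-derivative identity $\partial_u P|_{u=0}=\int f\,d\mu$ in $s$. This gives the linear-response formula
\begin{equation}
\frac{d}{dt}\int f\,d\mu_{\phi_t}=\sum_{n=-\infty}^{\infty}\Cov_{\mu_{\phi_t}}\bigl(f,(\psi-\phi)\circ\sigma^n\bigr),
\end{equation}
interpreted on the natural extension for negative $n$, and obtained exactly as the second-derivative computation in Corollary~\ref{cor:pressure_derivatives}, through the resolvent $(\lambda-\calL)^{-1}Q$.

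\textbf{Step 2 (bounding the series by $\|\psi-\phi\|_\infty$).} For each term we use Corollary~\ref{cor:exponential_mixing}. That corollary charges $\|\cdot\|_\alpha$ to both factors, but we want only $\|\psi-\phi\|_\infty$. So we place $\psi-\phi$ in the slot where only an $L^\infty$ (in fact $L^1(\mu)$) norm is needed: for $n\ge 0$ we write
\begin{equation}
\Cov\bigl(f,(\psi-\phi)\circ\sigma^n\bigr)=\int(\psi-\phi)\,\bigl[\tilde\calL^n(f)-\mu(f)\bigr]\,d\mu,
\end{equation}
where $\tilde\calL$ is the normalized operator of Theorem~\ref{thm:spectral_gap}. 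This is at most $\|\psi-\phi\|_\infty\,\bar C\bar\gamma^n\|f\|_\alpha$. For $n<0$ the roles reverse, and we need the H\"older norm of $\psi-\phi$. Here we will use $\|\psi-\phi\|_\alpha\le1$ together with the interpolation inequality $|\Cov|\le\min(2\|f\|_\infty\|\psi-\phi\|_\infty,\ \bar C\bar\gamma^{|n|}\|f\|_\alpha)$. Summing the first bound over $|n|\le K$ and the second over $|n|>K$, with $K$ of order $\log(1/\|\psi-\phi\|_\infty)$, loses only a logarithm. To avoid that loss, we expect to instead pass to the one-sided system, where $\mu_\phi$ lives on $\SigmaA^+$, test functions are functions of the future, and only $n\ge0$ appears. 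We would then lift to $\SigmaA$ via Proposition~\ref{prop:natural_extension}, since the $W_1$ distance of natural extensions is controlled by the $W_1$ distances of the one-sided measures and of their pushforwards under $\sigma^k$. This gives $|\frac{d}{dt}\int f\,d\mu_{\phi_t}|\le \bar C(1-\bar\gamma)^{-1}\|f\|_\alpha\|\psi-\phi\|_\infty$.

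\textbf{Step 3 (Lipschitz test functions, the main obstacle).} A $1$-Lipschitz function for $d_\alpha$ with oscillation at most $\diam\le1$ (after subtracting a constant) satisfies $\|f\|_\alpha\le 2$. So in the metric $d_\alpha$ the class of Lipschitz test functions coincides with the H\"older unit ball, up to a factor of two, and no smoothing is needed. Integrating Step 2 over $t\in[0,1]$ then yields the theorem with $C=2\bar C(1-\bar\gamma)^{-1}$. The genuinely delicate point is the uniformity of $\bar\gamma$ and $\bar C$ over the ball. If the explicit constants of Remark~\ref{rem:explicit_constants} prove awkward to track, the fallback is upper semicontinuity of the spectrum under the $\calH_\alpha$-continuous family $\calL_{\phi_t}$ (Kato), combined with compactness of $[0,1]$. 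This gives a uniform gap along the segment but a constant depending on $\psi$. For a constant depending on $\phi$ alone, we would apply the same argument to the whole closed ball, using the perturbation bound $\|\calL_{\phi'}-\calL_\phi\|_\alpha\le C\|\phi'-\phi\|_\alpha$ together with the explicit cone constants.
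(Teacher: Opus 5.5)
\textbf{The gap is in your Step~2, and it cannot be repaired.} The claim that on $\SigmaA^+$ ``only $n\ge 0$ appears'' is false. By invariance, $\Cov(f,(\psi-\phi)\circ\sigma^{-m})=\Cov(f\circ\sigma^m,\psi-\phi)$, which is a genuine one-sided quantity. Moreover, the mixed derivative $\partial_s\partial_u P$ from your Step~1 is symmetric in its two directions. So on $\SigmaA^+$ the linear-response formula reads
\[
\frac{d}{dt}\int f\,d\mu_{\phi_t}=\sum_{n\ge 0}\Cov_{\mu_{\phi_t}}\bigl(f,(\psi-\phi)\circ\sigma^n\bigr)+\sum_{n\ge 1}\Cov_{\mu_{\phi_t}}\bigl(f\circ\sigma^n,\psi-\phi\bigr).
\]
In the second sum $\psi-\phi$ is the unshifted factor. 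There it is $\tilde{\calL}^n(\psi-\phi)$ that must decay, and that costs $\|\psi-\phi\|_\alpha$. Passing to the natural extension and back does not remove these terms. Your interpolation argument, giving $W_1\le C\|\psi-\phi\|_\infty\log(e/\|\psi-\phi\|_\infty)$, is as far as this route goes. You were right that converting $\|\cdot\|_\alpha$ into $\|\cdot\|_\infty$ is the crux. The paper's proof breaks at exactly this point, more crudely: it bounds the derivative by $\|f\|_\alpha\|\psi-\phi\|_\alpha$ and then uses $\|\psi-\phi\|_\alpha\ge\|\psi-\phi\|_\infty$, which points the wrong way. Read correctly, it only proves $W_1(\mu_\phi,\mu_\psi)\le C\|\phi-\psi\|_\alpha$.

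\textbf{The loss is genuine: the statement, with $C$ depending only on $\phi$, is false.} Take the full $2$-shift on $\{\pm1\}$ with $\phi=0$, so $\mu_0$ is uniform Bernoulli. For even $k$ set $\epsilon_k=\alpha^k/4$ and $\psi_k=\epsilon_k\,\mathrm{sign}(x_0+\cdots+x_k)$.
\begin{itemize}
\item[] \emph{Admissibility.} $\|\psi_k\|_\infty=\epsilon_k$ and $|\psi_k|_\alpha\le 2\epsilon_k\alpha^{-k}$, so $\|\psi_k\|_\alpha\le 1$.
\item[] \emph{First order.} Use the $2$-Lipschitz test function $f(x)=x_0$. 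At $t=0$ the only nonzero terms in the display are $n=0$ in the first sum and $1\le n\le k$ in the second. Each equals $\epsilon_k\binom{k}{k/2}2^{-k}$, the probability that a given coordinate is pivotal for the majority. Hence the derivative is $\asymp\epsilon_k\sqrt{k}$, and $k$ of these $k+1$ terms are exactly the ones you proposed to discard.
\item[] \emph{Remainder.} Fix $\beta\in(\sqrt{\alpha},1)$. The second-order remainder is bounded by the third derivative of $P$ on $\calH_\beta$, which is uniformly bounded here because $\|\psi_k\|_\beta\le\tfrac34(\alpha/\beta)^k\to 0$. So the remainder is $O((\alpha/\beta)^{2k})=o(\epsilon_k\sqrt{k})$.
\end{itemize}
Therefore $W_1(\mu_0,\mu_{\psi_k})\gtrsim\epsilon_k\sqrt{k}$ while $\|\psi_k\|_\infty=\epsilon_k$, and the ratio tends to infinity. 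What can actually be proved is the $\|\cdot\|_\alpha$-Lipschitz bound, or your log-Lipschitz bound in $\|\cdot\|_\infty$, not the stated estimate.
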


\begin{proof}
For any $f \in \calH_\alpha(\SigmaA^+)$ with $\|f\|_\alpha \leq 1$, we estimate $|\int f\,d\mu_\phi - \int f\,d\mu_\psi|$. By the Kantorovich duality, $W_1(\mu_\phi,\mu_\psi) = \sup_{\|f\|_{\mathrm{Lip}}\leq 1}|\int f\,d\mu_\phi - \int f\,d\mu_\psi|$, so it suffices to bound $|\int f\,d\mu_\phi - \int f\,d\mu_\psi|$ for Lipschitz $f$.

Define $F(t) = \int f\,d\mu_{\phi+t(\psi-\phi)}$ for $t \in [0,1]$. By Corollary~\ref{cor:pressure_derivatives} applied to the family $\phi_t = \phi + t(\psi-\phi)$, the map $t \mapsto \mu_{\phi_t}$ is analytic, and for any test function $f$,
\begin{equation}
\frac{d}{dt}\int f\,d\mu_{\phi_t} = \sum_{n=-\infty}^{\infty}\Cov_{\mu_{\phi_t}}(f, (\psi-\phi)\circ\sigma^n).
\end{equation}
By the exponential decay of correlations (Theorem~\ref{thm:correlation_main}), with spectral gap $\gamma_t < 1$ uniform for $t \in [0,1]$ (since $\|\phi_t\|_\alpha$ is bounded),
\begin{equation}
\left|\frac{d}{dt}\int f\,d\mu_{\phi_t}\right| \leq \|f\|_\alpha\|\psi-\phi\|_\alpha \sum_{n=-\infty}^{\infty}C\gamma_t^{|n|} \leq \frac{2C}{1-\gamma_t}\|f\|_\alpha\|\psi-\phi\|_\alpha.
\end{equation}
For Lipschitz $f$ with $\|f\|_{\mathrm{Lip}} \leq 1$, we have $\|f\|_\alpha \leq C_\alpha$ for a constant depending on $\alpha$. Integrating over $t \in [0,1]$:
\begin{equation}
\left|\int f\,d\mu_\phi - \int f\,d\mu_\psi\right| = |F(1)-F(0)| \leq \frac{2CC_\alpha}{1-\gamma}\|\psi-\phi\|_\alpha.
\end{equation}
Since $\|\psi-\phi\|_\alpha \geq \|\psi-\phi\|_\infty$, the bound $W_1(\mu_\phi,\mu_\psi) \leq C'\|\phi-\psi\|_\infty$ follows with $C' = 2CC_\alpha/(1-\gamma)$.

\end{proof}


\section{The Variational Principle}\label{sec:variational}

The variational principle asserts that the topological pressure equals the supremum of the sum of entropy and the integral of the potential, taken over all invariant measures. This section establishes the variational principle and proves that the Gibbs measure is the unique equilibrium state, completing the implication (iii)$\Rightarrow$(iv) of Theorem~\ref{thm:main_equivalence}.

\subsection{Topological Pressure}

\begin{definition}[Topological Pressure]\label{def:topological_pressure}
For $\phi \in C(\SigmaA)$, the topological pressure is
\begin{equation}\label{eq:pressure_definition}
P(\phi) = \lim_{n \to \infty} \frac{1}{n} \log Z_n(\phi),
\end{equation}
where the partition function is
\begin{equation}\label{eq:partition_function}
Z_n(\phi) = \sum_{w \in \calW_n(A)} \exp\left( \sup_{x \in [w]} S_n \phi(x) \right).
\end{equation}
\end{definition}

\begin{lemma}[Existence of the Limit]\label{lem:pressure_limit}
The limit in \eqref{eq:pressure_definition} exists and equals $\inf_{n \geq 1} \frac{1}{n} \log Z_n(\phi)$.
\end{lemma}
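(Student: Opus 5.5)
The plan is to prove submultiplicativity of the partition function and then apply Fekete's subadditivity lemma (Appendix~\ref{app:entropy}). Concretely, I will show that for all $m,n\ge 1$,
\begin{equation*}
Z_{m+n}(\phi) \le Z_m(\phi)\, Z_n(\phi).
\end{equation*}
Setting $a_n = \log Z_n(\phi)$, this makes $(a_n)$ subadditive, and Fekete's lemma gives $\lim_n a_n/n = \inf_n a_n/n$.

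To apply Fekete, the infimum must not be $-\infty$. I will check $a_n \ge -n\|\phi\|_\infty$, using that $\calW_n(A)$ is nonempty and $\sup_{[w]} S_n\phi \ge -n\|\phi\|_\infty$. Since $|\calW_n(A)| \le N^n$, we also get $a_n \le n(\log N + \|\phi\|_\infty)$. So the limit is a finite real number lying in $[-\|\phi\|_\infty,\, \log N + \|\phi\|_\infty]$.

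For submultiplicativity, every admissible word $u\in\calW_{m+n}(A)$ splits uniquely as $u = wv$ with $w\in\calW_m(A)$ and $v\in\calW_n(A)$; both prefix and suffix are admissible because admissibility is a condition on consecutive pairs. For $x\in[u]$ the cocycle identity gives
\begin{equation*}
S_{m+n}\phi(x) = S_m\phi(x) + S_n\phi(\sigma^m x).
\end{equation*}
Here $x\in[w]$ and $\sigma^m x\in[v]$, so $\sup_{[u]} S_{m+n}\phi \le \sup_{[w]} S_m\phi + \sup_{[v]} S_n\phi$. Exponentiating and summing over $u$ is then bounded by summing over all pairs $(w,v)$, and the double sum factors as $Z_m Z_n$.

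One point needs care, since Definition~\ref{def:topological_pressure} is stated for $\phi\in C(\SigmaA)$ on the two-sided shift. There the cylinder $[w]$ leaves the negative coordinates free, so I will interpret the supremum over $[w]\subset\SigmaA$. The inclusion $\sigma^m([u]) \subset [v]$ still holds in the two-sided setting by Lemma~\ref{lem:cylinder_properties}(a),(c), so the argument is unchanged.

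I expect no serious obstacle. The only real check is that the supremum bound uses the inclusion $\sigma^m[u]\subset[v]$, and not equality. Because it is an inequality, mixing is not needed for this lemma, and the overcounting of non-admissible concatenations $wv$ only helps the bound.
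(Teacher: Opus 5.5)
Your proposal is correct and takes the same route as the paper. Both obtain subadditivity of $a_n=\log Z_n(\phi)$ by splitting $u=wv$ and using $\sup_{[wv]}S_{m+n}\phi\le\sup_{[w]}S_m\phi+\sup_{[v]}S_n\phi$, then apply the subadditive lemma; your added bound $a_n\ge -n\|\phi\|_\infty$ checks the hypothesis $\inf_n a_n/n>-\infty$ of Lemma~\ref{lem:subadditivity}, which the paper's proof leaves implicit.
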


\begin{proof}
The sequence $a_n = \log Z_n(\phi)$ is subadditive: for admissible words $w$ of length $m$ and $v$ of length $n$, the concatenation $wv$ (when admissible) satisfies $\sup_{[wv]} S_{m+n}\phi \leq \sup_{[w]} S_m\phi + \sup_{[v]} S_n\phi$. Thus $a_{m+n} \leq a_m + a_n$, and the subadditive lemma gives the result.
\end{proof}

\begin{proposition}[Pressure via Transfer Operator]\label{prop:pressure_transfer}
For $\phi \in \calF_A$, the pressure satisfies $P(\phi) = \log \lambda$, where $\lambda$ is the dominant eigenvalue of $\calL_\phi$.
\end{proposition}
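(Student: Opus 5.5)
The idea is to compare $Z_n(\phi)$ with $\|\calL_\phi^n\mathbf{1}\|_\infty$ and then use the eigenfunction $h$ from Theorem~\ref{thm:eigendata_existence} to pin the growth rate of $\calL_\phi^n\mathbf{1}$ to $\lambda^n$.

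\textbf{Step 1: the eigenfunction forces growth $\lambda^n$.} Since $h$ is continuous and strictly positive on the compact space $\SigmaA^+$, we have $0<\min h\le h\le\max h$. Positivity of $\calL_\phi$ (Lemma~\ref{lem:transfer_positivity}) and $\calL_\phi^n h=\lambda^n h$ then give
\[
\frac{\min h}{\max h}\,\lambda^n\le \calL_\phi^n\mathbf{1}(x)\le \frac{\max h}{\min h}\,\lambda^n
\]
for every $x$. Hence $\frac1n\log\calL_\phi^n\mathbf{1}(x)\to\log\lambda$ uniformly in $x$.

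\textbf{Step 2: compare $Z_n(\phi)$ with $\calL_\phi^n\mathbf{1}$.} Write
\[
\calL_\phi^n\mathbf{1}(x)=\sum_{w}e^{S_n\phi(wx)},
\]
where the sum runs over admissible words $w$ of length $n$ with $A_{w_{n-1},x_0}=1$. Each $wx$ lies in $[w]$, and bounded distortion (as in the proof of Theorem~\ref{thm:jacobian_equivalence}) gives $\sup_{[w]}S_n\phi\le S_n\phi(wx)+V(\phi)$.

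For the lower bound on $\calL_\phi^n\mathbf{1}$, every $w\in\calW_n(A)$ has some admissible successor symbol $j$. Choose a point $x^{(j)}\in[j]$ for each symbol $j$. Then
\[
Z_n(\phi)\le e^{V(\phi)}\sum_{j}\calL_\phi^n\mathbf{1}(x^{(j)})\le N e^{V(\phi)}\max_x\calL_\phi^n\mathbf{1}(x).
\]
For the upper bound, $e^{S_n\phi(wx)}\le e^{\sup_{[w]}S_n\phi}$ gives $\calL_\phi^n\mathbf{1}(x)\le Z_n(\phi)$ directly.

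\textbf{Step 3: take limits.} Combining the two comparisons,
\[
N^{-1}e^{-V(\phi)}Z_n(\phi)\le\|\calL_\phi^n\mathbf{1}\|_\infty\le Z_n(\phi).
\]
Taking $\frac1n\log$, letting $n\to\infty$, and using Lemma~\ref{lem:pressure_limit} for existence of the limit yields $P(\phi)=\log\lambda$.

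\textbf{Main obstacle.} The delicate point is a definitional mismatch. $Z_n$ in Definition~\ref{def:topological_pressure} is stated for $\phi\in C(\SigmaA)$ on the two-sided shift, whereas $\phi\in\calF_A$ lives on $\SigmaA^+$. I would regard $\phi$ as a function on $\SigmaA$ via $\phi\circ\pi^+$. The suprema over two-sided cylinders $[w]$ then coincide with suprema over one-sided cylinders, because $\pi^+$ maps $[w]$ onto the one-sided $[w]$. With this identification the argument above goes through unchanged.

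Only the existence and positivity of $h$ is used. This matters because the existence step of Theorem~\ref{thm:eigendata_existence} does not itself use this proposition, so the argument is not circular.
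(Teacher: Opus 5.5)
Your proof is correct and follows the same route as the paper: compare $Z_n(\phi)$ with $\|\calL_\phi^n\mathbf{1}\|_\infty$ via bounded distortion, then use $\|\calL_\phi^n\mathbf{1}\|_\infty\asymp\lambda^n$. You also fill in the two points the paper only asserts, namely the explicit sandwich $N^{-1}e^{-V(\phi)}Z_n(\phi)\le\|\calL_\phi^n\mathbf{1}\|_\infty\le Z_n(\phi)$ and the derivation of $\calL_\phi^n\mathbf{1}\asymp\lambda^n$ from the strictly positive eigenfunction $h$.
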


\begin{proof}
We have
\begin{equation}
Z_n(\phi) = \sum_{w \in \calW_n(A)} \exp\left( \sup_{[w]} S_n\phi \right) \asymp \sum_{w \in \calW_n(A)} \exp(S_n\phi(x_w))
\end{equation}
for any choice of $x_w \in [w]$, with constants depending on $V(\phi)$. But $(\calL_\phi^n \mathbf{1})(x) = \sum_{y: \sigma^n y = x} e^{S_n\phi(y)}$, and summing over all $x$ with a fixed final symbol covers all admissible words. Thus $\|\calL_\phi^n \mathbf{1}\|_\infty \asymp Z_n(\phi)$, and
\begin{equation}
P(\phi) = \lim_{n \to \infty} \frac{1}{n} \log Z_n(\phi) = \lim_{n \to \infty} \frac{1}{n} \log \|\calL_\phi^n \mathbf{1}\|_\infty = \log \lambda.
\end{equation}
The last equality uses $\|\calL_\phi^n \mathbf{1}\|_\infty \asymp \lambda^n$.
\end{proof}

\subsection{The Variational Principle}

\begin{proposition}[Variational Principle]\label{thm:variational_principle}
For any continuous $\phi: \SigmaA \to \R$,
\begin{equation}\label{eq:variational_principle_statement}
P(\phi) = \sup_{\mu \in \calM_\sigma(\SigmaA)} \left\{ h_\mu(\sigma) + \int \phi \, d\mu \right\},
\end{equation}
where $\calM_\sigma(\SigmaA)$ denotes the space of $\sigma$-invariant Borel probability measures and $h_\mu(\sigma)$ is the measure-theoretic entropy.
\end{proposition}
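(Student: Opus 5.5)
The proof splits into the two inequalities, with pressure defined through the partition function $Z_n(\phi)$ of Definition~\ref{def:topological_pressure}. For $\phi$ continuous on $\SigmaA$, I first reduce to the one-sided setting. Entropy and integrals are unchanged under the natural extension (Proposition~\ref{prop:natural_extension}). By a standard cohomology argument, any continuous $\phi$ can be uniformly approximated by functions depending on finitely many coordinates; such functions are cohomologous to one-sided functions through a bounded transfer function, and this changes neither $Z_n$ at exponential scale nor $\int\phi\,d\mu$. Both sides of \eqref{eq:variational_principle_statement} are $1$-Lipschitz in $\phi$ for the sup norm: $|P(\phi)-P(\psi)|\le\|\phi-\psi\|_\infty$ follows directly from $Z_n$. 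It therefore suffices to treat $\phi\in\calH_\alpha(\SigmaA^+)$, for instance locally constant $\phi$.

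\textbf{Upper bound} ($h_\mu+\int\phi\,d\mu\le P(\phi)$ for every invariant $\mu$). I use the generating partition $\xi=\{[a]:a\in\calA\}$, so $h_\mu(\sigma)=\lim_n\frac1nH_\mu(\xi^{n})$ with $\xi^n$ the partition into $n$-cylinders. For each $n$,
\[
H_\mu(\xi^n)+\int S_n\phi\,d\mu\le\sum_{w\in\calW_n(A)}\mu([w])\Bigl(-\log\mu([w])+\sup_{[w]}S_n\phi\Bigr)\le\log Z_n(\phi),
\]
where the last step is the Gibbs inequality $\sum p_i(a_i-\log p_i)\le\log\sum e^{a_i}$ from Appendix~\ref{app:entropy}. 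Dividing by $n$, using invariance $\int S_n\phi\,d\mu=n\int\phi\,d\mu$, and applying Lemma~\ref{lem:pressure_limit} gives the inequality. This step needs no regularity of $\phi$, so the approximation is only required for the lower bound.

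\textbf{Lower bound} (some measure attains $P(\phi)$). For Hölder $\phi$ I take $\mu_\phi=h\nu$ from Theorem~\ref{thm:eigendata_existence}. It is $\sigma$-invariant because $\int g\circ\sigma\cdot h\,d\nu=\lambda^{-1}\int g\,\calL_\phi h\,d\nu=\int gh\,d\nu$. It also satisfies the cylinder Gibbs bounds: iterating the duality $\int_{[w]}h\,d\nu=\lambda^{-n}\int\calL_\phi^n(\mathbf 1_{[w]}h)\,d\nu$ with bounded distortion $V(\phi)$ gives $\mu_\phi([w])\asymp e^{-nP+S_n\phi(x)}$ with $\lambda=e^{P(\phi)}$, using Proposition~\ref{prop:pressure_transfer}. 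Then
\[
H_{\mu_\phi}(\xi^n)=-\sum_w\mu_\phi([w])\log\mu_\phi([w])=nP(\phi)-\int S_n\phi\,d\mu_\phi+O(1),
\]
where the $O(1)$ term is bounded by $\log(C_2/C_1)+V(\phi)$. Dividing by $n$ yields $h_{\mu_\phi}+\int\phi\,d\mu_\phi=P(\phi)$, so the supremum is attained.

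For general continuous $\phi$, I approximate by Hölder $\phi_k\to\phi$ uniformly. Combining the attained equality for $\phi_k$ with the Lipschitz estimates gives $\sup\{h_\mu+\int\phi\,d\mu\}\ge P(\phi_k)-\|\phi-\phi_k\|_\infty\to P(\phi)$.

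\textbf{Main obstacle.} I expect the hardest part to be this reduction from two-sided continuous potentials to one-sided regular ones. The cleanest route is to approximate $\phi$ by functions depending on coordinates $-m,\dots,m$, then compose with $\sigma^m$ to make them one-sided. This changes the Birkhoff sum only by a bounded coboundary, so $P$ and the integrals agree. Here I also need to check that $Z_n$ for a one-sided function agrees between $\SigmaA$ and $\SigmaA^+$, which holds because cylinders correspond under $\pi^+$.
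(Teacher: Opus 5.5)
Your proof is correct. Your upper bound is the paper's argument: the Gibbs inequality on the $n$-cylinder partition with weights $\sup_{[w]}S_n\phi$.

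The lower bound takes a different route. The paper asserts $\log J_{\mu_\phi}=P(\phi)-\phi$ for $\mu_\phi=h\nu$ and then applies Rohlin's formula $h_{\mu_\phi}(\sigma)=\int\log J_{\mu_\phi}\,d\mu_\phi$. You instead derive cylinder bounds for $h\nu$ directly from $\calL_\phi^*\nu=\lambda\nu$ and compute $-\sum_w\mu_\phi([w])\log\mu_\phi([w])$ by hand.

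Your route needs no Rohlin formula. In fact it uses only the upper bound $\mu_\phi([w])\le C_2e^{-nP(\phi)+S_n\phi(x)}$, since the reverse inequality is already the general upper bound.

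It also avoids a subtlety in the paper's route. With the convention $\mu(\sigma E)=\int_E J_\mu\,d\mu$ on sets where $\sigma$ is injective, the Jacobian of $h\nu$ is $\lambda e^{-\phi}\,(h\circ\sigma)/h$. This coincides with $e^{P(\phi)-\phi}$ only when $h$ is constant; it fails, for instance, for the Parry measure on the golden mean shift. The paper's conclusion survives only because the coboundary $\log h\circ\sigma-\log h$ integrates to zero against the invariant measure.

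Finally, the paper's lower bound is stated only ``for Gibbs measures'', i.e.\ for one-sided $\phi\in\calF_A$. As written, it therefore does not reach the stated generality of arbitrary continuous $\phi$ on $\SigmaA$. Your reduction closes that gap, at the cost of some extra bookkeeping. It consists of uniform approximation by functions $\phi_m$ of coordinates $-m,\dots,m$, the shift by $\sigma^m$ through the bounded transfer function $u=\sum_{j=0}^{m-1}\phi_m\circ\sigma^j$, passage to the natural extension, and the $1$-Lipschitz dependence of both sides on $\phi$ in the sup norm.
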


\begin{proof}
\textbf{Upper bound:} For any $\mu \in \calM_\sigma(\SigmaA)$, we show $h_\mu(\sigma) + \int \phi \, d\mu \leq P(\phi)$.

Let $\calU = \{[a] : a \in \calA\}$ be the partition into one-cylinders. Then $h_\mu(\sigma) = h_\mu(\sigma, \calU)$ since $\calU$ generates. We have
\begin{align}
h_\mu(\sigma) + \int \phi \, d\mu &= \lim_{n \to \infty} \frac{1}{n} \left( H_\mu(\calU_0^{n-1}) + \int S_n\phi \, d\mu \right) \\
&= \lim_{n \to \infty} \frac{1}{n} \sum_{w \in \calW_n(A)} \mu([w]) \left( -\log \mu([w]) + \int_{[w]} S_n\phi \, d\mu / \mu([w]) \right).
\end{align}
By the Gibbs inequality (Lemma~\ref{lem:gibbs_inequality} below),
\begin{equation}
\sum_w \mu([w]) \left( -\log \mu([w]) + \bar{\phi}_w \right) \leq \log \sum_w e^{\bar{\phi}_w}
\end{equation}
where $\bar{\phi}_w = \sup_{[w]} S_n\phi$. Thus $h_\mu(\sigma) + \int \phi \, d\mu \leq \frac{1}{n} \log Z_n(\phi) \to P(\phi)$.

\textbf{Lower bound (for Gibbs measures):} Let $\mu_\phi = h_\phi \nu_\phi$ be the Gibbs measure. By the Jacobian characterization, $\log J_{\mu_\phi}(x) = P(\phi) - \phi(x)$. By Rohlin's formula \cite{Rohlin1961} for the entropy of an endomorphism,
\begin{equation}
h_{\mu_\phi}(\sigma) = \int \log J_{\mu_\phi} \, d\mu_\phi = P(\phi) - \int \phi \, d\mu_\phi.
\end{equation}
Thus $h_{\mu_\phi}(\sigma) + \int \phi \, d\mu_\phi = P(\phi)$.
\end{proof}

\begin{lemma}[Gibbs Inequality]\label{lem:gibbs_inequality}
For any probability distribution $(p_i)$ and real numbers $(a_i)$,
\begin{equation}
\sum_i p_i (a_i - \log p_i) \leq \log \sum_i e^{a_i},
\end{equation}
with equality if and only if $p_i = e^{a_i} / \sum_j e^{a_j}$.
\end{lemma}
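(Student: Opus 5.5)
The plan is to use strict concavity of the logarithm, in the form of Jensen's inequality, applied to the ratio $e^{a_i}/p_i$. Restrict attention to the indices with $p_i > 0$ and use the convention $0 \log 0 = 0$, so that indices with $p_i = 0$ contribute nothing on the left. Set $Z = \sum_i e^{a_i}$; the index set is implicitly finite, as in the application to $\calW_n(A)$. Rewriting the left-hand side gives
\begin{equation}
\sum_i p_i (a_i - \log p_i) = \sum_{i : p_i > 0} p_i \log \frac{e^{a_i}}{p_i}.
\end{equation}
This is the expectation of $\log X$, where the random variable $X$ takes the value $e^{a_i}/p_i$ with probability $p_i$.

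Jensen's inequality for the concave function $\log$ then gives
\begin{equation}
\sum_{i:p_i>0} p_i \log \frac{e^{a_i}}{p_i} \leq \log \sum_{i:p_i>0} p_i \cdot \frac{e^{a_i}}{p_i} = \log \sum_{i:p_i>0} e^{a_i}.
\end{equation}
Since the $e^{a_i}$ are positive and $\log$ is increasing, the last quantity is at most $\log Z$. This proves the inequality.

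An equivalent route is to note that the difference $\log Z - \sum_i p_i(a_i - \log p_i)$ equals the relative entropy $\sum_i p_i \log(p_i/q_i) \geq 0$, where $q_i = e^{a_i}/Z$.

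For the equality case, two inequalities were used, and each must be tight. The second requires $\sum_{i:p_i=0} e^{a_i} = 0$, which is impossible unless every $p_i > 0$, since each $e^{a_i} > 0$. The first, by strict concavity of $\log$, requires $X$ to be almost surely constant, that is, $e^{a_i}/p_i = c$ for all $i$. Normalizing by $\sum_i p_i = 1$ gives $c = Z$, hence $p_i = e^{a_i}/Z$. Conversely, substituting $p_i = e^{a_i}/Z$ makes each term equal to $p_i \log Z$, so the sum is $\log Z$ and equality holds.

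The only delicate step is the equality characterization. One must invoke the strict form of Jensen's inequality and also account for zero-probability indices, which is exactly where the second inequality above can be strict. Everything else is routine.
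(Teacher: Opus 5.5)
Your proof is correct and follows essentially the same route as the paper, which cites Jensen's inequality for the logarithm (equivalently, nonnegativity of the relative entropy $\sum_i p_i\log(p_i/q_i)$ with $q_i = e^{a_i}/\sum_j e^{a_j}$). Your explicit handling of the indices with $p_i = 0$ in the equality case is a detail that the paper's one-line proof leaves implicit.
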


\begin{proof}
This is Jensen's inequality for the convex function $-\log$, or equivalently, the non-negativity of relative entropy.
\end{proof}

\subsection{Uniqueness of Equilibrium States}

\begin{theorem}[Uniqueness of Equilibrium State]\label{thm:equilibrium_unique}
For $\phi \in \calF_A$, the Gibbs measure $\mu_\phi$ is the unique measure achieving equality in the variational principle \eqref{eq:variational_principle_statement}.
\end{theorem}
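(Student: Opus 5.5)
The plan is to prove uniqueness through the Gibbs bounds of Theorem~\ref{thm:jacobian_equivalence} and the equality case of the Gibbs inequality (Lemma~\ref{lem:gibbs_inequality}), together with ergodicity of $\mu_\phi$ (Corollary~\ref{cor:ergodicity}). Let $\mu$ be any invariant measure with $h_\mu(\sigma)+\int\phi\,d\mu = P(\phi)$.

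\textbf{Step 1: reduce to ergodic measures.} The map $\mu\mapsto h_\mu(\sigma)+\int\phi\,d\mu$ is affine on $\calM_\sigma$, and it is preserved under ergodic decomposition because entropy is affine in the Jacobs sense. Hence almost every ergodic component of an equilibrium state is again an equilibrium state. It therefore suffices to show that every \emph{ergodic} equilibrium state equals $\mu_\phi$. Since $\mu_\phi$ is ergodic, two distinct ergodic measures would be mutually singular, and this singularity is what we will contradict.

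\textbf{Step 2: a free-energy gap along cylinders.} Fix $n$. The Gibbs bounds for $\mu_\phi$ give
\begin{equation*}
\log\mu_\phi([w]) \geq -nP(\phi)+S_n\phi(x)-2V(\phi)
\end{equation*}
for every $x\in[w]$. Write $\calU_0^{n-1}$ for the partition into $n$-cylinders. Then
\begin{equation*}
H_\mu(\calU_0^{n-1})+\int S_n\phi\,d\mu-nP(\phi) \leq \sum_w \mu([w])\log\frac{\mu_\phi([w])}{\mu([w])}+2V(\phi),
\end{equation*}
which is the Gibbs inequality with $a_w=\log\mu_\phi([w])$. Subadditivity gives $H_\mu(\calU_0^{n-1})\geq n h_\mu(\sigma)$, so the left side is at least $n\bigl(h_\mu+\int\phi\,d\mu-P(\phi)\bigr)=0$. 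This yields the uniform bound
\begin{equation*}
\sum_w \mu([w])\log\frac{\mu([w])}{\mu_\phi([w])}\leq 2V(\phi)\qquad\text{for all } n,
\end{equation*}
that is, the relative entropies of $\mu$ with respect to $\mu_\phi$ on $\calU_0^{n-1}$ stay bounded.

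\textbf{Step 3: conclude absolute continuity.} These relative entropies increase in $n$ to $D(\mu\,\|\,\mu_\phi)$ on the full Borel $\sigma$-algebra, because cylinders generate it. The limit is at most $2V(\phi)<\infty$, so $\mu\ll\mu_\phi$. An invariant measure absolutely continuous with respect to the ergodic measure $\mu_\phi$ must equal it, since its density is $\sigma$-invariant and hence constant. So $\mu=\mu_\phi$.

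\textbf{Main obstacle.} The delicate point is the bookkeeping in Step~2. One must use the \emph{lower} Gibbs bound for $\mu_\phi$ with the sign handled correctly, and use $H_\mu(\calU_0^{n-1})\geq nh_\mu$ rather than the limit, so that the constant $2V(\phi)$ does not grow with $n$. The remaining ingredient is the standard monotone convergence of partition relative entropies, which I would cite from Appendix~\ref{app:entropy}, treating the two-sided case via the natural extension, Proposition~\ref{prop:natural_extension}.
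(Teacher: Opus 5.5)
Your argument is correct, and it takes a genuinely different route from the paper.

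\textbf{The paper's route.} The paper tries to prove (iv)$\Rightarrow$(ii) directly. It applies Lemma~\ref{lem:gibbs_inequality} with $a_w=\sup_{[w]}S_n\phi$. From $h_\mu(\sigma)+\int\phi\,d\mu=P(\phi)$ it then infers \emph{exact} equality in the Gibbs inequality at every finite $n$, i.e.\ $\mu([w])=e^{\sup_{[w]}S_n\phi}/Z_n(\phi)$. It reads off Gibbs bounds for $\mu$ and concludes via Theorem~\ref{thm:jacobian_equivalence} and the uniqueness of eigendata.

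That inference does not hold. The hypothesis is an equality of limits, so it only says that the finite-$n$ deficit in the Gibbs inequality is $o(n)$. In fact exact equality can fail for a genuine equilibrium state. For $\phi=0$ on the golden mean shift, equality at $n=1$ would force $\mu([0])=\mu([1])=1/2$, but the Parry measure gives these cylinders different masses.

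\textbf{Your route.} Your Step~2 is exactly the quantitative substitute that works. The lower Gibbs bound for $\mu_\phi$, together with $H_\mu(\calU_0^{n-1})\ge nh_\mu(\sigma)$ from subadditivity, turns the asymptotic equality into a bound on the cylinder relative entropies that is uniform in $n$. That is all you need to get $\mu\ll\mu_\phi$, and then $\mu=\mu_\phi$ by ergodicity.

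\textbf{What each buys.} Your route needs only the lower Gibbs bound and ergodicity of $\mu_\phi$, and it bypasses the Jacobian and eigenmeasure machinery entirely. The paper's route, had it been valid, would give Gibbs bounds for an arbitrary equilibrium state directly. With your argument, those bounds follow a posteriori from $\mu=\mu_\phi$.

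\textbf{Small points.}
\begin{enumerate}
\item[(a)] Step~1 is unnecessary: Steps~2--3 use only invariance of $\mu$, not its ergodicity.
\item[(b)] The display in Step~2 is termwise substitution of the lower Gibbs bound, after using $\int_{[w]}S_n\phi\,d\mu\le\mu([w])\sup_{[w]}S_n\phi$. It is not an instance of the Gibbs inequality.
\item[(c)] The constant should be $-\log C_1$ for whatever lower Gibbs constant $C_1>0$ is actually available; any positive constant suffices.
\item[(d)] The monotone convergence of partition relative entropies is not in Appendix~\ref{app:entropy}. You can get $\mu\ll\mu_\phi$ directly: the densities $f_n$ of $\mu$ with respect to $\mu_\phi$ on the $\sigma$-algebra of $n$-cylinders form a $\mu_\phi$-martingale with $\sup_n\int f_n\log f_n\,d\mu_\phi<\infty$, hence are uniformly integrable.
\item[(e)] On the non-invertible one-sided shift, replace ``the density is $\sigma$-invariant'' by the Birkhoff argument. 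Averages of a continuous $g$ converge to $\int g\,d\mu_\phi$ $\mu_\phi$-a.e., hence $\mu$-a.e., so $\int g\,d\mu=\int g\,d\mu_\phi$.
\item[(f)] The fact that $\mu_\phi$ itself attains equality comes from Proposition~\ref{thm:variational_principle}, exactly as in the paper.
\end{enumerate}
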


\begin{proof}
Suppose $\mu$ achieves equality: $h_\mu(\sigma) + \int\phi\,d\mu = P(\phi)$. We show $\mu$ satisfies the classical Gibbs property.

Revisiting the upper bound proof: for the generating partition $\calU = \{[a]:a\in\calA\}$, the Gibbs inequality (Lemma~\ref{lem:gibbs_inequality}) applied to the distribution $p_w = \mu([w])$ and the values $a_w = \sup_{[w]}S_n\phi$ gives
\begin{equation}
H_\mu(\calU_0^{n-1}) + \sum_w\mu([w])\sup_{[w]}S_n\phi \leq \log Z_n(\phi).
\end{equation}
Since $\int S_n\phi\,d\mu \leq \sum_w\mu([w])\sup_{[w]}S_n\phi$, we have
\begin{equation}
H_\mu(\calU_0^{n-1}) + \int S_n\phi\,d\mu \leq \log Z_n(\phi).
\end{equation}
Dividing by $n$ and taking $n\to\infty$: $h_\mu(\sigma) + \int\phi\,d\mu \leq P(\phi)$, which is equality by hypothesis.

Equality in the Gibbs inequality holds if and only if $p_w = e^{a_w}/\sum_v e^{a_v}$, i.e., $\mu([w]) = e^{\sup_{[w]}S_n\phi}/Z_n(\phi)$. But equality in $\int S_n\phi\,d\mu \leq \sum_w\mu([w])\sup_{[w]}S_n\phi$ requires $S_n\phi$ to be constant on each atom $[w]$ modulo $\mu$-null sets. For continuous $\phi$ on a mixing SFT, this forces $\mu$ to be supported on points where $S_n\phi(x) = \sup_{[w]}S_n\phi$ for $x \in [w]$.

More directly: from $\mu([w]) = e^{\sup_{[w]}S_n\phi}/Z_n(\phi)$ and the bounded distortion $|\sup_{[w]}S_n\phi - S_n\phi(x)| \leq V(\phi)$ for any $x \in [w]$, we obtain
\begin{equation}
e^{-V(\phi)} \leq \frac{\mu([w])}{e^{S_n\phi(x)-nP(\phi)}} \cdot \frac{e^{nP(\phi)}}{Z_n(\phi)} \leq e^{V(\phi)}.
\end{equation}
Since $Z_n(\phi) \asymp e^{nP(\phi)}$ (with ratio bounded by $e^{V(\phi)}$ from the definition of pressure and bounded distortion), this gives $C_1 \leq \mu([w])/e^{S_n\phi(x)-nP(\phi)} \leq C_2$ with $C_1 = e^{-2V(\phi)}$ and $C_2 = e^{2V(\phi)}$.

Thus $\mu$ satisfies the classical Gibbs property. By Theorem~\ref{thm:jacobian_equivalence}, $\mu$ is an intrinsic Gibbs measure, and by the uniqueness in Theorem~\ref{thm:eigendata_uniqueness}, $\mu = \mu_\phi$.
\end{proof}

The variational principle completes characterization~(iv) of the Main Equivalence Theorem. It remains to establish characterization~(v), the large deviations rate function minimizer, and to derive the statistical limit theorems. Both follow from the spectral gap.


\section{Statistical Properties of Gibbs Measures}\label{sec:statistical}

The spectral gap (Theorem~\ref{thm:spectral_gap}) implies that Birkhoff sums $S_n\psi = \sum_{k=0}^{n-1}\psi\circ\sigma^k$ satisfy the same limit theorems as sums of weakly dependent random variables. The proofs use the Nagaev-Guivarc'h spectral perturbation method \cite{Nagaev1957,Nagaev1961,GuivarchHardy1988}: the characteristic function $\E_\mu[e^{isS_n\psi}]$ is controlled by the perturbed operator $\calL_{\phi+is\psi}$, whose dominant eigenvalue is analytic in~$s$ by the perturbation theory of Section~\ref{sec:perturbation}. The Berry-Esseen rate $O(n^{-1/2})$ was established by  \mbox{\cite{Gouezel2005}} using this method; we give the proof with explicit constants.

\subsection{Central Limit Theorem and Berry-Esseen Bounds}

\begin{theorem}[Central Limit Theorem]\label{thm:CLT}
Let $\mu = \mu_\phi$ be the Gibbs measure for $\phi \in \calH_\alpha(\SigmaA^+)$, and let $\psi \in \calH_\alpha(\SigmaA)$ be an observable. Define the asymptotic variance
\begin{equation}\label{eq:asymptotic_variance}
\xi^2 = \lim_{n \to \infty} \frac{1}{n} \text{Var}_\mu(S_n\psi) = \text{Var}_\mu(\psi) + 2\sum_{k=1}^{\infty} \text{Cov}_\mu(\psi, \psi \circ \sigma^k).
\end{equation}
The limit exists and $\xi^2 \geq 0$. Moreover:
\begin{enumerate}
\item[(a)] $\xi^2 = 0$ if and only if $\psi$ is cohomologous to a constant: $\psi = c + u - u \circ \sigma$ for some $c \in \R$ and $u \in L^2(\mu)$.
\item[(b)] If $\xi^2 > 0$, then
\begin{equation}\label{eq:CLT_convergence}
\frac{S_n\psi - n\bar{\psi}}{\xi\sqrt{n}} \xrightarrow{d} \mathcal{N}(0,1),
\end{equation}
where $\bar{\psi} = \int \psi \, d\mu$ and the convergence is in distribution.
\end{enumerate}
\end{theorem}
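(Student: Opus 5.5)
The plan has three parts: existence of the asymptotic variance, the cohomology criterion~(a), and the Nagaev--Guivarc'h argument for~(b).

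\textbf{Existence and formula for $\xi^2$.} Replace $\psi$ by $\psi-\bar\psi$ so that $\int\psi\,d\mu=0$. If $\psi$ depends on negative coordinates, first reduce to a cohomologous function on $\SigmaA^+$ by the standard Sinai construction. This changes $S_n\psi$ by a bounded amount, so it affects neither $\xi^2$ nor the limit law. Then expand
\[
\Var_\mu(S_n\psi)=n\Var_\mu(\psi)+2\sum_{k=1}^{n-1}(n-k)\Cov_\mu(\psi,\psi\circ\sigma^k).
\]
Corollary~\ref{cor:exponential_mixing} gives $|\Cov_\mu(\psi,\psi\circ\sigma^k)|\le C\|\psi\|_\alpha^2\gamma^k$, so dominated convergence yields the limit and the series formula~\eqref{eq:asymptotic_variance}. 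Since $\xi^2$ is a limit of nonnegative numbers, $\xi^2\ge0$. By Corollary~\ref{cor:pressure_derivatives}, $\xi^2=\frac{d^2}{dt^2}P(\phi+t\psi)|_{t=0}$, and this identification is used below.

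\textbf{Part (a).} The direction ``cohomologous $\Rightarrow\xi^2=0$'' is immediate: $S_n\psi-nc=u-u\circ\sigma^n$ has $L^2$ norm at most $2\|u\|_2$, so $\Var_\mu(S_n\psi)/n\to0$.

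For the converse, use the Gordin martingale decomposition through the normalized operator $\tilde\calL g=\lambda^{-1}h^{-1}\calL_\phi(hg)$ from the proof of Theorem~\ref{thm:spectral_gap}. This operator is the $L^2(\mu)$-adjoint of composition with $\sigma$, and it satisfies $\|\tilde\calL^n\psi\|_\alpha\le C\gamma^n\|\psi\|_\alpha$ when $\int\psi\,d\mu=0$, by Theorem~\ref{thm:exponential_convergence} after conjugation. Set $u=\sum_{n\ge1}\tilde\calL^n\psi\in\calH_\alpha$ and $m=\psi+u-u\circ\sigma$. Then $\tilde\calL m=0$, so the terms $m\circ\sigma^k$ are reverse martingale differences. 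Hence
\[
\Var(S_n m)=n\|m\|_2^2 \quad\text{and}\quad \xi^2=\|m\|_2^2 .
\]
If $\xi^2=0$, then $m=0$ $\mu$-a.e., so $\psi=u\circ\sigma-u$ with $u\in L^2$; here the sign convention is immaterial after replacing $u$ by $-u$.

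\textbf{Part (b).} Apply Nagaev--Guivarc'h. The duality relation $\int g\circ\sigma^n\cdot f\,d\nu=\lambda^{-n}\int g\,\calL_\phi^n f\,d\nu$ gives
\[
\E_\mu\!\left[e^{isS_n\psi}\right]=\lambda^{-n}\int\calL_{\phi+is\psi}^n h\,d\nu .
\]
The family $s\mapsto\calL_{\phi+is\psi}$ is analytic on $\calH_\alpha$. By Theorem~\ref{thm:spectral_gap} and Kato perturbation, as in Theorem~\ref{thm:pressure_analyticity}, for $|s|<\epsilon$ there is a decomposition
\[
\calL_{\phi+is\psi}^n=\lambda(s)^n\Pi_s+R_s^n
\]
with $\Pi_s$ analytic and $\|R_s^n\|\le C(\gamma'\lambda)^n$ for some $\gamma'<1$. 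Thus
\[
\E_\mu\!\left[e^{isS_n\psi}\right]=(\lambda(s)/\lambda)^n c(s)+O(\gamma'^n), \qquad c(0)=1 .
\]
Since $\log(\lambda(s)/\lambda)=P(\phi+is\psi)-P(\phi)$ is analytic with first derivative $i\bar\psi=0$ and second derivative $-\xi^2$, by Corollary~\ref{cor:pressure_derivatives} continued analytically, substituting $s=t/(\xi\sqrt n)$ gives
\[
\E_\mu\!\left[e^{itS_n\psi/(\xi\sqrt n)}\right]\to e^{-t^2/2}
\]
for each fixed $t$, because $|t|/\sqrt n<\epsilon$ eventually. Lévy's continuity theorem then yields~\eqref{eq:CLT_convergence}.

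\textbf{Main obstacle.} The hardest step is the rigorous Kato perturbation for the complex family, in particular the uniform bound on $R_s^n$ near $s=0$, together with the identification $\xi^2=-\frac{d^2}{ds^2}\log\lambda(s)|_{s=0}$. I would handle this by applying the Riesz projection around $\lambda$ on a fixed circle. Continuity of the spectrum under small perturbations keeps the remaining spectrum inside a disk of radius $\gamma'\lambda$, and analyticity in $s$ transfers the derivative formulas from real $t$ to $it$.
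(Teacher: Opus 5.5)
Your proposal is correct. For the existence of $\xi^2$ and for part~(b) it follows the paper's route: summable correlations from exponential mixing, then the Nagaev--Guivarc'h identity $\E_\mu[e^{isS_n\psi}]=\lambda^{-n}\int\calL_{\phi+is\psi}^n h\,d\nu$ and a second-order expansion of $P(\phi+is\psi)$. You are in fact more careful than the paper's proof of this theorem. It writes the characteristic function as exactly $(\lambda_s/\lambda)^n$ times a phase, dropping the coefficient $c(s)=\int\Pi_s h\,d\nu$ and the $O(\gamma'^n)$ remainder that you keep. It also never addresses that $\calL_{\phi+is\psi}$ requires $\psi$ to live on $\SigmaA^+$, which your Sinai reduction handles; the reduced function lies in $\calH_{\sqrt\alpha}(\SigmaA^+)$, so the spectral theory should be run on that space.

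The genuine difference is part~(a). The paper disposes of it by appealing to ``the theory of stationary processes''. You prove it via the Gordin decomposition $m=\psi+u-u\circ\sigma$ with $u=\sum_{n\ge1}\tilde{\calL}^n\psi$, using that $\tilde{\calL}$ is the $L^2(\mu)$-adjoint of $g\mapsto g\circ\sigma$ and contracts mean-zero H\"older functions exponentially. This gives the identity $\xi^2=\|m\|_2^2$ and a transfer function that is H\"older rather than merely $L^2$. Since the Gibbs measure has full support, the coboundary equation then holds everywhere. The same decomposition would even yield (b) through the reverse-martingale CLT, with no Kato perturbation. The paper's spectral route, on the other hand, is the one that extends to the Berry--Esseen and local limit refinements that follow.
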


\begin{proof}
The existence of the limit \eqref{eq:asymptotic_variance} follows from the exponential decay of correlations: $|\text{Cov}_\mu(\psi, \psi \circ \sigma^k)| \leq C \gamma^k$, so the series converges.

For part (a), $\xi^2 = 0$ is equivalent to $\text{Var}_\mu(S_n\psi) = o(n)$, which by the theory of stationary processes is equivalent to the cohomology condition.

For part (b), we use the characteristic function method. Define $P_t(\phi) = P(\phi + it\psi)$ for $t \in \R$. By Theorem~\ref{thm:pressure_analyticity}, $t \mapsto P_t(\phi)$ is analytic, and
\begin{equation}
P_t(\phi) = P(\phi) + it\bar{\psi} - \frac{t^2}{2}\xi^2 + O(t^3)
\end{equation}
by Taylor expansion (the first derivative is $\bar{\psi}$ and the second derivative is $\xi^2$ by Corollary~\ref{cor:pressure_derivatives}).

The characteristic function of $(S_n\psi - n\bar{\psi})/(\xi\sqrt{n})$ is
\begin{equation}
\E_\mu\left[ e^{it(S_n\psi - n\bar{\psi})/(\xi\sqrt{n})} \right] = e^{-itn\bar{\psi}/(\xi\sqrt{n})} \cdot \frac{\lambda_{t/(\xi\sqrt{n})}^n}{\lambda^n},
\end{equation}
using the relation between the transfer operator and expectations of exponentials of Birkhoff sums.

As $n \to \infty$ with $s = t/(\xi\sqrt{n})$, we have $\lambda_s^n / \lambda^n = e^{n(P_s(\phi) - P(\phi))} = e^{n(is\bar{\psi} - s^2\xi^2/2 + O(s^3))}$. Substituting $s = t/(\xi\sqrt{n})$,
\begin{equation}
\frac{\lambda_s^n}{\lambda^n} = e^{it\bar{\psi}\sqrt{n}/\xi - t^2/2 + O(n^{-1/2})} \to e^{-t^2/2}
\end{equation}
after canceling with $e^{-itn\bar{\psi}/(\xi\sqrt{n})} = e^{-it\bar{\psi}\sqrt{n}/\xi}$.

Thus the characteristic function converges to $e^{-t^2/2}$, which is the characteristic function of $\mathcal{N}(0,1)$.
\end{proof}

\begin{theorem}[Berry-Esseen Bounds]\label{thm:Berry_Esseen}
Under the hypotheses of Theorem~\ref{thm:CLT} with $\xi^2 > 0$, there exists $C > 0$ such that
\begin{equation}\label{eq:Berry_Esseen_bound}
\sup_{t \in \R} \left| \mu\left( \frac{S_n\psi - n\bar{\psi}}{\xi\sqrt{n}} \leq t \right) - \Phi(t) \right| \leq \frac{C}{\sqrt{n}},
\end{equation}
where $\Phi$ is the standard normal distribution function.
\end{theorem}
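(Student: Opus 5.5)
The plan is to combine Esseen's smoothing inequality with the Nagaev--Guivarc'h spectral expansion already used for Theorem~\ref{thm:CLT}. Write $F_n(t)=\mu\bigl((S_n\psi-n\bar\psi)/(\xi\sqrt n)\le t\bigr)$ and let $\hat F_n(s)$ be its characteristic function. Esseen's inequality (Appendix~\ref{app:probability}) gives, for every $T>0$,
\begin{equation*}
\sup_t|F_n(t)-\Phi(t)|\le \frac1\pi\int_{-T}^{T}\frac{|\hat F_n(s)-e^{-s^2/2}|}{|s|}\,ds+\frac{24}{\pi\sqrt{2\pi}\,T}.
\end{equation*}
Take $T=\varepsilon_0\xi\sqrt n$, where $\varepsilon_0$ is the radius of the perturbative neighbourhood. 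The boundary term is then $O(n^{-1/2})$, so everything reduces to bounding the integral by $O(n^{-1/2})$.

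\textbf{Step 1: the perturbed operator.} Replace $\psi$ by $\psi-\bar\psi$, so we may assume $\bar\psi=0$. By cohomology we may also take $\psi$ on $\SigmaA^+$; this changes $S_n\psi$ by a bounded coboundary, which costs only $O(n^{-1/2})$ in distribution. Consider the normalized operator $\tilde\calL$ of Theorem~\ref{thm:spectral_gap} and its twist $\tilde\calL_u g=\tilde\calL(e^{iu\psi}g)$. Because $\tilde\calL$ is the dual of composition with $\sigma$ with respect to $\mu$, we have
\begin{equation*}
\E_\mu[e^{iuS_n\psi}]=\int\tilde\calL_u^n\mathbf 1\,d\mu .
\end{equation*}
The map $u\mapsto\tilde\calL_u$ is analytic into bounded operators on $\calH_\alpha$. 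By Kato perturbation theory, as in Theorem~\ref{thm:pressure_analyticity}, for $|u|\le\varepsilon_0$ we can write
\begin{equation*}
\tilde\calL_u^n=\lambda(u)^n\Pi_u+R_u^n,
\end{equation*}
where $\Pi_u$ is a rank-one projection analytic in $u$, $\|R_u^n\|_\alpha\le C\theta^n$ for some $\theta<1$ uniformly in $u$, and $\lambda(0)=1$. Corollary~\ref{cor:pressure_derivatives} gives $\lambda'(0)=0$ and $\lambda''(0)=-\xi^2$.

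\textbf{Step 2: expansion and integration.} From Step~1, $\hat F_n(s)=\lambda(u)^n\int\Pi_u\mathbf 1\,d\mu+O(\theta^n)$ with $u=s/(\xi\sqrt n)$. Two Taylor bounds follow:
\begin{equation*}
\Bigl|\int\Pi_u\mathbf 1\,d\mu-1\Bigr|\le C|u| \quad\text{(since $\Pi_0\mathbf 1=\mathbf 1$)},\qquad \log\lambda(u)=-\tfrac{\xi^2u^2}{2}+O(|u|^3).
\end{equation*}
Shrink $\varepsilon_0$ so that $|\lambda(u)|\le e^{-\xi^2u^2/4}$ on $|u|\le\varepsilon_0$. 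Then, for $|s|\le T$,
\begin{equation*}
|\hat F_n(s)-e^{-s^2/2}|\le C\Bigl(\frac{|s|^3}{\sqrt n}+\frac{|s|}{\sqrt n}\Bigr)e^{-s^2/4}+C\theta^n,
\end{equation*}
using $|e^a-e^b|\le|a-b|\max(|e^a|,|e^b|)$. Dividing by $|s|$ and integrating over $|s|\le T$, the main terms give $O(n^{-1/2})$. The remainder term contributes $\int_{|s|\le T}C\theta^n/|s|\,ds$, which diverges at $s=0$. This is repaired by noting that the difference vanishes at $s=0$ and is Lipschitz in $s$ there, with the $R$-part of size $O(|u|\theta^n)$, because $R_0^n\mathbf 1=0$ and $R_u$ is analytic in $u$. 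The integral is then bounded by $C\theta^n\,T/\sqrt n=O(\theta^n)$.

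\textbf{Main obstacle.} The delicate step is uniformity. The constants $C,\theta,\varepsilon_0$ in the decomposition $\tilde\calL_u^n=\lambda(u)^n\Pi_u+R_u^n$ must be uniform for $|u|\le\varepsilon_0$. We also need the bound $|\lambda(u)|\le e^{-cu^2}$ on that whole range, not just an asymptotic expansion at $u=0$. I would obtain both from the resolvent formula. On the circle $|z-1|=(1-\gamma)/2$, $\|(z-\tilde\calL)^{-1}\|_\alpha$ is bounded, and $\|\tilde\calL_u-\tilde\calL\|_\alpha\le C|u|\,\|\psi\|_\alpha$. A Neumann series then keeps the resolvent bounded when $|u|\le\varepsilon_0=c(1-\gamma)/\|\psi\|_\alpha$. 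This also makes the Berry--Esseen constant explicit in terms of $\gamma$, $\xi$, and $\|\psi\|_\alpha$. The cubic Taylor remainder of $\log\lambda$ is controlled by Cauchy estimates on the same disc.
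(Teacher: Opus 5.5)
Your proposal is correct and follows the same route as the paper: a Nagaev--Guivarc'h decomposition of the twisted transfer operator, cubic Taylor control of the leading eigenvalue, and Esseen's smoothing inequality with cutoff $T\asymp\sqrt{n}$, so that only the perturbative range of $u$ is ever used (your conjugation to the normalized $\tilde{\calL}$ is only a cosmetic change). You are in fact more careful than the paper on the points it glosses over. You keep the $O(|s|/\sqrt{n})$ eigenprojection term and show the spectral remainder is $O(|u|\theta^n)$, so that $|\hat F_n(s)-e^{-s^2/2}|/|s|$ is integrable near $s=0$; you reduce a two-sided $\psi$ to a one-sided one by a bounded coboundary; and you secure uniform constants on $|u|\le\varepsilon_0$ via a resolvent Neumann-series argument.
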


\begin{proof}
We use the Nagaev-Guivarc'h spectral perturbation method \cite{Nagaev1957,Nagaev1961,GuivarchHardy1988}, \mbox{\cite{Gouezel2005}}.

\textbf{Step 1: Perturbed transfer operator.} For $s \in \R$, define $\calL_s = \calL_{\phi + is\psi}$. By Theorem~\ref{thm:pressure_analyticity}, the map $s \mapsto \calL_s$ is an analytic family of operators on $\calH_\alpha(\SigmaA^+)$. By Theorem~\ref{thm:spectral_gap}, $\calL_0 = \calL_\phi$ has a simple dominant eigenvalue $\lambda_0 = e^{P(\phi)}$ with spectral gap $\gamma < 1$. By Kato's perturbation theory \cite{Kato1980}, for $|s|$ sufficiently small, $\calL_s$ has a simple eigenvalue $\lambda_s$ near $\lambda_0$, analytic in $s$, with $|\lambda_s| > r_{\mathrm{ess}}(\calL_s)$.

\textbf{Step 2: Eigenvalue expansion.} Without loss of generality, assume $\bar\psi = \int\psi\,d\mu = 0$ (replace $\psi$ by $\psi - \bar\psi$). Then by Corollary~\ref{cor:pressure_derivatives}:
\begin{equation}
\lambda_s = \lambda_0\exp\left(-\frac{s^2\xi^2}{2} + O(s^3)\right) = \lambda_0\left(1 - \frac{s^2\xi^2}{2} + O(s^3)\right),
\end{equation}
where $\xi^2 = P''(\phi;\psi) > 0$ by hypothesis. More precisely, $\lambda_s = \lambda_0 e^{p(s)}$ where $p(s) = -s^2\xi^2/2 + s^3 r(s)$ with $r$ analytic and $|r(s)| \leq C_3$ for $|s| \leq s_0$, where $C_3 = \sup_{|s|\leq s_0}|P'''(\phi;s\psi)|/6$.

\textbf{Step 3: Characteristic function control.} The key identity is: for the normalized Gibbs measure $\mu = h\nu$,
\begin{equation}\label{eq:char_func_identity}
\E_\mu[e^{isS_n\psi}] = \lambda_0^{-n}\int \calL_s^n(h)\,d\nu.
\end{equation}
This follows from the duality: $\int e^{isS_n\psi}h\,d\nu = \int \calL_s^n(h)\cdot\mathbf{1}\,d\nu$ (by iterating Lemma~\ref{lem:transfer_duality} with the complex potential $\phi + is\psi$).

By the spectral decomposition of $\calL_s$ for small $|s|$:
\begin{equation}
\calL_s^n(h) = \lambda_s^n \nu_s(h) h_s + R_s^n(h),
\end{equation}
where $h_s, \nu_s$ are the perturbed eigenfunction and eigenmeasure (analytic in $s$, with $h_0 = h$, $\nu_0 = \nu$), and $\|R_s^n(h)\|_\infty \leq C\gamma_s^n\|h\|_\alpha$ with $\gamma_s < |\lambda_s|$ uniformly for $|s| \leq s_0$. Since $\nu_s(h_s) = 1$ by normalization, $\nu_0(h) = 1$, and continuity gives $|\nu_s(h)| \geq 1/2$ for $|s|$ small. Thus:
\begin{equation}
\E_\mu[e^{isS_n\psi}] = \left(\frac{\lambda_s}{\lambda_0}\right)^n\nu_s(h)\int h_s\,d\nu + O(\gamma_s^n/\lambda_0^n).
\end{equation}
The remainder is $O(\theta^n)$ for some $\theta < 1$, which is negligible. Substituting $s \to t/(\xi\sqrt{n})$:
\begin{equation}
\E_\mu[e^{itS_n\psi/(\xi\sqrt{n})}] = \exp\left(-\frac{t^2}{2} + O\left(\frac{|t|^3}{\sqrt{n}}\right)\right)\cdot(1+O(n^{-1/2})).
\end{equation}

\textbf{Step 4: Berry-Esseen smoothing.} For $|t| \leq T = c\sqrt{n}$ (a suitable cutoff), the characteristic function satisfies
\begin{equation}
\left|\E_\mu[e^{itS_n\psi/(\xi\sqrt{n})}] - e^{-t^2/2}\right| \leq C\frac{|t|^3}{\sqrt{n}}e^{-t^2/4}.
\end{equation}
For $|t| > T$, the spectral gap gives $|\E_\mu[e^{itS_n\psi/(\xi\sqrt{n})}]| \leq C'\theta^n$ for some $\theta < 1$ (since $|\lambda_s/\lambda_0| < 1$ for $s$ bounded away from zero, by the strict convexity of $\log\lambda_s$ at $s=0$).

Applying the Berry-Esseen smoothing inequality (see e.g.\  \cite[XVI.3]{Feller1971}):
\begin{equation}
\sup_t\left|\mu\left(\frac{S_n\psi}{\xi\sqrt{n}}\leq t\right) - \Phi(t)\right| \leq \frac{1}{\pi}\int_{-T}^{T}\frac{|\varphi_n(t)-e^{-t^2/2}|}{|t|}\,dt + \frac{C''}{T},
\end{equation}
where $\varphi_n(t) = \E_\mu[e^{itS_n\psi/(\xi\sqrt{n})}]$. The integral is bounded by
\begin{equation}
\frac{C}{\sqrt{n}}\int_{-\infty}^{\infty}|t|^2 e^{-t^2/4}\,dt = \frac{C'}{\sqrt{n}},
\end{equation}
and $C''/T = C''/\sqrt{n}$. Combining gives $\sup_t|\ldots| \leq C_{\mathrm{BE}}/\sqrt{n}$.
\end{proof}

\begin{theorem}[Local Limit Theorem]\label{thm:local_limit}
Under the hypotheses of Theorem~\ref{thm:CLT} with $\xi^2 > 0$, the distribution of $S_n\psi$ admits a density with respect to the lattice or Lebesgue measure, and the following local limit theorem holds. If $\psi$ is non-lattice (i.e., $S_n\psi$ does not take values in a discrete subgroup $a + b\Z$ for any $a \in \R$, $b > 0$), then for all $t \in \R$,
\begin{equation}\label{eq:local_limit}
\xi\sqrt{n} \cdot \mu\left( S_n\psi \in [t, t+\delta] \right) = \frac{\delta}{\sqrt{2\pi}} \exp\left( -\frac{(t - n\bar{\psi})^2}{2n\xi^2} \right) + O\!\left(\frac{1}{\sqrt{n}}\right),
\end{equation}
uniformly in $t \in \R$ and $\delta > 0$ bounded, where the implicit constant depends on $\phi$, $\psi$, and $\alpha$. If $\psi$ is lattice-valued with maximal span $b > 0$, the analogous result holds with the Gaussian density replaced by the corresponding lattice Gaussian approximation:
\begin{equation}\label{eq:local_limit_lattice}
\xi\sqrt{n} \cdot \mu\left( S_n\psi = k \right) = \frac{b}{\sqrt{2\pi}} \exp\left( -\frac{(k - n\bar{\psi})^2}{2n\xi^2} \right) + O\!\left(\frac{1}{\sqrt{n}}\right)
\end{equation}
for each $k$ in the range of $S_n\psi$.
\end{theorem}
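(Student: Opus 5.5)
We prove the local limit theorem by Fourier inversion, using the same Nagaev--Guivarc'h decomposition as in the proof of Theorem~\ref{thm:Berry_Esseen}. We may assume $\bar\psi=0$, since we can replace $\psi$ by $\psi-\bar\psi$ and shift $t$.

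\textbf{Smoothing.} Fix a smooth function $\chi$ whose Fourier transform $\hat\chi$ has compact support in $[-K,K]$, chosen to approximate $\mathbf{1}_{[0,\delta]}$ from above and below. This is the standard Stone/Breiman sandwich, with errors controlled by $\varepsilon\to0$. Write
\[
\E_\mu[\chi(S_n\psi-t)]=\frac{1}{2\pi}\int_{-K}^{K}\hat\chi(s)e^{-ist}\,\E_\mu[e^{isS_n\psi}]\,ds .
\]
By \eqref{eq:char_func_identity}, $\E_\mu[e^{isS_n\psi}]=\lambda_0^{-n}\int\calL_s^n h\,d\nu$. We split the $s$-integral into $|s|\le s_0$ and $s_0\le|s|\le K$.

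\textbf{Small frequencies.} For $|s|\le s_0$, Step~3 of the proof of Theorem~\ref{thm:Berry_Esseen} gives
\[
\E_\mu[e^{isS_n\psi}]=(\lambda_s/\lambda_0)^n c(s)+O(\theta^n),
\]
where $c(s)=\nu_s(h)\int h_s\,d\nu$ is analytic in $s$ with $c(0)=1$. Substitute $s=u/(\xi\sqrt n)$ and expand
\[
(\lambda_s/\lambda_0)^n=e^{-u^2/2}\bigl(1+O(|u|^3/\sqrt n)\bigr), \qquad c(s)=1+O(|u|/\sqrt n).
\]
Use $\hat\chi(s)=\hat\chi(0)+O(|s|)$ and dominate the integrand by $e^{-u^2/4}$, as in Step~4 of that proof. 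The result is
\[
\xi\sqrt n\cdot(\ldots)=\frac{\hat\chi(0)}{\sqrt{2\pi}}\,e^{-t^2/(2n\xi^2)}+O(n^{-1/2}),
\]
uniformly in $t$ because we only bound $|e^{-ist}|\le1$.

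\textbf{Large frequencies (main obstacle).} We must show that $\sup_{s_0\le|s|\le K}\|\lambda_0^{-n}\calL_s^n\|_\alpha\le C\theta^n$ for some $\theta<1$. The bound $|\calL_s g|\le\calL_\phi|g|$ together with the Lasota--Yorke inequality of Theorem~\ref{thm:spectral_gap} (with the twist $e^{is\psi}$ absorbed into the constant $C''$) gives $r_{\mathrm{ess}}(\lambda_0^{-1}\calL_s)\le\alpha$, and hence $r(\lambda_0^{-1}\calL_s)\le1$. It remains to rule out eigenvalues of modulus one. Suppose $\calL_s g=\lambda_0 e^{i\theta}g$. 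Then we can argue exactly as in Theorem~\ref{thm:eigendata_uniqueness}(b): integrating against $\nu$ forces $|g|=ch$, and the equality case in the triangle inequality at each preimage forces
\[
s\psi=\theta+w\circ\sigma-w \pmod{2\pi}
\]
with $w$ continuous. This says that $\psi$ is cohomologous to a function valued in $\theta/s+(2\pi/s)\Z$, which is exactly the lattice case excluded by the non-lattice hypothesis. In the non-lattice case, upper semicontinuity of the spectral radius on the compact set $s_0\le|s|\le K$ yields a uniform $\theta<1$, so this range contributes $O(\sqrt n\,\theta^n)$.

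The delicate points are (i) making the non-lattice hypothesis match the cohomological obstruction, which requires stating it up to coboundaries (a measurable coboundary is upgraded to a continuous one by the cocycle argument just sketched), and (ii) the uniformity needed to let $\varepsilon\to0$ in the sandwich. The latter costs only $O(\varepsilon)$ relative to the main term.

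\textbf{Lattice case.} If $\psi$ is lattice-valued with maximal span $b$, we invert exactly on the circle:
\[
\mu(S_n\psi=k)=\frac{b}{2\pi}\int_{-\pi/b}^{\pi/b}e^{-isk}\,\E_\mu[e^{isS_n\psi}]\,ds .
\]
The same argument applies: the neighbourhood of $s=0$ gives the lattice Gaussian term with prefactor $b$. On the rest of the circle we still get a uniform bound $\theta<1$, because maximality of the span forces $\theta/s\notin b\Z$ unless $s\in(2\pi/b)\Z$, so no other point of the circle carries an eigenvalue of modulus $\lambda_0$.
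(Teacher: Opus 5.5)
\textbf{Verdict: genuine gap in the non-lattice case.} Your small-frequency expansion, your cohomological reading of the non-lattice hypothesis, and your lattice-case argument are sound. The non-lattice rate fails at exactly the step you call harmless: letting $\varepsilon\to0$ in the sandwich.

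For a fixed pair of kernels with $\operatorname{supp}\hat\chi^\pm\subset[-K,K]$ (so $\varepsilon\asymp 1/K$), what your argument actually yields is
\[
\sup_{t}\Bigl|\xi\sqrt n\,\mu\bigl(S_n\psi\in[t,t+\delta]\bigr)-\tfrac{\delta}{\sqrt{2\pi}}e^{-(t-n\bar\psi)^2/(2n\xi^2)}\Bigr|\le \frac{C}{K}+C_K\, n^{-1/2}+C_K\sqrt n\,\theta_K^{\,n},
\]
with $\theta_K=\sup_{s_0\le|s|\le K}r(\lambda_0^{-1}\calL_s)<1$. Compactness of $[s_0,K]$ says nothing about how fast $\theta_K\to1$ as $K\to\infty$. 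So the only conclusion is $\limsup_n\sup_t|\cdots|\le C/K$ for every $K$. That is an $o(1)$ error (Stone's local limit theorem), not $O(n^{-1/2})$. To reach $O(n^{-1/2})$ you would need $K\asymp\sqrt n$. You would then need a quantitative bound such as $\sup_{s_0\le|s|\le c\sqrt n}|\E_\mu[e^{isS_n\psi}]|=O(n^{-3/2})$. This is a Diophantine-type condition, and non-lattice does not imply it.

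\textbf{The gap cannot be closed, because the rate claim is false.} Take the full $3$-shift with the uniform Bernoulli measure, which is the Gibbs measure for $\phi=0$. Let $\psi(x)=f(x_0)$ with $f$ taking the values $0,1,\beta$, where $\beta=\sum_k 3^{-k!}$.
\begin{enumerate}
\item[(1)] Set $q=3^{j!}$, let $p/q$ be the corresponding partial sum (with $\gcd(p,q)=1$), and put $n=q^4$. Then $S_n\psi$ lies within $n|\beta-p/q|\le q^{-6}$ of the lattice $\frac1q\Z$ for large $j$.
\item[(2)] Since $q$ is odd, an interval of length $\frac12$ contains $\frac{q-1}{2}$ or $\frac{q+1}{2}$ lattice points, depending on $t$.
\item[(3)] Near the mean each atom has mass $\approx (q\xi\sqrt{2\pi n})^{-1}$.
\item[(4)] Hence $\xi\sqrt n\,\mu(S_n\psi\in[t,t+\frac12])$ oscillates in $t$ by order $1/q=n^{-1/4}$, while the Gaussian term is essentially constant over that $t$-range.
\end{enumerate}

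The paper's sketch does not avoid this either. Its inversion integral over the bounded window $[-\pi/\delta,\pi/\delta]$ is not an identity for non-lattice laws. Once the frequency window must grow with $n$, the qualitative decay away from $s=0$ is no longer enough.

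What your argument does prove is:
\begin{enumerate}
\item[(a)] the lattice statement with the $O(n^{-1/2})$ rate, with maximal span understood up to coboundaries;
\item[(b)] in the cohomologically non-lattice case, the local limit theorem with $o(1)$ error, uniformly in $t$.
\end{enumerate}
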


\begin{proof}
The proof follows the Nagaev-Guivarc'h spectral method used for the Berry-Esseen bounds, with the Fourier inversion formula replacing the smoothing inequality. In the non-lattice case, the key additional ingredient is that $|\lambda_s/\lambda_0| < 1$ for all $s \neq 0$ in a period interval (this follows from the non-lattice condition and the strict convexity of $\log|\lambda_s|$ at $s = 0$). The characteristic function $\varphi_n(s) = \E_\mu[e^{isS_n\psi}]$ therefore decays to zero as $|s| \to \infty$ within any compact set, and the spectral gap provides uniform exponential decay for $|s|$ bounded away from zero. Applying the Fourier inversion formula:
\begin{equation}
\mu(S_n\psi \in [t, t+\delta]) = \frac{1}{2\pi}\int_{-\pi/\delta}^{\pi/\delta} \varphi_n(s) \cdot \frac{e^{-ist} - e^{-is(t+\delta)}}{is}\,ds.
\end{equation}
Splitting the integral into $|s| \leq T/(\xi\sqrt{n})$ and $|s| > T/(\xi\sqrt{n})$, the first part yields the Gaussian term via the eigenvalue expansion from Theorem~\ref{thm:Berry_Esseen}, Step~2, and the second part is $O(n^{-1/2})$ by the spectral gap. The lattice case is analogous, with the integral taken over a single period $[-\pi/b, \pi/b]$. For the complete argument, see  \mbox{\cite[Theorem~2.5]{Gouezel2005}}; the uniformly hyperbolic case treated here is simpler than the non-uniformly expanding case of \mbox{\cite{Gouezel2005}} because the spectral gap is uniform.
\end{proof}

\subsection{Large Deviations Principle}

\begin{theorem}[Large Deviations Principle]\label{thm:LDP}
Let $\mu = \mu_\phi$ and $\psi \in \calH_\alpha(\SigmaA)$. For any closed set $F \subset \R$,
\begin{equation}\label{eq:LDP_upper}
\limsup_{n \to \infty} \frac{1}{n} \log \mu\left( \frac{S_n\psi}{n} \in F \right) \leq -\inf_{t \in F} I_\psi(t),
\end{equation}
and for any open set $G \subset \R$,
\begin{equation}\label{eq:LDP_lower}
\liminf_{n \to \infty} \frac{1}{n} \log \mu\left( \frac{S_n\psi}{n} \in G \right) \geq -\inf_{t \in G} I_\psi(t),
\end{equation}
where the rate function is
\begin{equation}\label{eq:rate_function_LDP}
I_\psi(t) = \sup_{s \in \R} \{st - P(\phi + s\psi) + P(\phi)\}.
\end{equation}
The rate function $I_\psi$ is convex, lower semi-continuous, has compact level sets, and achieves its unique minimum value of $0$ at $t = \bar{\psi}$.
\end{theorem}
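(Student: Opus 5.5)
The plan is to apply the G\"{a}rtner--Ellis theorem (Appendix~\ref{app:probability}) to the cumulant generating function $\Lambda_n(s) = \frac1n\log\E_\mu[e^{sS_n\psi}]$. The key step is to show that $\Lambda_n(s)\to\Lambda(s) := P(\phi+s\psi)-P(\phi)$ for every real $s$, with $\Lambda$ finite and differentiable on all of $\R$.

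To get the limit, I would use the same duality that gave \eqref{eq:char_func_identity}, now with a real perturbation:
\[
\E_\mu[e^{sS_n\psi}] = \lambda_\phi^{-n}\int \calL_{\phi+s\psi}^n(h_\phi)\,d\nu_\phi .
\]
Here there is no need to stay near $s=0$. For each real $s$ the potential $\phi+s\psi$ is again H\"{o}lder, so Theorem~\ref{thm:eigendata_existence} and Theorem~\ref{thm:exponential_convergence} apply to it directly. This gives $\lambda_{\phi+s\psi}^{-n}\calL_{\phi+s\psi}^n h_\phi \to \nu_{\phi+s\psi}(h_\phi)\,h_{\phi+s\psi}$ uniformly. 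The limit is strictly positive and bounded, because $h_\phi>0$ and $h_{\phi+s\psi}>0$. Taking $\frac1n\log$ then yields $\Lambda_n(s)\to \log\lambda_{\phi+s\psi}-\log\lambda_\phi = \Lambda(s)$, using Proposition~\ref{prop:pressure_transfer}. If one prefers to avoid the spectral input here, the Gibbs bounds \eqref{eq:gibbs_classical} for $\mu$ together with Proposition~\ref{prop:pressure_transfer} give $\E_\mu[e^{sS_n\psi}] \asymp Z_n(\phi+s\psi)/Z_n(\phi)$ up to constants $e^{\pm O(V(\phi)+|s|V(\psi))}$. This is a purely combinatorial route to the same limit.

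Next I need the regularity of $\Lambda$. By Theorem~\ref{thm:pressure_analyticity}, applied at each base point $\phi+s_0\psi$, the map $s\mapsto P(\phi+s\psi)$ is real-analytic on all of $\R$, hence differentiable everywhere. It is also convex, being a limit of the convex functions $\Lambda_n$ (H\"{o}lder's inequality). G\"{a}rtner--Ellis then gives the upper bound \eqref{eq:LDP_upper} for closed $F$ with rate function $I_\psi=\Lambda^*$ as in \eqref{eq:rate_function_LDP}. It also gives the lower bound \eqref{eq:LDP_lower} for open $G$, since $\Lambda$ is essentially smooth: it is finite and differentiable on all of $\R$, so steepness holds vacuously.

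For the properties of $I_\psi$: convexity and lower semicontinuity are automatic for a Legendre transform. Compact level sets follow from $I_\psi(t)\ge |t|-\max(\Lambda(1),\Lambda(-1))$. We have $I_\psi\ge0$ because $s=0$ is admissible in the supremum. By Corollary~\ref{cor:pressure_derivatives}, $\Lambda'(0)=\bar\psi$, so the supremum defining $I_\psi(\bar\psi)$ is attained at $s=0$, giving $I_\psi(\bar\psi)=0$.

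The main obstacle is \emph{uniqueness} of the zero. If $I_\psi(t)=0$, then $st\le\Lambda(s)$ for all $s$, which means $t$ lies in the subdifferential $\partial\Lambda(0)=\{\Lambda'(0)\}$. This subdifferential is a single point by differentiability, so $t=\bar\psi$. This argument works even when $\xi^2=0$. In that degenerate case $I_\psi$ is infinite off $\bar\psi$, since $\psi$ is cohomologous to $\bar\psi$ and $\Lambda(s)=s\bar\psi$. The care needed is to make sure that differentiability of $\Lambda$ at $0$, and not strict convexity, is what is invoked.
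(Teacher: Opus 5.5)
Your proposal is correct and follows the same route as the paper: G\"artner--Ellis applied to $\Lambda(s)=P(\phi+s\psi)-P(\phi)$, differentiability of $\Lambda$ from analyticity of the pressure, and $I_\psi(\bar\psi)=0$ from $\Lambda'(0)=\bar\psi$. It is more complete than the paper's proof in two respects.

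\begin{itemize}
\item \emph{The limit hypothesis.} The paper only asserts that $\Lambda$ is the limiting cumulant generating function. Your identity $\E_\mu[e^{sS_n\psi}]=\lambda_\phi^{-n}\int\calL_{\phi+s\psi}^n h_\phi\,d\nu_\phi$ actually verifies this for every $s\in\R$. You combine it with the RPF theorem for the real potential $\phi+s\psi$, or use the Gibbs-bound alternative.
\item \emph{Uniqueness of the zero.} The paper gets uniqueness from strict convexity of $\Lambda$, i.e.\ from $\Lambda''>0$. That requires $\psi$ not cohomologous to a constant, which the statement does not assume. Your argument is that $I_\psi(t)=0$ means $\Lambda(s)\ge st$ for all $s$, hence $t\in\partial\Lambda(0)=\{\Lambda'(0)\}$. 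This uses only differentiability of $\Lambda$ at $0$, so it proves the statement as written, degenerate case included.
\end{itemize}

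One caveat, shared with the paper. The statement takes $\psi\in\calH_\alpha(\SigmaA)$, but $\calL_{\phi+s\psi}$ and Theorem~\ref{thm:pressure_analyticity} need a function on $\SigmaA^+$. First replace $\psi$ by a cohomologous H\"older function of the future coordinates, using Sinai's lemma; this costs passing from $\alpha$ to $\alpha^{1/2}$. The transfer function is bounded, so $S_n\psi$ changes by $O(1)$ uniformly in $n$ and $x$. Therefore neither $\Lambda$ nor the pressures $P(\phi+s\psi)$ change, and G\"artner--Ellis then applies to the original $S_n\psi/n$.
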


\begin{proof}
This follows from the G\"{a}rtner-Ellis theorem \cite[Theorem~2.3.6]{DemboZeitouni1998} applied to the cumulant generating function $\Lambda(s) = P(\phi + s\psi) - P(\phi)$. By the analyticity of the pressure (Theorem~\ref{thm:pressure_analyticity}), $\Lambda$ is differentiable everywhere on $\R$, which is the hypothesis of the G\"{a}rtner-Ellis theorem ensuring the full large deviations principle (both upper and lower bounds). The rate function $I_\psi$ is the Legendre transform of $\Lambda$, and $I_\psi(\bar\psi) = 0$ because $\Lambda'(0) = \int\psi\,d\mu_\phi = \bar\psi$ by Corollary~\ref{cor:pressure_derivatives}. Strict convexity of $I_\psi$ follows from the strict convexity of $\Lambda$ (which holds because $\Lambda''(s) = \lim_{n\to\infty} n^{-1}\Var_{\mu_{\phi+s\psi}}(S_n\psi) > 0$ when $\psi$ is not cohomologous to a constant).
\end{proof}


\section{A Numerical Example}\label{sec:numerical}

We illustrate all quantitative results with a concrete computation for the simplest nontrivial case, demonstrating the explicit dependence of all constants on the data.

\subsection{The Full 2-Shift with Bernoulli Potential}

Consider the full 2-shift: $N = 2$, $A = \begin{pmatrix}1&1\\1&1\end{pmatrix}$ (mixing time $M = 1$). Let $\alpha = 1/2$ and define the H\"{o}lder potential
\begin{equation}\label{eq:example_potential}
\phi(x) = \begin{cases} \log p & \text{if } x_0 = 1 \\ \log(1-p) & \text{if } x_0 = 2 \end{cases}
\end{equation}
with parameter $p = 0.7$. This potential satisfies $\phi \in \calH_{1/2}(\SigmaA^+)$ with $|\phi|_{1/2} = 0$ (it depends only on $x_0$), so $\|\phi\|_{1/2} = \|\phi\|_\infty = \max(|\log 0.7|, |\log 0.3|) \approx 1.204$.

On functions of $x_0$ alone, $\calL_\phi$ acts as the matrix $L = \begin{pmatrix} p & p \\ 1-p & 1-p \end{pmatrix}$. The eigenvalues are $\lambda_1 = 1$ and $\lambda_2 = 0$. Thus:
\begin{equation}
\lambda = e^{P(\phi)} = 1, \quad P(\phi) = 0, \quad h \equiv 1, \quad \nu = (0.7, 0.3)\text{-Bernoulli}.
\end{equation}
The Gibbs measure is $\mu_\phi = h\nu$: the $(0.7, 0.3)$-Bernoulli measure.

For the $n$-cylinder $[w] = [w_0 \cdots w_{n-1}]$ with $k$ ones and $n-k$ twos:
\begin{equation}
\mu_\phi([w]) = (0.7)^k (0.3)^{n-k} = \exp(S_n\phi(x)) \quad \text{for any } x \in [w].
\end{equation}
Since $P(\phi) = 0$, the Gibbs bounds \eqref{eq:gibbs_bounds} hold with $C_1 = C_2 = 1$ (exact equality, no distortion because $\phi$ depends only on $x_0$). For example: $\mu_\phi([1,2,1,1]) = (0.7)(0.3)(0.7)(0.7) = 0.1029$, and $\exp(S_4\phi(x)) = (0.7)^3(0.3) = 0.1029$. $\checkmark$

The Jacobian of $\sigma$ is $J_{\mu}\sigma(x) = \mu([x_0 x_1 \cdots x_{n-1}]) / \mu([x_1 \cdots x_{n-1}]) = p_{x_0}/1 = p_{x_0}$ (the ratio stabilizes immediately for Bernoulli measures). Since $\sigma$ expands the measure of a cylinder by factor $1/p_{x_0}$:
\begin{equation}
J_\mu\sigma(x) = \frac{1}{p_{x_0}} = e^{-\phi(x)} = e^{P(\phi) - \phi(x)}. \quad \checkmark
\end{equation}
This confirms Definition~\ref{def:intrinsic_gibbs_full} pointwise (not just $\mu$-a.e.) for this example.

The entropy of the $(0.7, 0.3)$-Bernoulli measure:
\begin{equation}
h_\mu(\sigma) = -0.7\log 0.7 - 0.3\log 0.3 \approx 0.6109 \text{ nats}.
\end{equation}
The potential integral: $\int\phi\,d\mu = 0.7\log 0.7 + 0.3\log 0.3 = -h_\mu(\sigma) \approx -0.6109$. Thus:
\begin{equation}
h_\mu(\sigma) + \int\phi\,d\mu = 0 = P(\phi). \quad \checkmark
\end{equation}
For comparison, the $(1/2, 1/2)$-Bernoulli measure $\mu'$ gives $h_{\mu'}(\sigma) = \log 2 \approx 0.6931$ but $\int\phi\,d\mu' = \frac{1}{2}(\log 0.7 + \log 0.3) = \frac{1}{2}\log 0.21 \approx -0.7765$, so $h_{\mu'} + \int\phi\,d\mu' \approx -0.0834 < 0 = P(\phi)$. This confirms that $\mu'$ is \emph{not} an equilibrium state.

For the observable $\psi(x) = \mathbf{1}_{[1]}(x) - p$ (centered indicator of symbol 1):
\begin{equation}
\xi^2 = \Var_\mu(\psi) + 2\sum_{k=1}^\infty \Cov_\mu(\psi, \psi\circ\sigma^k) = p(1-p) + 0 = 0.21,
\end{equation}
since coordinates are independent under the Bernoulli measure. The CLT gives: $(\#\text{ones in } n \text{ symbols} - 0.7n)/\sqrt{0.21n} \to \mathcal{N}(0,1)$.

The large deviations rate function (via the cumulant $\Lambda(s) = \log(pe^{s(1-p)} + (1-p)e^{-sp})$):
\begin{equation}
I_\psi(t) = (t+p)\log\frac{t+p}{p} + (1-p-t)\log\frac{1-p-t}{1-p}, \quad t \in (-p, 1-p).
\end{equation}
Numerically: $I_\psi(0) = 0$ (confirming the minimum), $I_\psi(0.2) \approx 0.1211$ (probability of observing 90\% ones decays as $e^{-0.1211 n}$).

The second eigenvalue of $L$ is exactly $0$, so the spectral gap is maximal: $\gamma = 0$ for functions of $x_0$. For general H\"{o}lder functions, the essential spectral radius bound gives $\gamma \leq \alpha = 1/2$.

\begin{remark}
The Bernoulli example has $\var_k\phi = 0$ for $k \geq 1$, so it does not test the nontrivial cone machinery. The following example demonstrates the full theory.
\end{remark}

\subsection{Nontrivial Example: Ising-Type Potential}\label{subsec:ising_example}

Consider the full 2-shift ($N=2$, $A = \begin{pmatrix}1&1\\1&1\end{pmatrix}$, $M=1$) with alphabet $\calA = \{+1, -1\}$ and the nearest-neighbor potential
\begin{equation}\label{eq:ising_potential}
\phi(x) = \beta x_0 x_1 + \frac{h}{2}(x_0 + x_1),
\end{equation}
where $\beta > 0$ is the \emph{inverse temperature} (coupling strength) and $h \in \R$ is the \emph{external field}. This is the one-dimensional Ising model. We have $\var_0\phi = 2|\beta| + |h|$, $\var_1\phi = 2|\beta| + |h|$, $\var_k\phi = 0$ for $k \geq 2$, so $\phi \in \calH_\alpha$ for any $\alpha \in (0,1)$ with $|\phi|_\alpha = (2|\beta|+|h|)/\alpha$.

Take $\beta = 1$, $h = 0$ (zero field) for concreteness.

\textbf{Transfer matrix.} On functions of $x_0$ alone, $\calL_\phi$ acts as $L_{ij} = e^{\phi(ix_1\cdots)}$ summed over $j = x_1$:
\begin{equation}
L = \begin{pmatrix} e^{\beta} & e^{-\beta} \\ e^{-\beta} & e^{\beta} \end{pmatrix} = \begin{pmatrix} e & e^{-1} \\ e^{-1} & e \end{pmatrix}.
\end{equation}
The eigenvalues are $\lambda_1 = e^\beta + e^{-\beta} = 2\cosh(\beta) = 2\cosh(1) \approx 3.0862$ and $\lambda_2 = e^\beta - e^{-\beta} = 2\sinh(\beta) = 2\sinh(1) \approx 2.3504$.

\textbf{Pressure.} $P(\phi) = \log\lambda_1 = \log(2\cosh\beta) \approx 1.1270$.

\textbf{Eigenvectors.} Right eigenvector (eigenfunction): $h = (1, 1)^T$ (constant), since the matrix is symmetric with equal row sums. Left eigenvector (eigenmeasure): $\nu = (1/2, 1/2)$ (uniform). The Gibbs measure is the symmetric nearest-neighbor Markov chain $\mu_\phi$ with transition probabilities
\begin{equation}
P(x_1 = j \mid x_0 = i) = \frac{e^{\beta i j}}{2\cosh\beta} = \begin{cases} \frac{e^\beta}{2\cosh\beta} \approx 0.7311 & \text{if } j = i, \\ \frac{e^{-\beta}}{2\cosh\beta} \approx 0.2689 & \text{if } j \neq i. \end{cases}
\end{equation}

\textbf{Spectral gap.} The ratio $\lambda_2/\lambda_1 = \tanh(\beta) = \tanh(1) \approx 0.7616$. The spectral gap rate is $\gamma = \tanh(\beta) \approx 0.7616$, which governs the exponential decay of correlations.

\textbf{Gibbs bounds.} For an $n$-cylinder $[w]$, the distortion comes from the boundary: $S_n\phi(x)$ depends on $x_n$ (the symbol after $w$), so $\sup_w S_n\phi - \inf_w S_n\phi \leq 2|\beta| = 2$. The Gibbs bounds hold with $C_1 = e^{-|\beta|} = e^{-1}$ and $C_2 = e^{|\beta|} = e$, giving
\begin{equation}
e^{-1} \leq \frac{\mu_\phi([w])}{\exp(-nP(\phi) + S_n\phi(x))} \leq e.
\end{equation}
These constants are \emph{strictly} different from $1$ (unlike the Bernoulli example), reflecting genuine distortion from the interaction.

\textbf{Correlation decay.} For the observable $\psi(x) = x_0$ (the spin at position $0$), the correlation function is
\begin{equation}
C_n(\psi,\psi) = \E_\mu[x_0 x_n] - (\E_\mu[x_0])^2 = \tanh^n(\beta) = \tanh^n(1) \approx (0.7616)^n,
\end{equation}
confirming the exponential decay rate $\gamma = \tanh(\beta)$.

\textbf{CLT variance.} The asymptotic variance of $\psi(x) = x_0$ (with $\bar\psi = \E[x_0] = 0$ by symmetry):
\begin{equation}
\xi^2 = \Var_\mu(x_0) + 2\sum_{k=1}^\infty \Cov_\mu(x_0, x_k) = 1 + 2\sum_{k=1}^\infty\tanh^k(\beta) = 1 + \frac{2\tanh\beta}{1-\tanh\beta} = \frac{1+\tanh\beta}{1-\tanh\beta}.
\end{equation}
For $\beta = 1$: $\xi^2 = (1+0.7616)/(1-0.7616) \approx 7.389 = e^2$. Thus $(S_n\psi)/\sqrt{n} \to \mathcal{N}(0, e^2)$.

\textbf{Large deviations rate.} The cumulant generating function is $\Lambda(s) = \log\frac{\cosh(\beta+s)}{\cosh\beta}$, and the rate function:
\begin{equation}
I_\psi(t) = \sup_s\{st - \Lambda(s)\} = \frac{1+t}{2}\log\frac{1+t}{1+m} + \frac{1-t}{2}\log\frac{1-t}{1-m}
\end{equation}
where $m = \tanh\beta = \tanh 1 \approx 0.7616$ is the ``spontaneous magnetization.'' At $t = 0$: $I_\psi(0) = -\frac{1}{2}\log(1-m^2) = \beta - \log(2\cosh\beta) + \log 2 \approx 0.1269$.

This example demonstrates all the quantitative machinery: nontrivial Gibbs constants ($C_1 \neq C_2 \neq 1$), computable spectral gap ($\gamma = \tanh\beta < 1$), explicit CLT variance ($\xi^2 = e^{2\beta}$), and a nonzero LDP rate at the mean ($I(0) > 0$ since the mean spin is $0$ but the ``typical'' magnetization is $\pm m$).

\subsection{The Golden Mean Shift}\label{subsec:golden_mean}

The golden mean shift has alphabet $\{0, 1\}$ and forbids the word $11$. The transition matrix is $A = \begin{pmatrix} 1 & 1 \\ 1 & 0 \end{pmatrix}$ with spectral radius $\rho(A) = \frac{1+\sqrt{5}}{2} = \varphi$ (the golden ratio). This is the simplest example of a \emph{non-full} shift, illustrating how the transition structure affects the thermodynamics.

\textbf{Measure of maximal entropy ($\phi = 0$).} The pressure is $P(0) = \log\varphi \approx 0.4812$, and the unique equilibrium state is the Parry measure with $h_\mu(\sigma) = \log\varphi$. The left eigenvector of $A$ (normalized) gives $\nu([0]) = \varphi/(\varphi+1) = 1/\varphi \approx 0.618$ and $\nu([1]) = 1/(\varphi+1) \approx 0.382$. The Markov chain has transition probabilities $P(0|0) = P(1|0) = 1/2$, $P(0|1) = 1$, $P(1|1) = 0$.

\textbf{Potential $\phi(x) = a \cdot \mathbf{1}_{[0]}(x)$ (reward for symbol 0).} The transfer operator on functions of $x_0$ acts as $L = \begin{pmatrix} e^a & e^a \\ 1 & 0 \end{pmatrix}$. The dominant eigenvalue is $\lambda = \frac{e^a + \sqrt{e^{2a}+4}}{2}$, giving the pressure $P(a) = \log\frac{e^a+\sqrt{e^{2a}+4}}{2}$. At $a = 0$: $\lambda = \varphi$, $P(0) = \log\varphi$. As $a \to \infty$: $P(a) \to a$ (the measure concentrates on the orbit $\overline{0} = 000\cdots$). As $a \to -\infty$: $P(a) \to 0$ (the measure concentrates on symbol 1, but $11$ is forbidden, so the measure concentrates on the orbit $010101\cdots$ with $h = 0$).

This example illustrates how the forbidden transition $1 \to 1$ creates a non-trivial constraint that affects the pressure curve and equilibrium states, in contrast to the full shift examples above.


\section{Conclusion}\label{sec:conclusion}

This Part, Part~I of the series, proves that five characterizations
of Gibbs measures for H\"{o}lder potentials on mixing subshifts of
finite type are equivalent (Theorem~\ref{thm:main_equivalence}), with
all constants computed explicitly. The equivalence
(i)$\Leftrightarrow$(ii) between the Jacobian and cylinder Gibbs
conditions (Theorem~\ref{thm:jacobian_equivalence}) is established
with explicit bounded distortion constants. The eigenmeasure
characterization~(iii) follows from the Ruelle-Perron-Frobenius
theorem (Theorem~\ref{thm:RPF_main}), proved via the Birkhoff cone
contraction technique with an explicit spectral gap
(Theorem~\ref{thm:spectral_gap}). The variational
characterization~(iv) and the large deviations characterization~(v)
are derived from the spectral theory
(Theorems~\ref{thm:variational_principle}
and~\ref{thm:LDP}). The perturbation theory establishes the analytic
dependence of the pressure on the potential
(Theorem~\ref{thm:pressure_analyticity}) and the Lipschitz stability
of the Gibbs measure in the Wasserstein metric
(Theorem~\ref{thm:lipschitz_stability_full}).

The spectral gap is the single mechanism producing all statistical
properties: exponential mixing
(Corollary~\ref{cor:exponential_mixing}), the central limit theorem
with Berry-Esseen bounds (Theorems~\ref{thm:CLT}
and~\ref{thm:Berry_Esseen}), and the large deviations principle
(Theorem~\ref{thm:LDP}). Part~II \cite{Thiam2026b} develops the convex-analytic
structure of the pressure functional, identifying equilibrium states
as elements of the subdifferential. Part~III \cite{Thiam2026c} constructs quantitative
Markov partitions for Axiom~A diffeomorphisms, providing the coding
map that transfers the spectral and variational results of Parts~I
and~II to smooth dynamics.\\

\hspace{-0.4cm}\textit{Open Problems}
\vspace{0.1cm}
\begin{enumerate}
\item[] \textbf{Sharpness of spectral gap bounds.} The explicit bound $\gamma = \max(\alpha^{1/3}, (1-\eta)^{1/(3M)})$ is not sharp. For the full shift $\Sigma_N$ with $\phi = 0$, the true spectral gap is $\gamma = 0$ (the second eigenvalue of the normalized operator vanishes), while our bound gives a positive value. For the golden mean shift with $\phi = 0$, the spectral gap is computable from the transition matrix. Can the exponent $1/3$ be improved?

\item[] \textbf{Non-uniform hyperbolicity.} Does the Jacobian characterization $J_\mu\sigma = e^{P(\phi)-\phi}$ extend to non-uniformly hyperbolic systems (Young towers, Viana maps)? The Jacobian condition is metric-free and may hold beyond the uniformly hyperbolic setting, but the spectral gap is lost and must be replaced by polynomial decay estimates.

\item[] \textbf{Flows.} The five-way equivalence applies to discrete-time systems. The extension to suspension flows (Axiom A flows) requires the spectral theory of the Ruelle zeta function and additional analytic techniques; see  \cite{GiuliettiLiveraniPollicott2013} for recent progress.

\item[] \textbf{Higher-dimensional shifts.} For $\mathbb{Z}^d$ shifts of finite type with $d \geq 2$, the transfer operator approach breaks down. The Jacobian characterization may provide an alternative route to Gibbs measures on $\mathbb{Z}^d$ lattices, connecting to the Dobrushin-Lanford-Ruelle theory \cite{Dobrushin1968a,Ruelle1969}.
\end{enumerate}

\begin{remark}[Countable-state extensions]
The theory extends to countable-state Markov shifts satisfying Sarig's big images and preimages (BIP) condition \cite{Sarig1999,Sarig2001,Sarig2003}. Under finite Gurevich pressure and positive recurrence, the Ruelle-Perron-Frobenius theorem, the Gibbs property, and the statistical limit theorems remain valid. The proofs require modifications to handle non-compactness; see  \cite{Sarig1999,Sarig2001} and  \mbox{\cite{MauldinUrbanski2003}} for complete treatments. We do not develop this extension here.
\end{remark}

\begin{ack}
The author is grateful to Stefano Luzzatto for supervision during the ICTP Postgraduate Diploma in Mathematics at the International Centre for Theoretical Physics, Trieste, Italy (2013), during which the author worked through Bowen's monograph.

\end{ack}


\begin{appendix}\appheader

\section{Functional Analysis of Positive Operators}\label{app:functional}

We collect results from functional analysis used in the main text.

\begin{proposition}[Schauder-Tychonoff Fixed Point Theorem {\cite[Theorem~V.10.5]{DunfordSchwartz1958}}]
Let $E$ be a nonempty compact convex subset of a locally convex topological vector space. Then every continuous map $G: E \to E$ has a fixed point.
\end{proposition}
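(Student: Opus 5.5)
The statement is the Schauder--Tychonoff fixed point theorem: every continuous self-map $G$ of a nonempty compact convex subset $E$ of a locally convex space has a fixed point. I would reduce it to Brouwer's theorem by finite-dimensional approximation, in four steps.

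\textbf{Step 1 (finite nets).} Fix a convex, balanced open neighbourhood $U$ of $0$; local convexity gives a base of such sets. By compactness choose $x_1,\dots,x_m\in E$ with $E\subset\bigcup_i (x_i+U)$. Let $p_U$ be the Minkowski gauge of $U$, which is a continuous seminorm. Define the continuous functions $\beta_i(x)=\max\{0,\,1-p_U(x-x_i)\}$ on $E$. These are not all zero at any point of $E$, since each $x$ lies in some $x_i+U$.

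\textbf{Step 2 (Schauder projection).} Set $\pi_U(x)=\sum_i\beta_i(x)x_i/\sum_i\beta_i(x)$. This map is continuous from $E$ into the finite-dimensional compact convex set $E_U=\mathrm{conv}\{x_1,\dots,x_m\}\subset E$. It satisfies $\pi_U(x)-x\in U$ for every $x\in E$: the vector $\pi_U(x)-x$ is a convex combination of the vectors $x_i-x$ with $\beta_i(x)>0$, each of these lies in $U$, and $U$ is convex.

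\textbf{Step 3 (Brouwer).} The map $\pi_U\circ G$ sends $E_U$ continuously into $E_U$. Since $E_U$ is a compact convex subset of a finite-dimensional space, it is homeomorphic to a closed ball. Brouwer's theorem then gives $y_U\in E_U$ with $\pi_U(G(y_U))=y_U$, so that $G(y_U)-y_U\in U$.

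\textbf{Step 4 (limit).} Direct the family of such $U$ by reverse inclusion. The net $(y_U)$ lies in the compact set $E$, so it has a cluster point $y\in E$. To show $G(y)=y$, suppose not. Hausdorffness and local convexity give a convex balanced $W$ with $G(y)-y\notin W+W+W$. By continuity of $G$ at $y$, choose a convex balanced $V\subset W$ such that $z\in y+V$ implies $G(z)-G(y)\in W$. Then take $U\subset W$ far enough along the net that $y_U-y\in V$. For this $U$,
\[
G(y)-y=(G(y)-G(y_U))+(G(y_U)-y_U)+(y_U-y)\in W+W+W,
\]
which is a contradiction.

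\textbf{Main obstacle.} The only delicate part is the topology bookkeeping in Step 4, namely using a net rather than a sequence, since the space need not be metrizable. The genuinely deep input is Brouwer's theorem, which I would cite rather than prove. Since the paper labels this proposition as the standard result of Dunford--Schwartz, citing it is also acceptable. In the paper's uses of it (the weak-$*$ topology on measures, and $C(\SigmaA^+)$ with the sup norm), the ambient spaces are locally convex and Hausdorff as required.
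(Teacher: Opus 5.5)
The paper does not prove this proposition. It is quoted from Dunford--Schwartz (Theorem~V.10.5) and used as a black box in Section~\ref{sec:RPF}. Your argument is the standard proof of that result, and it is correct: Schauder projection onto the convex hull of a finite $U$-net, Brouwer on that finite-dimensional polytope, then a compactness limit over the neighbourhood base. Compared with the citation, it is self-contained modulo Brouwer, and it makes explicit a hypothesis that the paper's statement leaves implicit.

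That hypothesis is Hausdorffness, and it is genuinely needed. Consider $\R^2$ with the locally convex, non-Hausdorff vector topology whose open sets are $O\times\R$ with $O\subset\R$ open. The square $E=[0,1]^2$ is compact and convex, and $G(a,b)=(a,f(b))$ is continuous for every map $f:[0,1]\to[0,1]$. Any fixed-point-free $f$ therefore gives a counterexample.

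You already use Hausdorffness tacitly in Step~3. Identifying $E_U$ with a ball requires the span of $x_1,\dots,x_m$ to carry its Euclidean topology. You can avoid this by applying Brouwer on the standard simplex $\Delta_m$ to the map $t\mapsto \beta\bigl(G(\sum_i t_ix_i)\bigr)/\sum_j\beta_j\bigl(G(\sum_i t_ix_i)\bigr)$, where $\beta=(\beta_1,\dots,\beta_m)$. A fixed point $t^*$ gives $y=\sum_i t^*_ix_i$ with $\pi_U(G(y))=y$.

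Two cosmetic points about Step~4:
\begin{enumerate}
\item Take $W$ open as well, so that it is an index of your net.
\item The net can be replaced by a finite-intersection argument on the closed, nonempty sets $K_U=\{x\in E: G(x)-x\in\overline U\}$. These have the finite intersection property because $K_{U_1\cap U_2}\subset K_{U_1}\cap K_{U_2}$. Any point of $\bigcap_U K_U$ is a fixed point, since $\bigcap_U\overline U=\{0\}$ in a Hausdorff space.
\end{enumerate}
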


This is used in Sections~\ref{sec:RPF} to obtain the eigenmeasure $\nu$ and eigenfunction $h$ of the transfer operator.

\begin{proposition}[{\cite{IonescuTulceaMarinescu1950}}, Hennion form {\cite{Hennion1993}}]\label{thm:ITM}
Let $(B_0, \|\cdot\|_0)$ and $(B_1, \|\cdot\|_1)$ be Banach spaces with $B_0 \hookrightarrow B_1$ a compact inclusion. Let $L: B_0 \to B_0$ be a bounded linear operator satisfying the Lasota-Yorke inequality:
\begin{equation}
\|L^n g\|_0 \leq C r^n \|g\|_0 + D_n \|g\|_1 \quad \text{for all } n \geq 1, \; g \in B_0,
\end{equation}
for some $r < r(L|_{B_1})$ (the spectral radius of $L$ on $B_1$). Then $L$ is quasi-compact on $B_0$: the essential spectral radius satisfies $r_{\mathrm{ess}}(L|_{B_0}) \leq r$, and the spectrum of $L$ in $\{z : |z| > r\}$ consists of finitely many eigenvalues of finite multiplicity.
\end{proposition}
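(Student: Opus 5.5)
The plan follows Hennion's argument: bound the essential spectral radius through Nussbaum's formula
\[
r_{\mathrm{ess}}(L) = \lim_{n\to\infty} \chi\bigl(L^n U\bigr)^{1/n},
\]
where $U$ is the closed unit ball of $B_0$ and $\chi$ is the Kuratowski measure of noncompactness in $B_0$. Here $\chi(S)$ is the infimum of those $d>0$ such that $S$ can be covered by finitely many sets of $\|\cdot\|_0$-diameter at most $d$. I will take this formula as known (Nussbaum 1970). It is consistent with the definition of $r_{\mathrm{ess}}$ in Definition~\ref{def:essential_spectral_radius}, because the essential spectrum is invariant under compact perturbations. Given the formula, the task reduces to showing $\chi(L^n U) \leq 4Cr^n$ for every $n$.

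\textbf{Covering estimate.} Fix $n$ and $\epsilon > 0$.
\begin{enumerate}
\item[(1)] Since $B_0 \hookrightarrow B_1$ is compact, $U$ is totally bounded in $\|\cdot\|_1$. So there are finitely many $g_1,\dots,g_k \in U$ with $U \subset \bigcup_i \{g : \|g-g_i\|_1 \leq \epsilon/D_n\}$.
\item[(2)] Let $U_i$ be the part of $U$ in the $i$-th ball. For $g \in U_i$ we have $\|g - g_i\|_0 \leq 2$, so the Lasota-Yorke inequality applied to $g - g_i$ gives
\[
\|L^n g - L^n g_i\|_0 \leq C r^n \|g-g_i\|_0 + D_n\|g-g_i\|_1 \leq 2Cr^n + \epsilon .
\]
\item[(3)] Hence each $L^n U_i$ has $\|\cdot\|_0$-diameter at most $2(2Cr^n+\epsilon)$. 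Therefore $\chi(L^n U) \leq 4Cr^n + 2\epsilon$, and letting $\epsilon \to 0$ gives $\chi(L^nU)\le 4Cr^n$.
\end{enumerate}
Taking $n$-th roots, $r_{\mathrm{ess}}(L|_{B_0}) \leq \lim_n (4C)^{1/n} r = r$.

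\textbf{Spectrum outside radius $r$.} For $|z| > r \geq r_{\mathrm{ess}}(L)$, $L - zI$ is Fredholm of index $0$. The unbounded component of $\C \setminus \sigma_{\mathrm{ess}}$ contains points of the resolvent set. Standard Riesz–Schauder / Browder theory then shows that $\sigma(L) \cap \{|z|>r\}$ consists of isolated eigenvalues with finite-rank spectral projections. These eigenvalues can accumulate only on $\{|z| = r\}$. So for every $r' > r$ there are only finitely many of them in $\{|z|\geq r'\}$. This is the sense in which ``finitely many'' should be read, and I will state this precisely.

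\textbf{Role of the hypothesis and main obstacle.} The hypothesis $r < r(L|_{B_1})$ is not needed for the essential radius bound. Its role is to guarantee that the region $\{|z| > r\}$ actually contains spectrum, i.e.\ that $L$ is genuinely quasi-compact. To use it, I would show that $r(L|_{B_0}) \geq r(L|_{B_1})$ via the density or consistency of $B_0$ in $B_1$, exhibiting an eigenvalue of modulus greater than $r$. I expect this to be the delicate step, since it needs an extra assumption, such as density of $B_0$ in $B_1$, that is not stated. In that case I would record the statement as conditional.

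The other main obstacle is justifying Nussbaum's formula and identifying the resulting essential spectrum with the Fredholm one. For this I would cite Nussbaum and Hennion rather than reprove it.
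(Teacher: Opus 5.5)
The paper gives no proof of this proposition; it quotes it from Ionescu-Tulcea--Marinescu and Hennion. Your argument for $r_{\mathrm{ess}}(L|_{B_0})\le r$ is exactly Hennion's proof and is correct: Nussbaum's formula, plus covering $U$ by small $\|\cdot\|_1$-balls and applying the Lasota--Yorke inequality to $g-g_i$. One small point: Nussbaum states his formula with the operator seminorm $\|T\|_\chi$, but $\|T\|_\chi\le\chi(TU)\le 2\|T\|_\chi$, so your version agrees after taking $n$-th roots.

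You are also right that ``finitely many'' must be read as ``finitely many in $\{|z|\ge r'\}$ for each $r'>r$'', because the literal clause is false. Take $B_0=\ell^2$, let $B_1$ be the completion under $\|g\|_1^2=\sum_{k\ge1}k^{-2}|g_k|^2$, and let $L=\mathrm{diag}\bigl(r(1+1/k)\bigr)_{k\ge1}$. Splitting $g$ into its coordinates $k\le n$ and $k>n$ gives $\|L^ng\|_0\le e\,r^n\|g\|_0+n(2r)^n\|g\|_1$, and $r(L|_{B_1})=2r>r$. Yet $\sigma(L)\cap\{|z|>r\}=\{r(1+1/k):k\ge1\}$ is infinite.

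The genuine gap is your plan to obtain $r(L|_{B_0})\ge r(L|_{B_1})$ from density of $B_0$ in $B_1$; this cannot work. Let $B_1=\ell^2(\N_0)$ and $B_0=\{g:\sum_k 4^k|g_k|^2<\infty\}$, which is dense and compactly embedded. Let $L$ be the backward shift $(Lg)_k=g_{k+1}$.

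\begin{itemize}
\item Then $\|L^ng\|_0\le 2^{-n}\|g\|_0$, so the hypothesis holds with $r=1/2$, $C=1$, $D_n=0$, and $r(L|_{B_1})=1>r$.
\item On $B_0$, every $z$ with $|z|<1/2$ is an eigenvalue, with eigenvector $(z^k)_{k\ge0}$. For such $z$, $L-z$ is surjective with one-dimensional kernel, while for $|z|>1/2$ it is invertible.
\item So the index jumps across the circle $|z|=1/2$, and that circle lies in the essential spectrum. Hence $r_{\mathrm{ess}}(L|_{B_0})=r(L|_{B_0})=1/2$, and $L$ is not quasi-compact on $B_0$.
\end{itemize}

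Therefore no argument extracts quasi-compactness from $r<r(L|_{B_1})$, with or without density. The hypothesis must be Hennion's $r<r(L|_{B_0})$, and then quasi-compactness follows immediately from your essential-radius bound. This is also what the paper's application actually has available. The eigenfunction $h\in\calH_\alpha(\SigmaA^+)$ with $\calL_\phi h=\lambda h$ (Theorem~\ref{thm:eigendata_existence}) gives $r(\calL_\phi|_{\calH_\alpha})\ge\lambda$ directly. Your conditional statement should therefore be phrased in that form rather than via density.
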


This is the key tool for establishing the spectral gap in Section~\ref{sec:spectral_gap}. In our application, $B_0 = \calH_\alpha(\SigmaA^+)$, $B_1 = C(\SigmaA^+)$, $L = \calL_\phi$, and $r = \alpha \cdot N e^{\|\phi\|_\infty}$.

\begin{definition}[Birkhoff's Projective Metric]\label{def:birkhoff_metric}
Let $\calK$ be a cone in a Banach space $B$ (i.e., $\calK$ is closed, convex, $\calK \cap (-\calK) = \{0\}$, and $\lambda \calK \subset \calK$ for $\lambda \geq 0$). For $f, g \in \calK \setminus \{0\}$, define
\begin{equation}
\alpha(f,g) = \sup\{t \geq 0 : g - tf \in \calK\}, \quad \beta(f,g) = \inf\{s > 0 : sf - g \in \calK\},
\end{equation}
and the \emph{Hilbert projective metric} $\Theta(f,g) = \log(\beta(f,g)/\alpha(f,g))$ when both are finite and positive.
\end{definition}

\begin{proposition}[Birkhoff's Contraction Principle {\cite{Birkhoff1957}}]
Let $L$ be a linear operator with $L(\calK) \subset \calK$, and suppose $L(\calK)$ has finite diameter $\Delta = \sup_{f,g \in L(\calK) \setminus \{0\}} \Theta(f,g) < \infty$. Then $L$ is a strict contraction in the Hilbert metric:
\begin{equation}
\Theta(Lf, Lg) \leq \tanh(\Delta/4) \cdot \Theta(f,g) \quad \text{for all } f, g \in \calK \setminus \{0\}.
\end{equation}
\end{proposition}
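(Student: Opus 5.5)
The plan is the classical two-dimensional reduction: sandwich $f$ and $g$ in the cone order, push the sandwich forward by $L$, and bound the resulting cross-ratio by calculus.

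\textbf{Trivial cases.} Fix $f, g \in \calK\setminus\{0\}$ and write $a_0 = \alpha(f,g)$, $b_0 = \beta(f,g)$ as in Definition~\ref{def:birkhoff_metric}. If $a_0 = 0$ or $b_0 = \infty$, then $\Theta(f,g) = \infty$ and there is nothing to prove. If $a_0 = b_0$, then $g = a_0 f$, since both $g - a_0f$ and $a_0 f - g$ lie in the closed cone $\calK$ and $\calK\cap(-\calK)=\{0\}$; then $\Theta(Lf,Lg)=0$. So assume $0 < a_0 < b_0 < \infty$.

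\textbf{Step 1: reduce to two vectors in $L(\calK)$.} By closedness of $\calK$, both $g - a_0 f$ and $b_0 f - g$ lie in $\calK$. Set
\[
u = L(g - a_0 f), \qquad v = L(b_0 f - g).
\]
Both lie in $L(\calK)$. The key algebraic identities are
\[
u + v = (b_0 - a_0)\,Lf, \qquad b_0 u + a_0 v = (b_0 - a_0)\,Lg .
\]
So, up to the common positive scalar $b_0 - a_0$, $Lf$ and $Lg$ are the combinations $u+v$ and $b_0u + a_0v$.

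If $u$ or $v$ is zero, the claim follows directly from these identities. Otherwise the diameter hypothesis gives $\Theta(u,v)\le\Delta$. Setting $s=\alpha(u,v)$ and $t=\beta(u,v)$, we have $v - su\in\calK$, $tu - v\in\calK$ and $t/s\le e^\Delta$.

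\textbf{Step 2: compute $\alpha(Lf,Lg)$ and $\beta(Lf,Lg)$.} For a scalar $\tau$,
\[
Lg - \tau Lf \;\propto\; (b_0-\tau)\,u + (a_0-\tau)\,v .
\]
For $a_0\le\tau\le b_0$, write $v = su + (v - su)$; then the right-hand side equals $\bigl[(b_0-\tau)+(a_0-\tau)s\bigr]u$ plus a nonpositive multiple of an element of $\calK$. Running this in the two directions, and using the upper comparison $v\le tu$ symmetrically, gives
\[
\alpha(Lf,Lg)\ \ge\ \frac{b_0 + a_0 t}{1+t}, \qquad \beta(Lf,Lg)\ \le\ \frac{b_0 + a_0 s}{1+s}.
\]
The order of $s$ and $t$ in each bound is to be checked carefully, since $(b_0+a_0x)/(1+x)$ is decreasing in $x$. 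Hence
\[
\Theta(Lf,Lg)\ \le\ \log\frac{(b_0+a_0s)(1+t)}{(1+s)(b_0+a_0t)} .
\]

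\textbf{Step 3: the calculus (main obstacle).} Put $r = b_0/a_0 = e^{\Theta(f,g)}$ and $t = e^{\Delta'}s$ with $\Delta'\le\Delta$. The right side depends only on $r$, $s$ and $\Delta'$, and is increasing in $\Delta'$, so we may take $\Delta'=\Delta$. Maximizing over $s>0$, the optimum occurs at $s = \sqrt{r}\,e^{-\Delta/2}$ by the symmetry of the expression. This yields the explicit bound
\[
\Theta(Lf,Lg)\ \le\ F(\Theta) := 2\log\frac{\cosh\bigl((\Theta+\Delta)/4\bigr)}{\cosh\bigl((\Theta-\Delta)/4\bigr)}, \qquad \Theta=\Theta(f,g).
\]
It then remains to show $F(\Theta)\le\tanh(\Delta/4)\,\Theta$. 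Since $F(0)=0$, it suffices to show $F'(\Theta)\le\tanh(\Delta/4)$. Now
\[
F'(\Theta)=\tfrac12\Bigl[\tanh\bigl((\Theta+\Delta)/4\bigr)-\tanh\bigl((\Theta-\Delta)/4\bigr)\Bigr],
\]
and this is maximized at $\Theta=0$, where it equals exactly $\tanh(\Delta/4)$. The maximality at $\Theta=0$ follows from concavity of $\tanh$ on $[0,\infty)$ together with its oddness.

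This step is where I expect the most care to be needed: getting the optimizer in $s$ right, and verifying the derivative inequality cleanly. If the direct maximization becomes messy, I would instead quote the standard cross-ratio form of the estimate from \cite{Birkhoff1957} or \cite{Liverani1995} and only carry out the final $\tanh$ inequality.
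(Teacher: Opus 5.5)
The paper does not prove this proposition. It is quoted from \cite{Birkhoff1957} in the appendix, and the only related argument in the main text, the proof of Lemma~\ref{lem:hilbert_contraction}, asserts a contraction factor without deriving it. Your proposal is the classical complete proof, and it checks out:
the identities $u+v=(b_0-a_0)Lf$ and $b_0u+a_0v=(b_0-a_0)Lg$ are correct;
so are the bounds $\alpha(Lf,Lg)\ge(b_0+a_0t)/(1+t)$ (from $v\le tu$) and $\beta(Lf,Lg)\le(b_0+a_0s)/(1+s)$ (from $v\ge su$);
the monotonicity in $t$ holds;
the closed form of $F$ at $s=\sqrt r\,e^{-\Delta/2}$ is right;
and the bound $F'\le\tanh(\Delta/4)$ holds.
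Your route therefore supplies what the paper omits: a self-contained derivation of the sharp factor $\tanh(\Delta/4)$, using only that $\calK$ is closed and $\calK\cap(-\calK)=\{0\}$.

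The one step you assert rather than prove is that $s=\sqrt r\,e^{-\Delta/2}$ is the maximizer ``by symmetry''. Invariance of the cross-ratio under $s\mapsto re^{-\Delta}/s$ only shows that this point is critical. It is in fact the maximum: writing $s=e^x$, the $x$-derivative has the sign of $(\Theta-\Delta)/2-x$. You can, however, avoid the optimization entirely. Fix $s=e^x$, call your bound $B=\log\frac{(r+s)(1+e^{\Delta}s)}{(1+s)(r+e^{\Delta}s)}$, and differentiate in $\Theta=\log r$:
\[
\frac{\partial B}{\partial\Theta}=\frac{r}{r+s}-\frac{r}{r+e^{\Delta}s}=\frac12\left[\tanh\frac{\Theta-x}{2}-\tanh\frac{\Theta-x-\Delta}{2}\right]\le\tanh(\Delta/4).
\]
The inequality follows from the same fixed-length-increment property of $\tanh$ that you use for $F'$. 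Since $B=0$ at $\Theta=0$, integrating gives $B\le\tanh(\Delta/4)\,\Theta(f,g)$ for every $s$, which is the claim.
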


This contraction principle underlies the cone technique in Section~\ref{sec:transfer_operator}.

\section{Entropy Estimates}\label{app:entropy}

\begin{lemma}[Subadditivity Lemma, cf.\  {\cite[Lemma~1.18]{Bowen1975}}]\label{lem:subadditivity}
Suppose $\{a_m\}_{m=1}^\infty$ satisfies $\inf_m a_m/m > -\infty$ and $a_{m+n} \leq a_m + a_n$ for all $m, n \geq 1$. Then $\lim_{m \to \infty} a_m/m$ exists and equals $\inf_m a_m/m$.
\end{lemma}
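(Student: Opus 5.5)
The subadditivity lemma is Fekete's lemma, and I would prove it directly. Set $L = \inf_m a_m/m$, which is finite by hypothesis. Since $a_m/m \geq L$ for every $m$, we immediately have $\liminf_{m\to\infty} a_m/m \geq L$. So it suffices to prove $\limsup_{m\to\infty} a_m/m \leq L$; the limit then exists and equals $L$.

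For the upper bound, fix $\epsilon > 0$ and choose $k$ with $a_k/k < L + \epsilon$. For any $m > k$, write $m = qk + r$ with $q \geq 1$ and $0 \leq r < k$.

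Iterating $a_{m+n} \leq a_m + a_n$ gives $a_{qk} \leq q\,a_k$. If $r \geq 1$, one more application gives $a_m \leq q\,a_k + a_r$; if $r = 0$, we have simply $a_m \leq q\,a_k$.

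Let $c = \max(0, \max_{1 \leq r < k} a_r)$, a finite constant depending only on $k$. Then for every $m > k$,
\[
\frac{a_m}{m} \leq \frac{qk}{m}\cdot\frac{a_k}{k} + \frac{c}{m}.
\]
As $m \to \infty$, we have $qk/m \to 1$ and $c/m \to 0$. Hence $\limsup_m a_m/m \leq a_k/k < L + \epsilon$.

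One subtlety arises when $a_k < 0$. The factor $qk/m \leq 1$ then multiplies a negative number, so the first term is at most $(qk/m)(a_k/k)$, which tends to $a_k/k$. The limsup bound is unaffected, so no sign case split is really needed beyond taking the limit.

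Letting $\epsilon \to 0$ gives $\limsup_m a_m/m \leq L$, which completes the argument.

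There is no real obstacle here. The only care needed is in handling the remainder term uniformly, which is done by the constant $c$, and in using the hypothesis $\inf_m a_m/m > -\infty$, which guarantees that $L$ is finite.
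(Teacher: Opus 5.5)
Your proof is correct and follows the same division-with-remainder argument as the paper: write the index as a multiple of a fixed $k$ plus a remainder, use $a_{qk} \le q\,a_k$, show the remainder term is negligible, and take the infimum over $k$. Your bookkeeping is actually cleaner than the paper's. Keeping the factor $qk/m \to 1$ and bounding the remainder by the uniform constant $c \ge 0$ avoids the paper's step $\frac{a_{km+n}}{km+n} \le \frac{a_{km}}{km} + \frac{a_n}{km}$. That step tacitly requires $a_{km} + a_n \ge 0$, and it involves an undefined $a_0$ when $n = 0$.
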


\begin{proof}
Fix $m > 0$. For $j > 0$, write $j = km + n$ with $0 \leq n < m$. Then
\begin{equation}
\frac{a_j}{j} = \frac{a_{km+n}}{km+n} \leq \frac{a_{km}}{km} + \frac{a_n}{km} \leq \frac{a_m}{m} + \frac{a_n}{km}.
\end{equation}
Letting $j \to \infty$ (hence $k \to \infty$): $\limsup_j a_j/j \leq a_m/m$. Varying $m$: $\limsup_j a_j/j \leq \inf_m a_m/m \leq \liminf_j a_j/j$.
\end{proof}

This lemma is used to establish the existence of the pressure $P(\phi) = \lim_{m \to \infty} m^{-1} \log Z_m(\phi)$ and the entropy $h_\mu(T, \calD) = \lim_{m \to \infty} m^{-1} H_\mu(\calD \vee \cdots \vee T^{-m+1}\calD)$.

\begin{lemma}[Entropy of Partitions, cf.\  {\cite[Lemma~1.17]{Bowen1975}}]\label{lem:entropy_partition}
For finite partitions $\calC, \calD$ of a probability space $(X, \calB, \mu)$:
\begin{enumerate}
\item[(a)] $H_\mu(\calC \vee \calD) \leq H_\mu(\calC) + H_\mu(\calD)$.
\item[(b)] $H_\mu(\calC | \calD) \defeq H_\mu(\calC \vee \calD) - H_\mu(\calD) \leq H_\mu(\calC)$, with equality if and only if $\calC$ and $\calD$ are independent.
\item[(c)] If $\calD \supset \calC$ (every set in $\calC$ is a union of sets in $\calD$), then $H_\mu(\calC | \calD) = 0$.
\end{enumerate}
\end{lemma}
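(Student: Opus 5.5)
The plan is to work directly from the definition $H_\mu(\calC) = -\sum_{C \in \calC} \mu(C)\log\mu(C)$, with the convention $0\log 0 = 0$, and to reduce all three parts to a single inequality. That inequality comes from the strict concavity of $\eta(t) = -t\log t$ on $[0,1]$, equivalently from the Gibbs inequality of Lemma~\ref{lem:gibbs_inequality}. Since (b) is literally a rearrangement of (a) once $H_\mu(\calC|\calD)$ is defined as $H_\mu(\calC\vee\calD) - H_\mu(\calD)$, I will prove (b) first, including the equality case, and deduce (a) from it.

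For (b), I first rewrite the conditional entropy as an average over the atoms of $\calD$. For $D \in \calD$ with $\mu(D)>0$, splitting $\log\mu(C\cap D) = \log\mu(D) + \log\bigl(\mu(C\cap D)/\mu(D)\bigr)$ gives
\[
H_\mu(\calC|\calD) = \sum_{D\in\calD,\ \mu(D)>0} \mu(D) \sum_{C\in\calC} \eta\!\left(\frac{\mu(C\cap D)}{\mu(D)}\right),
\]
where null atoms of $\calD$ contribute nothing. Interchanging the sums, for each fixed $C$ I apply Jensen's inequality to the concave $\eta$ with weights $\mu(D)$:
\[
\sum_D \mu(D)\,\eta\bigl(\mu(C|D)\bigr) \le \eta\Bigl(\sum_D \mu(D)\mu(C|D)\Bigr) = \eta(\mu(C)).
\]
Summing over $C$ yields $H_\mu(\calC|\calD) \le H_\mu(\calC)$. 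Adding $H_\mu(\calD)$ to both sides then gives (a).

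For the equality case in (b), I use that $\eta$ is strictly concave. So Jensen is an equality for a given $C$ exactly when $\mu(C|D)$ is the same for all $D$ of positive measure, and that common value is then forced to be $\mu(C)$. Equality in the summed inequality forces equality for every $C$, which says $\mu(C\cap D) = \mu(C)\mu(D)$ for all $C$ and all non-null $D$. For null $D$ this identity holds trivially, so it is independence of $\calC$ and $\calD$. Conversely, independence makes every conditional distribution $\mu(\cdot|D)$ equal to $\mu$, which gives equality directly.

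For (c), if every $C\in\calC$ is a union of atoms of $\calD$, then each $D$ lies in exactly one $C$. Hence $\calC\vee\calD = \calD$ up to null sets, and $H_\mu(\calC\vee\calD) = H_\mu(\calD)$, so $H_\mu(\calC|\calD) = 0$. Alternatively, in the atom-wise formula every $\mu(C|D)\in\{0,1\}$, and $\eta$ vanishes at both values.

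The only delicate step is the equality case in (b). There the null atoms of $\calD$ have to be handled explicitly, since conditional probabilities are undefined on them. The strict concavity of $\eta$ also has to be applied per $C$ and not to the sum. I will do both by restricting all sums to non-null atoms from the outset.
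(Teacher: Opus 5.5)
Your proposal is correct and uses essentially the paper's argument, Jensen's inequality for the concave function $-t\log t$. The only cosmetic difference is that the paper conditions on the atoms of $\calC$ to prove (a) directly, while you condition on the atoms of $\calD$ to prove (b) and then add $H_\mu(\calD)$. Your strict-concavity treatment of the equality case in (b) and your explicit handling of null atoms go beyond the paper, which only says that (b) and (c) ``follow directly from the definitions''; that is true for the inequality in (b) and for (c), but not for the claim that equality holds exactly under independence.
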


\begin{proof}
For (a): using concavity of $\varphi(x) = -x\log x$ on $[0,1]$,
\begin{align}
H_\mu(\calC \vee \calD) - H_\mu(\calC) &= \sum_{j} \sum_{i} \mu(C_i) \varphi\!\left(\frac{\mu(C_i \cap D_j)}{\mu(C_i)}\right) \\
&\leq \sum_j \varphi\!\left(\sum_i \mu(C_i \cap D_j)\right) = \sum_j \varphi(\mu(D_j)) = H_\mu(\calD).
\end{align}
Parts (b) and (c) follow directly from the definitions.
\end{proof}

\begin{lemma}[Gibbs Inequality, cf.\  {\cite[Lemma~1.1]{Bowen1975}}]
Let $a_1, \ldots, a_n \in \R$. On the simplex $\{(p_1, \ldots, p_n) : p_i \geq 0, \sum p_i = 1\}$, the function
\begin{equation}
F(p_1, \ldots, p_n) = \sum_{i=1}^n (-p_i \log p_i) + \sum_{i=1}^n p_i a_i
\end{equation}
achieves its maximum value $\log \sum_{i=1}^n e^{a_i}$ uniquely at $p_j = e^{a_j} / \sum_i e^{a_i}$.
\end{lemma}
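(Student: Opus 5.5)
We would prove the Gibbs inequality by reducing it to Jensen's inequality for the concave logarithm, in the form already quoted in Lemma~\ref{lem:gibbs_inequality}, and then reading off the maximizer from the equality case. Set $Z = \sum_{i=1}^n e^{a_i}$ and $q_j = e^{a_j}/Z$, so that $(q_j)$ is a point of the simplex with all coordinates strictly positive. For an arbitrary point $(p_1,\ldots,p_n)$ of the simplex, let $S = \{i : p_i > 0\}$ be its support, and use the convention $0\log 0 = 0$, so that $F$ is continuous on the closed simplex and only indices in $S$ contribute.

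The key computation rewrites $F(p) - \log Z$ as a negative relative entropy:
\[
F(p) - \log Z = \sum_{i \in S} p_i\left(a_i - \log Z - \log p_i\right) = \sum_{i\in S} p_i \log\frac{q_i}{p_i}.
\]
This uses $\sum_{i\in S} p_i = 1$ to absorb the constant $\log Z$. By Jensen's inequality for the concave function $\log$, applied with weights $p_i$ ($i\in S$) to the positive numbers $q_i/p_i$, we obtain
\[
\sum_{i\in S} p_i \log\frac{q_i}{p_i} \leq \log \sum_{i\in S} q_i \leq \log \sum_{i=1}^n q_i = \log 1 = 0.
\]
Hence $F(p) \leq \log Z$ for every $p$ in the simplex. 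Evaluating directly at $p = q$ gives $\log(q_i^{-1}e^{a_i}) = \log Z$ for each $i$, so $F(q) = \log Z$ and the maximum is attained.

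For uniqueness, which is the only delicate point, we track both inequalities in the chain. Since $\log$ is strictly concave, equality in the first (Jensen) step forces $q_i/p_i$ to be constant on $S$. Equality in the second step forces $\sum_{i\in S} q_i = 1$; because every $q_i > 0$, this happens only when $S = \{1,\ldots,n\}$. The constant ratio must then equal $\sum q_i / \sum p_i = 1$, so $p = q$. The boundary of the simplex, where some $p_i = 0$, is therefore excluded precisely by the strict positivity of the $q_i$. This is the step we would check with care, since it is what justifies the convention $0\log 0 = 0$ without losing uniqueness.
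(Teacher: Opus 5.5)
Your proof is correct and takes the same route as the paper: you rewrite $F(p)-\log\sum_i e^{a_i}$ as a negative relative entropy $-D_{\mathrm{KL}}(p\|q)$ with $q_j=e^{a_j}/\sum_i e^{a_i}$, then conclude by Jensen's inequality and its equality case. You also restrict to the support $S$ and add the step $\sum_{i\in S}q_i\le 1$, which handles boundary points of the simplex (some $p_i=0$) more carefully than the paper, whose Jensen step $\log\sum_j p_j(q_j/p_j)=0$ tacitly assumes all $p_j>0$.
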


\begin{proof}
Define $q_j = e^{a_j}/\sum_i e^{a_i}$. Then $F(p) = -\sum_j p_j\log(p_j/q_j) + \log\sum_i e^{a_i} = -D_{\mathrm{KL}}(p\|q) + \log\sum_i e^{a_i}$, where $D_{\mathrm{KL}}(p\|q) = \sum_j p_j\log(p_j/q_j) \geq 0$ by Jensen's inequality (applied to $-\log$: $\sum p_j\log(q_j/p_j) \leq \log\sum p_j(q_j/p_j) = 0$). Thus $F(p) \leq \log\sum_i e^{a_i}$, with equality if and only if $D_{\mathrm{KL}}(p\|q) = 0$, i.e., $p = q$.
\end{proof}

\section{Probabilistic Limit Theorems}\label{app:probability}

\begin{proposition}[G\"{a}rtner-Ellis Theorem {\cite[Theorem~2.3.6]{DemboZeitouni1998}}]\label{thm:gartner_ellis}
Let $(X_n)_{n \geq 1}$ be a sequence of random variables. Suppose the limit
\begin{equation}
\Lambda(s) = \lim_{n \to \infty} \frac{1}{n} \log \E[e^{sX_n}]
\end{equation}
exists as an extended real number for all $s \in \R$, and suppose $\Lambda$ is differentiable on the interior of $\{s : \Lambda(s) < \infty\}$. Then $(X_n/n)$ satisfies a large deviations principle with rate function $I(t) = \sup_{s \in \R}\{st - \Lambda(s)\}$ (the Legendre-Fenchel transform of $\Lambda$).
\end{proposition}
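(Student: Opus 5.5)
The plan is the classical two-sided argument: an exponential Chebyshev bound for the upper bound and an exponential change of measure (tilting) for the lower bound. I will write $\mu_n$ for the law of $X_n/n$, and I will assume throughout that $\Lambda$ is finite on all of $\R$. This is exactly the situation in which the result is invoked (Theorem~\ref{thm:LDP}, where $\Lambda(s)=P(\phi+s\psi)-P(\phi)$ is finite and analytic). For general $\Lambda$ one also needs $0$ in the interior of the effective domain and steepness of $\Lambda$ at the boundary; I will note where these enter. First I record the basic properties. Each $\Lambda_n(s)=n^{-1}\log\E[e^{sX_n}]$ is convex by H\"older's inequality, so $\Lambda$ is convex with $\Lambda(0)=0$. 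Hence $I\ge 0$, and $I$ is convex and lower semicontinuous, being a supremum of affine functions. Finiteness of $\Lambda$ near $0$ gives $I(t)/|t|\to\infty$, so the level sets of $I$ are compact.

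\textbf{Upper bound.} For a compact interval $[a,b]$ lying to the right of $\Lambda'(0)$, I apply Markov's inequality $\Prob(X_n/n\ge a)\le e^{-nsa}\E[e^{sX_n}]$ for $s\ge 0$. This gives $\limsup n^{-1}\log\mu_n([a,\infty))\le -\sup_{s\ge0}(sa-\Lambda(s))$. By convexity, and since $a\ge\Lambda'(0)$, this supremum equals $I(a)=\inf_{[a,\infty)}I$. The symmetric argument handles sets to the left of $\Lambda'(0)$. A general closed $F$ not containing $\Lambda'(0)$ is split into $F\cap(-\infty,c_-]$ and $F\cap[c_+,\infty)$, which is all that is needed in one dimension. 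If $\Lambda'(0)\in F$, then $\inf_F I=0$ and there is nothing to prove. Exponential tightness follows from the same Chebyshev bound with $s=\pm1$, which passes from compact to closed sets in general.

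\textbf{Lower bound.} It suffices to show, for every $t$ in the open set $G$ and every $\delta>0$, that $\liminf n^{-1}\log\mu_n((t-\delta,t+\delta))\ge -I(t)$. The key step is to reduce to \emph{exposed points} $t=\Lambda'(\eta)$. For such $t$ I define the tilted laws $d\tilde\mu_n=e^{n\eta x-n\Lambda_n(\eta)}d\mu_n$. On $(t-\delta,t+\delta)$ we have $d\mu_n/d\tilde\mu_n\ge e^{-n\eta t-n|\eta|\delta+n\Lambda_n(\eta)}$, which gives
\begin{equation*}
\liminf\tfrac1n\log\mu_n((t-\delta,t+\delta))\ge -(\eta t-\Lambda(\eta))-|\eta|\delta+\liminf\tfrac1n\log\tilde\mu_n((t-\delta,t+\delta)).
\end{equation*}
The tilted family has limiting cumulant $\tilde\Lambda(s)=\Lambda(s+\eta)-\Lambda(\eta)$, with $\tilde\Lambda'(0)=t$. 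Applying the already-proved upper bound to $\tilde\mu_n$ on the complement of $(t-\delta,t+\delta)$ shows that this complement has exponentially small $\tilde\mu_n$-mass, because the tilted rate function vanishes only at $t$. Hence $\tilde\mu_n((t-\delta,t+\delta))\to1$. Since $\eta t-\Lambda(\eta)=I(t)$ at an exposed point, letting $\delta\to0$ gives the local lower bound.

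\textbf{Main obstacle.} The delicate step is passing from exposed points to all $t$ with $I(t)<\infty$. This is where differentiability of $\Lambda$ is used. When $\Lambda$ is differentiable and finite on $\R$, the range of $\Lambda'$ is an interval $(m_-,m_+)$, and every $t$ in it is exposed. Points outside $[m_-,m_+]$ have $I=\infty$. For the endpoints, and to make the tilted rate function vanish only at $t$, I will use the fact that if $\Lambda'$ is constant on an interval then $I$ is affine there. Lower semicontinuity together with convexity of $I$ then gives $\inf_{G}I=\inf_{G\cap(m_-,m_+)}I$ for open $G$. In full generality this is Rockafellar's lemma that essential smoothness makes the exposed points dense in $\{I<\infty\}$ in the appropriate sense. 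I expect verifying this approximation, together with uniqueness of the zero of $\tilde I$ when $\Lambda$ is differentiable but not strictly convex, to be the main technical point. I would handle it via the one-dimensional convex-analysis argument just sketched.
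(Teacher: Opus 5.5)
The paper gives no proof of its own here; it only cites Dembo--Zeitouni, Theorem~2.3.6. Your plan (Chebyshev for the upper bound, then tilting at exposed points plus approximation for the lower bound) is exactly that proof, specialized to $d=1$ and $\mathcal{D}_\Lambda=\R$.

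Your restriction to finite $\Lambda$ is warranted. As printed, the statement omits $0\in\mathcal{D}_\Lambda^{o}$, lower semicontinuity and steepness, and it is false without them. For example, take $\Prob(X_n/n=k)\propto e^{-nk}/(k^2+1)$ for $k=0,1,2,\dots$. Then $\Lambda(s)=0$ for $s\le 1$ and $\Lambda(s)=+\infty$ for $s>1$, so every printed hypothesis holds. Yet $I(t)=t$ on $[0,\infty)$, while $(0.4,0.6)$ carries no mass, so the lower bound fails. The paper only applies the result to $\Lambda(s)=P(\phi+s\psi)-P(\phi)$, which is finite and analytic on $\R$, so your version is the one actually needed. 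The upper bound, the tilting estimate, and the reduction to exposed points are all correct.

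One ingredient you propose is wrong, though the step it is meant to support is true and easy. You claim that $\Lambda'$ constant on an interval makes $I$ affine there; this reads Legendre duality backwards. If $\Lambda$ is affine with slope $m$ on $[\eta_1,\eta_2]$, then $I$ has a \emph{corner} at $m$, with $\partial I(m)\supseteq[\eta_1,\eta_2]$. Affine pieces of $I$ correspond to corners of $\Lambda$, which differentiability rules out.

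You do not need any such fact.
\begin{itemize}
\item \textbf{Zero set of $\tilde I$.} We have $\tilde I(y)=0$ iff $\tilde\Lambda(s)\ge sy$ for all $s$, i.e.\ iff $y\in\partial\tilde\Lambda(0)=\{\tilde\Lambda'(0)\}=\{t\}$. This uses only differentiability of $\tilde\Lambda$ at $0$; strict convexity plays no role. Convexity of $\tilde I$ then gives $\inf_{|y-t|\ge\delta}\tilde I=\min\{\tilde I(t-\delta),\tilde I(t+\delta)\}>0$, which is exactly what your upper bound for $\tilde\mu_n$ requires.
\item \textbf{Approximation step.} Work with the range $R$ of $\Lambda'$ (the set of exposed points) rather than $(m_-,m_+)$, which is empty when $\Lambda(s)=ms$. $R$ is an interval whose closure contains $\{I<\infty\}$. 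If $t\in\overline{R}\setminus R$ and $t_0\in R$, convexity alone gives $I(\theta t+(1-\theta)t_0)\le\theta I(t)+(1-\theta)I(t_0)\to I(t)$ along points of $R$. No separate endpoint analysis or appeal to lower semicontinuity is needed.
\end{itemize}
A minor point: finiteness of $\Lambda$ near $0$ gives only linear growth of $I$, which suffices for compact level sets. Superlinear growth uses finiteness on all of $\R$.
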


In our application, $X_n = S_n\psi$ under the Gibbs measure $\mu_\phi$, and $\Lambda(s) = P(\phi + s\psi) - P(\phi)$. The differentiability of $\Lambda$ follows from the analyticity of the pressure (Theorem~\ref{thm:pressure_analyticity}).

\begin{proposition}[Berry-Esseen for Weakly Dependent Sequences {\mbox{\cite{Gouezel2005}}, \cite{GuivarchHardy1988}}]\label{thm:berry_esseen_general}

Let $(Y_k)_{k \geq 1}$ be a stationary sequence with exponential decay of correlations: $|\Cov(Y_0, Y_n)| \leq C\gamma^n$ for some $\gamma \in (0,1)$. Set $X_n = \sum_{k=1}^n Y_k$ and suppose $\xi^2 = \lim_{n \to \infty} n^{-1}\Var(X_n) > 0$ and $\E[|Y_1|^3] < \infty$. Then
\begin{equation}
\sup_{t \in \R} \left| \Prob\!\left(\frac{X_n - n\E[Y_1]}{\xi\sqrt{n}} \leq t\right) - \Phi(t) \right| \leq \frac{C_{\mathrm{BE}}}{\sqrt{n}},
\end{equation}
where $C_{\mathrm{BE}}$ depends on $\E[|Y_1|^3]$, $\xi$, $C$, and $\gamma$.
\end{proposition}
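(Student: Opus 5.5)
The plan is to run Stein's method (equivalently, Tikhomirov's characteristic-function method) for dependent summands. Before starting, the hypothesis has to be read more strongly than its literal wording. Exponential decay of $\Cov(Y_0,Y_n)$ alone does not even give a CLT for stationary sequences: there are Gaussian-free counterexamples with uncorrelated summands. I will therefore interpret ``exponential decay of correlations'' as it is used in the applications of Section~\ref{sec:statistical}. That reading is a decorrelation estimate for bounded (or Lipschitz) functionals of the past and future:
\begin{equation*}
\bigl|\Cov\bigl(F(Y_1,\dots,Y_k),\,G(Y_{k+n},Y_{k+n+1},\dots)\bigr)\bigr|\leq C\gamma^n\|F\|\,\|G\|.
\end{equation*}
Here $\|\cdot\|$ is a sup norm, or a Lipschitz/H\"older norm for $Y_k=\psi\circ\sigma^{k-1}$, where the estimate follows from Corollary~\ref{cor:exponential_mixing}. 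In other words I assume exponential $\phi$- or $\alpha$-mixing. The constant $C_{\mathrm{BE}}$ will then depend on $\E|Y_1|^3$, $\xi$, $C$ and $\gamma$, as claimed.

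\textbf{Step 1 (reduction).} Centre $Y_k$ and set $W_n=(X_n-n\E Y_1)/(\xi\sqrt n)$. Using only the covariance bound, I will first record
\begin{equation*}
\Var(X_n)=n\xi^2+O\Bigl(\sum_k k\gamma^k\Bigr)=n\xi^2+O(1),
\end{equation*}
exactly as in the variance expansion at the end of the proof of Corollary~\ref{cor:pressure_derivatives}. This shows that normalising by $\xi\sqrt n$ instead of $\sqrt{\Var X_n}$ costs only $O(n^{-1})$ in Kolmogorov distance.

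\textbf{Step 2 (Stein--Tikhomirov identity).} Write $\varphi_n(t)=\E e^{itW_n}$. The goal is the differential inequality
\begin{equation*}
\varphi_n'(t)=-t\varphi_n(t)+R_n(t),\qquad |R_n(t)|\leq C(1+t^2)\,\frac{\log n}{\sqrt n},
\end{equation*}
valid for $|t|\leq c\sqrt n/\log n$. To derive it, fix $m\asymp\log n$ with $\gamma^m\leq n^{-3/2}$. Then expand
\begin{equation*}
\varphi_n'(t)=\frac{i}{\xi\sqrt n}\sum_k\E\bigl[Y_k e^{itW_n}\bigr].
\end{equation*}
For each $k$, split $W_n=W_n^{(k,m)}+\Delta_k$, where $\Delta_k$ collects the terms with $|j-k|\leq m$. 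In $\E[Y_ke^{itW_n^{(k,m)}}]$ the factors $Y_k$ and $e^{itW_n^{(k,m)}}$ are $m$-separated, so the decorrelation hypothesis makes this term $O(\gamma^m)$. Taylor-expanding $e^{it\Delta_k}$ to second order then produces the main term $-t\varphi_n(t)\cdot\Var(X_n)/(n\xi^2)$. The remainder terms involve $\E|Y_k\Delta_k^2|\leq C m^2\E|Y_1|^3$, which gives $O(|t|^2m^2/\sqrt n)$. They also involve decorrelation errors between the local block and the far field, which the mixing assumption handles.

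\textbf{Step 3 (conclusion).} Integrating the differential inequality gives
\begin{equation*}
|\varphi_n(t)-e^{-t^2/2}|\leq C(|t|+|t|^3)\,e^{-t^2/4}\,\frac{m^2}{\sqrt n}.
\end{equation*}
Feeding this into Esseen's smoothing inequality, as in Step~4 of the proof of Theorem~\ref{thm:Berry_Esseen}, with $T\asymp\sqrt n/m^2$, yields a bound of order $(\log n)^2/\sqrt n$.

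\textbf{Main obstacle.} The real difficulty is removing these logarithms, which come from the block length $m\asymp\log n$. My first attempt would be Sunklodas's refinement: iterate the identity of Step~2 with successive blocks at geometric distances $m_1<m_2<\cdots$. The error at level $j$ then carries weight $\gamma^{m_j}$ instead of $m^2$, and the series of errors sums to $O(n^{-1/2})$ under exponential $\phi$-mixing together with a third moment. If that iteration does not close under $\alpha$-mixing alone, I would fall back on the stronger decorrelation available in the dynamical case. There, $Y_k=\psi\circ\sigma^{k-1}$ and the spectral route of Theorem~\ref{thm:Berry_Esseen} gives $n^{-1/2}$ directly. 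I would then record that the general statement needs $\phi$-mixing, not merely covariance decay.
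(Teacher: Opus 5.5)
You are right that the proposition is false as literally stated. Take $\varepsilon_k$ to be i.i.d.\ random signs, independent of a non-constant $Z\geq 0$ with $\E Z^3<\infty$, and set $Y_k=Z\varepsilon_k$. This sequence is stationary, with $\Cov(Y_0,Y_n)=0$ for $n\geq 1$ and $\xi^2=\E Z^2>0$. Yet $X_n/\sqrt n$ converges to the non-Gaussian scale mixture $ZG$, where $G\sim\mathcal{N}(0,1)$. The paper does not prove the general statement either: it cites two spectral results for dynamical observables and sketches the Nagaev--Guivarc'h route.

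Your replacement argument, however, does not reach the claimed rate even under exponential mixing. The first failure is in the passage from Step~2 to Step~3. An additive bound $|R_n(t)|\leq\varepsilon_n(1+t^2)$ in $\varphi_n'=-t\varphi_n+R_n$ does not imply Gaussian damping. Indeed $\varphi_n(t)-e^{-t^2/2}=\int_0^t e^{-(t^2-s^2)/2}R_n(s)\,ds$, and for $R_n(s)=\varepsilon_n(1+s^2)$ this is of size $\varepsilon_n|t|$ for large $|t|$. Esseen's inequality then gives only $\varepsilon_nT+T^{-1}$, i.e.\ a rate of order $\sqrt{\varepsilon_n}\approx n^{-1/4}$ up to logarithms.

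What Tikhomirov's method actually produces is an inequality whose main error is proportional to $|\varphi_n(t)|$. An example is $|\varphi_n'(t)+t\varphi_n(t)|\leq C\varepsilon_n(1+t^2)|\varphi_n(t)|$ plus a negligible additive term. That form yields $|\varphi_n(t)|\leq e^{-t^2/4}$ for $|t|\leq c/\varepsilon_n$, and then your Step~3 bound. Your sketch discards this factor when it bounds the Taylor remainder by $\E|Y_k\Delta_k^2|$. It also cannot recover the factor with a single cut. $\Delta_k$ uses the indices $|j-k|\leq m$ and $W_n^{(k,m)}$ uses $|j-k|>m$, so the local block and the far field are adjacent. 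There is no gap across which to factor $\E[Y_k\Delta_k e^{itW_n^{(k,m)}}]$, or to replace $\E e^{itW_n^{(k,m)}}$ by $\varphi_n(t)$ multiplicatively. One needs gapped, nested block scales and a recursive comparison of the characteristic functions of the truncated sums with $\varphi_n$. Your phrase about ``decorrelation errors between the local block and the far field'' hides exactly this step.

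Even after that repair, a single block length $m\asymp\log n$ costs a factor $m^2$ and leaves $(\log n)^2/\sqrt n$. Removing the logarithm is the entire content of a bound $C_{\mathrm{BE}}/\sqrt n$. You offer only an iteration you have not carried out, so your argument establishes the stated rate for no class of sequences.

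Your fallback does not support the conclusion you draw from it either. The spectral argument of Theorem~\ref{thm:Berry_Esseen} says nothing about general $\phi$-mixing sequences. Its real hypothesis is a representation $\E_\mu[e^{isS_n\psi}]=(\lambda_s/\lambda_0)^n c(s)+O(|s|\theta^n)$ for $|s|\leq s_0$, with $\lambda_s$ and $c(s)$ analytic and $c(0)=1$. For $Y_k=\psi\circ\sigma^{k-1}$ this comes from the isolated simple eigenvalue of $\calL_{\phi+is\psi}$. After centring, $\lambda_s/\lambda_0=\exp(-\xi^2 s^2/2+O(|s|^3))$. This gives $|\lambda_s/\lambda_0|^n\leq e^{-t^2/4}$ directly on $|t|\leq s_0\xi\sqrt n$, with $s=t/(\xi\sqrt n)$ and $s_0$ small. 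So the Gaussian damping and the range $T\asymp\sqrt n$ come for free, no block length appears, and Esseen's inequality gives $O(n^{-1/2})$ with no logarithm.

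That is the route the paper sketches. The sound fix is to restate the proposition under this spectral hypothesis. $C_{\mathrm{BE}}$ would then depend on $s_0$, $\theta$ and a bound on the third derivative of $s\mapsto\log\lambda_s$, not on $\E|Y_1|^3$ and covariance or mixing constants.
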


For dynamical systems, this is proved via the Nagaev-Guivarc'h spectral method \cite{Nagaev1957,GuivarchHardy1988}: one controls the characteristic function $\E_\mu[e^{isS_n\psi}]$ using the perturbed transfer operator $\calL_{\phi + is\psi}$, expands the dominant eigenvalue to third order using the analyticity from Theorem~\ref{thm:pressure_analyticity}, and applies the classical Berry-Esseen smoothing inequality. For the complete argument in the non-uniformly expanding case, see  \mbox{\cite[Theorem~2.1 and Proposition~2.3]{Gouezel2005}}. The uniformly hyperbolic case treated in this Part is simpler because the spectral gap of $\calL_\phi$ on $\calH_\alpha$ (Theorem~\ref{thm:spectral_gap}) is uniform, eliminating the need for the inducing schemes of \mbox{\cite{Gouezel2005}}.\newpage 

\end{appendix}

\begin{Backmatter}

\end{Backmatter}

\end{document}